\documentclass
{article}

\usepackage[utf8]{inputenc}
\usepackage[margin=2.5cm]{geometry}
\usepackage{amsmath,amssymb,amsthm}
\usepackage{stmaryrd}
\usepackage{mathtools}
\usepackage{dsfont}
\usepackage{hyperref}
\usepackage[dvipsnames]{xcolor}
\usepackage{ulem}
\usepackage{multirow}
\usepackage{cases}

\usepackage{wrapfig}

\usepackage{silence}
\WarningFilter{amsmath}{Foreign command \over}
\WarningFilter{amsmath}{Foreign command \atopwithdelims}

\setlength{\tabcolsep}{10pt}

\usepackage{constants}
\renewconstantfamily{normal}{symbol=c}
\newconstantfamily{assumptions}{symbol=C}

\usepackage{graphicx}
\usepackage{tikz}
\usetikzlibrary{math}
\usepackage{pgfplots}
\pgfplotsset{compat=1.17}
\usepackage{caption}
\usepackage{subcaption}

\hypersetup{
    colorlinks=true,
    linkcolor=blue,
    filecolor=magenta,      
    urlcolor=Blue,
    citecolor=Blue,
}

\usepackage[
colorinlistoftodos,prependcaption]{todonotes}
\usepackage{setspace}
\newcounter{mycomment}

\newtheorem{theorem}{Theorem}[section]
\newtheorem{lemma}[theorem]{Lemma}
\newtheorem{proposition}[theorem]{Proposition}
\newtheorem{corollary}[theorem]{Corollary}
\newtheorem{remark}[theorem]{Remark}

\DeclarePairedDelimiter\norm{\lVert}{\rVert}

\newcommand{\bA}{\mathbb{A}}
\newcommand{\bE}{\mathbb{E}}

\newcommand{\bB}{\mathbb{B}}

\newcommand{\bM}{\mathbb{M}}
\newcommand{\bN}{\mathbb{N}}

\newcommand{\bP}{\mathbb{P}}

\newcommand{\bR}{\mathbb{R}}

\newcommand{\bH}{\mathbb{H}}

\newcommand{\bS}{\mathbb{S}}
\newcommand{\bZ}{\mathbb{Z}}
\newcommand{\bX}{\mathbb{X}}
\newcommand{\bV}{\mathbb{\hspace{-.07mm}V\hspace{-.2mm}}}

\newcommand{\cA}{\mathcal{A}}

\newcommand{\cH}{\mathcal{H}}

\newcommand{\cN}{\mathcal{N}}

\newcommand{\cX}{\mathcal{X}}

\newcommand{\V}{\mathbb{V}}

\newcommand{\dint}{\mathrm{d}}
\newcommand{\aaa}{\theta_1}
\newcommand{\bbb}{\theta_2}

\newcommand{\dist}{\operatorname{dist}}
\newcommand{\diam}{\operatorname{diam}}

\newcommand{\eqrefb}[1]{{\normalfont(\ref*{#1})}}

\begin{document}

\author{
Gilles Bonnet\footnotemark[1] \footnotemark[2],\;
Anna Gusakova\footnotemark[3]}

\renewcommand{\thefootnote}{\fnsymbol{footnote}}

\footnotetext[1]{
    Bernoulli Institute, University of Groningen, Nijenborgh 4, NL-9747 AG Groningen, Netherlands.
    Email: g.f.y.bonnet@rug.nl
}
\footnotetext[2]{
    CogniGron (Groningen Cognitive Systems and Materials Center), University of Groningen, Nijenborgh 4, NL-9747 AG Groningen, Netherlands.
}
\footnotetext[3]{
    University of Münster, Institut für Mathematische Stochastik, Orl\'eans-Ring 10, 48149 Münster, Germany. Email: gusakova@uni-muenster.de
}

\title{\textbf{Concentration inequalities for Poisson $U$-statistics 
}}

\maketitle

\begin{abstract}

In this article we obtain concentration inequalities for Poisson $U$-statistics $F_m(f,\eta)$ of order $m\ge 1$ with kernels $f$ under general assumptions on $f$ and the intensity measure $\gamma \Lambda$ of underlying Poisson point process $\eta$. The main result are new concentration bounds of the form 
\[
  \mathbb{P}(|F_m ( f , \eta) -\mathbb{E} F_m ( f , \eta)| \ge t)\leq 2\exp(-I(\gamma,t)), 
  \]
  where $I(\gamma,t)$ is of optimal order in $t$,  namely it satisfies $I(\gamma,t)=\Theta(t^{1\over m}\log t)$ as $t\to\infty$ and $\gamma$ is fixed. The function $I(\gamma,t)$ is given explicitly in terms of parameters of the assumptions satisfied by $f$ and $\Lambda$. One of the key ingredients of the proof is bounding the centred moments of $F_m(f,\eta)$. We discuss the optimality of obtained concentration bounds and consider a number of applications related to Gilbert graphs and Poisson hyperplane processes in constant curvature spaces.

{\bf Keywords.} {Centred moments, concentration inequality, Gilbert graph, Poisson functional, Poisson hyperplane process, Poisson process, stochastic geometry, subpartitions, $U$-statistic.}

{\bf MSC 2020.} {Primary: 60D05, 
60F10; 
Secondary: 05C80 
 }
\end{abstract}

\tableofcontents


\section{Introduction}

\paragraph{Purpose of the article:}
Let $\eta$ be a Poisson process of intensity measure $\gamma \Lambda$ 
 on a measure space $(\bX,\cX,\Lambda)$, $\gamma>0$, where $\Lambda$ is a $s$-finite measure (i.e.\ a countable sum of finite measures).
Let $m\in\bN$.
We consider a Poisson $U$-statistic of (measurable) \textit{kernel} $f\colon\bX^m\to \bR$, which is a random variable
$$
F_m=F_m(f,\eta) = \int f(x_1,\ldots,x_m) \, \eta^{(m)}(d x_1, \ldots , d x_m),
$$
where $\eta^{(m)}$ denotes the factorial measure of $\eta$.
When $\Lambda$ is non-atomic measure, points of $\eta$ have no multiplicity and thus $F_m$ can be written in the following simpler form
$$
F_m= \sum_{(x_1,\ldots,x_m)\in \eta^m_{\neq}} f(x_1,\ldots,x_m) ,
$$
where $\eta^m_{\neq}$ is the collection of $m$-tuples of distinct points of $\eta$.
In this article we establish concentration bounds for $F_m$ around its mean $\bE F_m$, under additional assumptions,  
which are formulated in terms of bounds for the integrals of the special type of the kernel $f$ with respect to $\Lambda$ (see \eqref{eq:fBound}). This integrals appear in the formulas for the centred moment of Poisson $U$-statistic $F_m$. 
The exact formulation of the above assumption require some additional notation and terminology and is therefore postponed to the Section \ref{sec:Assumptions}, where we also compare it with other natural sets of assumptions, such as for example the \textit{local} U-statistics as studied by Bachmann and Reitzner in \cite{BR18}.

We are aiming for the concentration bounds of the following form. We find \textit{rate functions} $I_+ , I_- \colon [0,\infty) \times [0,\infty) \to [0,\infty)$, with $\lim_{t\to\infty}I_+(\gamma,t)=\lim_{t\to\infty}I_-(\gamma,t)=\infty$ for any fixed $\gamma$, and such that
\begin{align*}
\bP(F_m-\bE F_m\leq -t)&\leq \exp(-I_-(\gamma,t)) ,\\
\bP(F_m-\bE F_m\geq t)&\leq \exp(-I_+(\gamma,t)) .
\end{align*}
The rate functions depends on several quantities associated to the kernel $f$, the most prominent being the \textit{order} $m$.
At the end of this introduction, we provide a qualitative description of the rate functions, and give pointers to specific results in later sections for their precise expressions.

\paragraph{Background on Poisson $U$-statistics and concentration bounds:}
Poisson $U$-statistics cover a wide class of functionals of Poisson processes, which play an important role in stochastic geometry.
Classical examples include the
total edge length of random graphs, different characteristics of intersection process of Poisson hyperplane process,
and volumes of random simplices.
This wide range of applications drags a lot of attention and stimulated the intensive investigation of the properties of Poisson $U$-statistics.

The classical settings when $U$-statistics are defined for binomial processes instead of Poisson processes are well studied and there is an extensive literature on this topic, see e.g.\ \cite{Lee90} for more detail and further references.
In contrast to this, the study of Poisson $U$-statistic appeared to be a challenging task, but some significant progress was made during the last years.
In particular the combination of modern techniques, such as Malliavin calculus on Poisson spaces together with Stein’s method for normal approximation, Wiener-It\^{o} chaos expansion and Fock space representation appeared to be very fruitful leading to the number of striking results including that exact formulas for the moments and abstract limit theorems \cite{ET14, L-RP13b, L-RP13, LPST14, PT13, RS13}. For further background material on this topic and for references we refer reader to \cite{LP} and \cite{PeccatiReitzner}.

Compared with the number of limit theorems available in the literature there are only few results concerning concentration inequalities specific for Poisson $U$-statistics. On the other hand the questions regarding concentration bounds for general Poisson functionals have been considered in the past and one can identify three different approaches to this problem. The first approach relies on a modified log-Sobolev \cite{wu2000new} and $\Phi$-Sobolev inequalities \cite{Chaf}. This technique was first applied by Wu \cite{wu2000new}, leading to a collection of concentration bounds under some rather restrictive conditions, and was further extended  in \cite{bachmann2016concentration} and \cite{GST21}. In \cite{BR18} this approach was in particularly used to derive concentration bounds for local Poisson $U$-statictics over $\bR^d$ with bounded kernel. The obtained concentration bounds have been used to analyse random graphs models \cite{bachmann2016concentration, BR18} and random polytopes \cite{GST21}. Another approach using general covariance identities for exponential
functions of Poisson processes was applied in \cite{BHP07, GieringerLast,HP02}. The resulting concentration inequalities were particularly useful to study concentration properties of
geometric functionals associated with the Poisson Boolean model \cite{GieringerLast}. In \cite{R13} Reitzner proved an analogue of Talagrand’s inequality for the convex distance on the Poisson space, which in turn was used in \cite{RST17} to derive concentration inequalities for general Poisson functionals and, in particular, for $U$-statistics around its median (see also \cite{HR-B03, PeccatiReitzner}). The obtained bounds provide a good rate for local $U$-statistics and $U$-statistics of order $1$ and they have been applied to random geometric graphs models \cite{RST17} and the Poisson flat processes \cite{PeccatiReitzner}. Recently using the general
transport inequality  the concentration bounds for so-called convex functionals of Poisson processes were proved by Gozlan, Herry and Peccati \cite{NRP21}. They also lead to concentration inequalities for Poisson
$U$-statistics around the median. In the very recent work \cite{ST23} Schulte and Th\"ale have used the cumulants method in order to establish moderate deviation principle and derive Bernstein and Cram\'er type concentration inequalities for Poisson $U$-statistics. The application of this method relies on bounds for the cumulants of Poisson $U$-statistic.

\paragraph{Description of our results:}
As it is seen from the above overview, almost all known concentration bounds for Poisson $U$-statistics are based on concentration bounds for general Poisson functionals. For this reason the estimates are often not sensitive to the specific structure of Poisson $U$-statistics and provide a good bound only in some special cases.
Our aim is to use the properties of Poisson $U$-statistics in order to obtain a new set of concentration inequalities under some mild assumptions (see assumptions \eqref{eq:fBound} in the next section), including local $U$-statistics as a particular case, as we will show in Lemma \ref{lem:relAssumptions}. 
The proof relies on the known formulas for the centred moments of Poisson $U$-statistic and Markov inequality. 
It should be noted that the obtained concentration bounds are optimal (up to the constants) and provide an improvement of known results by the logarithmic factor in case of $m\ge 2$. The detailed analysis of the bounds and their comparison with earlier results is postponed to Section \ref{sec:discussion}. As an application we will consider some functionals of random geometric graph models and Poisson hyperplane process in Euclidean and hyperbolic spaces. 

Our main results are summarized in the following theorem, which we present here in a simplified form. An extended form of this theorem with the explicit constants is given at the beginning of Section \ref{sec:discussion}.
The statement of this theorem involves assumption \eqref{eq:fBound} and the notation $f_1$ which are both introduced in the next section.

\begin{theorem}[Main result, simplified version]  \label{tm:Summary} \,
Let $F_m$ be a Poisson $U$-statistic with kernel $f$ satisfying $\|f_1\|_{L^2(\Lambda)} >0$, where $f_1$ is defined by \eqref{eq_fk}.
If 
\eqref{eq:fBound} is satisfied with $q\in[0,1]$, then there exist $C,c>0$ depending only on $f$ and $\Lambda$, such that
\begin{align*}
    \bP(|F_m-\bE F_m| \geq t)
    &\leq 2\exp \left( - c t^{\frac{1}{m}} \min \left( \left( \frac{t}{\gamma^m} \right)^{2-\frac{1}{m}} ,\left( 1 + \log_+ \left( \frac{t}{\gamma^m} \right)\right)^{1-q} \right)  \right), 
    & \qquad t \geq 0, \gamma \geq C^{-1},
\end{align*}
where $\log_+(\cdot) = {\bf 1}(\log(\cdot)\geq 0) \log(\cdot)$ denotes the positive part of the logarithm.
\end{theorem}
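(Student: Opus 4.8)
The plan is to apply Markov's inequality to a centred moment of $F_m$ of high, $t$-dependent order and then optimise over that order. For every even integer $p\ge 2$,
\[
  \bP\bigl(|F_m-\bE F_m|\ge t\bigr)\ \le\ t^{-p}\,\bE|F_m-\bE F_m|^p ,
\]
so the proof splits into (i) an explicit upper bound for the centred moments $\bE|F_m-\bE F_m|^p$ in terms of $p$, $\gamma$ and the quantities of \eqref{eq:fBound}, and (ii) the choice $p=p(\gamma,t)$ minimising the resulting expression. The factor $2$ in the statement then absorbs, once and for all, the trivial range of small $t$ where $\bP\le 1$ already dominates the bound.

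For part (i) one starts from the known combinatorial formula for the centred moments of a Poisson $U$-statistic (see e.g.\ \cite{RS13,LP}): $\bE(F_m-\bE F_m)^p$ is a finite signed sum, indexed by the partitions $\sigma$ of the ground set $\{1,\dots,p\}\times\{1,\dots,m\}$ that are \emph{admissible} --- non-flat (no block contains two elements of the same ``row'' $\{i\}\times\{1,\dots,m\}$) and, as a consequence of the centring, without singleton blocks, so in particular $|\sigma|\le mp/2$ --- of terms $\gamma^{|\sigma|}\int(f^{\otimes p})_\sigma\,\dint\Lambda^{|\sigma|}$, where $(f^{\otimes p})_\sigma$ is obtained from $f(x_1)\cdots f(x_p)$ by identifying arguments according to $\sigma$. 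Three features of this sum drive the estimate. First, the $p$ factors carry $mp$ variables in total and $|\sigma|\le mp/2$ blocks, which is precisely what produces the exponent $1/m$ in the rate function. Second, exactly as for a Poisson random variable --- where $\bE(\operatorname{Poi}(\mu))^p$ is a $\mu$-weighted sum of Stirling numbers $S(p,k)$, dominated for large $p$ by the terms with $k\asymp p/\log p$ --- the admissible $\sigma$ that dominate for large $p$ have of order $mp/\log(mp)$ blocks, and it is their near-factorial count that produces the logarithm in the rate. Third, each integral $\bigl|\int(f^{\otimes p})_\sigma\,\dint\Lambda^{|\sigma|}\bigr|$ is controlled, uniformly in $\sigma$, by assumption \eqref{eq:fBound}, with the dependence on $|\sigma|$ --- hence the surviving power of the logarithm --- regulated by the exponent $q$, while the first-order kernel $f_1$ governs the pairing contribution. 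Putting this together yields, uniformly for $\gamma\ge C^{-1}$, a centred-moment bound of the schematic form
\[
  \bE|F_m-\bE F_m|^p\ \le\ \bigl(c\,\gamma^{2m-1}\|f_1\|_{L^2(\Lambda)}^2\,p\bigr)^{p/2}\ +\ \bigl(c\,\gamma^{m}\bigr)^{p}\ +\ \Bigl(\frac{c\,p}{\bigl(1+\log(e+p/\gamma^m)\bigr)^{1-q}}\Bigr)^{mp},
\]
in which the first summand is the Gaussian/Wick pairing term (of order $\operatorname{Var}(F_m)^{p/2}$ with $\operatorname{Var}(F_m)\asymp\gamma^{2m-1}\|f_1\|_{L^2(\Lambda)}^2$), the second is a harmless geometric term, and the third carries the near-factorial count of the remaining admissible partitions; the restriction $\gamma\ge C^{-1}$ is what allows the intermediate partitions to be absorbed into these three.

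Part (ii) is a one-variable optimisation. Inserting the three summands into $t^{-p}\bE|F_m-\bE F_m|^p$ and differentiating the logarithm in $p$, one finds that for $t$ of order at most $\gamma^m$ (up to logarithmic corrections) the first summand dominates, the optimal $p$ stays bounded, and one recovers the Bernstein rate $-c\,t^2/\gamma^{2m-1}=-c\,t^{1/m}(t/\gamma^m)^{2-1/m}$ --- which is also why the hypothesis $\|f_1\|_{L^2(\Lambda)}>0$ is needed --- while the geometric middle term never constrains the choice of $p$. For $t$ large compared with $\gamma^m$ the third summand dominates, the optimal (integer) $p$ is of order $t^{1/m}\bigl(1+\log_+(t/\gamma^m)\bigr)^{1-q}$, and substituting it --- carefully tracking the $\log\log$-type lower-order terms, just as in the optimisation of the Markov bound for a Poisson tail --- gives the rate $-c\,t^{1/m}\bigl(1+\log_+(t/\gamma^m)\bigr)^{1-q}$. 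Rounding $p$ to an even integer and checking in each regime that the non-dominant summands do no harm produces the single $\min(\cdot,\cdot)$ of the statement for all $t\ge 0$ and $\gamma\ge C^{-1}$; the explicit constants appear in the extended statement at the start of Section~\ref{sec:discussion}.

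The step I expect to be the main obstacle is part (i), and within it the bookkeeping around \eqref{eq:fBound}: pinning down exactly which partitions are admissible for a centred moment, counting them with the correct $(p/\log p)$-type asymptotics, and --- above all --- showing that \eqref{eq:fBound} with parameter $q$ bounds each $\int(f^{\otimes p})_\sigma\,\dint\Lambda^{|\sigma|}$ uniformly in $\sigma$ with exactly the right dependence on $|\sigma|$, so that the powers of $\gamma$ emerge as $\gamma^{2m-1}$ in the pairing term and $\gamma^m$ in the heavy-tailed term, and the logarithmic factor emerges with exponent precisely $1-q$. By comparison the optimisation over $p$ is routine calculus, the only delicate points being the integrality of $p$ and the gluing of the two regimes into one formula.
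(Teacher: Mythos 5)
Your proposal follows essentially the same route as the paper: the centred-moment formula over subpartitions, uniform control of the integrals via \eqref{eq:fBound}, Markov's inequality at an even order optimised as a function of $t$ and $\gamma$, and a gluing of the resulting regimes (this is Section \ref{sec:ConcentrationViaMoments} together with the proof of the extended theorem at the start of Section \ref{sec:discussion}). You also correctly single out the main obstacle, namely the combinatorial bookkeeping behind the moment bound, which the paper carries out via Stirling numbers, the Fa\`a di Bruno identity of Lemma \ref{lem:FaadiBrino}, and a H\"older split that is what produces the exponent $1-q$ on the logarithm.

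Two points where your sketch slips or diverges. First, in the regime $\gamma^{m-1/2}\lesssim t\lesssim\gamma^m$ the optimal order does \emph{not} stay bounded: minimising $(cV\ell)^{\ell/2}t^{-\ell}$ forces $\ell\asymp t^2/(cV)\asymp t^2\gamma^{-(2m-1)}$, which grows up to order $\gamma$ as $t$ approaches $\gamma^m$; with bounded $\ell$, Markov only yields polynomial decay. This is exactly the choice made in Theorem \ref{tm:momentConcRes_NEW}.b, and it is also why the corresponding moment bound in Theorem \ref{thm:mombound} is only proved under the constraint $\gamma\beta_1\ge 2m\ell$. Second, the paper's moment upper bounds never involve $\|f_1\|_{L^2(\Lambda)}$ (they carry the cruder constant $\beta_2^2(\gamma\beta_1)^{2m-1}$); that quantity enters only as a \emph{lower} bound on $\bV F_m$, used for the smallest range $t\lesssim\gamma^{m-1/2}$ via the Chebyshev--Cantelli inequality (Proposition \ref{prop:ChebyshevCantelli}) rather than via moments. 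Your alternative of absorbing that range into the factor $2$ by shrinking $c$ does work for the two-sided statement as given (and then $\|f_1\|>0$ is not even needed there), but it cannot deliver the one-sided versions the paper also records. Neither issue invalidates the plan; both would need to be repaired in a full write-up.
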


\paragraph{Structure of the paper:} The rest of the paper is structured as follows.
In Section \ref{sec:UStat} we present the basic notions related to Poisson $U$-statistics, introduce the assumption \eqref{eq:fBound} of our main theorem, as well as other sets of assumptions and explain how they relate to each other.
In the same section we will present concentration bound for the lower tail and for Poisson $U$-statistic of order $1$ based on known estimates. 
Section \ref{sec:large} is devoted to concentration and anticoncentration bounds under some stronger assumptions.
In Section \ref{sec:ConcentrationViaMoments} we prove the concentration bounds for Poisson $U$-statistics satisfying assumptions \eqref{eq:fBound} by using Markov's inequality and estimates for the centred moments. 
The discussion of the quality of the obtained results and the comparison with the known bounds is postponed to Section \ref{sec:discussion}.
Finally we consider applications to random geometric graphs in Section \ref{sec:geometricGraphs}, and to Poisson hyperplane process in Euclidean and hyperbolic spaces in Section \ref{sec:Applications}.


\section{Poisson \texorpdfstring{$U$}{U}-statistic} \label{sec:UStat}

\subsection{Definition and basic properties}

Let $(\bX,\cX)$ be a measurable space equipped with a measure $\gamma\Lambda$, where $\gamma>0$. 
Let $\bN_0 = \bN \cup \{0\}$ and $\overline{\bN}_0 = \bN \cup \{0,\infty\}$.
We say that a measure is $s$-finite if it can be written as a countable sum of $\sigma$-finite measures.
By $\mathbf{N}(\bX)$ we denote the space of {$s$-finite $\overline{\bN}_0$-valued } 
measures on $\bX$ and $\cN(\bX)$ is defined as the smallest $\sigma$-algebra on $\mathbf{N}(\bX)$ such that the mappings $\xi\mapsto\xi(B)$, $\xi\in\mathbf{N}(\bX)$, $B\in\cX$ are measurable. 
From now on, we assume that $\Lambda$ is a $s$-finite measure.
A \textit{Poisson process} $\eta$ with intensity measure $\lambda=\gamma\Lambda$ is a measurable mapping from some fixed probability space $(\Omega, \cA, \bP)$ to $(\mathbf{N}(\bX),\cN(\bX))$ with the following two properties: for any $B\in\cX$ the random variable $\eta(B)$ is Poisson distributed with mean $\gamma\Lambda(B)$; for any $n\in\bN$ and pairwise disjoint sets $B_1,\ldots,B_n\in\cX$ the random variables $\eta(B_1),\ldots, \eta(B_n)$ are independent. The parameter $\gamma$ is referred to as the \textit{intensity} of the process $\eta$. For more information regarding point processes and their properties we refer reader to \cite{LP,SW}.

A \textit{Poisson functional} is a random variable $F$, which almost surely satisfies $F=g(\eta)$ for some measurable function $g\colon\mathbf{N}(\bX)\to\bR$. The function $g$ is called a representative of $F$. If $\bP_{\eta}$ denotes the distribution of $\eta$ we will write $L^p(\bP_{\eta})$, $p\ge 0$ for the space of Poisson functionals satisfying $\bE|F|^p<\infty$. 
Moreover, as usual, we denote by $L^p(\Lambda^k)$ the space of all measurable functions $f\colon\bX^k\to\bR\cup\{\pm\infty\}$ with
$$
\int_{\bX^k}|f(x_1,\ldots,x_k)|^p\Lambda(\dint x_1)\cdots\Lambda(\dint x_k)<\infty.
$$
The corresponding norm is denoted by $\|\cdot\|_{L^p(\Lambda^k)}$. 
Given a Poisson functional $F$ with representative $g$ and $x\in\bX$ we define the so-called difference or add-one cost operator $D_x$ as follows
$$
D_x F(\eta)=g(\eta+\delta_x)-g(\eta),
$$
where $\delta_x$ denotes the Dirac measure at $x$.

One of the important special cases of Poisson functional is \textit{Poisson $U$-statistic}. Let $m\in\bN$ and $f\colon\bX^m \to\bR$ be a measurable symmetric function satisfying $f\in L^1(\Lambda^m)$. Symmetric means that it is invariant with respect to permutations of the coordinates. The corresponding Poisson $U$-statistic is the following random variable
$$
F_m = F_m(f,\eta) 
= \int f(x_1,\ldots,x_m) \, \eta^{(m)}(d x_1, \ldots , d x_m) ,
$$
where $\eta^{(m)}$ denotes the factorial measure of $\eta$.
The function $f$ is called the \textit{kernel} and $m$ the \textit{order} of $F_m$. In what follows we will always assume that $\Lambda^m(\{x\in\bX^m\colon f(x)\neq 0\})>0$.

The particular interest to Poisson $U$-statistics is motivated by their nice structure. Thus, using Malliavin calculus, one can show that a $U$-statistic of order $m$ consists of a finite sum of Wiener-Ito integrals \cite{RS13}, which allows to derive convenient formulas for the variance and higher centred moments of $F_m(f,\eta)$ in terms of $f$. If $f\in L^1(\Lambda^m)$, using the Slivnyak-Mecke formula \cite[Corollary 3.2.3]{SW} we have
\begin{equation}\label{eq_UstatExp}
\bE F_m(f,\eta)=\gamma^m\int_{\bX^m}f(x_1,\ldots,x_m)\Lambda(\dint x_1)\cdots\Lambda(\dint x_m).
\end{equation}
Further if $f\in L^1(\Lambda^m)$ is a symmetric function, then for $1\leq k\leq m$ we define the function $f_k \colon \bX^k \to \bR$ by 
\begin{equation}\label{eq_fk}
f_k(y_1,\ldots,y_k)\coloneqq  \binom{m}{k}\int_{\bX^{m-k}}f(y_1,\ldots, y_k, z_1,\ldots, z_{m-k}) \Lambda(\dint z_1)\cdots \Lambda(\dint z_{m-k})
\end{equation}
if the integral on the right-hand side is well defined, and by $f_k(y_1,\ldots,y_k)\coloneqq 0$ otherwise. 
For $k=m$, this definition should be understood as $f_m := f$.
It should be noted that, for $f\in L^1(\Lambda^m)$, the above integral is well defined for almost all tuples $(y_1,\ldots,y_k)$, the functions $f_k\colon\bX^k\to\bR$ are symmetric, and $f_k\in L^1(\Lambda^k)$.
Using this notation, the variance of a Poisson $U$-statistic $F_m(f,\eta)\in L^1(\bP_{\eta})$, satisfying $f_k\in L^2(\Lambda^k)$ for all $1\leq k\leq m$, can be written as follows (see \cite[Proposition 12.12]{LP})
\begin{equation}\label{eq_UstatVar}
\V F_m(f,\eta)=\sum_{k=1}^m\gamma^{2m-k}k! \|f_k\|^2_{L^2(\Lambda^k)}.
\end{equation}
Note that the condition $\bE F_m\neq 0$ implies $\|f_1\|_{L^2(\Lambda)}>0$. In particular in case of the non-negative kernel $f$ the condition $\bE F_m\neq 0$ is equivalent to $F_m$ is not almost surely zero. Thus, in case of non-negative kernel $\|f_1\|_{L^2(\Lambda)}=0$ only in the trivial case.

\subsection{Sets of subpartitions and moments of Poisson \texorpdfstring{$U$}{U}-statistic} \label{sec:subpartitions}

In order to present formula \eqref{eq:MomentUStat} below for the $\ell$-th centred moments $\bE[(F_m(f,\eta)-\bE F_m(f,\eta))^\ell]$, $\ell\ge 2$, we need to introduce some additional combinatorial notation and terminology first.

For an integer $n\in\bN$, a \textit{subpartition} $\sigma$ of $[n] \coloneqq \{1,\ldots,n\}$ is a family of disjoint non-empty subsets of $[n]$. The set $\Pi_n^*$ of all subpartitions of $[n]$ is given by
\begin{align*}
    \Pi_n^*
    &\coloneqq  \{ \sigma=\{B_1 , \ldots , B_k\} : k\geq 1 \,;\, \emptyset \neq B_i \subset [n] \text{ and } B_i\cap B_j =\emptyset \text{ for any } i \neq j  \}.
\end{align*}
The subsets $B_i$ are called the \textit{blocks} of the subpartition. The number of blocks in a subpartition $\sigma$ is denoted by $|\sigma|$ and the cardinality of their union $\bigcup_{B\in\sigma} B$ is denoted by $\|\sigma\|$.

We will now define several collections of subpartitions.
A \textit{partition} of $[n]$ is a subpartition $\sigma$ of $[n]$ for which the union of the blocks is the entire set $[n]$.
Thus,
\begin{align*}
    \Pi_n
    &\coloneqq  \{ \sigma : \sigma\in\Pi_n^* \,,\, \|\sigma\|=n \}
\end{align*}
is the set of all partitions of $[n]$.
The collection of subpartitions for which each block has cardinality two or higher is denoted by 
\begin{align*}
    \Pi_{n,\geq 2}^* &\coloneqq  \{ \sigma \in \Pi^*_n : |B| \geq 2 \text{ for any } B\in \sigma  \} .
\end{align*}
Let $\ell,n_1,\ldots,n_\ell \in \bN$ and set $n\coloneqq \sum_{i\in[\ell]} n_i$.
For any $i\in[\ell]$, define 
$$
    R_i 
    \coloneqq  R_i(n_1,\ldots,n_\ell)
    \coloneqq  \{ j\in\bN : n_1 +\cdots + n_{i-1} < j \leq n_1 +\cdots + n_{i} \}.
$$
The letter $R$ stands for ``Row'' and is motivated by the fact that the set $[n]$ can be represented by a diagram consisting of $\ell$ rows of dots, with the $i$-th row having $n_i$ dots and therefore corresponding precisely to the set~$R_i$, see Figure \ref{fig:Diag1}.
We will consider the partition
$$\pi \coloneqq  \pi(n_1,\ldots,n_\ell) \coloneqq  \{R_i : i \in [\ell] \} \in \Pi_n. $$
We define
\begin{equation*}
    \begin{aligned}
        \Pi^*(n_1,\ldots, n_\ell) 
        &\coloneqq  \{ \sigma \in \Pi^*_n : |B\cap R| \leq 1  \text{ for any } B\in \sigma \text{ and } R \in \pi \}, 
        \\
        \Pi^{**}(n_1,\ldots, n_\ell) 
        &\coloneqq  \left\{ \sigma \in \Pi^*_n : 
        \begin{aligned}
            & |B\cap R| \leq 1  \text{ for any } B\in \sigma \text{ and } R \in \pi  , \\ 
            & \text{and for any } R\in\pi \text{ there exists } B\in \sigma \text{ such that } |B| \geq 2 \text{ and } |B\cap R| =1 
        \end{aligned} \right\}.
    \end{aligned}
\end{equation*}
The first collection consist of subpartitions for which any block intersects any row at most once, while the second has the additional constraint that every row is intersected by at least one block of size at least $2$.
We also consider the intersections of these collections with the two previous ones.
\begin{align*}
    \Pi(n_1,\ldots, n_\ell) 
    &\coloneqq  \Pi^\star (n_1,\ldots, n_\ell) \cap \Pi_n
    = \{ \sigma \in \Pi_n : |B\cap R| \leq 1 \text{ for any } B\in \sigma \text{ and } R \in \pi \} ,
    \\
    \Pi_{\geq 2}^*(n_1,\ldots, n_\ell) 
    &\coloneqq  \Pi^*(n_1,\ldots, n_\ell) \cap \Pi_{n,\geq 2}^* 
    = \{ \sigma \in \Pi^*_n : |B| \geq 2 \text{ and } |B\cap R| \leq 1  \text{ for any } B\in \sigma \text{ and } R \in \pi \} .
    \\\Pi_{\geq 2}^{**}(n_1,\ldots,n_\ell)
    &\coloneqq \Pi^{**}(n_1,\ldots, n_\ell) \cap \Pi_{n,\geq 2}^* 
    \\&= \{ \sigma \in \Pi_{\geq 2}^{*}(n_1,\ldots,n_\ell) : \text{for any } R\in\pi \text{ there exists } B\in\sigma \text{ such that }  | B \cap R| = 1 \}.
\end{align*}
Summarizing the above description, we say that for any $n_1,\ldots,n_\ell\in\bN$ with $\sum_{i=1}^{\ell}n_i=n$ the set $\Pi_{\geq 2}^{**}(n_1,\ldots,n_\ell)$ consists of all subpartitions $\sigma \in \Pi^*_n$, such that:
\begin{itemize}
    \item[(i)] for any $B\in\sigma$ we have $|B|\ge 2$;
    \item[(ii)] for any $B\in\sigma$ and any $i\in[\ell]$ we have $|B\cap R_i|\leq 1$;
    \item[(iii)] for any $i\in[\ell]$ there exists $B\in\sigma$ such that $|B\cap R_i|=1$.
\end{itemize}
In case, when $n_1=\ldots=n_{\ell}=m$ we use simplified notation, namely $\Pi(m;\ell)$, $\Pi^*(m;\ell)$, $\Pi_{\geq 2}^{*}(m;\ell)$ and $\Pi_{\geq 2}^{**}(m;\ell)$. In particular, when $n_1=\ldots=n_{\ell}=m$, $n=m\ell$, it is convenient to illustrate the partitions from the set $\Pi_{\geq 2}^{**}(m;\ell)$ with the help of the following diagram (see e.g. Figure \ref{fig:Diag1}), where we represent the set of numbers $[n]$ as a set of nodes in the integer lattice $\bZ^2$. Since any number $k\in [n]$ admits a unique representation of the form $k=(i-1)m+j$, $i\in[\ell]$, $j\in[m]$, we identify the number $k$ with the node with coordinates $(i,j)$. In this representation the condition (ii) can be equivalently reformulated as: each block $B$ of a partition $\sigma$ may intersect each row in maximum one node, and the condition (iii) means, that: each row is intersected by at least one block $B$ of the partition $\sigma$.

\begin{figure}[t]
    \centering
    \begin{subfigure}[b]{0.24\textwidth}
        \centering
        \begin{tikzpicture}
            \foreach \x in {0,...,2}
                \foreach \y in {0,...,3} 
                    {\filldraw (\x,\y) circle (3pt); }
            \draw (0,2) ellipse (10pt and 40pt);
            \draw (1,.5) ellipse (10pt and 25pt);
            \draw[rotate around={-45:(1.5,2.5)}] (1.5,2.5) ellipse (10pt and 30pt);
        \end{tikzpicture}
        \caption{$\sigma\in \Pi_{\geq 2}^{**}(3,3,3,3)$}
    \end{subfigure}
    \hfill
    \begin{subfigure}[b]{0.24\textwidth}
        \centering
        \begin{tikzpicture}
            \foreach \x in {0,...,2}
                \foreach \y in {0,...,3} 
                    {\filldraw (\x,\y) circle (3pt); }
            \draw (0,2) ellipse (10pt and 40pt);
            \draw (1,0.5) ellipse (10pt and 25pt);
            \draw[rotate around={-45:(1.5,2.5)}] (1.5,2.5) ellipse (10pt and 30pt);
            \draw (2,1) ellipse (10pt and 10pt);
            \draw[dashed] (2,1) ellipse (8pt and 8pt);
        \end{tikzpicture}
        \caption{$\sigma \notin \Pi_{\geq 2}^{**}(3,3,3,3)$}
    \end{subfigure}
    \hfill
    \begin{subfigure}[b]{0.24\textwidth}
        \centering
        \begin{tikzpicture}
            \foreach \x in {0,...,2}
                \foreach \y in {0,...,3} 
                    {\filldraw (\x,\y) circle (3pt); }
            \draw (0,2) ellipse (10pt and 40pt);
            \draw (1,.5) ellipse (10pt and 25pt);
            \draw (1.5,2) ellipse (30pt and 10pt);
            \draw[dashed] (1.5,2) ellipse (27pt and 7pt);
        \end{tikzpicture}
        \caption{$\sigma \notin \Pi_{\geq 2}^{**}(3,3,3,3)$}
    \end{subfigure}
    \hfill
    \begin{subfigure}[b]{0.24\textwidth}
        \centering
        \begin{tikzpicture}
            \foreach \x in {0,...,2}
                \foreach \y in {0,...,3} 
                    {\filldraw (\x,\y) circle (3pt); }
            \draw (0,2) ellipse (10pt and 40pt);
            \draw[rotate around={-45:(1.5,2.5)}] (1.5,2.5) ellipse (10pt and 30pt);
        \draw[dotted] (-0.3,-0.4) rectangle (2.3,0.4);
        \end{tikzpicture}
        \caption{$\sigma \notin \Pi_{\geq 2}^{**}(3,3,3,3)$}
    \end{subfigure}
    \caption{Illustration of subpartitions of $\{1,\ldots,12\}$ which belong to $\Pi_{\geq 2}^{**}(3,3,3,3)$ (subfigure (a)) or do not belong to $\Pi_{\geq 2}^{**}(3,3,3,3)$ (subfigures (b)-(d)). Condition (i) is violated by the dashed circle in (b). The dashed ellipse in (c) contradicts condition (ii). Condition (iii) do not hold in (d) because the bottom row do not intersect any block.}
    \label{fig:Diag1}
\end{figure}
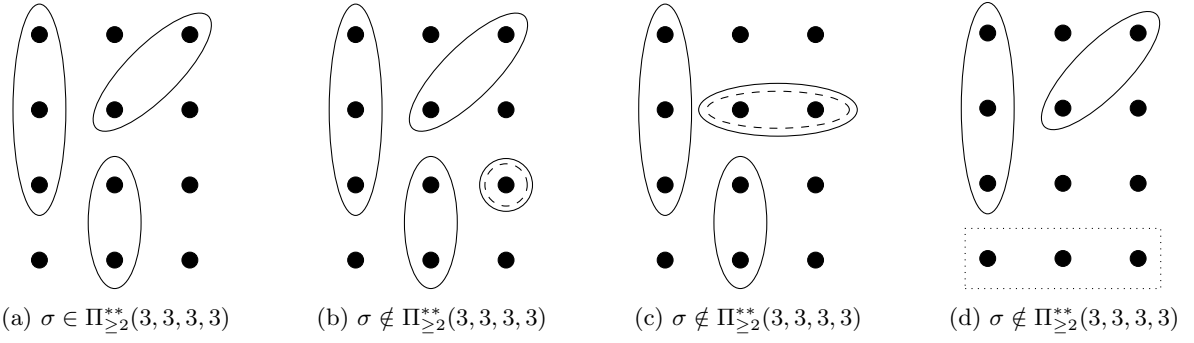

Observe, that one can always complete a subpartition $\sigma$ by adding all the singletons $\{i\} \subset [n]$, which do not belong to any block of $\sigma$. This construction is  the bijection
\begin{align} \label{eq0317}
    \begin{split}
        h\colon\Pi_{n,\geq 2}^* & \to \Pi_n \\
    \sigma & \mapsto \widetilde{\sigma} \coloneqq  \sigma \cup \{\{i\} : i \in [n] 
    \setminus \cup_{B\in\sigma} B \},
    \end{split}
\end{align}
which provides the identifications $\Pi_{n,\geq 2}^* \cong \Pi_n$ and $\Pi_{\geq 2}^*(n_1,\ldots, n_\ell)  \cong \Pi(n_1,\ldots, n_\ell) $.
The inverse map removes all singletons
\begin{align*}
    h^{-1}\colon\Pi_n & \to \Pi_{n,\geq 2}^* \\
    \widetilde{\sigma} & \mapsto \sigma \coloneqq  
    \{ B \in \widetilde{\sigma} : |B|\geq 2 \} .
\end{align*}
Note also, that $|\widetilde{\sigma}| = |\sigma| + n - \|\sigma\| $.
This observation motivates our final definition of a collection of subpartitions:
\begin{align*}
    \Pi_{\geq 2}^{**}(m;\ell, k)
    & \coloneqq  \{\sigma\in \Pi_{\geq 2}^{**}(m;\ell) : m\ell +|\sigma|-\|\sigma\|=|h(\sigma)|=k \} .
\end{align*}

For any measurable real function $f$ defined on $\bX^m$ and for any integer $\ell$, let   $f^{\otimes \ell}$ be the tensor product that maps $(x_1,\ldots,x_{m\ell}) \in \bX^{m\ell}$ to the product $f(x_1\ldots,x_m) \cdots f(x_{(\ell-1)m+1},\ldots,x_{\ell m})$.
For any integer $\ell$ and any $\sigma\in \Pi_{\geq 2}^{**}(m;\ell)$ we consider the real function $(f^{\otimes \ell})_\sigma$ defined on $\bX^{|\sigma|+ m\ell - \|\sigma\|}$ by identifying the variable $x_i$ and $x_j$ if they belong to the same block of $\sigma$.

We are finally ready to formulate the following result \cite[Proposition 12.13]{LP}: for a $U$-statistic $F_m(f,\eta)$ of order $m\ge 1$ with $f\in L^1(\Lambda^m)$ and for any $\ell\ge 2$ such that 
$
 \int (|f|^{\otimes \ell})_{\sigma} \dint \Lambda^{|\sigma|}<\infty
$
holds for all $\sigma\in \Pi(m;\ell)$, the $\ell$-th centred moment of $F_m$ is given by
\begin{align} \label{eq:MomentUStat}
    \bE[(F_m-\bE F_m)^\ell]
    &= \sum_{\sigma\in \Pi_{\geq 2}^{**}(m;\ell)}\gamma^{m\ell +|\sigma|-\|\sigma\|} \int (f^{\otimes \ell})_{\sigma} \dint \Lambda^{m\ell +|\sigma|-\|\sigma\|} ,
\end{align}
where we recall that $\gamma\Lambda$ is the intensity measure of the underlying Poisson process $\eta$. 
Note that the cumulants of $F_m$ can be expressed by a similar formula, where the summation is taken over different sets of subpartitions, see \cite[Theorem 3.7]{ST23}, for example.

\subsection{Assumptions}\label{sec:Assumptions}
Recall that $\eta$ is a Poisson point process on $(\bX,\cX)$ with intensity measure $\gamma\Lambda$, $\gamma>0$ and $F_m(f,\eta)$ is the corresponding Poisson $U$-statistic with kernel $f\colon\bX^m\to\bR$.
In this article we will consider the following assumptions. 

\subsubsection{Statements of the assumptions}

The \textbf{assumptions of the first type} are the following:
We assume that there exist constants $\beta_0\in[1,\infty)$, $\beta_1,\beta_2 \in(0,\infty)$ and $q\in[0,1]$, such that 
\begin{equation}
    \tag{A1} \label{eq:fBound}
    \begin{aligned}
        & f\in L^1(\Lambda^m),\, \int (|f|^{\otimes \ell})_{\sigma} \dint \Lambda^{k}<\infty \text{ and }
        \\&\Big|\int (f^{\otimes \ell})_{\sigma} \dint \Lambda^{k}\Big|
        \leq \beta_0 \beta_1^{k}\beta_2^{\ell} \Big(\prod_{J\in h(\sigma)}|J|!\Big)^q
        \text{ for all $\ell,k\in\bN$, $\ell\ge 2$, $\sigma\in \Pi_{\geq 2}^{**}(m;\ell,k)$},
    \end{aligned}
\end{equation}
where we recall that the map $h$ is defined by \eqref{eq0317}.
Let us point out that in case when $f$ is non-negative the condition $\int (|f|^{\otimes \ell})_{\sigma} \dint \Lambda^{k}<\infty $ can be dropped. Also note that assuming \eqref{eq:fBound} we have 
\begin{align}
\|f_k\|^2_{L^2(\Lambda^k)}&=\int_{\bX^{k}}\left(\binom{m}{k}\int_{\bX^{m-k}}f(y_1,\ldots, y_k, z_1,\ldots, z_{m-k}) \Lambda(\dint z_1)\cdots \Lambda(\dint z_{m-k})\right)^2\Lambda(\dint y_1)\cdots \Lambda(\dint y_{k})\notag\\
&\leq \binom{m}{k}^2\Big|\int (f^{\otimes 2})_{\sigma}\dint \Lambda^{2m-k}\Big|\leq \binom{m}{k}^2\beta_02^{qk}\beta_1^{2m-k}\beta_2^2<\infty,\label{eq:fkA2Bound1}
\end{align}
where $\sigma\in\Pi_{\ge 2}^{**}(m;2,2m-k)$ is taken to be $\sigma=\{B_1,\ldots, B_k\}$ with $B_i=\{\{i\}, \{m+i\}\}$, $1\leq i\leq k$. 
Thus, for any constant $\Cl{c47}>0$, if $\gamma \beta_1\ge \Cr{c47}$, we obtain from \eqref{eq_UstatVar} and \eqref{eq:fkA2Bound1} that
\begin{align}
    \gamma^{2m-1}\|f_1\|^2_{L^2(\Lambda)}\leq \V F_m(f,\eta)
    &\leq 2^q\beta_0\left( \sum_{k=1}^m \big(
    2^q\Cr{c47}^{-1}\big)^{k-1}k! \binom{m}{k}^2 \right) \beta_2^2(\gamma\beta_1)^{2m-1}\notag\\
    &\leq 2^q\beta_0m^2\big(
    2^q\Cr{c47}^{-1}m+1\big)^{m-1} \beta_2^2(\gamma\beta_1)^{2m-1}, \label{eq_UstatVarBounds}
\end{align}
where in the second line we used the inequalities ${m\choose k}\leq m{m-1\choose k-1}$  and $m!/(m-k)!\leq m^k$ for $1\leq k\leq m$.

\medskip

The \textbf{assumptions of the second type} are the following:  We assume that there are constants $\alpha_1,\alpha_2\in(0,\infty)$ such that
\begin{align} \label{assumptions} \tag{A2} 
    \Lambda(\bX)=\alpha_1, 
    \quad \text{and} 
    \quad |f(x_1,\ldots,x_m)| \leq \alpha_2 \text{ for any } (x_1,\ldots,x_m)\in\bX^m.
\end{align}
This assumption means that the expected number of points of $\eta$ is finite 
and the kernel $f$ is a  bounded function.

\medskip
The \textbf{assumptions of the third type} are the following: We assume that there is non-increasing function $g\colon\bR_+\to[0,1]$ and constant $M\in (0,\infty)$ such that
\begin{equation} \label{assumptionsGeneral} \tag{A3}
    \begin{aligned}        
        & \bX \text{ is equipped with a metric $\dist(\cdot,\cdot)$}, \quad f\in L^1(\Lambda^m), \quad f\geq 0\text{ and }\\
        &  f(x_1,\ldots,x_m) \leq M g^{m-1}(\operatorname{diam}(x_1,\ldots,x_m)) ,
        \\&C(g,\Lambda) \coloneqq  \sup_{x \in \bX} \int_{\bX} g(\operatorname{dist}(x,y)) \Lambda(d y) < \infty.
    \end{aligned}
\end{equation}
As it will be shown in Lemma \ref{lem:relAssumptions}, \eqrefb{assumptionsGeneral} generalizes the following type of assumptions, which were introduced by Bachmann and Reitzner in \cite{BR18}.
For given constants $0<\rho\leq \Theta\rho$ and $0<M_1\leq M_2$, they are stated as
\begin{equation}
    \label{assumptionsBR} \tag{$A_{\text{\sc{BR}}}$}
    \begin{aligned}        &\bX=\bR^d,
        \ \Lambda \text{ is locally finite without atoms},\ F<\infty \text{ a.s.}\text{ and }\\
        &\ f(x_1,\ldots,x_m)
        \begin{cases}
            > 0 \text{ if } \operatorname{diam}(x_1,\ldots,x_m) \leq \rho,
            \\= 0 \text{ if } \operatorname{diam}(x_1,\ldots,x_m) > \Theta \rho ,
            \\ \in \{0\} \cup [M_1,M_2] ,
        \end{cases}
    \end{aligned}
\end{equation}
where $\operatorname{diam}(x_1,\ldots,x_m) = \max_{i\neq j}\|x_i-x_j\|_2$.

\subsubsection{Relations between the assumptions}

With the next lemma, we explain how the various assumptions introduced above relate to each other. In particular we see that \eqref{eq:fBound} is more general than each of the other assumptions.
On the other hand, it is often convenient to work under more restrictive assumptions since they are easier to check, and in some cases allow to derive slightly better concentration bounds (compare the constants of Theorem \ref{thm:largeorder2} and Theorem \ref{tm:momentConcRes_NEW}.a).

\begin{lemma}[Relations between the assumptions]\ \label{lem:relAssumptions}
    \begin{enumerate}
        \item $\eqref{assumptions} \Rightarrow \eqref{eq:fBound}$: 
        If \eqrefb{assumptions} is satisfied for some constants $\alpha_1, \alpha_2 \in (0,\infty)$, then \eqrefb{eq:fBound} holds with $\beta_0=1$, $\beta_1 = \alpha_1$, $\beta_2 = \alpha_2$ and $q=0$.
        \item $\eqref{assumptionsBR} \Rightarrow \eqref{assumptionsGeneral}$: 
        If \eqrefb{assumptionsBR} is satisfied for some constants $0<\rho\leq \Theta\rho$ and $0<M_1\leq M_2$, then \eqrefb{assumptionsGeneral} holds with $\bX=\bR^d$, $\dist(x,y)=\|x-y\|_2$, $M=M_2$ and $g(r)={\bf 1}(r\leq \Theta\rho)$.
        \item $\eqref{assumptionsGeneral} \Rightarrow \eqref{eq:fBound}$: 
        If \eqrefb{assumptionsGeneral} is satisfied for some function $g\colon\bR_+ \to [0,1]$ and constants $M$ and $C(g,\Lambda)$, then for any $s\in[0,1]$, \eqrefb{eq:fBound} holds with $q=0$,  $\beta_0=1$,
        $\beta_1 = C(g,\Lambda) \max\left( 1 , \frac{\|f\|_{L^1(\Lambda^m)}}{M C(g,\Lambda)^m} \right)^{\frac{s}{m}}  $ and 
        $\beta_2 = M \max\left( 1 , \frac{\|f\|_{L^1(\Lambda^m)}}{M C(g,\Lambda)^m} \right)^{\frac{1-s}{2}} $.
    \end{enumerate}
\end{lemma}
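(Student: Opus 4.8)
\textbf{Proof plan for Lemma \ref{lem:relAssumptions}.}

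The three parts are verified separately, and in each case the task reduces to estimating the integrals $\int (f^{\otimes\ell})_\sigma\,\dint\Lambda^k$ that appear in \eqref{eq:fBound} for $\sigma\in\Pi_{\ge2}^{**}(m;\ell,k)$. Throughout I use the combinatorial bookkeeping fixed in Section \ref{sec:subpartitions}: for such a $\sigma$ one has $k=m\ell+|\sigma|-\|\sigma\|$, the function $(f^{\otimes\ell})_\sigma$ lives on $\bX^k$, and the blocks of $h(\sigma)$ partition $[m\ell]$, so that $\sum_{J\in h(\sigma)}|J|=m\ell$ and the number of blocks of $h(\sigma)$ equals $k$.

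For part (1), I would observe that under \eqref{assumptions} the kernel is bounded by $\alpha_2$, hence $|f^{\otimes\ell}|\le\alpha_2^\ell$ pointwise, so $|(f^{\otimes\ell})_\sigma|\le\alpha_2^\ell$ on $\bX^k$; integrating against $\Lambda^k$ and using $\Lambda(\bX)=\alpha_1$ gives the clean bound $\alpha_1^k\alpha_2^\ell$. This matches the right-hand side of \eqref{eq:fBound} with $\beta_0=1$, $\beta_1=\alpha_1$, $\beta_2=\alpha_2$, $q=0$ (the factor $(\prod_J|J|!)^0=1$), and the integrability statement $\int(|f|^{\otimes\ell})_\sigma\,\dint\Lambda^k<\infty$ is immediate since $\alpha_1,\alpha_2<\infty$. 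Part (2) is a direct matter of reading off definitions: under \eqref{assumptionsBR} the kernel vanishes whenever $\operatorname{diam}>\Theta\rho$ and is bounded by $M_2$ otherwise, so $f(x_1,\ldots,x_m)\le M_2\,{\bf 1}(\operatorname{diam}(x_1,\ldots,x_m)\le\Theta\rho)$; writing $g(r)={\bf 1}(r\le\Theta\rho)$, which is non-increasing and $[0,1]$-valued, and noting $g^{m-1}=g$, this is exactly the pointwise bound in \eqref{assumptionsGeneral} with $M=M_2$. The condition $C(g,\Lambda)=\sup_x\Lambda(B(x,\Theta\rho))<\infty$ follows from local finiteness of $\Lambda$ (balls of radius $\Theta\rho$ have finite and, by translation, uniformly bounded $\Lambda$-mass; more carefully one only needs the supremum finite, which is part of what "locally finite" yields together with $F<\infty$ a.s.), and $f\in L^1(\Lambda^m)$, $f\ge0$ are inherited.

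Part (3) is the substantive one and the main obstacle, because it requires turning the metric decay hypothesis $f\le Mg^{m-1}(\operatorname{diam})$ into a bound on the collapsed integrals $\int(f^{\otimes\ell})_\sigma\,\dint\Lambda^k$. The plan is: first, for a single factor, bound $f(y_1,\ldots,y_m)\le M\prod_{j=2}^m g(\operatorname{dist}(y_1,y_j))$ — this is legitimate because $\operatorname{diam}(y_1,\ldots,y_m)\ge\operatorname{dist}(y_1,y_j)$ for each $j$ and $g$ is non-increasing, so $g^{m-1}(\operatorname{diam})\le\prod_{j\ge2}g(\operatorname{dist}(y_1,y_j))$. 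Thus for the $\ell$-fold tensor, after identifying variables according to $\sigma$, one gets a product of $M^\ell$ times $\le m-1$ factors of the form $g(\operatorname{dist}(\cdot,\cdot))$ per row; one then integrates out the $k$ remaining variables one at a time, each time peeling off a $g(\operatorname{dist}(x,\cdot))$ factor and invoking $\int_\bX g(\operatorname{dist}(x,y))\Lambda(\dint y)\le C(g,\Lambda)$, which produces at most one power of $C(g,\Lambda)$ per integrated variable — except that the variables sitting in the "root" position of each row (the collapsed $x_1$-type variable) cannot always be integrated that way, and one is forced to use the crude estimate $\|f\|_{L^1(\Lambda^m)}$ (equivalently the full integral over a row) for those. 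The precise count — how many of the $k$ variables get a $C(g,\Lambda)$ and how many cost an $\|f\|_{L^1}/(MC^m)$-type factor — is exactly what is absorbed into the interpolation parameter $s\in[0,1]$ and the $\max(1,\cdot)$ terms: taking $\beta_1=C(g,\Lambda)\max(1,\|f\|_{L^1(\Lambda^m)}/(MC(g,\Lambda)^m))^{s/m}$ and $\beta_2=M\max(1,\ldots)^{(1-s)/2}$ and checking $\beta_0\beta_1^k\beta_2^\ell$ dominates the bound for every admissible $(\ell,k,\sigma)$ — using $m\le k\le m\ell$ and $2\le\ell$ — is the one genuinely computational step, and I would organize it by bounding the raw estimate as $M^\ell C(g,\Lambda)^{a}\,(\|f\|_{L^1(\Lambda^m)})^{b}$ with $a+mb=k$, $b\le\ell$, and then dominating this uniformly in $a,b$ by the claimed $\beta_0\beta_1^k\beta_2^\ell$ via the case distinction $\|f\|_{L^1(\Lambda^m)}\gtrless MC(g,\Lambda)^m$, where the $\max(1,\cdot)$ makes the bound monotone in the right direction. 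The integrability requirement $\int(|f|^{\otimes\ell})_\sigma\,\dint\Lambda^k<\infty$ comes for free from the same estimate since $C(g,\Lambda)<\infty$, $M<\infty$, $\|f\|_{L^1(\Lambda^m)}<\infty$.
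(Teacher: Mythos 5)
Your part (1) matches the paper's argument and is fine. Part (2), however, has a genuine gap. The implication \eqref{assumptionsBR} $\Rightarrow$ \eqref{assumptionsGeneral} requires proving two finiteness statements that are \emph{not} among the hypotheses of \eqref{assumptionsBR}: that $f\in L^1(\Lambda^m)$ (equivalently $\bE F<\infty$) and that $C(g,\Lambda)=\sup_{x}\Lambda(B_{\Theta\rho}(x))<\infty$. You dismiss both as "inherited" from local finiteness "by translation", but $\Lambda$ is not assumed translation invariant, and a locally finite atomless measure on $\bR^d$ can perfectly well have $\Lambda(B_{\Theta\rho}(x))\to\infty$ as $\|x\|\to\infty$ (take a density growing like $e^{\|x\|}$). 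Deriving these two facts from the only available hypotheses --- $F<\infty$ a.s.\ together with $f\geq M_1>0$ on tuples of diameter at most $\rho$ --- is the entire content of the paper's proof of this part: it tiles $\bR^d$ by unit cubes, notes that $F<\infty$ forces $\sum_z {\bf 1}(F_z>0)<\infty$ because each nonzero local contribution is at least $M_1$, extracts independent Bernoulli subfamilies along sublattices and applies Borel--Cantelli to get $\sum_z \bP(F_z>0)<\infty$, lower-bounds each such probability by a Poisson point mass to conclude $\sum_z\min(1,\lambda_z^m)<\infty$, and only then reads off both $\sup_x\Lambda(B_{\Theta\rho}(x))<\infty$ and $\bE F<\infty$. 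None of this appears in your plan, and the step you wave at is exactly the one that needs the argument.

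In part (3) your overall strategy (peel off $g(\dist(\cdot,\cdot))$ factors one variable at a time against $C(g,\Lambda)$, pay $\|f\|_{L^1(\Lambda^m)}$ for the variables that cannot be peeled, then interpolate) is the same as the paper's, but your stated counting is too weak to give the claimed constants. The paper makes the count precise via the graph on the blocks of $h(\sigma)$ with edges between blocks sharing a row: with $c$ the number of connected components, a spanning tree of each component (needed so that each integrated variable carries exactly one surviving $g$-factor; without the tree the product of $g$'s may have cycles and the one-at-a-time integration does not apply) yields the bound $C(g,\Lambda)^{k-mc}M^{\ell-c}\|f\|_{L^1(\Lambda^m)}^{c}$. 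The stated $\beta_1,\beta_2$ then require the two bounds $c\leq k/m$ and, crucially, $c\leq\ell/2$; the latter holds because every block has size at least $2$ and meets each row at most once while every row meets some block, so each component spans at least two rows. Your constraint ``$b\leq\ell$'' is strictly weaker than $b\leq\ell/2$ and would only support $\beta_2=M\max\bigl(1,\|f\|_{L^1(\Lambda^m)}/(MC(g,\Lambda)^m)\bigr)^{1-s}$ rather than the exponent $(1-s)/2$ in the lemma, so the interpolation as you set it up does not close for the stated parameters.
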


\begin{proof}[Proof of Lemma \ref{lem:relAssumptions}.1]
    Assuming \eqref{assumptions} we have that $\|f\|_{L^1(\Lambda^m)}\leq \alpha_1^{m}\alpha_2<\infty$ and for any subpartition $\sigma\in \Pi_{\geq 2}^{**}(m;\ell,k)$ we get
        $\Big|\int (f^{\otimes \ell})_{\sigma} \dint \Lambda^{m\ell +|\sigma|-\|\sigma\|}\Big|\leq \int (|f|^{\otimes \ell})_{\sigma} \dint \Lambda^{m\ell +|\sigma|-\|\sigma\|}\leq \alpha_1^k\alpha_2^{\ell}<\infty.$
\end{proof}

\begin{proof}[Proof of Lemma \ref{lem:relAssumptions}.2]
    Assume \eqref{assumptionsBR}. We only need to show that $\bE F < \infty$ and $ \sup_{x \in \bR^d} \Lambda(B_{\Theta\rho}(x)) < \infty$.

    Without loss of generality we assume $\rho=\sqrt{d}$ and $\Theta\rho \in \bN$.
    We can do this reduction by considering $\widetilde{f}(\cdot) = f( \frac{\sqrt{d}}{\rho} \cdot )$, and replacing $\Theta$ by a larger value.
    In particular if $x_1,\ldots,x_m$ are all within the same unit cube $z+[0,1)^d$, for an arbitrary $z\in\bZ^d$, then $\diam(x_1,\ldots,x_m) \leq  \rho$ and thus $f(x_1,\ldots,x_m)\geq M_1>0$.
    
    We set some notation. For $z\in \bZ^d$ we consider
    \begin{align*}
        F_z &\coloneqq  \!\!\!\!\!\!\!\!  \sum_{(x_1,\ldots,x_m)\in\eta^m_{\neq}} \!\!\!\!\!\!\! {\bf 1}\{x_1-z \in [0,1)^d \} f(x_1,\ldots,x_m) ,
        & B_z &\coloneqq  {\bf 1} \{ F_z > 0 \} ,
        & p_z &\coloneqq  \bP(B_z=1) ,
        & \lambda_z &\coloneqq  \Lambda(z + [0,1)^d) .
    \end{align*}
    First, we observe that $F=\sum_{z\in\bZ^d} F_z <\infty$ implies that $\sum_{z\in\bZ^d} B_z<\infty$.
    This follows from the fact that $F$ is a sum of local contributions $F_z$ taking values in $\{0\} \cup [M_1, \infty)$. Second, we observe that for any $z'\in \bZ^d$ the random variables $\{ B_{z} : z\in z' + (2 \Theta \rho+1) \bZ^d \}$ form a collection of independent Bernoulli random variables, from which using Borel-Cantelli lemma we derive the following implications
    \begin{align*}
        \sum_{z\in\bZ^d} B_z<\infty \text{ a.s.} 
        & \Rightarrow  \text{for all } z' \in \bZ^d, \  \sum_{z\in z' + (2 \Theta \rho+1) \bZ^d} p_z <\infty 
        \\& \Rightarrow \sum_{z\in\bZ^d} p_z = \sum_{z'\in \llbracket 0, 2 \Theta \rho\rrbracket^d} \sum_{z\in z' + (2 \Theta \rho+1) \bZ^d} p_z  <\infty ,
    \end{align*} 
    where $\llbracket a , b \rrbracket$ is a short notation for the set $\{ a, a+1 ,\ldots , b \}$ for any $a,b \in \bZ$ with $a<b$.
    
    Third, we observe that if there are $m$ points in $z+[0,1)^d$ then $B_z=1$. 
    Therefore,
    \[ p_z
    \geq \bP( P_{\lambda_z} \geq m )
    \geq \bP( P_{\min(1,\lambda_z)} \geq m )
    \geq \bP( P_{\min(1,\lambda_z)} = m )
    \geq \frac{e^{-1} \min(1 , \lambda_z^m)}{m!}, \]
    where $P_\alpha$ denotes a Poisson random variable with mean $\alpha>0$.
    Hence, $\sum_{z\in\bZ^d} \min(1,\lambda_z^m) <\infty$, which implies that $\lambda_z>1$ for only finitely many $z$, that $\max_{z\in\bZ^d} \lambda_z <\infty$, and that $\sum_{z\in\bZ^d} \lambda_z^m <\infty$. In particular $\sup_{x\in\bR^d} \Lambda(B_{\Theta\rho}(x)) <\infty$ since for any $x\in\bR^d$ the ball $B_{\Theta\rho}(x)$ can be covered by $(2\Theta\rho+1)^d$ cubes of the form $z+[0,1)^d$.
    
    It remains to show that $\bE F <\infty$.
    We start by bounding $\bE F_z$ for any arbitrary $z \in \bZ^d$.
    Note that for any $x \in z+[0,1)^d$ the ball $B_{\Theta\rho}(x)$ is covered by the cubes $z'+[0,1)^d$ with $z'\in z + \llbracket -\Theta\rho -1 , \Theta \rho \rrbracket^d$.
    Therefore, using that $f(x_1,\ldots,x_m)\leq M_2 {\bf 1} (x_2,\ldots,x_m \in B_{\Theta\rho}(x_1))$, we get that 
    \begin{align*}
        \bE F_z
        &\leq M_2 \int_{z+[0,1)^d} \Lambda(B_{\rho\Theta}(x))^{m-1} \Lambda(\dint x)
         \leq M_2 \lambda_z \Bigl(\sum_{\rlap{\scriptsize$z'\in z + \llbracket -\Theta\rho -1 , \Theta \rho \rrbracket^d$}} \lambda_{z'} \Bigr)^{m-1} 
         \leq M_2 (2\Theta\rho+1)^{m-1} \max_{z'\in z + \llbracket -\Theta\rho -1 , \Theta \rho \rrbracket^d} ( \lambda_{z'} )^m ,
    \end{align*}
    where, for the second inequality, after expanding the sum, we simply bounded every summand by their maximum.
    Next, using that the maximum of non-negative numbers is less than their sum, we observe 
    \begin{align*}
        \bE F
        = \sum_{z\in \bZ^d} F_z
        \leq M_2 (2\Theta\rho+1)^{m-1} \sum_{z\in \bZ^d} \sum_{z'\in z + \llbracket -\Theta\rho -1 , \Theta \rho \rrbracket^d} \lambda_{z'}^m 
        = M_2 (2\Theta\rho+1)^{m+d-1} \sum_{z\in \bZ^d} \lambda_{z}^m 
        <\infty ,
    \end{align*}
    which completes the proof.
\end{proof}

\begin{proof}[Proof of Lemma \ref{lem:relAssumptions}.3]
    For a given $\sigma \in \Pi_{\geq 2}^{**}(m;\ell,k)$, let $\widetilde{\sigma} = h(\sigma) = \{ B_1, \ldots , B_k \} $ be defined by \eqref{eq0317}.
    We assume that the blocks are ordered such that $\min(B_1)< \cdots < \min(B_k)$.
    In particular $\min(B_i)=i$ for $i\in[m]$.

    We define the graph $G_\sigma$ whose vertex set is  $ [k] $ and whose edges are the couples $\{i, j\}$ for which the blocks $B_i$ and $B_j$ have one row in common, meaning that there exists $r\in[\ell]$ such that $|B_i \cap R_r| = |B_i \cap R_r| = 1 $.
    We denote the set of connected components of this graph by  $\operatorname{comp}(\sigma)$.
    
    Note that, for any connected component $\mathcal{C}\in \operatorname{comp}(\sigma)$, the union $\cup_{i\in \mathcal{C}} B_i$ consists of $2$ or more rows.
    Thus $|\cup_{i\in \mathcal{C}} B_i| \geq 2m$ and it follows that the number of connected components satisfies $\left|\operatorname{comp}(\sigma) \right| \leq m \ell/(2m) = \ell/2$.
    We also note that any connected component consists of at least $m$ vertices, therefore $m \left|\operatorname{comp}(\sigma) \right| \leq k$ which implies $ \left|\operatorname{comp}(\sigma) \right| \leq k/m $.
    Combining these two bounds, we have 
    $$ \left|\operatorname{comp}(\sigma) \right| 
    \leq k \frac{s}{m} + \ell \frac{1-s}{2} \text{  for any  } s\in[0,1] .$$
    
    We readily have that $f\in L^1(\Lambda^m)$ and $f\geq 0$.
    So it is enough to show that
    \begin{align} \label{eq6224}
        \int (f^{\otimes \ell})_{\sigma} \dint \Lambda^{k}
        \leq \beta_1'^{k} \beta_2'^{\ell} \beta_3^{\left|\operatorname{comp}(\sigma) \right|} 
        \text{ for all $\ell,k\in\bN$, $\ell\geq 2$, $\sigma\in \Pi_{\geq 2}^{**}(m;\ell,k)$, }
    \end{align}
    with
    \[ 
        \beta_1' = C(g,\Lambda) , \qquad 
        \beta_2' = M , \qquad
        \beta_3 = \frac{\|f\|_{L^1(\Lambda^m)}}{M C(g,\Lambda)^m}.
    \]
    We only need to prove \eqref{eq6224} for connected partitions, i.e.\ for $\sigma$ satisfying $\left|\operatorname{comp}(\sigma) \right| = 1$.
    Let $\tau=\tau_\sigma \colon [m\ell] \to [k]$ be the function defined by $\tau(i) = j$ for any $i\in B_j$.
    Because of the ordering of the blocks, we have $\tau(i)=i$ for $i\in[m]$.
    Thus, we can write 
    \begin{align*}
        (f^{\otimes \ell})_{\sigma}(x_1,\ldots,x_k)
        &= \prod_{r=1}^\ell f(x_{\tau((r-1)m+1)} , \ldots , x_{\tau(rm)} ) 
        \\&\leq f(x_1,\ldots,x_m) M^{\ell-1} \prod_{r=2}^\ell g^{m-1}(\diam(x_{\tau((r-1)m+1)} , \ldots , x_{\tau(rm)} )) .
    \end{align*}    
    Let $T$ be a spanning tree of $G_\sigma$, i.e.\ a connected subgraph of $G_{\sigma}$ which does not have cycles.
    We denote its edge set by $E(T)$, and recall that a tree has one edge less than vertices, that is $|E(T)|=k-1$.
    By construction of the initial graph $G_\sigma$, we have that the edge set $E$ can be partitioned in the subsets
    \[ E_r := \left\{ \{ i,j \} \in E(T) : i , j \in \{\tau((r-1)m+1), \ldots , \tau(rm) \} \right\},\quad r\in [\ell] .\]
    We can choose $T$ such that $E_1$ consist of exactly $m-1$ edges, for instance $\{m,i\}$ with $i\in[m-1]$.
    Since $T$ does not have any cycle, $E_r$ consists of at most $m-1$ edges. 
    This implies
    \begin{align*}
        g^{m-1}(\diam(x_{\tau((r-1)m+1)} , \ldots , x_{\tau(rm)} )) 
        \leq \prod_{\{i,j\}\in E_r} g(\dist(x_i,x_j)) ,
    \end{align*}   
    where we also used that $\diam(x_{\tau((i-1)m+1)} , \ldots , x_{\tau(im)} ) \geq \dist(x_i,x_j)$ and that $g$ is decreasing taking value in $[0,1]$.
    We get
    \begin{align*}
        (f^{\otimes \ell})_{\sigma}(x_1,\ldots,x_k)
        &\leq f(x_1,\ldots,x_m) M^{\ell-1} \prod_{\{i,j\}\in E(T)\setminus E_1} g(\dist(x_i,x_j)) ,
    \end{align*}    
    and, therefore,
    \begin{align*}
        \int (f^{\otimes \ell})_{\sigma} \dint \Lambda^{k}
        &\leq M^{\ell-1} \int_{(\bR^d)^k} f(x_1,\ldots,x_m) \prod_{\{i,j\}\in E(T)\setminus E_1} g(\dist(x_i,x_j)) \Lambda^{k}(\dint x_{1},\ldots, \dint x_k)  .
    \end{align*} 
    Now we integrate iteratively.
    We start by a leaf $j\in[k]\setminus[m]$ of the tree.
    Let $i\in [m\ell]$ be the vertex to which it is connected. 
    At this step we need to bound the integral $\int g(\dist(x_i,x_j)) \Lambda(\dint x_j)$ for arbitrary $x_i\in\bR^d$.
    This integral is upper bounded by $C(g,\Lambda)$.
    We remove the leaf $j$ of the tree and iterate this process until the only vertices left are $[m]$.
    This produces the bound
    \begin{align*}
        \int (f^{\otimes \ell})_{\sigma} \dint \Lambda^{k}
        &\leq \|f\|_{L_1(\Lambda^m)} M^{\ell-1} C(g,\lambda)^{(k-1)-(m-1)}
        = \beta_1'^k \beta_2'^\ell \beta_3 ,
    \end{align*} 
    which completes the proof.
\end{proof}


\subsection{First concentration bounds}

As was already mentioned in the introduction there are few concentration inequalities for Poisson $U$-statistics available in the literature. 
In particular,  Bachmann and Peccati obtained in \cite{bachmann2016concentration} a concentration bound for the lower tail for general Poisson $U$-statistic $F_m(f,\eta)$ with non-negative kernel $f$, such that $f_k\in L^1(\Lambda^k)\cap L^2(\Lambda^k)$, $1\leq k\leq m$.
Adapting the above result to our setting we get the following proposition.

\begin{proposition}[Application of Bachmann, Peccati 2016] \label{prop:LowerTail} 
   Assume that $m\ge 1$, $\Lambda$ has no atoms, $f$ is non-negative and $F_m=F_m(f,\eta)$ satisfies \eqref{eq:fBound} with $\gamma \beta_1\ge \Cr{c47} >0$.
   Then for all $t\ge 0$ we have
    \begin{equation} \label{eq1705a}
        \bP(F_m-\bE F_m\leq -t)
        \leq \exp\left(-\frac{t^2}{2m^2V}\right)
        \leq \exp\Big(-{t^2\over \Cr{c181} \beta_0 \beta_2^2(\gamma\beta_1)^{2m-1}}\Big) ,
    \end{equation}
    where $m^2V=\sum_{k=1}^m\gamma^{2m-k}kk!\|f_k\|^2_{L^2(\Lambda^k)}<\infty$ and $\Cl{c181} \coloneqq  2^{q+1}m^2(2^q\Cr{c47}^{-1}m+1)^{m-1}$.
\end{proposition}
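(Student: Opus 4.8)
The plan is to obtain the lower-tail bound from the known result of Bachmann and Peccati in \cite{bachmann2016concentration}, and then reduce the variance-type quantity appearing there to the explicit form in terms of $\beta_0,\beta_1,\beta_2$ via the estimate \eqref{eq:fkA2Bound1} already established. Recall that the Bachmann--Peccati inequality for a Poisson $U$-statistic with non-negative kernel $f$ satisfying $f_k\in L^1(\Lambda^k)\cap L^2(\Lambda^k)$ for $1\le k\le m$ states that $\bP(F_m-\bE F_m\le -t)\le \exp(-t^2/(2m^2V))$, where $m^2V=\sum_{k=1}^m\gamma^{2m-k}kk!\|f_k\|^2_{L^2(\Lambda^k)}$. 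So the first step is simply to check that the hypotheses of that result are met: since $f$ is non-negative and \eqref{eq:fBound} holds, we know $f\in L^1(\Lambda^m)$, and the computation \eqref{eq:fkA2Bound1} shows $\|f_k\|^2_{L^2(\Lambda^k)}<\infty$ for each $k$; the $L^1$ integrability of $f_k$ follows from $f\in L^1(\Lambda^m)$ by Fubini, as noted after \eqref{eq_fk}. This also makes $m^2V<\infty$, so the first inequality in \eqref{eq1705a} is just a restatement of the cited bound.

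The second step is the quantitative reduction. Using \eqref{eq:fkA2Bound1}, namely $\|f_k\|^2_{L^2(\Lambda^k)}\le \binom{m}{k}^2\beta_0 2^{qk}\beta_1^{2m-k}\beta_2^2$, I would bound
\[
m^2V=\sum_{k=1}^m\gamma^{2m-k}kk!\|f_k\|^2_{L^2(\Lambda^k)}
\le \beta_0\beta_2^2\sum_{k=1}^m kk!\binom{m}{k}^2 2^{qk}\gamma^{2m-k}\beta_1^{2m-k}.
\]
Now I pull out the dominant factor $(\gamma\beta_1)^{2m-1}$, writing $\gamma^{2m-k}\beta_1^{2m-k}=(\gamma\beta_1)^{2m-1}(\gamma\beta_1)^{1-k}\le(\gamma\beta_1)^{2m-1}\Cr{c47}^{1-k}$ using the hypothesis $\gamma\beta_1\ge\Cr{c47}$ (valid since $1-k\le 0$). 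This leaves the numerical sum $\sum_{k=1}^m kk!\binom{m}{k}^2(2^q\Cr{c47}^{-1})^{k-1}2^q$; bounding $k\le m$, using $\binom{m}{k}\le m\binom{m-1}{k-1}$ and $k!\binom{m-1}{k-1}=m!/(m-k)!\le m^k$ exactly as in the derivation of \eqref{eq_UstatVarBounds}, one gets a bound of the shape $2^q\beta_0 m^3(2^q\Cr{c47}^{-1}m+1)^{m-1}\beta_2^2(\gamma\beta_1)^{2m-1}$, which I would package into the constant $\Cr{c181}=2^{q+1}m^2(2^q\Cr{c47}^{-1}m+1)^{m-1}$ (absorbing the extra factor of $m$ by a slightly generous choice, or by tracking the $k\le m$ step more carefully — the stated $\Cr{c181}$ suggests the intended bound uses $\sum_k kk!\binom{m}{k}^2\cdots \le 2m^2(\cdots)$, so I would double-check which crude bound on the sum the authors intend). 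Substituting $2m^2V\le \Cr{c181}\beta_0\beta_2^2(\gamma\beta_1)^{2m-1}$ into the first inequality and using monotonicity of $x\mapsto e^{-t^2/x}$ yields the second inequality in \eqref{eq1705a}.

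The only genuine subtlety — and the step I would be most careful about — is the bookkeeping of constants in the numerical sum, i.e.\ making sure the factor $k$ (absent in \eqref{eq_UstatVarBounds} but present here because of $kk!$ rather than $k!$) is correctly absorbed so that the final constant really is the stated $\Cr{c181}$ and not something a factor of $m$ larger; everything else is a direct invocation of \cite{bachmann2016concentration} together with the already-proved bound \eqref{eq:fkA2Bound1}. No new ideas beyond those are needed.
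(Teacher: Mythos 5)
Your proposal follows exactly the paper's route: invoke the Bachmann--Peccati lower-tail bound (their Corollary 5.5, with finiteness of $V$ from their Proposition 5.7 via \eqref{eq:fkA2Bound1}), then bound $m^2V$ using \eqref{eq:fkA2Bound1} and factor out $(\gamma\beta_1)^{2m-1}$ with $\gamma\beta_1\ge\Cr{c47}$. The one point you hedge on resolves cleanly: do not bound $k\le m$ separately; instead write $kk!\binom{m}{k}^2=\bigl(k\binom{m}{k}\bigr)\bigl(k!\binom{m}{k}\bigr)$ and use the identity $k\binom{m}{k}=m\binom{m-1}{k-1}$ together with $k!\binom{m}{k}=m!/(m-k)!\le m^k$, which gives $\sum_{k=1}^m kk!\binom{m}{k}^2(2^q\Cr{c47}^{-1})^{k-1}\le m^2(2^q\Cr{c47}^{-1}m+1)^{m-1}$ by the binomial theorem and hence exactly the stated $\Cr{c181}$, with no spurious extra factor of $m$.
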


\begin{proof}[Proof of Proposition \ref{prop:LowerTail}]
    Since by \eqref{eq:fkA2Bound1} we have $f_k\in L^2(\Lambda^k)$, $1\leq k\leq m$, then by \cite[Proposition 5.7]{bachmann2016concentration} $V<\infty$. Hence, the first bound follows by \cite[Corollary 5.5]{bachmann2016concentration}.

    Further we note that
    \begin{align*}
    m^2V
    &=\sum_{k=1}^m\gamma^{2m-k}kk!\|f_k\|^2_{L^2(\Lambda^k)}
    \leq 2^q\beta_0\beta_2^2(\gamma\beta_1)^{2m-1}\sum_{k=1}^m(\gamma \beta_1)^{-k+1}kk!\binom{m}{k}^22^{q(k-1)},
    \end{align*}
    where we used \eqref{eq:fkA2Bound1} to bound $\|f_k\|^2_{L^2(\Lambda^k)}$.
    Thus, using the fact that $\gamma\beta_1\ge \Cr{c47}$, ${m!\over (m-k)!}\leq m^k$ and $k{m\choose k}\leq m{m-1\choose k-1}$ we get
    \begin{align*}
    2m^2V
    &\leq 2^{q+1}\beta_0\beta_2^2(\gamma\beta_1)^{2m-1}m^2\sum_{k=1}^m\big(2^q\Cr{c47}^{-1}m\big)^{k-1}{m-1\choose k-1}\leq \Cr{c181} \beta_0 \beta_2^2 (\gamma\beta_1)^{2m-1} .
    \end{align*}
\end{proof}

Another easy case is the first order Poisson $U$-statistics $F_1(f,\eta)$. Using the result of Wu \cite{wu2000new} we directly obtain the following bounds.

\begin{proposition}[Application of Wu 2000] 
\label{prop:m1} Assume that $F_1$ is a Poisson $U$-statistic of order $1$, satisfying \eqref{assumptions}. Then for any $t>0$ we get
    \begin{align*}
        \bP(F_1-\bE F_1 \geq t) 
        & \leq
        \exp \left( -\frac{t}{2\alpha_2} \log \left( 1 + \frac{t}{\gamma\alpha_1\alpha_2} \right) \right).
        \end{align*}
        If, moreover, we have $f\ge 0$, then
        \begin{align*}
        \bP(F_1-\bE F_1 \leq -t)
        & \leq \exp \left( - \frac{t^2}{2\gamma\alpha_2^2\alpha_1} \right).
    \end{align*}
\end{proposition}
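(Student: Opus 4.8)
The plan is to invoke Wu's concentration inequality \cite{wu2000new} for Poisson functionals, which controls deviations in terms of the $L^\infty$ and $L^2$ norms of the difference operator $D_x F$. For a first-order $U$-statistic $F_1 = \int f(x)\,\eta(\dint x)$ with kernel $f\colon\bX\to\bR$, the add-one cost operator is explicit: $D_x F_1 = f(x)$, independently of $\eta$. Under assumption \eqref{assumptions} we then have $\|D_\cdot F_1\|_\infty \le \alpha_2$ pointwise, and $\int_\bX (D_x F_1)^2\,\gamma\Lambda(\dint x) = \gamma\int_\bX f(x)^2\,\Lambda(\dint x) \le \gamma\alpha_2^2\alpha_1$, using $|f|\le\alpha_2$ and $\Lambda(\bX)=\alpha_1$. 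These are precisely the two quantities entering Wu's bounds.

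First I would record the form of Wu's inequality as used here: if $F$ is a Poisson functional with $\|D_\cdot F\|_\infty \le b$ and $\int (D_x F)^2 \,\gamma\Lambda(\dint x) \le v$ almost surely, then the upper tail satisfies $\bP(F - \bE F \ge t) \le \exp\bigl(-\tfrac{t}{2b}\log(1+\tfrac{bt}{v})\bigr)$, and if additionally $D_x F \ge 0$ for all $x$ then the lower tail satisfies $\bP(F-\bE F \le -t) \le \exp(-\tfrac{t^2}{2v})$. Next I would substitute $b=\alpha_2$ and $v = \gamma\alpha_2^2\alpha_1$. For the upper tail this gives $\exp\bigl(-\tfrac{t}{2\alpha_2}\log(1+\tfrac{\alpha_2 t}{\gamma\alpha_2^2\alpha_1})\bigr) = \exp\bigl(-\tfrac{t}{2\alpha_2}\log(1+\tfrac{t}{\gamma\alpha_1\alpha_2})\bigr)$, which is the claimed bound. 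For the lower tail, note that $f\ge 0$ gives $D_x F_1 = f(x) \ge 0$, so the one-sided hypothesis of Wu's lower-tail bound is met, and substituting yields $\exp(-\tfrac{t^2}{2\gamma\alpha_2^2\alpha_1})$, as claimed.

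There is essentially no obstacle here: the only points requiring care are (i) matching the exact normalization of the variance proxy $v$ appearing in the reference (whether it is stated with the intensity $\gamma\Lambda$ already incorporated, which it is, since $D_x$ is integrated against the intensity measure), and (ii) confirming that the kernel $f$ in a first-order $U$-statistic is automatically in $L^1(\Lambda)$ under \eqref{assumptions}, so that $F_1$ and $\bE F_1 = \gamma\int f\,\dint\Lambda$ are well defined — this follows from $\|f\|_{L^1(\Lambda)} \le \alpha_2\alpha_1 < \infty$. Everything else is a direct substitution into \cite{wu2000new}.
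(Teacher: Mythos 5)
Your proposal is correct and follows essentially the same route as the paper: compute $D_x F_1 = f(x)$, bound it by $\alpha_2$ and its squared integral against the intensity measure by $\gamma\alpha_1\alpha_2^2$, note the sign condition $D_x F_1 \ge 0$ when $f\ge 0$, and plug into \cite[Proposition 3.1]{wu2000new}. The only difference is that you spell out the substitution and the integrability check in more detail than the paper does.
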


\begin{remark}
    Note that, for Poisson $U$-statistic $F_1$ satisfying \eqref{assumptions}, from \eqref{eq1705a} we get
    $$
    \bP(F_1-\bE F_1 \leq -t) \leq \exp \left( - \frac{t^2}{2\gamma\|f\|^2_{L^2(\Lambda)}} \right)\leq \exp \left( - \frac{t^2}{2\gamma\alpha_2^2\alpha_1} \right).
    $$
    Thus, Proposition \ref{prop:m1} gives a slightly worse bound for the lower tail than Proposition \ref{prop:LowerTail} in this case.
\end{remark}

\begin{proof}
Observe that if \eqref{assumptions} is satisfied, we have $D_x F_1= f(x) \leq \alpha_2 $ and 
$$ 
\int_{\bX} |D_x F_1|^2 \lambda (\dint y)\leq \int_{\bX} |f(x)|^2 \lambda (\dint y) \leq \gamma\alpha_1\alpha_2^2.
$$
If additionally $f\ge 0$, then $D_x (-F_1) = -f(x) \leq 0 $.
The proposition follows by \cite[Proposition 3.1]{wu2000new}. 
\end{proof}

Note that the results from \cite{wu2000new} are applicable under \eqref{eq:fBound} only with the additional assumption that $f$ is bounded. In contrast to a $U$-statistic of order $1$, which in case of a finite non-atomic measure $\Lambda$ is simply a sum of Poisson number of i.i.d.\ random variables, establishing concentration bounds for Poisson $U$-statistic of order $m\ge 2$ is more complicated task. We recall that our results for this case are summarized in Theorem~\ref{tm:Summary} and that in Section \ref{sec:discussion} we will discuss these results in details.

The next proposition covers the situation when the order of deviation $t$ is at most $\sqrt{\bV F_m}$ and is a direct consequence of Chebyshev-Cantelli's concentration inequality.

\begin{proposition}[Application of Chebyshev-Cantelli's inequality] 
\label{prop:ChebyshevCantelli} 
Assume that $m\ge 1$ and $F_m$ satisfies \eqref{eq:fBound} and $\bV F_m>0$. Then for any constants $\Cr{c47} , \Cl{c43}>0$, we get
    \begin{align*}
        \bP(F_m-\bE F_m \geq t) 
        & \leq\exp\Big(-(1+\Cr{c43}^2)^{-1}{t^2\over \bV F_m}\Big)
        \leq 
        \exp \left( -\frac{t^2}{\Cr{c44}\beta_2^2(\beta_1\gamma)^{2m-1}}\right),
        \ 0\leq t<\Cr{c43}\sqrt{\bV F_m}, \ \gamma \beta_1\ge \Cr{c47},
    \end{align*}
    where
    $$
    \Cl{c44}=2^q\beta_0m^2\big(
    2^q\Cr{c47}^{-1}m+1\big)^{m-1}(1+\Cr{c43}^2).
    $$
\end{proposition}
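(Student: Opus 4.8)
The plan is to combine the one-sided Chebyshev inequality with the variance estimate already derived in \eqref{eq_UstatVarBounds}. First I would invoke the Chebyshev--Cantelli inequality: for any real random variable $X$ with finite mean and finite variance $v>0$ and any $t\ge 0$ one has $\bP(X-\bE X\ge t)\le v/(v+t^2)$. Here $\bV F_m<\infty$ since \eqref{eq:fBound} gives $f\in L^1(\Lambda^m)$ and \eqref{eq:fkA2Bound1} gives $f_k\in L^2(\Lambda^k)$ for all $1\le k\le m$, so \eqref{eq_UstatVar} applies; and $\bV F_m>0$ by hypothesis. Thus
\[
\bP(F_m-\bE F_m\ge t)\le \frac{\bV F_m}{\bV F_m+t^2}=\Bigl(1+\tfrac{t^2}{\bV F_m}\Bigr)^{-1}.
\]

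Next I would convert this rational bound into the claimed exponential one, which is where the restriction $t<\Cr{c43}\sqrt{\bV F_m}$ enters. Writing $u\coloneqq t^2/\bV F_m$, this restriction reads $u<\Cr{c43}^2$. Using the elementary inequality $\log(1+u)\ge u/(1+u)$ for $u\ge 0$ (the function $u\mapsto \log(1+u)-u/(1+u)$ has derivative $u/(1+u)^2\ge 0$ and vanishes at $0$) together with $1+u<1+\Cr{c43}^2$, I get $\log(1+u)\ge u/(1+u)\ge u/(1+\Cr{c43}^2)$, i.e.\ $(1+u)^{-1}\le \exp\bigl(-u/(1+\Cr{c43}^2)\bigr)$. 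This yields the first asserted inequality $\bP(F_m-\bE F_m\ge t)\le \exp\bigl(-(1+\Cr{c43}^2)^{-1}t^2/\bV F_m\bigr)$.

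Finally I would substitute the upper variance bound of \eqref{eq_UstatVarBounds}: under $\gamma\beta_1\ge \Cr{c47}$ one has $\bV F_m\le 2^q\beta_0 m^2(2^q\Cr{c47}^{-1}m+1)^{m-1}\beta_2^2(\gamma\beta_1)^{2m-1}$, and since $\Cr{c44}=2^q\beta_0 m^2(2^q\Cr{c47}^{-1}m+1)^{m-1}(1+\Cr{c43}^2)$ this reads $\bV F_m\le \Cr{c44}(1+\Cr{c43}^2)^{-1}\beta_2^2(\gamma\beta_1)^{2m-1}$. Hence $(1+\Cr{c43}^2)^{-1}t^2/\bV F_m\ge t^2/(\Cr{c44}\beta_2^2(\gamma\beta_1)^{2m-1})$, and exponentiating gives the second inequality, completing the proof. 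There is no real obstacle here; the only conceptual point is that the cut-off $t<\Cr{c43}\sqrt{\bV F_m}$ is exactly what allows one to upgrade the Chebyshev--Cantelli tail $(1+u)^{-1}$ to a genuine sub-Gaussian-type tail $e^{-cu}$ with the fixed constant $c=(1+\Cr{c43}^2)^{-1}$ (without such a restriction this is impossible, since $(1+u)^{-1}\ge e^{-u}$), and then that the constant $\Cr{c44}$ has been chosen precisely to absorb the variance bound of \eqref{eq_UstatVarBounds}.
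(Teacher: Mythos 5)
Your proof is correct and follows essentially the same route as the paper's: Chebyshev--Cantelli, then the elementary bound $\log(1+u)\ge u/(1+u)\ge (1+\Cr{c43}^2)^{-1}u$ valid for $u\le\Cr{c43}^2$, and finally the variance upper bound \eqref{eq_UstatVarBounds} to absorb $\bV F_m$ into the constant $\Cr{c44}$. Nothing to add.
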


\begin{remark}
    Note that according to \eqref{eq_UstatVarBounds} we have $\bV F_m\ge \|f_1\|^2_{L^2(\Lambda)}\gamma^{2m-1}$ and if $\|f_1\|^2_{L^2(\Lambda)}>0$ the Proposition \ref{prop:ChebyshevCantelli} holds for $t<\Cl{c45}\gamma^{m-{1\over 2}}$ for some constant $\Cr{c45}= \Cr{c45}(f) >0$.
\end{remark}

\begin{proof}[Proof of Proposition \ref{prop:ChebyshevCantelli}]
By Chebyshev-Cantelli's inequality (see for example \cite[Exercise 2.3]{BLM13}) we get for any $t\ge 0$ that
\begin{align*}
    \bP(F_m-\bE F_m \geq t)\leq {\bV F_m\over \bV F_m+t^2}=\exp\Big(-\log\Big(1+{t^2\over \bV F_m}\Big)\Big).
\end{align*}
Using \eqref{eq_UstatVarBounds} and the inequality $\log(1+x)>{x\over x+1}>(1+\Cr{c43}^2)^{-1}x$, which holds for any $x\leq \Cr{c43}^2$ we get, that for any $t<\Cr{c43}\sqrt{\bV F_m}$ it holds
$$
\log\Big(1+{t^2\over \bV F_m}\Big)\ge (1+\Cr{c43}^2)^{-1}{t^2\over \bV F_m}\ge {t^2\over \Cr{c44}\beta_2^2(\beta_1\gamma)^{2m-1}},
$$
which finishes the proof.
\end{proof}


\section{Concentration inequalities via concentration of Poisson random variables} \label{sec:large}

In this section, we provide upper and lower bounds for the probability $\bP(F_m-\bE F_m\ge t)$ under certain conditions when $t$ is of order at least $\gamma^m$, which in case of non-negative kernel is the order of expectation \eqref{eq_UstatExp}. 
The upper bound requires assumption \eqref{assumptions}, which assumes only that the kernel and the intensity measure are bounded.
In order to get a lower bound of the same order, we will require the assumption that there exist some positive constants $\aaa,\bbb\in(0,\infty)$ and some pairwise disjoint subsets $S_1,\ldots,S_m$ of $\bX$, such that
\begin{equation} \label{assumptionflowbound} 
    f\ge 0,\qquad f\bigr|_{ S_1 \times \cdots \times S_m} \geq \bbb,
    \qquad  \Lambda(S_i) \geq \aaa . \tag{A4}
\end{equation}

As commented in Remark \ref{rem:mildAssumption}, the latter is a particularly mild assumption.

\begin{theorem}[Application of concentration bounds for Poisson random variables]
\label{thm:largeorder2}
    Let $F_m$, $m\ge 1$ be a Poisson $U$-statistic satisfying \eqref{assumptions}, then 
        \begin{align*}
            \bP(|F_m - \bE F_m| \geq t) 
            & \leq 2\exp \left( -\frac{1}{m}\Big({t\over2\alpha_2}\Big)^{\frac{1}{m}}\log \left( \frac{t}{2e^m\alpha_2(\alpha_1\gamma)^m}\right) \right), 
            & \text{for any  $\gamma>0$ and $t\ge 2\alpha_2(\alpha_1\gamma)^m$.}
        \end{align*}
\end{theorem}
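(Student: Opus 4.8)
The plan is to dominate $F_m$ by a simple functional of the total number of points $N := \eta(\bX)$, which is a Poisson random variable with mean $\mu := \gamma\Lambda(\bX) = \gamma\alpha_1$ by the definition of $\eta$ and assumption \eqref{assumptions}, and then to invoke a Chernoff bound for the upper tail of a Poisson distribution. First I would use \eqref{eq_UstatExp} together with $|f|\le\alpha_2$ and $\Lambda(\bX)=\alpha_1$ to get $|\bE F_m| \le \gamma^m\alpha_2\alpha_1^m = \alpha_2(\alpha_1\gamma)^m \le t/2$ for every $t \ge 2\alpha_2(\alpha_1\gamma)^m$; hence by the triangle inequality $\{|F_m-\bE F_m|\ge t\}\subseteq\{|F_m|\ge t/2\}$, so it suffices to bound $\bP(|F_m|\ge t/2)$.

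Next I would observe that the factorial measure $\eta^{(m)}$ has total mass $\eta^{(m)}(\bX^m) = N(N-1)\cdots(N-m+1)\le N^m$, so that $|f|\le\alpha_2$ gives $|F_m| \le \alpha_2\,\eta^{(m)}(\bX^m) \le \alpha_2 N^m$ almost surely. Consequently
\[
\bP(|F_m-\bE F_m|\ge t) \le \bP\bigl(\alpha_2 N^m \ge t/2\bigr) = \bP\Bigl(N \ge \bigl(\tfrac{t}{2\alpha_2}\bigr)^{1/m}\Bigr).
\]
Writing $s := (t/(2\alpha_2))^{1/m}$, the hypothesis $t\ge 2\alpha_2(\alpha_1\gamma)^m$ is exactly equivalent to $s\ge\mu$, which is the regime where the Poisson Chernoff bound is useful: optimising $\bP(N\ge s)\le e^{-\theta s}\bE e^{\theta N} = e^{-\theta s+\mu(e^\theta-1)}$ at $\theta=\log(s/\mu)$ yields $\bP(N\ge s)\le\exp\bigl(-s\log(s/(e\mu))\bigr)$. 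Substituting $s=(t/(2\alpha_2))^{1/m}$ and $\mu=\gamma\alpha_1$ and simplifying
\[
\log\Bigl(\frac{s}{e\mu}\Bigr) = \log\Bigl(\frac{(t/(2\alpha_2))^{1/m}}{e\gamma\alpha_1}\Bigr) = \frac{1}{m}\log\Bigl(\frac{t}{2e^m\alpha_2(\alpha_1\gamma)^m}\Bigr)
\]
then gives the claimed bound, even without the leading factor $2$, which I would keep only for uniformity with the two-sided estimates elsewhere.

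I do not expect a genuine obstacle here: the argument is short, and the only things to be careful about are checking that $s\ge\mu$ corresponds precisely to the stated range of $t$ (so that the Poisson tail estimate is applied in its valid regime) and the algebraic simplification of $\log(s/(e\mu))$ into the form appearing in the statement. When $t<2e^m\alpha_2(\alpha_1\gamma)^m$ the right-hand side exceeds $1$ and the inequality is trivial, so nothing special is needed in that sub-range. If desired, one could instead split $\bP(F_m-\bE F_m\ge t)+\bP(F_m-\bE F_m\le -t)$ and bound each term by the same Poisson tail, which is where the factor $2$ would naturally appear.
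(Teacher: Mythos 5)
Your proposal is correct and follows essentially the same route as the paper: bound $|\bE F_m|\le \alpha_2(\alpha_1\gamma)^m\le t/2$, dominate $|F_m|\le \alpha_2\,\eta(\bX)^m$, and reduce to the tail $\bP(P_{\gamma\alpha_1}\ge (t/(2\alpha_2))^{1/m})\le \exp(-y\log(y/(e\alpha)))$, which the paper obtains via a Stirling-type estimate (Lemma \ref{lem0204}) rather than your Chernoff optimisation — the two give the identical bound. The only cosmetic difference is that the paper treats the two tails by applying the one-sided bound to $F_m$ and to $-F_m$ separately (hence the factor $2$), whereas you absorb both into $\{|F_m|\ge t/2\}$.
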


\begin{theorem}[Application of anti-concentration bounds for Poisson random variables]
\label{thm:largeorder2_lowerbound}
     Let $F_m$, $m\ge 1$ be a Poisson $U$-statistic satisfying \eqref{assumptionflowbound}, then there exist constants $\Cr{c1905c}$, and $\Cr{c1905d}$, independent of $\gamma$ and $t$, such that 
        \begin{align*}
            \bP(F_m - \bE F_m \geq t)
            &\ge \exp \left( -\Cr{c1905c} t^{\frac{1}{m}}\log \left( \frac{t}{\gamma^m}\right) \right)
            & \text{for any  $\gamma>1$ and $t\ge\Cr{c1905d}\gamma^m$},
        \end{align*}
        Moreover, one can set $\Cl{c1905c} = 2\bbb^{-\frac{1}{m}}$ and we have that $\Cl{c1905d}$  is a (non-explicit) constant which depends only on $\aaa$, $\bbb$ and $\|f\|_{L^1(\Lambda^m)}$.
\end{theorem}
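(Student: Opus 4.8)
The plan is to produce an explicit event of non‑negligible probability on which $F_m-\bE F_m\ge t$. Since $f\ge 0$ and $f\ge\bbb$ on $S_1\times\cdots\times S_m$, and since $S_1,\dots,S_m$ are pairwise disjoint, every realization of $\eta$ satisfies $F_m\ge\bbb\,\eta^{(m)}(S_1\times\cdots\times S_m)=\bbb\prod_{i=1}^m\eta(S_i)$, the last equality being the defining property of the factorial measure on a product of disjoint sets. Writing $\mu:=\bE F_m=\gamma^m\|f\|_{L^1(\Lambda^m)}$ via \eqref{eq_UstatExp}, if $N$ is chosen so that $\bbb N^m\ge t+\mu$, then on the event $E:=\{\eta(S_i)\ge N \text{ for all } i\in[m]\}$ we have $F_m-\bE F_m\ge t$, hence $\bP(F_m-\bE F_m\ge t)\ge\bP(E)$. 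Because the $S_i$ are disjoint, $\eta(S_1),\dots,\eta(S_m)$ are independent Poisson variables with means $\gamma\Lambda(S_i)\ge\gamma\aaa$, and since $\lambda\mapsto\bP(P_\lambda\ge N)$ is nondecreasing (here $P_\lambda$ is Poisson with mean $\lambda$),
\[
  \bP(E)\ =\ \prod_{i=1}^m\bP(\eta(S_i)\ge N)\ \ge\ \bP(P_{\gamma\aaa}\ge N)^m\ \ge\ \Bigl(\frac{e^{-\gamma\aaa}(\gamma\aaa)^N}{N!}\Bigr)^{m}.
\]

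Next I would take logarithms and use the crude bound $\log N!\le N\log N$ to obtain
\[
  \log\bP(F_m-\bE F_m\ge t)\ \ge\ -mN\log\tfrac N\gamma\ +\ mN\log\aaa\ -\ m\gamma\aaa .
\]
I would then set $N:=\bigl\lceil((1+\varepsilon)t/\bbb)^{1/m}\bigr\rceil$ for a small absolute constant $\varepsilon$ (say $\varepsilon=\tfrac1{10}$). Choosing $\Cr{c1905d}$ large enough forces $\mu=\gamma^m\|f\|_{L^1(\Lambda^m)}\le\varepsilon t$ (since $\gamma^m\le t/\Cr{c1905d}$) and makes $t$ large enough that the ceiling inflates $N$ by at most the factor $1+\varepsilon$; then $\bbb N^m\ge(1+\varepsilon)t\ge t+\mu$, so the reduction of the first step applies, and from $t\ge\Cr{c1905d}\gamma^m$ one gets $N/\gamma\ge(\Cr{c1905d}/\bbb)^{1/m}>1$ together with $\log(N/\gamma)\le\tfrac1m\log(t/\gamma^m)+c_0$ and $N\le(1+\varepsilon)^{1+1/m}\bbb^{-1/m}t^{1/m}$, where $c_0=c_0(\bbb,m,\varepsilon)$. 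Substituting, the leading term becomes $-N\log(t/\gamma^m)\ge-(1+\varepsilon)^{1+1/m}\bbb^{-1/m}t^{1/m}\log(t/\gamma^m)$, whose coefficient is at most $1.21\,\bbb^{-1/m}<2\,\bbb^{-1/m}$, while the remaining contributions ($mN\log\aaa$, $-m\gamma\aaa$, and the term $mN c_0$ coming from the estimate of $\log(N/\gamma)$) are all of the form $O(t^{1/m})$ with constant depending only on $\aaa,\bbb,m$. Enlarging $\Cr{c1905d}$ one last time so that $\log(t/\gamma^m)\ge\log\Cr{c1905d}$ is big enough to absorb this $O(t^{1/m})$ error into the gap between $1.21\,\bbb^{-1/m}$ and $2\,\bbb^{-1/m}$ yields $\log\bP(F_m-\bE F_m\ge t)\ge-2\,\bbb^{-1/m}t^{1/m}\log(t/\gamma^m)$. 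Since $\gamma>1$ and $\Cr{c1905d}>1$ we have $\log(t/\gamma^m)>0$ throughout, and by construction $\Cr{c1905d}$ depends only on $\aaa$, $\bbb$ and $\|f\|_{L^1(\Lambda^m)}$ (for the fixed $m$), giving the claim with $\Cr{c1905c}=2\,\bbb^{-1/m}$.

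I expect the only real difficulty to be the constant bookkeeping in the last step: because the advertised coefficient $2\,\bbb^{-1/m}$ leaves essentially no multiplicative room when $m=1$, one cannot afford $N\approx(2t/\bbb)^{1/m}$ and must instead take $N\approx(t/\bbb)^{1/m}$, which is legitimate only once $\bE F_m=o(t)$, i.e.\ once $\Cr{c1905d}$ is large; and the leftover additive $O(t^{1/m})$ must then be swallowed using $\log(t/\gamma^m)\ge\log\Cr{c1905d}$, again forcing $\Cr{c1905d}$ large. All the structural ingredients — the domination $F_m\ge\bbb\prod_i\eta(S_i)$, independence of the $\eta(S_i)$, monotonicity of Poisson upper tails, and the one-term lower bound $\bP(P_\lambda\ge N)\ge e^{-\lambda}\lambda^N/N!$ — are routine.
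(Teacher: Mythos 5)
Your proposal is correct and follows essentially the same route as the paper: domination $F_m\ge\bbb\prod_i\eta(S_i)$ by a product of independent Poisson variables, the one-point lower bound $\bP(P_\lambda\ge N)\ge e^{-\lambda}\lambda^N/N!$ with a Stirling-type estimate, and absorption of the mean shift and the $O(t^{1/m})$ error terms into the slack of the constant $2\bbb^{-1/m}$ by taking $\Cr{c1905d}$ large. The only difference is organizational — the paper factors the argument through an uncentred tail bound (its Theorem \ref{thm:largeorder1} and Lemma \ref{lem0204}) and shifts by $\bE F_m$ afterwards, whereas you handle the centring when choosing $N$ — which does not change the substance.
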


The proof of Theorem \ref{thm:largeorder2} and Theorem \ref{thm:largeorder2_lowerbound} will conclude the current section. 
We start by considering the probability $\bP(F_m\ge t)$, $t>0$. 
Denote $P=\eta(\bX)$, which is a Poisson random variable of mean $\gamma \alpha_1$.
Under \eqref{assumptions} we have
$
     F_m \leq \alpha_2 P^m,
$
which leads to the upper bound
\begin{align} \label{UpperBound0617}
    \bP(F_m \geq t) 
    & \leq \bP(\alpha_2 P^m \geq t) 
    = \bP(P \geq (t/\alpha_2)^{\frac{1}{m}}) 
    \leq \bP(P_{\gamma\alpha_1} \geq (t/\alpha_2)^{\frac{1}{m}}) ,
\end{align}
where $P_{\alpha}$ denotes a Poisson random variable with mean $\alpha>0$.
If one assumes \eqref{assumptionflowbound} and denote $P_i=\eta(S_i)$, $1\leq i\leq m$, which are independent Poisson random variables with means $\gamma\Lambda(S_i)$, we have 
$
\bbb \prod_{i=1}^m P_i \leq F_m,
$
which leads to the lower bound
\begin{align} \label{LowerBound0617}
    \bP(F_m \geq t) 
    & \geq \bP\left( \bbb \prod_{i=1}^m P_i \geq t \right)
   \geq \prod_{i=1}^m \bP\left( P_i \geq (t/\bbb)^{\frac{1}{m}}  \right) 
    \geq \bP\left( P_{\gamma\aaa} \geq (t/\bbb)^{\frac{1}{m}}  \right)^m .
\end{align}
This argument reduces the problem of bounding $\bP(F_m\ge t)$ to bounding the tail of a Poisson random variable.
The next lemma provides such bounds, which we include here for completeness of our arguments. 
Note that stronger bounds (but with a slightly more involved form) can be found in \cite[p. 1225]{Hou02}, for example.
\begin{lemma} \label{lem0204}
    Let $P_{\alpha}$ denote a Poisson random variable with mean $\alpha>0$.
    For any constants $\Cl[assumptions]{C1},\Cl[assumptions]{C2}>0$, there exists a constant $\Cl{c14}>0$, independent of $\alpha$ and $y$, such that 
    \begin{align*}
        \bP(P_{\alpha} \geq y) 
        &\leq \left(e \frac{\alpha}{y}\right)^y,
        && \text{for any  $\alpha>0$ and $y\ge\alpha+1$,}
        \\
        \bP(P_{\alpha} \geq y) 
        &\geq \left(\Cr{c14}\frac{\alpha}{y} \right)^y ,
        && \text{for any  $\alpha>\Cr{C1}$ and $y>\Cr{C2}\alpha$.}
    \end{align*}
    The constant $\Cr{c14}$ depends on $\Cr{C1}$ and $\Cr{C2}$ only.
\end{lemma}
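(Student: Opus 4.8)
The plan is to derive both bounds from standard Chernoff-type estimates for the Poisson distribution combined with elementary manipulations of the series for $e^\alpha$. For the \emph{upper bound}, I would start from the exponential Markov inequality: for any $\theta > 0$,
\[
\bP(P_\alpha \ge y) \le e^{-\theta y}\, \bE[e^{\theta P_\alpha}] = e^{-\theta y}\exp\!\left(\alpha(e^\theta - 1)\right).
\]
Optimizing in $\theta$ gives $e^\theta = y/\alpha$ (which is $\ge 1$ precisely because $y \ge \alpha + 1 > \alpha$), and substituting yields the classical bound
\[
\bP(P_\alpha \ge y) \le \exp\!\left(y - \alpha - y\log\tfrac{y}{\alpha}\right) = \left(\frac{e\alpha}{y}\right)^y e^{-\alpha} \le \left(\frac{e\alpha}{y}\right)^y,
\]
using $e^{-\alpha} \le 1$. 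This is clean and needs no case analysis beyond noting $y \ge \alpha+1$ guarantees the optimizing $\theta$ is positive.

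For the \emph{lower bound}, I would bound the tail from below by a single term of the Poisson mass function. Choosing the integer $n = \lceil y \rceil$ (so $n \ge y$ and, since $y > \Cr{C2}\alpha$, $n$ is not too far from $y$), we have
\[
\bP(P_\alpha \ge y) \ge \bP(P_\alpha = n) = e^{-\alpha}\frac{\alpha^n}{n!}.
\]
Applying the upper Stirling bound $n! \le e\sqrt{n}\,(n/e)^n \le e\, n\, (n/e)^n$ gives
\[
\bP(P_\alpha = n) \ge \frac{e^{-\alpha}}{e n}\left(\frac{e\alpha}{n}\right)^n.
\]
The remaining work is bookkeeping: I must convert the exponent $n$ back to $y$ and absorb the polynomial prefactor $e^{-\alpha}/(en)$ into a constant raised to the power $y$. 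Since $y < n \le y+1$ and $y > \Cr{C2}\alpha > \Cr{C2}\Cr{C1}$, one has $n/y \le 1 + 1/y \le 1 + 1/(\Cr{C2}\Cr{C1})$, so $(e\alpha/n)^n = (e\alpha/y)^n (y/n)^n \ge (e\alpha/y)^{y+1}\cdot(\text{const})$ (the factor $(e\alpha/y)^{\,n-y}$ is bounded below because $\alpha/y$ is bounded away from $0$ by $1/\Cr{C2}$ and from above by the constraint, so this factor lies in a fixed compact range; similarly $(y/n)^n$ is bounded below by a positive constant). Finally $e^{-\alpha} \ge e^{-y/\Cr{C2}}$ and $1/(en)$ is at least $(\text{const})^y$ for $y$ bounded below. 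Collecting all these bounded-below factors into a single base, one gets $\bP(P_\alpha \ge y) \ge (\Cr{c14}\,\alpha/y)^y$ for a constant $\Cr{c14}$ depending only on $\Cr{C1}, \Cr{C2}$.

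The main obstacle is purely technical rather than conceptual: making sure every polynomial or subexponential factor ($e^{-\alpha}$, $1/n$, $\sqrt n$, the discrepancy between $n$ and $y$) can genuinely be swallowed into a constant raised to the $y$-th power. This is exactly where the hypotheses $\alpha > \Cr{C1}$ and $y > \Cr{C2}\alpha$ are used: they ensure $y$ is bounded below (so $a^y$ with $a<1$ can dominate a fixed polynomial in $y$) and that the ratio $\alpha/y$ stays in a fixed interval $(0, 1/\Cr{C2}]$ bounded away from both endpoints in the relevant sense, so that stray factors like $(e\alpha/y)^{n-y}$ do not blow up or vanish. I would organize this as: (i) reduce to a single Poisson mass term; (ii) apply Stirling; (iii) a short lemma-free computation showing $g(y) := (\text{prefactors})^{1/y}$ is bounded below on the region $y > \Cr{C2}\Cr{C1}$; (iv) conclude.
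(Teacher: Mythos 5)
Your overall strategy matches the paper's: for the lower bound both you and the authors keep the single mass term at $n=\lceil y\rceil$, apply Stirling, and absorb the stray prefactors ($e^{-\alpha}$, the $n$ versus $y$ discrepancy) into a constant raised to the power $y$, using exactly the hypotheses $\alpha>\Cr{C1}$ and $y>\Cr{C2}\alpha$ to keep $y$ bounded below. For the upper bound you use the Chernoff/exponential-Markov route where the paper manipulates the series directly via $\frac{1}{k!}\le\frac{1}{y!(k-y)!}$ and then treats non-integer $y$ by monotonicity of $t\mapsto(e\alpha/t)^t$; your route is equally standard, gives the marginally stronger bound $(e\alpha/y)^y e^{-\alpha}$, and avoids the integer/non-integer case split.

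One intermediate claim in your lower-bound bookkeeping is wrong and should be repaired, although it does not sink the argument. You assert that $\alpha/y$ is ``bounded away from $0$ by $1/\Cr{C2}$''; in fact $y>\Cr{C2}\alpha$ gives only the \emph{upper} bound $\alpha/y<1/\Cr{C2}$, and $\alpha/y$ can be arbitrarily small, so $(e\alpha/y)^{n-y}$ does not lie in a fixed compact range and is not bounded below by an absolute constant. What saves you is that $0\le n-y<1$, so this factor is at least $\min(1,e\alpha/y)\ge\min(1,e\Cr{C1}/y)$, i.e.\ it decays only polynomially in $y$, and since $y>\Cr{C2}\Cr{C1}$ it can be dominated by $c^y$ for a suitable $c$ depending only on $\Cr{C1}$ and $\Cr{C2}$ --- which is precisely the absorption mechanism you correctly describe in your final paragraph, so you should invoke that there rather than a spurious compactness argument. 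The paper avoids the issue altogether by writing the Stirling bound as $\lceil y\rceil!\le(\Cr{c19}y)^y$ with exponent exactly $y$, so that the only leftover factor is $\alpha^{\lceil y\rceil-y}\ge\min(1,\Cr{C1})$, a genuine constant; you may find that factorization cleaner than tracking $(e\alpha/n)^{n}$.
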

\begin{proof}
    \textit{Upper bound:}
    First, we consider the case when $y\in\bN$. Then using Stirling's approximation for $y!$ (or, more precisely, Robbins estimate) we get
    \begin{align*}
        \bP(P_{\alpha}\geq y)
        &= \sum_{k\geq y} e^{-\alpha} \frac{\alpha^k}{k!}
        \leq e^{-\alpha} \frac{\alpha^y}{y!} \sum_{k\geq y} \frac{\alpha^{k-y}}{(k-y)!} 
        = \frac{\alpha^y}{y!}
        \leq \left(e \frac{\alpha}{y}\right)^y .
    \end{align*}
    When we remove the extra assumption that $y$ is an integer, one has
    \[ \bP(P_{\alpha}\geq y) 
    \leq \bP(P_{\alpha}\geq \lfloor y \rfloor) 
    \leq \left(e \frac{\alpha}{\lfloor y \rfloor}\right)^{\lfloor y \rfloor}
    \leq \left(e \frac{\alpha}{y}\right)^y ,\]
    where the last inequality follows from the fact that $t\mapsto \left(\frac{\alpha e}{t}\right)^t $ decreases for $t\geq \alpha$.
    
    \medskip
    \textit{Lower bound:}
    Observe, that 
    \begin{align*}
        \bP(P_{\alpha}\geq y)
        & \geq \bP(P_{\alpha}= \lceil y\rceil)
        = e^{-\alpha} \frac{\alpha^{\lceil y\rceil}}{\lceil y\rceil!} .
    \end{align*}
    Applying Stirling's approximation to $\lceil y\rceil!$ we get that
    $$
    \lceil y\rceil ! \leq \sqrt{2\pi}\lceil y\rceil^{\lceil y\rceil+{1/2}}e^{-\lceil y\rceil+1}\leq (\Cr{c19} y)^y
    $$ 
    for some large enough constant $\Cl{c19}$. Here, $\Cr{c19}$ depends only on $\Cr{C1}\Cr{C2}$, which is a lower bound for $y$. Getting an explicit representation of this constant seems to be difficult in general and we choose to omit it.
    Note also that $\alpha\leq y/\Cr{C2}$ by assumption.
    Thus, we get
    \begin{align*}
        \bP(P_{\alpha}\geq y)
        & \geq e^{-y/\Cr{C2}} \frac{\alpha^y}{(\Cr{c19} y)^y} \alpha^{\lceil y\rceil -y} 
        = \left( \frac{ \Cr{C1}^{\frac{\lceil y\rceil}{y} -1}}{e^{1/\Cr{C2}}\Cr{c19}} \frac{\alpha}{y} \right)^y ,
    \end{align*}
    and the lemma holds by setting $\Cr{c14} = \frac{1}{e^{1/\Cr{C2}}\Cr{c19}} \min(1,\Cr{C1})$.
\end{proof}

As a direct consequence of the bounds \eqref{UpperBound0617}, \eqref{LowerBound0617} and Lemma \ref{lem0204} we prove the following theorem.

\begin{theorem} \label{thm:largeorder1}
    Let $F_m$, $m\ge 1$ be a Poisson $U$-statistic satisfying \eqref{assumptions}.
    Then 
    \begin{align*}
        \bP(F_m \geq t) 
        & \leq \exp \left( - \frac{1}{m}\Big(\frac{t}{\alpha_2}\Big)^{\frac{1}{m}} \log \left( \frac{t}{e^m\alpha_2(\alpha_1\gamma)^m} \right) \right), 
        && \text{for any  $\gamma>0$ and $t\ge\alpha_2(\alpha_1\gamma)^m$.}
    \end{align*}
   Let $F_m$ be a Poisson $U$-statistic satisfying \eqref{assumptionflowbound}. Then for any constants $\Cl[assumptions]{C3},\Cl[assumptions]{C4}>0$, there exist positive constants $\Cl{c17}$ and $\Cl{c18}$, independent of $\gamma$ and $t$, such that  
    \begin{align*}
        \bP(F_m \geq t) 
        & \geq \exp\left( - \Cr{c17} t^{\frac{1}{m}} \log \left( \Cr{c18} \frac{t}{\gamma^m} \right) \right),
        && \text{for any  $\gamma>\Cr{C3}$ and $t\ge\Cr{C4} \gamma^m$.}
    \end{align*}
    Moreover, one can set $\Cr{c17}= \bbb^{-\frac{1}{m}}$
    and we have that $\Cr{c18}$ is a (non-explicit) constant which depends only on $\Cr{C3}$, $\Cr{C4}$, $\aaa$ and $\bbb$.
\end{theorem}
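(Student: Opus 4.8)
The plan is to feed the two elementary deterministic reductions \eqref{UpperBound0617} and \eqref{LowerBound0617}, obtained just above, into the Poisson tail estimates of Lemma \ref{lem0204}, and then merely take logarithms and rearrange. Nothing conceptually new is needed: the real work lies in the bookkeeping of constants.

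\textbf{Upper bound.} Under \eqref{assumptions}, \eqref{UpperBound0617} reduces the claim to a Poisson tail bound, $\bP(F_m\ge t)\le \bP\bigl(P_{\gamma\alpha_1}\ge (t/\alpha_2)^{1/m}\bigr)$. Since $P_{\gamma\alpha_1}$ is integer-valued, this probability equals $\bP(P_{\gamma\alpha_1}\ge y_0)$ with $y_0\coloneqq\lceil (t/\alpha_2)^{1/m}\rceil$, and the bound $\bP(P_{\gamma\alpha_1}\ge n)\le (e\gamma\alpha_1/n)^n$ --- valid for every $n\in\bN$ with no restriction relating $n$ to $\gamma\alpha_1$, as established in the integer case in the proof of Lemma \ref{lem0204} --- gives $\bP(P_{\gamma\alpha_1}\ge y_0)\le (e\gamma\alpha_1/y_0)^{y_0}$. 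The hypothesis $t\ge \alpha_2(\alpha_1\gamma)^m$ is precisely $(t/\alpha_2)^{1/m}\ge \gamma\alpha_1$, so $\gamma\alpha_1\le (t/\alpha_2)^{1/m}\le y_0$, and since $s\mapsto (e\gamma\alpha_1/s)^s$ is decreasing on $[\gamma\alpha_1,\infty)$ one obtains $\bP(F_m\ge t)\le \bigl(e\gamma\alpha_1\,(t/\alpha_2)^{-1/m}\bigr)^{(t/\alpha_2)^{1/m}}$. Taking logarithms and using $\log\bigl((t/\alpha_2)^{1/m}/(e\gamma\alpha_1)\bigr)=\tfrac1m\log\bigl(t/(e^m\alpha_2(\alpha_1\gamma)^m)\bigr)$ yields the stated bound; in the sub-range where this logarithm is negative the asserted inequality is trivially true, its right-hand side being then $\ge 1$.

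\textbf{Lower bound.} Under \eqref{assumptionflowbound}, \eqref{LowerBound0617} gives $\bP(F_m\ge t)\ge \bP\bigl(P_{\gamma\aaa}\ge (t/\bbb)^{1/m}\bigr)^m$. I would instantiate the two free constants of Lemma \ref{lem0204} by, say, $\Cr{C1}=\Cr{C3}\aaa$ and $\Cr{C2}=\tfrac12(\Cr{C4}/\bbb)^{1/m}\aaa^{-1}$, so that $\gamma>\Cr{C3}$ and $t\ge \Cr{C4}\gamma^m$ force its hypotheses $\gamma\aaa>\Cr{C1}$ and $(t/\bbb)^{1/m}>\Cr{C2}\gamma\aaa$. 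The lower bound of Lemma \ref{lem0204} then yields $\bP\bigl(P_{\gamma\aaa}\ge (t/\bbb)^{1/m}\bigr)\ge \bigl(\Cr{c14}\gamma\aaa\,(t/\bbb)^{-1/m}\bigr)^{(t/\bbb)^{1/m}}$, where the constant $\Cr{c14}$ depends only on $\Cr{C1},\Cr{C2}$, hence only on $\Cr{C3},\Cr{C4},\aaa,\bbb$. Raising to the power $m$, taking logarithms and simplifying via $m(t/\bbb)^{1/m}\log\bigl((t/\bbb)^{1/m}/(\Cr{c14}\gamma\aaa)\bigr)=\bbb^{-1/m}t^{1/m}\log\bigl(t/(\bbb\,\Cr{c14}^m\aaa^m\gamma^m)\bigr)$ gives exactly the claimed inequality with $\Cr{c17}=\bbb^{-1/m}$ and $\Cr{c18}=(\bbb\,\Cr{c14}^m\aaa^m)^{-1}$. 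Should this value of $\Cr{c18}$ be too small, I would replace it by $\max(\Cr{c18},1/\Cr{C4})$; this only weakens the bound but ensures $\Cr{c18}\,t/\gamma^m\ge 1$ throughout $\{\gamma>\Cr{C3},\,t\ge\Cr{C4}\gamma^m\}$, so the inequality is never vacuous, while the dependence of $\Cr{c18}$ on $\Cr{C3},\Cr{C4},\aaa,\bbb$ alone is preserved.

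\textbf{Main obstacle.} There is no genuine analytic difficulty: once \eqref{UpperBound0617}, \eqref{LowerBound0617} and Lemma \ref{lem0204} are in hand, everything reduces to substitution and manipulation of logarithms. The one point that needs care is the constant bookkeeping in the lower bound --- choosing $\Cr{C1},\Cr{C2}$ so that the hypotheses of Lemma \ref{lem0204} hold uniformly over the whole region $\{\gamma>\Cr{C3},\,t\ge\Cr{C4}\gamma^m\}$, keeping $\Cr{c18}$ a function of $\Cr{C3},\Cr{C4},\aaa,\bbb$ only, and verifying that the resulting expressions are non-vacuous in the intended regime.
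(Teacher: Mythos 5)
Your proposal is correct and follows essentially the same route as the paper: feed the reductions \eqref{UpperBound0617} and \eqref{LowerBound0617} into Lemma \ref{lem0204} and rearrange logarithms, with the same choice of $\Cr{C1}$ and (up to a harmless factor $1/2$) of $\Cr{C2}$. Your slightly more careful treatment of the upper bound via $y_0=\lceil (t/\alpha_2)^{1/m}\rceil$ and monotonicity of $s\mapsto(e\alpha/s)^s$ even patches the small mismatch between the lemma's hypothesis $y\ge\alpha+1$ and the theorem's range $y\ge\alpha$, which the paper glosses over.
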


\begin{proof}
    \textit{Upper bound:} Let $\gamma>0$ and $t\ge\alpha_2(\alpha_1\gamma)^m$.
    Combining the upper bound \eqref{UpperBound0617} with the upper bound of Lemma \ref{lem0204} we get
    \begin{align*}
        \bP(F_m \geq t) 
        & \leq \bP(P_{\gamma\alpha_1} \geq (t/\alpha_2)^{\frac{1}{m}})
        \leq \left(e \frac{\alpha_1\gamma}{(t/\alpha_2)^{\frac{1}{m}}}\right)^{(t/\alpha_2)^{\frac{1}{m}}} =
        \exp \left( - \frac{1}{m}\Big(\frac{t}{\alpha_2}\Big)^{\frac{1}{m}} \log \left( \frac{1}{e^m} \frac{t}{\alpha_2(\alpha_1\gamma)^m} \right) \right). 
    \end{align*}
    
    \medskip
    \textit{Lower bound:} Let $\gamma>\Cr{C3}$ and $t>\Cr{C4} \gamma^m$.
    This time we use the lower bound \eqref{LowerBound0617} and apply the lower bound of Lemma \ref{lem0204} where $\alpha$ is replaced by $\gamma \aaa$, $y=(t/\bbb)^{\frac{1}{m}}$,  $\Cr{C1} = \Cr{C3} \aaa$ and $\Cr{C2} = \aaa^{-1}(\Cr{C4}/\bbb)^{\frac{1}{m}}$. This gives
    \begin{align*}
        \bP(F_m \geq t) 
        & \geq \bP\left( P_{\gamma\aaa} \geq (t/\bbb)^{\frac{1}{m}}  \right)^m 
        \geq \left(\Cr{c14}\frac{\gamma \aaa}{(t/\bbb)^{\frac{1}{m}}} \right)^{m (t/\bbb)^{\frac{1}{m}}}
        = \exp\left( - \Big(\frac{t}{\bbb}\Big)^{\frac{1}{m}} \log \left(  \frac{1}{\Cr{c14}^m} \frac{t}{\bbb(\aaa\gamma)^m} \right) \right),
    \end{align*}
    where $\Cr{c14}$ depends only on $\Cr{C1}$ and $\Cr{C2}$, and therefore only on  $\Cr{C3}$, $\Cr{C4}$, $\aaa$ and $\bbb$.
\end{proof}

Now, we can derive the bounds of Theorem \ref{thm:largeorder2} as a simple corollary of the previous theorem; i.e.\ we bound $\bP(F_m - \bE F_m \geq t)$ instead of $\bP(F_m \geq t)$.

\begin{proof}[Proof of Theorem \ref{thm:largeorder2}]
    For the upper bound under condition \eqref{assumptions} we have
    $$
    |\bE F_m|=\gamma^m\Big|\int_{\bX}f(x_1,\ldots,x_m)\Lambda(\dint x_1)\cdots\Lambda(\dint x_m)\Big|\leq \gamma^m\int_{\bX}|f(x_1,\ldots,x_m)|\Lambda(\dint x_1)\cdots\Lambda(\dint x_m)\leq \alpha_2(\alpha_1\gamma)^m.
    $$
    Then by Theorem \ref{thm:largeorder1} for any $\gamma>0$ and  $t\ge 2\alpha_2(\alpha_1\gamma)^m$ we get
    \begin{align}
        \bP(F_m - \bE F_m \geq t) 
        &\leq \bP(F_m \geq t-|\bE F_m|)\leq \bP(F_m \geq t/2)\leq \exp \left( - \frac{1}{m}\Big(\frac{t}{2\alpha_2}\Big)^{\frac{1}{m}} \log \left( \frac{t}{2e^m\alpha_2(\alpha_1\gamma)^m} \right) \right).\label{eq:ConcentrationBoundA1}
    \end{align} 
    Finally we note that
    \[
    \bP(F_m-\bE F_m\leq -t)=\bP((-F_m)-\bE[(-F_m)]\ge t),
    \]
    and $-F_m$ satisfies \eqref{assumptions} with the same $\alpha_1$ and $\alpha_2$. Thus, \eqref{eq:ConcentrationBoundA1} holds for $-F_m$ as well, which finishes the proof.
    \end{proof}

    \begin{proof}[Proof of Theorem \ref{thm:largeorder2_lowerbound}]
    Recall, that we assume that $f\ge 0$ and according to \eqref{eq_UstatExp} we have $\bE F_m = \gamma^m \norm{f}_{L^1(\Lambda^m)}>0 $. Applying Theorem \ref{thm:largeorder1} with $\Cr{C3} = 1$, $\Cr{C4}=\|f\|_{L^1(\Lambda^m)}$ we have
    \begin{align*}
        \bP(F_m - \bE F_m \geq t) 
        & = \bP(F_m \geq t + \bE F_m) 
        \\&\geq \exp\left( - \Cr{c17} (t + \bE F_m)^{\frac{1}{m}} \log \left( \Cr{c18} \frac{t + \bE F_m}{\gamma^m} \right) \right),
    \end{align*}
    for any  $\gamma> 1 $ and $ t + \bE F_m\ge \|f\|_{L^1(\Lambda^m)}\gamma^m$, where we recall that
    \( \Cr{c17}
       = {\bbb^{-\frac{1}{m}}}\)
    and $\Cr{c18}$ is a (non-explicit) constant depending on $\aaa$, $\bbb$ and $\norm{f}_{L^1(\Lambda^m)}$. Further, it follows that there exists a (non-explicit) constant $\Cr{c1905d}$, depending on $\aaa$, $\bbb$ and $\norm{f}_{L^1(\Lambda^m)}$, which is sufficiently large so that
    \begin{align*}
        (t + \bE F_m)^{\frac{1}{m}} \log \left( \Cr{c18} \frac{ t + \bE F_m}{\gamma^m} \right)
        \leq 2 t^{\frac{1}{m}} \log \left( \frac{t}{\gamma^m} \right) ,
        && \text{for any $ t \ge \Cr{c1905d} \gamma^m$.}
    \end{align*}
    We also assume that $ \Cr{c1905d} $ is bigger than $\|f\|_{L^1(\Lambda^m)}$ so that the condition $ t \ge \Cr{c1905d} \gamma^m$ is more restrictive than $ t + \bE F_m\ge \|f\|_{L^1(\Lambda^m)}\gamma^m$.
    Combining the two last displayed equations, we get
    \begin{align*}
        \bP(F_m - \bE F_m \geq t) 
        &\geq \exp\left( - 2 \Cr{c17} t^{\frac{1}{m}} \log \left( \frac{ t}{\gamma^m} \right) \right),
        && \text{for any  $\gamma> 1 $ and $ t \ge \Cr{c1905d} \gamma^m$.}
    \end{align*}
\end{proof}


\section{Concentration inequalities via bounds on the centred moments}\label{sec:ConcentrationViaMoments}

In this section, we establish bounds for the $\ell$-th centred moments of a Poisson $U$-statistic $F_m$, and use them together with Markov's inequality to get concentration bounds.
The key ingredient of this approach is an existing formula for the $\ell$-th centred moments of a Poisson $U$-statistic.

\subsection{Bound on the centred moments}

Recall that by \eqref{eq:MomentUStat}, given $\ell\ge 2$, if $\int (|f|^{\otimes \ell})_{\sigma} \dint \Lambda^{|\sigma|}<\infty$
holds for all $\sigma\in \Pi(m;\ell)$, then 
\[
\bE[(F_m-\bE F_m)^\ell]
= \sum \gamma^{k(\sigma)}\int (f^{\otimes \ell})_{\sigma} \dint \Lambda^{k(\sigma)},
\]
where the sum runs over subpartitions $\sigma\in \Pi_{\geq 2}^{**}(m;\ell)$, and where $k(\sigma) := m\ell +|\sigma|-\|\sigma\|$.
In particular if $f\ge 0$, then the $\ell$-th centred moment of $F_m$ is non-negative and increasing with respect to its kernel.

Note that for any $\sigma\in \Pi(m;\ell)$ we have $h^{-1}(\sigma)\in \Pi^*_{\ge 2}(m;\ell)$, where we recall that $h$ is defined by \eqref{eq0317}. Let $J:=\{i\in [\ell]\colon \text{ there exists }B\in h^{-1}(\sigma)\text{ such that }|B\cap R_i|=1\}$ and $j=|J|$. Thus, by applying corresponding renumbering of elements and rows we may consider $h^{-1}(\sigma)$ as a partition $\widetilde \sigma\in \Pi_{\ge 2}^{**}(m;j)$. 
 Then under assumption \eqref{eq:fBound} we get
\begin{align*}
    \int (|f|^{\otimes \ell})_{\sigma} \dint \Lambda^{|\sigma|}=(\|f\|_{L^1(\Lambda^m)})^{\ell-j}\int (|f|^{\otimes \ell})_{\widetilde\sigma} \dint \Lambda^{k(\widetilde\sigma)}<\infty.
\end{align*}
Moreover, under \eqref{eq:fBound} we have 
$$ 
\gamma^{k(\sigma)}\Big|\int (f^{\otimes \ell})_{\sigma} \dint \Lambda^{k(\sigma)}\Big|\leq \beta_0 \beta_2^{\ell} (\gamma\beta_1)^{k(\sigma)}\sum_{\sigma\in \Pi_{\geq 2}^{**}(m;\ell,k)}\Big(\prod_{J\in h(\sigma)}|J|!\Big)^q,
$$
and, thus,
\begin{align*}
    \bE[(F_m-\bE F_m)^\ell] 
    & \leq \sum_{\sigma\in \Pi_{\geq 2}^{**}(m;\ell)}\gamma^{k(\sigma)}\Big| \int (f^{\otimes \ell})_{\sigma} \dint \Lambda^{k(\sigma)}\Big|\leq \beta_0 \beta_2^{\ell} \sum_{k}(\gamma\beta_1)^k\sum_{\sigma\in \Pi_{\geq 2}^{**}(m;\ell,k)}\Big(\prod_{J\in h(\sigma)}|J|!\Big)^q.
\end{align*}

Further we introduce the notation
\begin{align*}
    S_{\geq 2}^m(\ell,k)
    & \coloneqq  |\Pi_{\geq 2}^{**}(m;\ell, k)|.
\end{align*}
Note that $\{h(\sigma)\colon \sigma\in\Pi_{\geq 2}^{**}(m;\ell,k)\}$ is the set of partitions of $[m\ell]$ consisting of $k$ sets and which satisfy additional constraints. In particular it is a subset of the set $\Pi_{m\ell}(k)$ of all partitions of $[m\ell]$ into $k$ blocks. The cardinality of $\Pi_{m\ell}(k)$ is the Stirling number of the second kind $S(m \ell,k)$.
Also note, that if $k>m\ell - \ell/2$ or $k<m$ then $\Pi_{\geq 2}^{**}(m;\ell,k)$ is an empty set.
Hence, we get 
\begin{equation}\label{eq:New2}
    \bE[(F_m-\bE F_m)^\ell]
    \leq \beta_0 \beta_2^{\ell} \sum_{k=m}^{\lfloor m\ell - \ell/2\rfloor} (\gamma\beta_1)^k\sum_{\sigma'\in \Pi_{m\ell}(k)}\Big(\prod_{J\in \sigma'}|J|!\Big)^q.
\end{equation}

Before we continue let us formulate and prove the following lemma, providing an information about the properties of the term $\prod_{J\in \sigma'}|J|!$. The proof is inspired by the proof of Lemma 3.5 in \cite{eichelsbacher2015Moderate}.

\begin{lemma}\label{lem:FaadiBrino}
\begin{itemize}
\item [(a)] For any $1\leq k\leq m\ell$ we have
\[
\sum_{\sigma\in \Pi_{m\ell}(k)}\prod_{J\in\sigma}|J|!={(m\ell-1)!\over (k-1)!}{m\ell\choose k}.
\] 
\item[(b)] For any $\sigma\in \Pi_{\geq 2}^{**}(m;\ell,k)$ we have
\[
\Big(\prod_{J\in h(\sigma)}|J|!\Big)^q\leq \ell^{qm\ell}.
\]
\end{itemize}
\end{lemma}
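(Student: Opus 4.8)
The plan is to prove the two parts separately, with part~(a) being a classical identity and part~(b) following from it by a crude but sufficient estimate.

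\textbf{Part (a).} The generating-function identity behind this is the exponential formula. First I would observe that $\sum_{\sigma\in\Pi_n(k)}\prod_{J\in\sigma}|J|!$ counts, up to the usual multinomial bookkeeping, the ways of splitting $[n]$ into $k$ linearly ordered blocks, i.e.\ it equals the number of ways to arrange $n$ labelled objects into $k$ nonempty ordered lists divided by $k!$ (since the blocks of a partition are unordered). Concretely, the exponential generating function of a single nonempty ordered block is $\sum_{j\geq 1}j!\,\frac{x^j}{j!}=\sum_{j\geq 1}x^j=\frac{x}{1-x}$, so by the exponential formula
\[
\sum_{n\geq 0}\Big(\sum_{\sigma\in\Pi_n(k)}\prod_{J\in\sigma}|J|!\Big)\frac{x^n}{n!}=\frac{1}{k!}\Big(\frac{x}{1-x}\Big)^k=\frac{x^k}{k!}(1-x)^{-k}.
\]
Extracting the coefficient of $x^n/n!$ with $n=m\ell$, and using $(1-x)^{-k}=\sum_{i\geq 0}\binom{k+i-1}{i}x^i$, the coefficient of $x^{m\ell}$ is $\binom{m\ell-1}{m\ell-k}/k!=\binom{m\ell-1}{k-1}/k!$, so multiplying by $(m\ell)!$ gives $\frac{(m\ell)!}{k!}\binom{m\ell-1}{k-1}=\frac{(m\ell-1)!}{(k-1)!}\binom{m\ell}{k}$, which is exactly the claimed formula. (Alternatively, one can give a direct bijective or induction argument, but the generating-function route is cleanest.)

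\textbf{Part (b).} This is a rough bound. For $\sigma\in\Pi_{\geq 2}^{**}(m;\ell,k)$, the blocks $J\in h(\sigma)$ form a partition of $[m\ell]$, so $\sum_{J\in h(\sigma)}|J|=m\ell$ and each $|J|\leq \ell$ by condition (ii) (a block meets each of the $\ell$ rows at most once, so has at most $\ell$ elements). Then $\prod_{J}|J|!\leq \prod_{J}\ell^{|J|}=\ell^{\sum_J|J|}=\ell^{m\ell}$, using $r!\leq \ell^{r}$ for $1\leq r\leq \ell$ (indeed $r!\le r^{r}\le \ell^{r}$). Raising to the power $q\in[0,1]$ only makes this smaller, so $\big(\prod_{J\in h(\sigma)}|J|!\big)^{q}\leq \ell^{qm\ell}$, as claimed. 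I do not expect any real obstacle here; the only point requiring care is remembering to invoke condition (ii) of the definition of $\Pi_{\geq 2}^{**}(m;\ell,k)$ to get the bound $|J|\leq\ell$ on block sizes — without that, part~(b) would fail. Part~(a) is the more substantive step, and if one prefers to avoid generating functions, the cleanest alternative is induction on $k$ using the recursion obtained by removing the block containing the element $m\ell$.
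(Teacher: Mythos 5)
Your proof is correct, and both parts take a different (and in part (b), simpler) route than the paper. For part (a), the paper applies the multivariate Fa\'a di Bruno formula to $f\circ g$ with $f(y)=(y-1)^k$ and $g(x)=(1-x)^{-1}$, evaluated at $x=0$; you instead invoke the exponential formula, interpreting $\prod_{J\in\sigma}|J|!$ as the number of linear orders on the blocks so that the relevant EGF is $\frac{1}{k!}(x/(1-x))^k$. These are two packagings of the same underlying computation — both ultimately extract the coefficient of $x^{m\ell}$ in $(x/(1-x))^k$ — but your version makes the combinatorial meaning of the weighted sum explicit, whereas the paper's version avoids having to justify the exponential formula by appealing to a ready-made derivative identity. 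For part (b) your argument is genuinely shorter and cleaner: you bound each factor by $|J|!\leq \ell^{|J|}$ (valid since condition (ii) forces $|J|\leq\ell$ for every block of $h(\sigma)$, singletons included) and sum the exponents over the partition of $[m\ell]$. The paper instead runs an exchange argument to identify the extremal block-size configuration (several blocks of size $\ell$, many singletons, one leftover block) and bounds that configuration; your one-line estimate reaches the same bound $\ell^{qm\ell}$ without any rearrangement, and you correctly flag that condition (ii) is the essential ingredient. No gaps.
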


\begin{proof}
\textit{Proof of (a):} We use the multivariate Fa\'a di Bruno formula. Namely let $f\colon\bR\to \bR$ and $g\colon\bR^n\to \bR$ be some functions. Then the following equality
\begin{equation}\label{eq:FaadiBruno}
{\partial^n\over \partial x_1\ldots\partial x_n}f(g(x_1,\ldots,x_n))=\sum_{\sigma\in \Pi_n}f^{(|\sigma|)}(g(x_1,\ldots,x_n))\prod_{J\in\sigma}{\partial^{|J|}\over \prod_{j\in J}\partial x_j}g(x_1,\ldots,x_n),
\end{equation}
holds regardless of whether $x_1,\ldots,x_n$ are distinct or not. We choose $n=m\ell$, $f(y)=(y-1)^k$ and $g(x)=g(x,\ldots,x)=(1-x)^{-1}$. Then note that, for any $k'\in\bN$,
\[
f^{(k')}(y)|_{y=1}=
\begin{cases}
k!,\qquad &k'=k,\\
0,\qquad &k'\neq k.
\end{cases}
\]
Moreover $g^{(j)}(x)|_{x=0}=j!$ for any $j\in\bN$. Thus, for $x_1=\cdots=x_n=x=0$, the right hand side of \eqref{eq:FaadiBruno} becomes
\[
\sum_{\sigma\in \Pi_n}f^{(|\sigma|)}(y)|_{y=1}\prod_{J\in\sigma}g^{|J|}(x)|_{x=0}=k!\sum_{\sigma\in \Pi_n(k)}\prod_{J\in\sigma}|J|!.
\]
On the other hand, $f(g(x)) = ((1-x)^{-1}-1)^k = x^k (1-x)^{-k} $, and thus the left hand side of \eqref{eq:FaadiBruno} is equal to
\begin{align*}
(x^k(1-x)^{-k})^{(m\ell)}|_{x=0}={m\ell\choose k}k![(1-x)^{-k}]^{(m\ell-k)}|_{x=0}=k!{(m\ell-1)!\over (k-1)!}{m\ell\choose k}.
\end{align*}
This finishes the proof.

\textit{Proof of (b):} First we note that since $\sigma\in \Pi_{\geq 2}^{**}(m;\ell,k)$ each block of $\sigma$ contains not more then $\ell$ elements. At the same time blocks of $h(\sigma)$ are the blocks of $\sigma$ with additional singletons. Hence, each block $J\in h(\sigma)$ satisfies $|J|\leq \ell$. Let $h(\sigma)=\{J_1,\ldots, J_k\}$. Next we note that if there are blocks $J_{i_1}, J_{i_2}\in h(\sigma)$, $1\leq i_1,i_2\leq k$, $i_1\neq i_2$ with $1<|J_{i_2}|\leq |J_{i_1}|<\ell$, then 
\begin{align*}
    \prod_{i=1}^{k}(|J_i|)!=\frac{|J_{i_2}|-1}{|J_{i_1}|+1}(|J_{i_1}|+1)!(|J_{i_2}|-1)!\prod_{i\neq i_1,i_2}(|J_i|)!\leq (|J_{i_1}|+1)!(|J_{i_2}|-1)!\prod_{i\neq i_1,i_2}(|J_i|)!.
\end{align*}
This means that the maximal value of $\prod_{J\in h(\sigma)}|J|!$ is achieved for the permutation $h(\sigma)$ consisting of $k_1$ blocks of size $\ell$, $k_2$ blocks of size $1$ and remaining $1$ block of size $m\ell-\ell k_1-k_2$ if $m\ell-\ell k_1-k_2\neq 0$, where
\[
k_1+k_2+1=k,\qquad (m-1)\ell+1\leq k_1\ell+k_2\leq  m\ell-1.
\]
In case $m\ell-\ell k_1-k_2=0$ we have instead $k_1+k_2=k$.
In both cases we have $m\ell-\ell k_1-k_2\leq \ell$ and $k_2\ge 0$ and, thus, we get
\[
\Big(\prod_{J\in h(\sigma)}|J|!\Big)^q\leq (\ell!)^{qk_1}((m\ell-\ell k_1-k_2)!)^q\leq \ell^{qm\ell-qk_2}\leq \ell^{qm\ell},
\]
which finishes the proof.
\end{proof}

Assuming \eqref{eq:fBound}, using Lemma \ref{lem:FaadiBrino}.b and \eqref{eq:New2} we may write
\begin{equation}\label{eq:New3}
    \bE[(F_m-\bE F_m)^\ell]\leq \beta_0 \beta_2^{\ell}\ell^{qm\ell} \sum_{k=m}^{\lfloor m\ell - \ell/2\rfloor} S_{\ge 2}^m(\ell,k)(\gamma\beta_1)^k
    \leq \beta_0 \beta_2^{\ell}\ell^{qm\ell}  \sum_{k=1}^{m\ell} S(m\ell,k)(\gamma\beta_1)^k.
\end{equation}
At the same time by \cite[Equation 1.3-14]{Haight}, we have that $ \bE[P_\alpha^n] = \sum_{k=1}^n S(n,k) \alpha^k $, for any $\alpha>0$ and $n\in\bN$, where $P_\alpha$ denotes a Poisson random variable with mean $\alpha$.
Hence we have that the right hand side of the last displayed equation is bounded by $\beta_0 (\beta_2\ell^{qm})^{\ell} \bE (P_{\gamma\beta_1})^{m\ell}$.
In the following lemma, we group these observations together with an application to the more restrictive assumption \eqref{assumptions}.
\begin{lemma} \label{lem:mommon}
    Let $\ell\in\bN$ and recall that $P_{\alpha}$ denotes a Poisson random variable with mean $\alpha>0$. Then
    \begin{enumerate}
        \item The $\ell$-th centred moment of a Poisson $U$-statistic with non-negative kernel is increasing with respect to its kernel. 
        \item Assuming \eqref{assumptions}, we have
        \begin{equation} \label{eq_momentBound}
            \bE[(F_m-\bE F_m)^\ell]
            \leq \alpha_2^{\ell}\sum_{k} S_{\geq 2}^m(\ell,k) (\gamma\alpha_1)^k
            = \alpha_2^{\ell}\bE[((P_{\gamma\alpha_1})_m-\bE ((P_{\gamma\alpha_1})_m))^\ell] ,
        \end{equation}
        where 
        $(x)_m = x  (x-1) \cdots (x-m+1)$ denotes the falling factorial, for any $x\in \bR$ and $m\in \bN$.
        \item Assuming \eqref{eq:fBound}, we have
        \begin{align} \label{eq_momentBoundGeneral}
            \bE[(F_m-\bE F_m)^\ell]
            &\leq \beta_0 (\beta_2\ell^{qm})^{\ell} \sum_{k=m}^{\lfloor m\ell - \ell/2\rfloor} S(m\ell,k)(\gamma\beta_1)^k
            \\&\leq \beta_0 (\beta_2\ell^{qm})^{\ell} \bE (P_{\gamma\beta_1})^{m\ell}.\label{eq_momentBoundGeneral2}
        \end{align}
    \end{enumerate}    
\end{lemma}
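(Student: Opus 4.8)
The plan is to prove the three parts essentially by collecting and packaging what has already been established in the running text just above the lemma statement. For part (1), I would observe directly from formula \eqref{eq:MomentUStat}: when $f\geq 0$, every integrand $(f^{\otimes\ell})_\sigma$ appearing in the sum over $\sigma\in\Pi_{\geq 2}^{**}(m;\ell)$ is non-negative, and moreover each such integral $\int(f^{\otimes\ell})_\sigma\,\dint\Lambda^{k(\sigma)}$ is monotone non-decreasing in $f$ (pointwise). Since the coefficients $\gamma^{k(\sigma)}$ are positive, the whole sum is monotone non-decreasing in $f$, which is exactly the claim. One caveat to address is that the formula \eqref{eq:MomentUStat} is only valid when the relevant integrability condition holds; but increasing $f$ only increases the integrals, so if the moment formula applies to the larger kernel it a fortiori applies to the smaller one, and the inequality is interpreted with the convention that the larger side may be $+\infty$.

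For part (2), I would specialize the moment formula under \eqref{assumptions}. Since $|f|\leq\alpha_2$ pointwise, we have $|(f^{\otimes\ell})_\sigma|\leq\alpha_2^\ell$, and since $\Lambda(\bX)=\alpha_1$ we get $\int(f^{\otimes\ell})_\sigma\,\dint\Lambda^{k(\sigma)}\leq\alpha_2^\ell\,\alpha_1^{k(\sigma)}$. Summing over $\sigma\in\Pi_{\geq 2}^{**}(m;\ell)$, grouping by the value $k=k(\sigma)$ and recalling the definition $S_{\geq 2}^m(\ell,k)=|\Pi_{\geq 2}^{**}(m;\ell,k)|$, yields the first inequality $\bE[(F_m-\bE F_m)^\ell]\leq\alpha_2^\ell\sum_k S_{\geq 2}^m(\ell,k)(\gamma\alpha_1)^k$. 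For the equality, I would apply the very same moment formula \eqref{eq:MomentUStat} to the Poisson $U$-statistic whose kernel is the constant function $\mathbf 1$ on a space of total $\Lambda$-mass $\alpha_1$: its $U$-statistic is exactly the falling factorial $(P_{\gamma\alpha_1})_m$ where $P_{\gamma\alpha_1}=\eta(\bX)$, its kernel contributes $(f^{\otimes\ell})_\sigma\equiv 1$, and hence its $\ell$-th centred moment equals $\sum_k S_{\geq 2}^m(\ell,k)(\gamma\alpha_1)^k$. This identifies the combinatorial sum as $\bE[((P_{\gamma\alpha_1})_m-\bE((P_{\gamma\alpha_1})_m))^\ell]$.

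For part (3), the work is already done in the displays \eqref{eq:New2}, \eqref{eq:New3} preceding the lemma: under \eqref{eq:fBound} one bounds $\bE[(F_m-\bE F_m)^\ell]\leq\beta_0\beta_2^\ell\sum_{k=m}^{\lfloor m\ell-\ell/2\rfloor}(\gamma\beta_1)^k\sum_{\sigma'\in\Pi_{m\ell}(k)}(\prod_{J\in\sigma'}|J|!)^q$, then uses Lemma \ref{lem:FaadiBrino}.b to replace $(\prod_J|J|!)^q$ by $\ell^{qm\ell}$, and bounds the restricted count $S_{\geq 2}^m(\ell,k)$ by the full Stirling number $S(m\ell,k)$ (valid because $\Pi_{\geq 2}^{**}(m;\ell,k)$ embeds in $\Pi_{m\ell}(k)$ via $h$); this gives \eqref{eq_momentBoundGeneral}. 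The second inequality \eqref{eq_momentBoundGeneral2} then follows by extending the sum over all $k\in[m\ell]$ and invoking the identity $\bE[P_\alpha^n]=\sum_{k=1}^n S(n,k)\alpha^k$ with $\alpha=\gamma\beta_1$, $n=m\ell$, after noting $\beta_2^\ell\ell^{qm\ell}=(\beta_2\ell^{qm})^\ell$.

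I do not expect a genuine obstacle here: the statement is a bookkeeping consolidation of estimates proved in the surrounding text. The only points requiring a little care are (i) handling the integrability hypothesis in \eqref{eq:MomentUStat} consistently across parts (1)–(3), which under \eqref{assumptions} or \eqref{eq:fBound} is guaranteed by the finiteness conditions built into those assumptions (and by \eqref{eq:fkA2Bound1}-type bounds); (ii) making sure the combinatorial identification in part (2) is to the falling factorial $(P_{\gamma\alpha_1})_m$ rather than to $P_{\gamma\alpha_1}^m$, since the $U$-statistic of the constant kernel over an $m$-tuple of \emph{distinct} points is precisely $\eta(\bX)(\eta(\bX)-1)\cdots(\eta(\bX)-m+1)$; and (iii) recalling why $k$ ranges only over $m\leq k\leq\lfloor m\ell-\ell/2\rfloor$ for the $\Pi_{\geq 2}^{**}$ sums (from the constraints that every block has size $\geq 2$ and every row is hit), which was already justified before the lemma.
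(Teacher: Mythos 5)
Your proposal is correct and follows essentially the same route as the paper: part (1) is read off from the term-by-term positivity and monotonicity of \eqref{eq:MomentUStat}, part (3) is the consolidation of \eqref{eq:New2}--\eqref{eq:New3} with Lemma \ref{lem:FaadiBrino}.b and the Poisson moment identity, and part (2)'s equality comes from evaluating the moment formula at a constant kernel, whose $U$-statistic is the falling factorial $(P_{\gamma\alpha_1})_m$. The only cosmetic difference is that for the inequality in part (2) you redo the pointwise bound $|(f^{\otimes\ell})_\sigma|\leq\alpha_2^\ell$ directly, whereas the paper routes it through \eqref{eq:New3} together with Lemma \ref{lem:relAssumptions}.1 ($q=0$, $\beta_0=1$, $\beta_1=\alpha_1$, $\beta_2=\alpha_2$); these are the same computation.
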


\begin{proof} 
    Points 1 and 3 are established right above the lemma.   
    The inequality in \eqref{eq_momentBound} follow from \eqref{eq:New3} and Lemma \ref{lem:relAssumptions}, and 
    the equality in \eqref{eq_momentBound} follows from the observation that, when the kernel is constant equal to $\alpha_2$, $F_m$ is equal (in distribution) to $\alpha_2(P_{\gamma\alpha_1})_m$, and $ \bE[(F_m-\bE F_m)^\ell]
    = \sum \gamma^{k(\sigma)}\int (f^{\otimes \ell})_{\sigma} \dint \Lambda^{k(\sigma)}$, with $f \equiv \alpha_2$.
\end{proof}
Note that the bounds \eqref{eq_momentBoundGeneral} and \eqref{eq_momentBoundGeneral2} are expressed in terms of Stirling numbers and moments of Poisson random variables.
By utilizing known bounds on these quantities, we will derive more explicit centred moment bounds in Theorem \ref{thm:mombound} below.
If one would establish a good bound the coefficient $S_{\geq 2}^m(\ell,k)$, then applying it to equation \eqref{eq:New3} could lead to even better bounds than the ones derived in Theorem \ref{thm:mombound} below.
In the next remark, we gather some elementary observations about these numbers.
\begin{remark}
    To the best of the authors' knowledge, the numbers $S_{\geq 2}^m(\ell,k)$ have not been studied when $m\geq 2$.
    However, when $m=1$ these numbers are known in the literature as $2$-associated Stirling numbers of the second kind and denoted $S_2(\ell,k) \coloneqq  S_{\geq 2}^1 (\ell,k)$.
    More generally the $r$-associated Stirling numbers of the second kind $S_r(\ell,k)$ is the number of possible partitions of an $\ell$ elements set into $k$ subsets of at least $r$ elements each.
    For $r\leq 3$, these numbers are listed by the sequences \href{https://oeis.org/A008277}{A008277} ($r=1$, corresponding to the classical Stirling numbers of the second kind),  \href{https://oeis.org/A008299}{A008299} ($r=2$) and \href{https://oeis.org/A059022}{A059022} ($r=3$) on the On-Line Encyclopedia of Integer Sequences (OEIS) \cite{oeis}, where the curious reader can find a few references about these numbers.
\end{remark}

\begin{theorem}[Centred moment bounds] \label{thm:mombound}
    Assume that $F_m$ is a Poisson $U$-statistic satisfying \eqref{eq:fBound}. Then, 
    \begin{align*}
        \bE[(F_m-\bE F_m)^\ell]
        &\leq \beta_0\left({2^{qm}(m\ell)^m\beta_2\max\big(1,{\gamma\beta_1\over m\ell}\big)^{(m-{1\over 2})q} \over \log^{m(1-q)}\big(1+{m\ell\over\gamma\beta_1}\big)}\right)^{\ell}
        \quad \text{ for all $\ell
        \ge 2$, $\gamma\beta_1>0$}, \\
        \bE[(F_m-\bE F_m)^\ell]
        &\leq \beta_0 \Big(2^{2m+1}m\ell\beta_2^2(\gamma\beta_1)^{2m-1}\Big)^{\ell/2} 
        \quad \text{ for all $\ell
        \ge 2$ when $\gamma\beta_1 \geq 2 m \ell$.}
    \end{align*}
\end{theorem}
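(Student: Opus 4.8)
The plan is to estimate the combinatorial sum already isolated in \eqref{eq:New2}. Abbreviating $n\coloneqq m\ell$ and $\alpha\coloneqq\gamma\beta_1$, that inequality says $\bE[(F_m-\bE F_m)^\ell]\le\beta_0\beta_2^{\,\ell}\,T$ with
\[
T\ \coloneqq\ \sum_{k=m}^{\lfloor n-\ell/2\rfloor}\alpha^k\sum_{\sigma'\in\Pi_{n}(k)}\Bigl(\prod_{J\in\sigma'}|J|!\Bigr)^q ,
\]
so everything comes down to bounding $T$. The first step is to separate off the exponent $q$. By Lemma~\ref{lem:FaadiBrino}(a) the inner sum with $q=1$ equals $\tfrac{(n-1)!}{(k-1)!}\binom nk$, and that sum has $|\Pi_n(k)|=S(n,k)$ terms; so the power--mean (Hölder) inequality gives $\sum_{\sigma'\in\Pi_n(k)}(\prod_{J}|J|!)^q\le S(n,k)^{1-q}\bigl(\tfrac{(n-1)!}{(k-1)!}\binom nk\bigr)^{q}$. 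Writing $\alpha^k=(\alpha^k)^{1-q}(\alpha^k)^{q}$ and applying Hölder a second time, now in the index $k$, yields
\[
T\ \le\ T_0^{\,1-q}\,T_1^{\,q},\qquad
T_0\coloneqq\sum_{k=1}^{n}\alpha^k S(n,k),\qquad
T_1\coloneqq\sum_{k=m}^{\lfloor n-\ell/2\rfloor}\alpha^k\,\frac{(n-1)!}{(k-1)!}\binom nk ,
\]
where we have harmlessly enlarged the summation range of $T_0$. It then remains to bound $T_0$ and $T_1$ separately.

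For $T_0$ I would use the elementary bound $x^{n}\le(n/\theta)^{n}e^{\theta x-n}$ (maximise $x\mapsto x^{n}e^{-\theta x}$) together with the Poisson moment generating function $\bE e^{\theta P_\alpha}=e^{\alpha(e^{\theta}-1)}$, and then take $\theta=\log(1+n/\alpha)$; this produces the clean estimate $\bE[P_\alpha^{n}]\le\bigl(n/\log(1+n/\alpha)\bigr)^{n}$. Since $T_0=\bE[P_\alpha^{n}]$ by the identity recalled just before Lemma~\ref{lem:mommon}, we obtain $T_0^{\,1-q}\le\bigl(n/\log(1+n/\alpha)\bigr)^{n(1-q)}$, which is exactly the factor $\log^{-m(1-q)}(1+m\ell/\gamma\beta_1)$ (raised to the power $\ell$) appearing in the first asserted bound. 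In particular, when $q=0$ this single step already proves the whole first inequality, $T=T_0\le\bigl((m\ell)^{m}/\log^{m}(1+m\ell/\gamma\beta_1)\bigr)^{\ell}$, and it explains why no $\ell$--dependent prefactor is needed there.

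The real work is the bound on $T_1$, where the cut-off $k\le\lfloor n-\ell/2\rfloor$ is indispensable: it forces $T_1$ to be a polynomial in $\alpha$ of degree $n-\ell/2$. The ratio of consecutive summands is $\alpha\,\frac{(n-k)(n-k+1)}{k(k+1)}$, which is decreasing in $k$, and this suggests splitting into the cases $\alpha\le m\ell$ and $\alpha>m\ell$. When $\alpha\le m\ell$ one has $\max(1,\alpha/m\ell)=1$ and it suffices to show $T_1\le(2m\ell)^{m\ell}$ up to a subexponential factor; I would do this via the exponential generating function $\sum_{k}\frac{(n-1)!}{(k-1)!}\binom nk x^{k}=n!\,[z^{n}]e^{xz/(1-z)}$ and a Cauchy coefficient estimate (equivalently, by summing the nearly geometric series after $\frac{(n-1)!}{(k-1)!}\le n^{\,n-k}$ and $\binom nk\le2^{n}$), being careful to use the refined Stirling inequality $n!\le e\sqrt{2\pi n}\,(n/e)^{n}$ so that the factor $e^{-n}$ beats the loss; the residual polynomial-in-$n$ slack is then absorbed because for $q<1$ the denominator $\log^{-(1-q)n}(1+n/\alpha)\ge(\log2)^{-(1-q)n}$ is exponentially large, while the finitely many small values of $\ell$ are checked by hand. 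When $\alpha>m\ell$ the truncated summand is largest at $k=\lfloor n-\ell/2\rfloor$, where $\frac{(n-1)!}{(k-1)!}\binom nk\le n^{\ell}/\lfloor\ell/2\rfloor!\le(2em)^{\ell/2}(m\ell)^{\ell/2}$ by Stirling (up to a polynomial factor from the number of terms); together with the accompanying $\alpha^{\,n-\ell/2}$ this has precisely the shape $\bigl((m\ell)^{m}\max(1,\alpha/m\ell)^{m-1/2}\bigr)^{\ell}$ since $(m-\tfrac12)\ell=n-\ell/2$, and keeping track of the constants is what pins the prefactor $2^{qm}$. Feeding both cases into $T\le T_0^{1-q}T_1^{q}$ completes the first bound. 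I expect this constant bookkeeping — certifying that $2^{qm}$ (and, below, $2^{2m+1}$) is large enough uniformly in $\ell\ge2$, with the small-$\ell$ cases needing separate verification — to be the main obstacle.

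For the second bound we assume $\alpha=\gamma\beta_1\ge2m\ell$ and work with the truncated sum $T$ in \eqref{eq:New2} directly rather than through its $T_0^{1-q}T_1^{q}$ factorisation. With $\alpha$ this large, the consecutive-ratio computation above — applied both to the weights $S(n,k)$ and to the weights $\frac{(n-1)!}{(k-1)!}\binom nk$, hence to $\sum_{\sigma'}(\prod_{J}|J|!)^{q}$ — shows that the summand is increasing in $k$ throughout $m\le k\le\lfloor n-\ell/2\rfloor$, so $T$ is at most $n$ times its value at $k=\lfloor n-\ell/2\rfloor$. Estimating $S(n,k)$ and $\frac{(n-1)!}{(k-1)!}\binom nk$ at that endpoint by Stirling, pulling out the factor $\alpha^{\,n-\ell/2}=(\gamma\beta_1)^{(2m-1)\ell/2}$, and absorbing the leftover polynomial-in-$m\ell$ factors into the constant $2^{2m+1}$, we arrive at $\bE[(F_m-\bE F_m)^{\ell}]\le\beta_0\bigl(2^{2m+1}m\ell\,\beta_2^{2}(\gamma\beta_1)^{2m-1}\bigr)^{\ell/2}$, as claimed.
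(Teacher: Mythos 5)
Your overall architecture for the first inequality coincides with the paper's: start from \eqref{eq:New2}, use concavity of $x\mapsto x^q$ together with Lemma \ref{lem:FaadiBrino}(a) to replace the inner sum by $S(m\ell,k)^{1-q}\bigl(\tbinom{m\ell}{k}\tfrac{(m\ell-1)!}{(k-1)!}\bigr)^q$, apply H\"older in $k$ to split off $T_0^{1-q}T_1^{q}$, and bound $T_0=\bE P_{\gamma\beta_1}^{m\ell}$ by $(m\ell/\log(1+m\ell/\gamma\beta_1))^{m\ell}$ (the paper cites Ahle; your MGF derivation of the same bound is fine). The gap is in your treatment of $T_1$. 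The stated bound leaves \emph{no} room for subexponential slack: once $T_0^{1-q}$ consumes the entire factor $(m\ell)^{m(1-q)\ell}\log^{-m(1-q)\ell}(1+m\ell/\gamma\beta_1)$, you must prove $T_1\le (2m\ell)^{m\ell}\max(1,\gamma\beta_1/m\ell)^{m\ell-\ell/2}$ exactly. Your proposal to absorb a polynomial factor into the logarithmic denominator is illegitimate — that denominator is already fully accounted for by the Ahle bound on $T_0$, and you have extracted no surplus from it — while absorption into $2^{qm\ell}$ leaves the small-$\ell$ cases unverified, as you acknowledge. None of this machinery (generating functions, Cauchy estimates, Stirling with refined constants, case-checking) is needed: keeping the binomial sum intact over the truncated range gives $\sum_{k=m}^{\lfloor m\ell-\ell/2\rfloor}\binom{m\ell}{k}(\gamma\beta_1/m\ell)^{k}\le 2^{m\ell}\max(1,\gamma\beta_1/m\ell)^{m\ell-\ell/2}$, which combined with $\tfrac{(m\ell-1)!}{(k-1)!}\le(m\ell)^{m\ell-k}$ — the two inequalities you mention only parenthetically — yields the exact constant in two lines. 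This is the paper's argument.

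For the second inequality your key step is false as stated. With the weight $w_k=\tfrac{(m\ell-1)!}{(k-1)!}\tbinom{m\ell}{k}$ the consecutive ratio is $\gamma\beta_1\,\tfrac{n-k}{k(k+1)}$ (with $n=m\ell$), and at $k=\lfloor n-\ell/2\rfloor$ this is of order $\gamma\beta_1\cdot(\ell/2)/n^2=\ell/n=1/m$ when $\gamma\beta_1=2n$; so for $m\ge2$ the summand is \emph{decreasing} near the top of the range and ``$T$ is at most $n$ times its endpoint value'' does not follow. The repair is again the paper's elementary route: bound each term by $2^{n}(m\ell)^{m\ell-k}(\gamma\beta_1)^{k}=2^{n}(\gamma\beta_1)^{n-\ell/2}(m\ell)^{\ell/2}\,(m\ell/\gamma\beta_1)^{n-k-\ell/2}$ and sum the resulting geometric series (ratio $\le 1/2$), which costs only a factor $2\le 2^{\ell/2}$ and produces $2^{2m+1}$ exactly, with no Stirling estimates and no case analysis.
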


The proof of Theorem \ref{thm:mombound} can be found after the next corollary, in which we simplify slightly the bounds of the last theorem by getting rid of the prefactor $\beta_0$.
We do this at the cost of increasing slightly the constant exponentiated to the $\ell$ and assuming $\ell$ to be large enough.
\begin{corollary}
\label{cor:mombound}
    Assume that $F_m$ is a Poisson $U$-statistic satisfying \eqref{eq:fBound}. Then, 
    \begin{align*}
        \bE[(F_m-\bE F_m)^\ell]
        &\leq \left({2^{qm}e(m\ell)^m\beta_2\max\big(1,{\gamma\beta_1\over m\ell}\big)^{(m-{1\over 2})q} \over \log^{m(1-q)}\big(1+{m\ell\over\gamma\beta_1}\big)}\right)^{\ell}
        \quad \text{ for all $\gamma\beta_1>0$ and $\ell \geq \max(\log (\beta_0),2)$},\\
        \bE[(F_m-\bE F_m)^\ell]
        &\leq \left(2^{2m+1}e^2m\ell\beta_2^2(\gamma\beta_1)^{2m-1}\right)^{\ell/2} 
        \quad \text{ when $\gamma\beta_1 \geq 2 m \ell$ and $\ell\geq \max(\log(\beta_0),2)$.}
    \end{align*}    
\end{corollary}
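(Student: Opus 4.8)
The plan is to deduce the corollary from Theorem \ref{thm:mombound} by a purely algebraic manipulation that replaces the leading factor $\beta_0$ with a constant that can be absorbed into the base of the $\ell$-th power, at the cost of assuming $\ell$ large. The starting point is the first bound of Theorem \ref{thm:mombound}, which has the shape $\bE[(F_m-\bE F_m)^\ell] \le \beta_0 A^\ell$, where
\[
A = \frac{2^{qm}(m\ell)^m\beta_2\max\big(1,\tfrac{\gamma\beta_1}{m\ell}\big)^{(m-\frac12)q}}{\log^{m(1-q)}\big(1+\tfrac{m\ell}{\gamma\beta_1}\big)}.
\]
The elementary fact I would use is that $\beta_0 \le \beta_0' := \max(\beta_0,1) \le e^\ell$ whenever $\ell \ge \log(\beta_0')$; combined with $\ell \ge \max(\log\beta_0,2)$ (noting that if $\beta_0<1$ then $\beta_0\le 1\le e^\ell$ trivially, and recalling that \eqref{eq:fBound} assumes $\beta_0\in[1,\infty)$, so in fact $\beta_0=\beta_0'$), this gives $\beta_0 A^\ell \le e^\ell A^\ell = (eA)^\ell$. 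That is exactly the claimed first inequality, since $eA$ is the displayed base with the extra factor $e$ inserted.

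The second inequality is handled the same way. Theorem \ref{thm:mombound} gives, for $\gamma\beta_1 \ge 2m\ell$,
\[
\bE[(F_m-\bE F_m)^\ell] \le \beta_0\big(2^{2m+1}m\ell\,\beta_2^2(\gamma\beta_1)^{2m-1}\big)^{\ell/2} = \beta_0 B^{\ell/2}
\]
with $B = 2^{2m+1}m\ell\,\beta_2^2(\gamma\beta_1)^{2m-1}$. Here I would bound $\beta_0 \le e^\ell = (e^2)^{\ell/2}$ for $\ell \ge \max(\log\beta_0,2)$, so that $\beta_0 B^{\ell/2} \le (e^2 B)^{\ell/2} = \big(2^{2m+1}e^2 m\ell\,\beta_2^2(\gamma\beta_1)^{2m-1}\big)^{\ell/2}$, which is the stated bound.

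There is essentially no obstacle here — the only point requiring a line of care is the bookkeeping on when $\beta_0 \le e^\ell$: this needs $\ell \ge \log\beta_0$ when $\beta_0\ge 1$ (which is the case under \eqref{eq:fBound}), and is automatic when $\beta_0<1$; taking $\ell \ge \max(\log\beta_0,2)$ covers both and simultaneously preserves the hypothesis $\ell\ge 2$ under which Theorem \ref{thm:mombound} was proved. The "$\max$ with $2$" is therefore not cosmetic but is there to keep the $\ell\ge 2$ requirement in force. I would write the proof in two short paragraphs, one per inequality, each a single chain of the form $\beta_0\,(\text{base})^{\ell \text{ or } \ell/2} \le e^{\ell \text{ or } 2\cdot\ell/2}(\text{base})^{\ell \text{ or }\ell/2}$.
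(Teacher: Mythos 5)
Your argument is exactly the paper's: the published proof consists of the single observation that $\beta_0 \leq (e^{1/k})^{\ell}$ for $\ell \geq k \log \beta_0$, applied with $k=1$ to absorb $\beta_0$ as a factor $e$ (resp.\ $e^2$) into the base raised to the power $\ell$ (resp.\ $\ell/2$). Your additional bookkeeping on $\beta_0\ge 1$ and on retaining $\ell\ge 2$ for Theorem \ref{thm:mombound} is correct and consistent with the paper.
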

\begin{proof}[Proof of Corollary \ref{cor:mombound}]      
    This follows from the observation that $\beta_0 \leq (e^{1/k})^{\ell}$ for $\ell\geq k \log \beta_0$, $k\ge 0$.
\end{proof}

\begin{proof}[Proof of Theorem \ref{thm:mombound}]

    We start by proving the first inequality. By \eqref{eq:New2} we have 
    \[
    \bE[(F_m-\bE F_m)^\ell]
    \leq \beta_0 \beta_2^{\ell} \sum_{k=m}^{\lfloor m\ell - \ell/2\rfloor} (\gamma\beta_1)^k\sum_{\sigma'\in \Pi_{m\ell}(k)}\Big(\prod_{J\in \sigma'}|J|!\Big)^q.
    \]
    Further we note that $x^q$, $x\ge 0$, $q\in [0,1]$ is concave function. Hence, by applying Jensen's inequality and recalling that $|\Pi_{m\ell}(k)|=S(m\ell,k)$ we get
    \[
    \sum_{\sigma'\in \Pi_{m\ell}(k)}\Big(\prod_{J\in \sigma'}|J|!\Big)^q\leq S(m\ell,k)\Big({1\over S(m\ell,k)}\sum_{\sigma'\in \Pi_{m\ell}(k)}\prod_{J\in \sigma'}|J|!\Big)^q=S(m\ell,k)^{1-q}\Big({m\ell\choose k}{(m\ell-1)!\over (k-1)!}\Big)^q,
    \]
    where the second equality follows from Lemma \ref{lem:FaadiBrino}.a. Combining this together with \eqref{eq:New2} and using H\"olders inequality with $p_1={1\over 1-q}$ and $p_2={1\over q}$, $q\in (0,1)$ we obtain
    \begin{align*}
    \bE[(F_m-\bE F_m)^\ell]
    &\leq \beta_0 \beta_2^{\ell} \sum_{k=m}^{\lfloor m\ell - \ell/2\rfloor} (\gamma\beta_1)^{k(1-q+q)}S(m\ell,k)^{1-q}\Big({m\ell\choose k}{(m\ell-1)!\over (k-1)!}\Big)^q\\
    &\leq \beta_0 \beta_2^{\ell}
    \left(\sum_{k=1}^{m\ell}(\gamma\beta_1)^kS(m\ell,k)\right)^{1-q}\left(\sum_{k=m}^{\lfloor m\ell - \ell/2\rfloor}(\gamma\beta_1)^k{m\ell\choose k}{(m\ell-1)!\over (k-1)!}\right)^q.
    \end{align*}
    Also note that for $q=0$ and $q=1$ this inequality trivially holds.
    Finally according to  \cite[Theorem 1]{Ahle22}, we have that $\bE P_{\alpha}^n \leq ( n / \log(1+n/\alpha) )^n$ for $\alpha>0$ and $n\in\bN$ and, hence,
    \[
    \left(\sum_{k=1}^{m\ell}(\gamma\beta_1)^kS(m\ell,k)\right)^{1-q}=\Big(\bE\big[ (P_{\gamma\beta_1})^{m\ell}\big]\Big)^{1-q}\leq \left({(m\ell)^{m(1-q)}\over\log^{m(1-q)}\big(1+{m\ell\over \gamma\beta_1}\big)}\right)^\ell.
    \]
    Moreover since ${(m\ell-1)!\over (k-1)!}\leq (m\ell)^{m\ell-k}$ 
    we get
    \[
    \left(\sum_{k=m}^{\lfloor m\ell - \ell/2\rfloor}(\gamma\beta_1)^k{m\ell\choose k}{(m\ell-1)!\over (k-1)!}\right)^q\leq (m\ell)^{qm\ell}\left(\sum_{k=m}^{\lfloor m\ell - \ell/2\rfloor}\left({\gamma\beta_1\over m\ell}\right)^k{m\ell\choose k}\right)^q \leq (2m\ell)^{qm\ell}\max\Big(1,{\gamma\beta_1\over m\ell}\Big)^{q(m\ell-\ell/2)}.
    \]
    Combining these estimates together finishes the proof.
    
    In order to prove the second inequality, we combine \eqref{eq:New2} with Lemma \ref{lem:FaadiBrino} and obtain 
\[
    \bE[(F_m-\bE F_m)^\ell]
    \leq \beta_0 \beta_2^{\ell} \sum_{k=m}^{\lfloor m\ell - \ell/2\rfloor} (\gamma\beta_1)^k{(m\ell-1)!\over (k-1)!}{m\ell\choose k},
    \]
     since $\prod_{J\in \sigma'}|J|!\ge 1$.  Further note that ${m\ell\choose k}\leq 2^{m\ell}$ and ${(m\ell-1)!\over (k-1)!}\leq (m\ell)^{m\ell-k}$. Thus for any $\gamma\beta_1\ge 2m\ell$, we get
    \[
    \bE[(F_m-\bE F_m)^\ell]
    \leq \beta_0 (2^{2m}m\ell\beta_2^2(\gamma\beta_1)^{2m-1})^{\ell/2} \sum_{k=m}^{\lfloor m\ell - \ell/2\rfloor} \Big({m\ell\over \gamma\beta_1}\Big)^{m\ell-k-\ell/2}\leq 2\beta_0 (2^{2m}m\ell\beta_2^2(\gamma\beta_1)^{2m-1})^{\ell/2},
    \]
which finishes the proof since $2\leq 2^{\ell/2}$.
  
\end{proof}

\subsection{Concentration bounds}

In this section we prove the following concentration bounds.
In the results below we did not aim for optimal constants in order to get simpler proofs.

\begin{theorem}[Application of the bounds on centred moments] \label{tm:momentConcRes_NEW}
    Assume that $F_m$, $m\ge 1$ is a Poisson $U$-statistic satisfying \eqref{eq:fBound}.
    \begin{itemize}
    \item[(a)] Let $\Cl{c151} \coloneqq  \left( \lceil \max(2e^{2} , \lceil\log(\beta_0)\rceil )\times 2^qe^{1+{1\over m}} m\rceil  \right)^m$.
    If $\gamma\beta_1\geq 1$, we have 
    \begin{equation} \label{eq:MomentBoundExponentialTail}
        \bP(|F_m-\bE F_m|\geq t)
         \leq \exp\Big(- \frac{1}{2^{1+q}em} \Big({t\over e\beta_2}\Big)^{1\over m} \log^{1-q}\Big({1\over 2^{2mq}e^{m+1}}{t\over \beta_2(\gamma\beta_1)^m}\Big) \Big) ,
    \end{equation} 
    for any $t\ge \Cr{c151} \beta_2 (\gamma\beta_1)^{m}$.
    \item[(b)] Let $\Cl{c162} \coloneqq  \max(1, \lceil \log(\beta_0) \rceil ) $.
    If $\gamma\beta_1\geq 8m\Cr{c162}$,
    we have
    \begin{align} \label{eq:MomentBoundSubExponentialTail}
        \bP(|F_m-\bE F_m|\geq t)
        \leq \exp\Big(-\frac{t^2}{2^{2m+8}m \beta_2^2 (\gamma\beta_1)^{2m-1}}\Big),
    \end{align}
    for $2^{m+3}\sqrt{2\Cr{c162}m}  \beta_2 (\gamma\beta_1)^{m-{1\over 2}}
        \leq t
        \leq 2^{m+2}\beta_2 (\gamma\beta_1)^{m}$.
    \end{itemize}
\end{theorem}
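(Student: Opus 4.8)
The plan is to feed the centred-moment estimates of Corollary~\ref{cor:mombound} into Markov's inequality. For any even integer $\ell\ge 2$ we have $(F_m-\bE F_m)^\ell=|F_m-\bE F_m|^\ell\ge 0$, so
\[
\bP(|F_m-\bE F_m|\ge t)=\bP\big((F_m-\bE F_m)^\ell\ge t^\ell\big)\le t^{-\ell}\,\bE[(F_m-\bE F_m)^\ell] ,
\]
and both tails are covered simultaneously, which is why there is no factor $2$ in the statement. Everything then reduces to choosing $\ell=\ell(t)$ well, subject to the hypotheses under which Corollary~\ref{cor:mombound} is available: $\ell\ge\max(\log\beta_0,2)$ in both parts, and in addition $\gamma\beta_1\ge 2m\ell$ for part~(b).

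\emph{Part (a).} Inserting the first bound of Corollary~\ref{cor:mombound} gives $\bP(|F_m-\bE F_m|\ge t)\le B(\ell)^\ell$, where
\[
B(\ell)=\frac{2^{qm}e\,(m\ell)^m\beta_2}{t}\cdot\frac{\max\big(1,\tfrac{\gamma\beta_1}{m\ell}\big)^{(m-1/2)q}}{\log^{m(1-q)}\big(1+\tfrac{m\ell}{\gamma\beta_1}\big)} .
\]
I would then choose $\ell$ close to the minimiser of $B(\ell)^\ell$. Since $\ell$ occurs in $B(\ell)$ both polynomially (through $(m\ell)^m$) and logarithmically (in the denominator), the minimiser has no closed form; one has to solve $\frac{d}{d\ell}\big(\ell\log B(\ell)\big)=0$ to sub-leading order, and it is precisely the retained sub-leading term that makes the optimal $\ell$ proportional to $(t/\beta_2)^{1/m}\log^{1-q}\!\big(t/(\beta_2(\gamma\beta_1)^m)\big)/m^{2-q}$ rather than just $(t/\beta_2)^{1/m}/m$; this in turn is what produces the $\log^{1-q}$ factor in the final exponent. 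The constant $\Cr{c151}$ is engineered so that the hypothesis $t\ge\Cr{c151}\beta_2(\gamma\beta_1)^m$, together with $\gamma\beta_1\ge 1$, forces this choice of $\ell$ to satisfy $\ell\ge\max(\log\beta_0,2)$ (so Corollary~\ref{cor:mombound} applies) and $m\ell\ge\gamma\beta_1$ (so that $\max(1,\tfrac{\gamma\beta_1}{m\ell})=1$ and $\log(1+\tfrac{m\ell}{\gamma\beta_1})\ge\log 2$). With such an $\ell$ one checks $B(\ell)\le e^{-1}$, hence $\bP(|F_m-\bE F_m|\ge t)\le e^{-\ell}$; substituting the value of $\ell$ (after rounding it down to an even integer, which only costs constants) and using that the exponent claimed in \eqref{eq:MomentBoundExponentialTail} is a fixed fraction of $\ell$ gives \eqref{eq:MomentBoundExponentialTail}.

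\emph{Part (b).} Here I would instead use the second bound of Corollary~\ref{cor:mombound}, valid whenever $\gamma\beta_1\ge 2m\ell$ and $\ell\ge\max(\log\beta_0,2)$, giving
\[
\bP(|F_m-\bE F_m|\ge t)\le\left(\frac{2^{2m+1}e^2m\ell\,\beta_2^2(\gamma\beta_1)^{2m-1}}{t^2}\right)^{\ell/2} .
\]
This is the classical Gaussian-type optimisation: for a suitable constant, taking $\ell$ (rounded to an even integer) proportional to $t^2\big/\big(m\beta_2^2(\gamma\beta_1)^{2m-1}\big)$ makes the bracket $\le e^{-1}$, so that $\bP(|F_m-\bE F_m|\ge t)\le e^{-\ell/2}$, which after absorbing the rounding loss becomes $\exp\big(-t^2/(2^{2m+8}m\beta_2^2(\gamma\beta_1)^{2m-1})\big)$. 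The two side conditions cut out the admissible $t$-window: the requirement $\ell\ge\max(\log\beta_0,2)$ (recall $\Cr{c162}\ge\lceil\log\beta_0\rceil$) pushes $t$ above the left endpoint of order $\sqrt{\Cr{c162}m}\,\beta_2(\gamma\beta_1)^{m-1/2}$, while $\gamma\beta_1\ge 2m\ell$ pushes $t$ below the right endpoint of order $\beta_2(\gamma\beta_1)^m$, i.e.\ $t\le 2^{m+2}\beta_2(\gamma\beta_1)^m$; the standing assumption $\gamma\beta_1\ge 8m\Cr{c162}$ is exactly what guarantees that this window is non-empty (indeed $2^{m+3}\sqrt{2m\Cr{c162}}\le 2^{m+2}\sqrt{\gamma\beta_1}$ iff $\gamma\beta_1\ge 8m\Cr{c162}$) and leaves enough slack to swallow the rounding of $\ell$.

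\emph{Main obstacle.} All the probabilistic substance sits in Corollary~\ref{cor:mombound}; what remains is bookkeeping, and it is genuinely delicate only for part~(a). There the point is that $B(\ell)$ is not minimised in closed form, so one must (i) commit to an explicit near-optimal $\ell$, (ii) verify that it is admissible throughout $t\ge\Cr{c151}\beta_2(\gamma\beta_1)^m$ — in particular that $\ell\ge\max(\log\beta_0,2)$ and $m\ell\ge\gamma\beta_1$ hold there, which is exactly what the somewhat baroque definition of $\Cr{c151}$ is for — and (iii) chase the constants (the powers of $2^q$, of $e$, and of $m$, plus the effect of replacing $\ell$ by the nearest smaller even integer) carefully enough to land on the precise exponent of \eqref{eq:MomentBoundExponentialTail}. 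Part~(b), by contrast, is a routine one-parameter quadratic optimisation together with two interval inequalities.
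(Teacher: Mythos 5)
Your proposal follows the paper's proof essentially verbatim in structure: Markov's inequality applied to an even moment (which is exactly why no factor $2$ appears), the two bounds of Corollary~\ref{cor:mombound}, and in each part a near-optimal even $\ell$ — in (a) the paper takes the largest even $\ell_0$ with $\ell_0/\log^{1-q}(1+m\ell_0/(\gamma\beta_1))\le\varphi(t)$ where $\varphi(t)\propto (t/\beta_2)^{1/m}/m$, matching the implicit, non-closed-form choice and the scale $\ell_0\propto (t/\beta_2)^{1/m}\log^{1-q}(\cdot)$ you describe, with $\Cr{c151}$ serving precisely to guarantee $\ell_0\ge\max(\log\beta_0,\gamma\beta_1)$; in (b) the paper takes the largest even $\ell_0\le \varphi(t)\propto t^2/(m\beta_2^2(\gamma\beta_1)^{2m-1})$ and the $t$-window and the role of $\gamma\beta_1\ge 8m\Cr{c162}$ are exactly as you state. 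The remaining work is the constant-chasing you already flag; there is no gap in the approach.
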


\begin{remark}
    In (a) we recover the behavior of Theorem \ref{thm:largeorder2} with slightly worse constants.
\end{remark}

\begin{proof}[Proof of Theorem \ref{tm:momentConcRes_NEW}.a]
    Applying Markov's inequality and Corollary \ref{cor:mombound} with $\gamma\beta_1\ge 1$, for any even $\ell\geq \max(\log(\beta_0),\gamma\beta_1)$, we have 
    \begin{align}
        \bP(|F_m-\bE F_m|\geq t)
        &\leq \frac{\bE[(F_m-\bE F_m)^\ell]}{t^\ell}\notag
        \\
        &\leq \left({e (2^qm\ell)^m \beta_2\max\big(1,{\gamma\beta_1\over m\ell}\big)^{(m-{1\over 2})q}\over t\log^{m(1-q)}\big(1+{m\ell\over\gamma\beta_1}\big)}\right)^{\ell} = \left({\ell\over e\varphi(t)\log^{1-q}\big(1+{m\ell\over\gamma\beta_1}\big)}\right)^{m\ell} , \label{eq:151a}
    \end{align}
    where $\varphi(t) \coloneqq  t^{1\over m} / [2^{q}e^{1+{1\over m}} m \beta_2^{1\over m}] $. 
    
    Note that the assumption $t\geq \Cr{c151} \beta_2 (\gamma\beta_1)^{m}$ gives that $\varphi(t) \geq (\Cr{c151}^{1\over m}/2^{q}e^{1+{1\over m}} m) \gamma\beta_1$.
    Further recalling, that $\Cr{c151}= \left( \lceil \max(2e^{2} , \lceil\log(\beta_0)\rceil )\times 2^{q}e^{1+{1\over m}} m\rceil  \right)^m $ and $\gamma\beta_1\ge 1$ we obtain 
    \begin{equation}\label{eq:152}
    \varphi(t)\ge \max(2e^{2}, \lceil\log(\beta_0)\rceil )\gamma\beta_1\ge 2e^2\gamma\beta_1\ge 2e^2>2.
    \end{equation}
    Let $\ell_0$ be the largest even integer number such that $\ell_0/ \log^{1-q}(1+m\ell_0/(\gamma\beta_1)
    ) \leq \varphi(t) $.  
   This choice is possible since for $\ell=2^{1-q}\lfloor\varphi(t)\rfloor\ge 2^{1-q}\lfloor 2e^{2}\gamma\beta_1\rfloor\ge 2\lfloor e^2\rfloor\lfloor \gamma\beta_1\rfloor$ by \eqref{eq:152} we have
    $$
    {\ell\over \log^{1-q}\big(1+{m\ell\over \gamma\beta_1}\big)}\leq  {2^{1-q}\lfloor\varphi(t)\rfloor\over \log^{1-q}\big(1+{2m\lfloor e^2\rfloor\lfloor \gamma\beta_1\rfloor\over \gamma\beta_1}\big)}\leq {2^{1-q}\lfloor\varphi(t)\rfloor\over \log^{1-q}(1+\lfloor e^{2}\rfloor)}\leq \lfloor\varphi(t)\rfloor,
    $$
    and since $\ell/\log^{1-q}\big(1+{m\ell\over\gamma\beta_1}\big)$ is strictly increasing in $\ell$, we conclude $\ell_0 \geq 2^{1-q}\lfloor\varphi(t)\rfloor>2$. Further, from the definition of $\ell_0$ and since $\ell_0\ge 2$, we obtain
    \begin{equation*}
        \frac{\ell_0}{\log^{1-q}\big(1+{m\ell_0\over\gamma\beta_1}\big)}
        \leq \varphi(t) 
        < \frac{\ell_0+2}{\log^{1-q}\big(1+{m(\ell_0+2)\over\gamma\beta_1}\big)}
        \leq \frac{2 \ell_0}{\log^{1-q}\big(1+{m\ell_0\over\gamma\beta_1}\big)} .
    \end{equation*}
    Our next aim is to show that $\ell_0\ge \log(\beta_0)$ and $\ell_0\ge \gamma\beta_1$ so that \eqref{eq:151a} applies. Indeed since $\gamma\beta_1\ge 1$ and due to \eqref{eq:152} we have
    $$
    \ell_0 \geq 2^{1-q}\lfloor\max(2e^{2}, \lceil\log(\beta_0)\rceil )\gamma\beta_1\rfloor \ge \lceil\log(\beta_0)\rceil\ge \log(\beta_0),\qquad\qquad \ell_0 \geq 2\lfloor e^2\rfloor\lfloor \gamma\beta_1\rfloor\ge \lfloor \gamma\beta_1\rfloor+1\ge \gamma\beta_1.
    $$
    In particular Equation \eqref{eq:151a} holds for $\ell=\ell_0$, which gives
    \begin{align*}
        \bP(|F_m-\bE F_m|\geq t)
        &\leq \left({\ell_0\over e\varphi(t)\log^{1-q}\big(1+{m\ell_0\over\gamma\beta_1}\big)}\right)^{m\ell_0}
        \leq e^{- m \ell_0}
        \leq \exp\Big(- {m\over 2} \log^{1-q}\Big(1+{m\ell_0\over \gamma\beta_1}\Big) \varphi(t)\Big) .
    \end{align*}    
    Note that we have $\ell_0\geq 2^{1-q}\lfloor \varphi(t)\rfloor\ge 2\lfloor e^2\rfloor\lfloor\gamma\beta_1\rfloor$ and it holds that
    \begin{align*}
    \log\left(1+{2^{1-q}m\ell_0\over\gamma\beta_1\log(1+{m\ell_0\over \gamma\beta_1})}\right)&\leq \log\left(1+{2^{1-q}m\ell_0\over\gamma\beta_1\log^{1-q}(1+{2m\lfloor e^2\rfloor\lfloor\gamma\beta_1\rfloor\over \gamma\beta_1})}\right)\\
     &\leq \log\left(1+{2^{1-q}m\ell_0\over\gamma\beta_1\log^{1-q}(1+{\lfloor e^{2}\rfloor})}\right)\leq \log\left(1+{m\ell_0\over\gamma\beta_1}\right).
    \end{align*}
    Thus, we also get that,
    \begin{align*} 
        \log\Big(1+{m\ell_0\over\gamma\beta_1}\Big)&\ge \log\Big(1+{2^{1-q}m\ell_0\over\gamma\beta_1\log^{1-q}(1+{m\ell_0\over \gamma\beta_1})}\Big)\ge \log\Big({m\over 2^q}{\varphi(t)\over \gamma\beta_1}\Big)={1\over m}\log\Big({1\over 2^{2mq}e^{m+1}}{t\over \beta_2(\gamma\beta_1)^m}\Big).
    \end{align*}
    Therefore ,
    \begin{align*}
        \bP(|F_m-\bE F_m|\geq t)
        & \leq \exp\Big(- \frac{1}{2^{1+q}e^{1+{1\over m}}m}\Big({t\over \beta_2}\Big)^{1\over m} \log^{1-q}\Big({1\over 2^{2mq}e^{m+1}}{t\over \beta_2(\gamma\beta_1)^m}\Big) \Big) .
    \end{align*}
\end{proof}

\begin{proof}[Proof of Theorem \ref{tm:momentConcRes_NEW}.b]
    Again, we use Markov's inequality in combination with Corollary \ref{cor:mombound}. For any even number $\ell\geq \max(2, \log(\beta_0)) $ and $\gamma\beta_1\ge 2m\ell$, we get
    \begin{align}
        \bP(|F_m-\bE F_m|\geq t)
        \leq \frac{\bE[(F_m-\bE F_m)^\ell]}{t^\ell}
        &\leq \left(\frac{\beta_2^2 (\gamma\beta_1)^{2m-1}2^{2m+1}e^2 m\ell}{t^2}  \right)^{\ell/2} 
        \leq \left( \frac{\ell}{e \varphi(t)} \right)^{\ell/2} .\label{eq:154}
    \end{align}
    with $ \varphi(t) \coloneqq  t^2 / [ 2^{2m+6}m \beta_2^2 (\gamma\beta_1)^{2m-1} ] $, where we used that $e^3 \leq 2^5$.
    Note that the assumptions on $t$ are equivalent to 
    \begin{equation}\label{eq:153}
    2\max(1, \lceil \log(\beta_0) \rceil ) \leq \varphi(t) \leq (4m)^{-1}\gamma\beta_1.
    \end{equation}
    Let $\ell_0$ be the largest even number such that $\ell_0\leq \varphi(t)$.
    Since the left hand side of \eqref{eq:153} is an even number, there exists such a $\ell_0$ and it satisfies $\ell_0 \geq 2\max(1, \lceil \log(\beta_0) \rceil ) \geq 2$. 
    It follows that
    \begin{equation*}
       \ell_0\leq \varphi(t)\leq \ell_0+2\leq 2\ell_0,
    \end{equation*}
    and in combination with \eqref{eq:153} this implies, that $\gamma\beta_1\ge 4m\ell_0$. Hence, $\ell_0$ satisfies \eqref{eq:154} and we get
    \begin{align*}
        \bP(|F_m-\bE F_m|\geq t) 
        & \leq \left( \frac{\ell_0}{e \varphi(t)} \right)^{\ell_0/2} 
        \leq \left( \frac{1}{e} \right)^{\varphi(t)/4}
        \leq \exp\Big(-\frac{t^2}{2^{2m+8}m \beta_2^2 (\gamma\beta_1)^{2m-1} }\Big).
    \end{align*}
\end{proof}


\section{Summary and discussion of the results}\label{sec:discussion}

Our main theorem was presented in the introduction in a simple form, without explicit constants. 
We give now its extended version, as well as its proof.
It summarizes the results of the previous sections. 

\newtheorem*{theorem*}{Theorem \ref{tm:Summary} Extended}

\begin{theorem*}[Main result]\label{tm:SummaryComplete} \,
    Assume $m\geq 1$and $F_m$ satisfies \eqref{eq:fBound}. Then
    \begin{subnumcases}{\bP(|F_m-\bE F_m| \geq t) \leq \label{boundA2}}
        2\exp\left( - \Cr{c_26} \frac{t^2}{\gamma^{2m-1}} \right), 
        & \text{if $0\leq  t\leq \Cr{c_24} \gamma^{m-\frac{1}{2}}$, $\gamma\ge \Cr{c_11}$;} \label{newbound4}
        \\ \exp\left( - \Cr{c_23} \frac{t^2}{\gamma^{2m-1}} \right), 
        & \text{if $ \Cr{c_24} \gamma^{m-\frac{1}{2}} \leq  t\leq \Cr{c_17}\gamma^{m}$, $\gamma\ge \Cr{c_11}$;} \label{newbound5}
        \\\exp \left( -\Cr{c_15} t^{\frac{1}{m}}\log^{1-q} \left( \Cr{c_16} \frac{t}{\gamma^m}\right) \right), 
        & \text{if $t\ge \Cr{c_17}\gamma^m$, $\gamma \geq \Cr{c_9}$} \label{newbound6}
    \end{subnumcases}
    where
    \begin{align*}
        \Cl{c_26} &= \frac{\|f_1\|_{L^2(\Lambda)}^2 }{ 2^{17m+4}(\beta_0 m)^{2m+3} \beta_2^4 \beta_1^{4m-2}} &
        \Cl{c_23} &= \frac{1}{2^{16m+4} (m \beta_0)^{2m+1} \beta_2^2 \beta_1^{2m-1}} &
        \Cl{c_24} &= 2^{8m+{3
        \over 2}}(m \beta_0)^{m+{1\over 2}} \beta_2 \beta_1^{m-\frac{1}{2}} \\
        \Cl{c_11} &={8 m \beta_0}{\beta_1^{-1}}, &
        \Cl{c_9} &=\beta_1^{-1}, &
        \Cl{c_15}&=(2^{1+q}em)^{-1}(e\beta_2)^{-{1\over m}},\\
        \Cl{c_16}&=2^{-mq}e^{-m-1}\beta_2^{-1}\beta_1^{-m}, &
        \Cl{c_17}&= 2^{8m}\beta_2 (m \beta_0 \beta_1)^m. &
    \end{align*}
    In particular, if $\beta_1 \gamma \geq 8 m \beta_0 $, then for any $t\geq 0$, we have 
    \begin{align} \label{onebound2} \tag{\ref*{boundA2}d}
        \bP(|F_m-\bE F_m| \geq t)
        &\leq 2\exp \left( - \Cr{c_27} \Big(\frac{t}{\beta_2} \Big)^{\frac{1}{m}} \min \left( \left[ \frac{t}{\beta_2 (\beta_1 \gamma)^m} \right]^{2-\frac{1}{m}} , \left[1 + \log_+ \left( \frac{t}{\beta_2 (\beta_1 \gamma)^m} \right)\right]^{1-q} \right)  \right), 
    \end{align}
    where $\Cl{c_27} := 2^{-17m-4}(\beta_0 m)^{-2m-3} \min\left( 1 , \|f_1\|_{L^2(\Lambda)}^2 \beta_2^{-2} \beta_1^{1-2m} \right) $. If we consider the bound for upper or lower tail only, the factor 2 in front of the exponent can be removed.
\end{theorem*}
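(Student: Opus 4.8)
The plan is to assemble \eqref{boundA2} and \eqref{onebound2} by patching together the three regimes already treated: the Gaussian tail of Proposition \ref{prop:ChebyshevCantelli} for small deviations, the stretched‑Gaussian tail of Theorem \ref{tm:momentConcRes_NEW}.b for moderate deviations, and the sub‑exponential $t^{1/m}\log^{1-q}t$ tail of Theorem \ref{tm:momentConcRes_NEW}.a for large deviations. Since each of those results was stated for $F_m$, and $-F_m$ is again a Poisson $U$‑statistic satisfying \eqref{eq:fBound} with the same constants $\beta_0,\beta_1,\beta_2,q$ (the bound there is on $|\int(f^{\otimes\ell})_\sigma\,\mathrm d\Lambda^k|$) and with $\bV(-F_m)=\bV F_m$, applying each estimate to both $F_m$ and $-F_m$ and taking a union bound yields the two‑sided statements with the prefactor $2$; dropping one of the two calls gives the one‑sided versions. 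First I would fix the free constants of Proposition \ref{prop:ChebyshevCantelli} (namely $\Cr{c43}$ and $\Cr{c47}$) and record the elementary inequality $\Cr{c162}\le\beta_0$, so that the validity windows of the three inputs can be made to line up.

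For \eqref{newbound4} I would split the range $0\le t\le\Cr{c_24}\gamma^{m-1/2}$ at $t_\star:=\Cr{c43}\|f_1\|_{L^2(\Lambda)}\gamma^{m-1/2}$: for $t\le t_\star$ Proposition \ref{prop:ChebyshevCantelli} applies directly, using $\bV F_m\ge\|f_1\|_{L^2(\Lambda)}^2\gamma^{2m-1}$ from \eqref{eq_UstatVarBounds}; for $t_\star<t\le\Cr{c_24}\gamma^{m-1/2}$ one bounds $\bP(|F_m-\bE F_m|\ge t)\le\bP(|F_m-\bE F_m|\ge t_\star)$ by monotonicity and uses that $t^2/\gamma^{2m-1}\le\Cr{c_24}^2$ on this stretch, so the bound at $t_\star$ is still at most $2\exp(-\Cr{c_26}t^2/\gamma^{2m-1})$ once $\Cr{c_26}$ is taken small enough — this is exactly the source of the extra factor $\|f_1\|_{L^2(\Lambda)}^2/(\beta_2^2\beta_1^{2m-1}\Cr{c_24}^2)$ appearing in $\Cr{c_26}$. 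For \eqref{newbound5} I would apply Theorem \ref{tm:momentConcRes_NEW}.b on its own window (after checking $\Cr{c_24}\gamma^{m-1/2}\ge 2^{m+3}\sqrt{2\Cr{c162}m}\,\beta_2(\gamma\beta_1)^{m-1/2}$, which reduces to an inequality between absolute constants) and again use a monotonicity step to extend the quadratic bound from $t\le 2^{m+2}\beta_2(\gamma\beta_1)^m$ up to $t\le\Cr{c_17}\gamma^m$, absorbing the loss into the smaller constant $\Cr{c_23}$. For \eqref{newbound6} one invokes Theorem \ref{tm:momentConcRes_NEW}.a for $t\ge\Cr{c_17}\gamma^m$, checking $\Cr{c_17}\gamma^m\ge\Cr{c151}\beta_2(\gamma\beta_1)^m$ (i.e.\ $2^{8m}(m\beta_0)^m\ge\Cr{c151}$, which holds after bounding $\Cr{c151}$ crudely) and simplifying the logarithmic argument, using that $\Cr{c_16}t/\gamma^m\ge1$ on this range.

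For the unified bound \eqref{onebound2} I would work under $\beta_1\gamma\ge8m\beta_0=\Cr{c_11}$, which forces $\gamma\ge\Cr{c_9}$ as well, so all three windows are available and together exhaust $[0,\infty)$. The key algebraic identity is
\[
\Bigl(\tfrac{t}{\beta_2}\Bigr)^{1/m}\Bigl[\tfrac{t}{\beta_2(\beta_1\gamma)^m}\Bigr]^{2-\frac1m}=\frac{t^2}{\beta_2^2(\beta_1\gamma)^{2m-1}},
\]
so on the first two windows the quadratic bounds \eqref{newbound4}, \eqref{newbound5} are already of the form $2\exp\bigl(-\Cr{c_27}(t/\beta_2)^{1/m}\cdot[\text{first term of the min}]\bigr)$ provided $\Cr{c_26}$ and $\Cr{c_23}$ both dominate $\Cr{c_27}\beta_2^{-2}\beta_1^{1-2m}$, which is how $\Cr{c_27}$ is calibrated; since $\min(a,b)\le a$, this gives \eqref{onebound2} there. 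On the third window I would compare $\log(\Cr{c_16}t/\gamma^m)$ with $1+\log_+\bigl(t/(\beta_2(\beta_1\gamma)^m)\bigr)$ — a bounded‑ratio comparison valid because $t/(\beta_2(\beta_1\gamma)^m)\ge2^{8m}(m\beta_0)^m$ there — and absorb the resulting constant together with the ratio of prefactors into $\Cr{c_27}$, using $\min(a,b)\le b$ to conclude. Finally, the statement that the factor $2$ may be dropped for a single tail follows since each one‑sided estimate used only $F_m$ or only $-F_m$.

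The main obstacle is not conceptual but the bookkeeping: one must choose $\Cr{c43},\Cr{c47}$ and verify simultaneously the chain of constant inequalities ($\Cr{c162}\le\beta_0$; $\Cr{c_24}\gamma^{m-1/2}$ exceeds the lower threshold of Theorem \ref{tm:momentConcRes_NEW}.b; $2^{8m}(m\beta_0)^m\ge\Cr{c151}$; $\Cr{c_26},\Cr{c_23}\ge\Cr{c_27}\beta_2^{-2}\beta_1^{1-2m}$; the logarithm comparison on the large‑deviation window) so that the three windows cover $[0,\infty)$ with no gap and the patched rate functions are never worse than the ones written in the statement.
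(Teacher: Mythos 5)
Your proposal is correct and follows essentially the same route as the paper: \eqref{newbound6} comes from Theorem \ref{tm:momentConcRes_NEW}.a after checking $\Cr{c151}\beta_2\beta_1^m\le\Cr{c_17}$, \eqref{newbound5} from Theorem \ref{tm:momentConcRes_NEW}.b extended by a monotonicity/rescaling step, \eqref{newbound4} from Proposition \ref{prop:ChebyshevCantelli} with $\bV F_m\ge\|f_1\|^2_{L^2(\Lambda)}\gamma^{2m-1}$, and \eqref{onebound2} by the identity $(t/\beta_2)^{1/m}[t/(\beta_2(\beta_1\gamma)^m)]^{2-1/m}=t^2/(\beta_2^2(\beta_1\gamma)^{2m-1})$. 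The only cosmetic differences are bookkeeping choices (you split the Gaussian window at $t_\star$ and freeze, while the paper tunes $\Cr{c43}$ in Chebyshev--Cantelli to cover the whole range at once, and uses a proportional rescaling $s\propto t$ for \eqref{newbound5}); also note that Theorem \ref{tm:momentConcRes_NEW} is already stated for $|F_m-\bE F_m|$, so no union bound over $\pm F_m$ is needed there, which is why \eqref{newbound5} and \eqref{newbound6} carry no prefactor $2$.
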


\begin{remark}
	Let us point out that in case when $F_m$ satisfies \eqref{assumptions} we may alternatively apply Theorem \ref{thm:largeorder2} instead of Theorem \ref{tm:momentConcRes_NEW}.b. The bounds obtained in this case will be of the same form as bounds \eqref{newbound4}, \eqref{newbound5}, \eqref{newbound6} and \eqref{onebound2}, with $\beta_0=1$, $\beta_1=\alpha_1$, $\beta_2=\alpha_2$ and $q=0$, but the corresponding constants will be sharper in terms of $m$. Moreover we may take $\Cr{c_9}=0$ in this case.
\end{remark}

\begin{proof}[Proof of \eqref{newbound6}]
    It follows from Theorem \ref{tm:momentConcRes_NEW}.a and the observations that $\Cr{c151}\beta_2\beta_1^m\leq \Cr{c_17}$, because
    \begin{align*}
        \lceil \max(2e^{2} , \lceil\log(\beta_0)\rceil )\times 2^qe^{1+{1\over m}} m\rceil
        \leq 2^8 m\lceil\log(\beta_0)\rceil
        \leq  2^8 m \beta_0 ,
    \end{align*}
    which follows from the trivial bounds
    $\max(2e^2, \lceil\log(\beta_0)\rceil ) e^{1+\frac{1}{m}} \leq 2e^{4}\lceil\log(\beta_0)\rceil$,
    $e^4\leq 2^6$,
    and $\lceil\log(\beta_0)\rceil \leq \beta_0$.     
\end{proof}

\begin{proof}[Proof of \eqref{newbound5}]
    Theorem \ref{tm:momentConcRes_NEW}.b gives that for $\gamma\beta_1\geq 8m\Cr{c162}$,
    we have
    \begin{align*}
        \bP(|F_m-\bE F_m|\geq s)
        &\leq \exp\Big(-\frac{s^2}{2^{2m+8}m \beta_2^2 (\gamma\beta_1)^{2m-1}}\Big),
    \end{align*}
    for $2^{m+3}\sqrt{2\Cr{c162}m} \beta_2 (\gamma\beta_1)^{m-{1\over 2}}
    \leq s
    \leq 2^{m+2}\beta_2 (\gamma\beta_1)^{m}$,
    where $\Cr{c162} = \max(1,\lceil \log \beta_0 \rceil) \leq \beta_0$.
    Taking $s= \frac{2^{m+2}}{2^{8m}(m \beta_0)^m} t < t $ and noting that $\bP(|F_m-\bE F_m|\geq t) \leq \bP(|F_m-\bE F_m|\geq s)$ gives 
    \begin{align*}
        \bP(|F_m-\bE F_m|\geq t)
        &\leq \exp\Big(- \frac{t^2}{2^{16m+4} (m \beta_0)^{2m+1}\beta_2^2 (\gamma\beta_1)^{2m-1}}\Big),
        \\& \text{for } 2^{8m+1}\sqrt{2\Cr{c162}m}(m \beta_0)^m \beta_2 (\gamma\beta_1)^{m-{1\over 2}}
            \leq  t
            \leq 2^{8m}(m \beta_0)^m\beta_2 (\gamma\beta_1)^{m}.
    \end{align*}
    Thus \eqref{newbound5} holds.    
\end{proof}

\begin{proof}[Proof of \eqref{newbound4}]
    If $\|f_1\|_{L^2(\Lambda)}=0$ then $\Cr{c_26}=0$ and the statement is trivial.
    For the rest of the proof, we assume that $\|f_1\|_{L^2(\Lambda)}>0$.
    Due to \eqref{eq_UstatVarBounds} this implies that $\bV F_m>0$.
    Thus, applying Proposition \ref{prop:ChebyshevCantelli} gives that, for any constants $\Cr{c47}, \Cr{c43}>0$,
    \begin{align*}
        \bP(F_m-\bE F_m \geq t) 
        & \leq \exp \left( -\frac{t^2}{\Cr{c44}\beta_2^2(\beta_1\gamma)^{2m-1}}\right),
        \ 0\leq t<\Cr{c43}\sqrt{\bV F_m}, \ \gamma \beta_1\ge \Cr{c47},
    \end{align*}
    where
    $$
    \Cr{c44}=2^q\beta_0 m^2\big(
    2^q\Cr{c47}^{-1} m + 1 \big)^{m-1}(1+\Cr{c43}^2).
    $$
    We set $ \Cr{c47}=8m\beta_0 \ge 2^q m $ and 
    $\Cr{c43} 
    = \Cr{c_24} \gamma^{m-\frac{1}{2}} / \sqrt{\bV F_m} $.
    By \eqref{eq_UstatVarBounds} we have $\bV F_m \geq \|f_1\|_{L^2(\Lambda)}^2 \gamma^{2m-1}$ and, thus, 
    $$ \Cr{c43}^2 
    \leq \Cr{c_24}^2 / \|f_1\|_{L^2(\Lambda)}^2 
    =2^{16m+3}(m\beta_0)^{2m+1} \beta_2^2 \beta_1^{2m-1} / \|f_1\|_{L^2(\Lambda)}^2.$$
    Moreover, since by \eqref{eq:fkA2Bound1} we have $\|f_1\|_{L^2(\Lambda)}^2 
    \leq 2^qm^2 \beta_0 \beta_2^2 \beta_1^{2m-1}$, it follows that
    \begin{align*}
        1+\Cr{c43}^2 
        & \leq ( 1 + 2^{16m+3-q}m^{2m-1} \beta_0^{2m} ) \frac{2^qm^2 \beta_0 \beta_2^2 \beta_1^{2m-1}}{\|f_1\|_{L^2(\Lambda)}^2}
        \leq 2^{16m+4}(m\beta_0)^{2m+1} \frac{\beta_2^2 \beta_1^{2m-1}}{\|f_1\|_{L^2(\Lambda)}^2} .
    \end{align*}
    Thus,
    \begin{align*}
        \Cr{c44} 
        &\leq 2^q\beta_0 m^2 2^{m-1}(1+ \Cr{c43}^2 ) 
        \leq \beta_0 m^2 2^{17m+4} (m\beta_0)^{2m+1} \frac{\beta_2^2 \beta_1^{2m-1}}{\|f_1\|_{L^2(\Lambda)}^2}
        \leq 2^{17m+4}(\beta_0 m)^{2m+3} \frac{\beta_2^2 \beta_1^{2m-1}}{\|f_1\|_{L^2(\Lambda)}^2}.
    \end{align*}
    Further we note that $\bP(F_m-\bE [F_m]\leq -t)=\bP((-F_m)-\bE[(-F_m)]\ge t)$ and $\bV [F_m]=\bV[(-F_m)]$. Thus, by applying Proposition \ref{prop:ChebyshevCantelli} for $-F_m$ and using the same arguments as above we get
    $$
    \bP(F_m-\bE F_m\leq -t)\leq \exp\left( - \Cr{c_26} \frac{t^2}{\gamma^{2m-1}} \right), 
    $$
    for any  $0\leq  t\leq \Cr{c_24} \gamma^{m-\frac{1}{2}}$, $\gamma\ge \Cr{c_11}$, which finishes the proof.
\end{proof}

\begin{proof}[Proof of \eqref{onebound2}]
    First, assume that $t\leq \beta_2 (\beta_1 \gamma)^m$.
    Then $t$ is either in the range given by \eqref{newbound4} or by \eqref{newbound5}.
    The right hand side of \eqref{onebound2} evaluates to $2\exp( - \min(\Cr{c_26},\Cr{c_23}) t^2 \gamma^{-m} )$.
    Thus \eqref{newbound4} and \eqref{newbound5} implies \eqref{onebound2}.

    Now, assume that $t\geq \beta_2 (\beta_1 \gamma)^m$.
    The right hand side of \eqref{onebound2} equals $2\exp \left( - \Cr{c_27} \left(\frac{t}{\beta_2} \right)^{\frac{1}{m}} \log \left( \frac{e t}{\beta_2 (\beta_1 \gamma)^m} \right) \right) ,$ which is larger that the right hands sides of \eqref{newbound4}, \eqref{newbound5} and \eqref{newbound6}.
    Thus \eqref{onebound2} holds as well.
\end{proof}

\begin{remark}
    Under additional condition \eqref{assumptionflowbound} we have $\|f_1\|^2_{L^2(\Lambda)}\ge m^2\theta_2^2\theta_1^{2m-1}>0$. Also if $F_m$ satisfies \eqref{assumptionsBR} we get
    \[
    \|f_1\|^2_{L^2(\Lambda)}
    \ge 
    M_1^2m^2\int_{\bR^d}\Lambda(B_{\rho}(x))^{2m-2}\Lambda(\dint x)>0.
    \]
    In particular, in these cases all constants in the above theorem are strictly positive, and can be lower bounded by functions of the parameters introduced in the corresponding assumptions.
\end{remark}

\subsection{Optimality of the bounds}

We start by discussing the quality of the obtained bounds in different regimes. In particular we will consider the situations, when  $\gamma$ is fixed and $t\to \infty$, and when $\gamma\to \infty$ and $t=\gamma^{a}$ for some $a>0$. It is also assumed that the measure $\Lambda$ and the kernel $f$ are fixed. Recall that the bounds from Theorem \ref{tm:Summary} can be represented in the form
\begin{align*}
\bP(F_m-\bE F_m\leq -t)&\leq \exp(-I_-(\gamma,t)) ,\\
\bP(F_m-\bE F_m\geq t)&\leq \exp(-I_+(\gamma,t)),
\end{align*}
where $I_+ , I_- \colon [0,\infty) \times [0,\infty) \to [0,\infty)$ are some given rate functions.

\paragraph*{Fixed intensity and $t\to\infty$:}
First assume that $\gamma>1$ is fixed and that $I_+$ is such that 
$$
\bP(F_m-\bE F_m\ge t)\leq \exp(-I_+(\gamma,t))
$$ 
holds for sufficiently big $t$. Then, under assumption \eqref{assumptionflowbound}, Theorem \ref{thm:largeorder2_lowerbound} implies that
$$
\lim\sup_{t\to\infty} {I_+(\gamma,t)\over t^\frac{1}{m}\log t}\leq \Cr{c1905c},
$$
where $\Cr{c1905c}$ is an explicit constant. In particular this means, that the bound \eqref{newbound6} is of optimal order in $t$ if $q=0$ and \eqref{assumptionflowbound} holds.
The following remark points out that this holds in many classical applications.
\begin{remark} \label{rem:mildAssumption}
It should be observed that \eqref{assumptionflowbound} is a mild assumption which.
Suppose that $\Lambda$ is a Borel measure on a topological space $\bX$ which assigns positive measure to all open sets. Assume further that the kernel $f$ is positive and continuous on some open set $U \subset \bX^m$. Then $f$ can be bounded from below on some product $S_1\times\cdots\times S_m$ of pairwise disjoint open sets, and hence condition \eqref{assumptionflowbound} is satisfied.
In particular, this holds for the applications considered in Sections \ref{sec:geometricGraphs} and \ref{sec:Applications}. 
\end{remark}

\paragraph*{Increasing intensity and $t$ of order $\gamma^\alpha\to\infty$:}\label{sec:AnalysingCLT} 

Let us now assume that $t$ and $\gamma$ tend to infinity simultaneously and $t=\gamma^{a}$, $a>0$. We define a value $b$ as 
$$
b(a)=\lim_{\gamma\to\infty}{\log I_+(\gamma, \gamma^{a})\over \log \gamma},
$$
if the above limit exists. The values of $b(a)$ corresponding to the bounds from Theorem \ref{tm:Summary} are presented on Figure \ref{fig:graphics}.

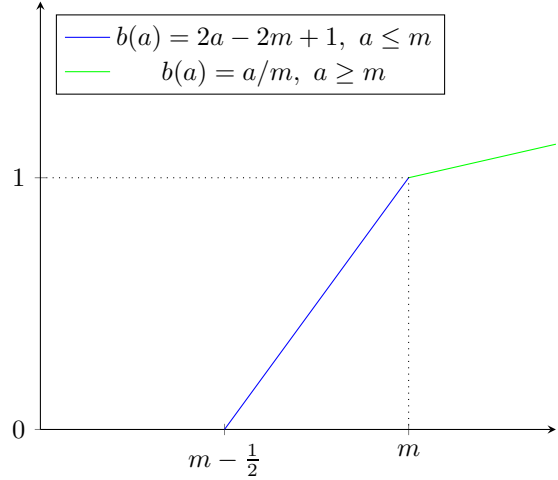
\begin{figure}
\begin{center}
        \begin{tikzpicture}
        \tikzmath{\m = 3;} 
        \begin{axis}[
            axis lines = left,
            ytick={0,1},
            xtick={{\m-1/2},\m},
            xticklabels={$m-\frac12$, $m$},
            yticklabels={$0$,$1$},
            legend pos=north west,
            xmin = {\m-1},
            ymax = 1.7,
        ]
        \addplot [
            domain=(\m-1/2):(\m), 
            samples=100, 
            color=blue,
            ]
            {2*x-2*\m+1};
        \addlegendentry{\(b(a) = 2 a -2m+1 ,\ a \leq m\)}
        \addplot [
            domain=\m:(\m+0.4), 
            samples=100, 
            color=green,
            ]
            {x/\m};
        \addlegendentry{\(b(a) = a/m ,\ a \geq m \)}
        \addplot[dotted] coordinates {(0,1) (\m,1) (\m,0)};
        \end{axis}
        \end{tikzpicture}
\end{center}
    \caption{Plots of $b(a) = \lim_{\gamma\to\infty} \frac{\log I_+(\gamma,\gamma^a)}{\log(\gamma)}$, $m\ge 1$. The blue and green curves correspond to bounds \eqref{newbound5} and \eqref{newbound6}, respectively.}\label{fig:graphics}
\end{figure}

\paragraph*{Increasing intensity and $t= s \sqrt{\bV F_m}$ (CLT regime):} In the above settings when the kernel $f$ and the measure $\Lambda$ are independent of $\gamma$ (which is named \textit{geometric $U$-statistic} in \cite[Definition 5.1]{RS13});  $F_m$ and Poisson $U$-statistic with kernel $|f|$ are square integrable; $\|f_1\|_{L^2(\Lambda)}>0$ and $\gamma\to \infty$ there is a quantitative Central Limit Theorem  proven in \cite[Corollary 4.3]{Schu16} of the following form: there exists a constant $\Cl{c43b}$, independent of $\gamma$, such that
\begin{equation}\label{eq:CLT}
\sup_{s\ge 0}\left|\bP\big(F_m-\bE F_m \ge s\sqrt{\V F_m }\big)-(1-\Phi(s))\right|=\sup_{s\ge 0}\left|\bP\big(F_m-\bE F_m \leq -s\sqrt{\V F_m }\big)-\Phi(-s)\right|\leq \Cr{c43b}\gamma^{-1/2},
\end{equation}
where $\Phi(s)$ is cumulative distribution function of a standard Gaussian random variable. Using the estimates from \cite[Theorem 2 with $\beta=2$]{ErfctL} for lower bound and \cite[Theorem 1]{ErfctL} for the upper bound we get 
$$
\frac{1}{2}\sqrt\frac{e}{2\pi}e^{-s^2}\leq 1-\Phi(s)=\Phi(-s)=\frac{1}{2}\text{ercf}\Big({s\over \sqrt{2}}\Big)={1\over \sqrt{2\pi}}\int_{s}^{\infty}e^{-x^2/2}\dint x\leq \frac{1}{2}e^{-s^2/2},
$$
which are valid for all $s\ge 0$. Hence, if
$$
s\leq \Big(\frac{1}{2}\log(\gamma)-\log \Big(4\Cr{c43b}\sqrt{\frac{2\pi}{e}}\Big)\Big)^{1/2},
$$
the inequality \eqref{eq:CLT} implies 
\begin{align}
\bP\big(|F_m-\bE F_m|\ge s\sqrt{\V F_m }\big)&\leq e^{-s^2/2}+2\Cr{c43b}\gamma^{-1/2}\leq e^{-s^2/2}+\frac{1}{2}\sqrt\frac{e}{2\pi}e^{-s^2}\leq 2 e^{-s^2/2}, \label{eq:241a}\\
\bP\big(|F_m-\bE F_m |\ge s\sqrt{\V F_m }\big)&\ge \sqrt\frac{e}{2\pi}e^{-s^2}-2\Cr{c43b}\gamma^{-1/2}\ge2\Cr{c43b}e^{-s^2}, \notag
\end{align}
which in particular holds for any fixed $s\ge 0$ and sufficiently big $\gamma$. 

Applying our results in the CLT regime, namely for $t=s\sqrt{\bV F_m}$, we obtain the following concentration inequalities, which agree with \eqref{eq:241a} up to constants.

\begin{corollary}
\label{cor:CLT}
    Assume that $F_m$ is a Poisson $U$-statistic satisfying \eqref{eq:fBound} and $\|f_1\|_{L^2(\Lambda)}>0$.
    Let $\Cr{c162} = \max(1, \lceil \log(\beta_0) \rceil ) $ as in Theorem \ref{tm:momentConcRes_NEW}.b.
    If $\gamma\beta_1\geq 8m\Cr{c162}$, then there exist positive constants $\Cl{c10}$ and $\Cl{c11}$, independent of $\gamma$ and $t$, such that
    \begin{align} \label{eq:ourconcbound}
        \bP(|F_m-\bE F_m| \geq s \sqrt{\V F_m}) 
        &\leq 2\exp \left( - \Cr{c10} s^2 \right) ,
        & 0 \leq s \leq \Cr{c11} \sqrt{\gamma \beta_1}.
    \end{align}
    Moreover, one can set
    \begin{align*}
        \Cr{c10} 
        &= {\|f_1\|^4_{L^2(\Lambda)}\over 2^{3m+7+q}(\beta_0m)^2m\beta_2^{4}\beta_1^{4m-2}},&
         \Cr{c11} 
        &= 2^{{m+5-q\over 2}}\beta_0^{-{1\over 2}}m^{-1} .
    \end{align*}
\end{corollary}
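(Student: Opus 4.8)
The plan is to deduce \eqref{eq:ourconcbound} from Theorem \ref{tm:momentConcRes_NEW}.b via the substitution $t=s\sqrt{\bV F_m}$, treating the values of $s$ that are too small for that theorem by the trivial estimate $\bP(\,\cdot\,)\le 1$. Write $v\coloneqq\bV F_m$. The two ingredients are the two-sided variance bound \eqref{eq_UstatVarBounds} and the $L^2$-bound on $f_1$ coming from \eqref{eq:fkA2Bound1}.

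First I would apply \eqref{eq_UstatVarBounds} with its free constant chosen equal to $8m$, which is permitted since $\gamma\beta_1\ge 8m\Cr{c162}\ge 8m$ (as $\Cr{c162}\ge 1$); because $\big(2^q(8m)^{-1}m+1\big)^{m-1}\le(5/4)^{m-1}\le 2^{m-1}$ this gives
\[
\|f_1\|_{L^2(\Lambda)}^2\,\gamma^{2m-1}\ \le\ v\ \le\ 2^{m-1+q}\beta_0 m^2\beta_2^2(\gamma\beta_1)^{2m-1}.
\]
Put $s_0\coloneqq 2^{m+3}\sqrt{2\Cr{c162}m}\,\beta_2\beta_1^{m-\frac{1}{2}}/\|f_1\|_{L^2(\Lambda)}$. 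Using the two inequalities above I would check that whenever $s_0\le s\le\Cr{c11}\sqrt{\gamma\beta_1}$ the number $t=s\sqrt v$ lies in the window $2^{m+3}\sqrt{2\Cr{c162}m}\,\beta_2(\gamma\beta_1)^{m-\frac{1}{2}}\le t\le 2^{m+2}\beta_2(\gamma\beta_1)^{m}$ on which Theorem \ref{tm:momentConcRes_NEW}.b applies: the lower bound on $t$ follows from $v\ge\|f_1\|_{L^2(\Lambda)}^2\gamma^{2m-1}$ and the definition of $s_0$, and the upper bound follows from the upper variance estimate together with the value $\Cr{c11}=2^{(m+5-q)/2}\beta_0^{-1/2}m^{-1}$.

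On the range $s_0\le s\le\Cr{c11}\sqrt{\gamma\beta_1}$, Theorem \ref{tm:momentConcRes_NEW}.b gives
\[
\bP(|F_m-\bE F_m|\ge s\sqrt v)\ \le\ \exp\Big(-\frac{s^2v}{2^{2m+8}m\beta_2^2(\gamma\beta_1)^{2m-1}}\Big);
\]
bounding $v$ from below and then invoking $\|f_1\|_{L^2(\Lambda)}^2\le 2^q\beta_0 m^2\beta_2^2\beta_1^{2m-1}$ from \eqref{eq:fkA2Bound1} (with $k=1$) together with $2^{1-m}\le 1$ and $\beta_0\ge 1$ shows that $\Cr{c10}\le\|f_1\|_{L^2(\Lambda)}^2/(2^{2m+8}m\beta_2^2\beta_1^{2m-1})$, whence the right-hand side is $\le\exp(-\Cr{c10}s^2)\le 2\exp(-\Cr{c10}s^2)$. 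For $0\le s\le s_0$ I would instead use $\bP(|F_m-\bE F_m|\ge s\sqrt v)\le 1$, which is $\le 2\exp(-\Cr{c10}s^2)$ as soon as $\Cr{c10}s^2\le\log 2$; since $s\le s_0$ it suffices that $\Cr{c10}s_0^2\le\log 2$, and substituting the explicit constants and again using \eqref{eq:fkA2Bound1} and $\Cr{c162}\le\beta_0$ yields $\Cr{c10}s_0^2\le 2^{-m}\le 1/2<\log 2$. Combining the two ranges, and noting that if $s_0>\Cr{c11}\sqrt{\gamma\beta_1}$ the whole interval $[0,\Cr{c11}\sqrt{\gamma\beta_1}]$ is already covered by the trivial estimate, proves \eqref{eq:ourconcbound}.

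Every step is elementary; the only real effort is the bookkeeping of the powers of $2$, $\beta_0$, $m$, $\beta_1$ and $\beta_2$ so that the exponent produced by Theorem \ref{tm:momentConcRes_NEW}.b dominates $\Cr{c10}s^2$ while simultaneously $\Cr{c10}s_0^2$ stays below $\log 2$. This double constraint is exactly what forces the quartic dependence on $\|f_1\|_{L^2(\Lambda)}$ in $\Cr{c10}$ and the stated form of $\Cr{c11}$, and I do not anticipate any genuine difficulty beyond it.
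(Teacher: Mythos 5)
Your proposal is correct: the arithmetic checks out (in particular $\Cr{c10}s_0^2\le 2^{-m}\le\tfrac12<\log 2$ and the verification that $t=s\sqrt{\V F_m}$ falls in the admissible window of Theorem \ref{tm:momentConcRes_NEW}.b for $s_0\le s\le \Cr{c11}\sqrt{\gamma\beta_1}$ both hold with the stated constants). The large-$s$ part of your argument coincides with the paper's. The one place where you diverge is the range $0\le s\le s_0$: the paper does not use the trivial bound $\bP(\cdot)\le 1$ there, but instead invokes Proposition \ref{prop:ChebyshevCantelli} (Chebyshev--Cantelli) with $\Cr{c43}=s_0$ and $\Cr{c47}=8m\Cr{c162}$, which is what actually produces the quartic constant $\Cr{c10}=\|f_1\|^4_{L^2(\Lambda)}/(2^{3m+7+q}(\beta_0 m)^2 m\beta_2^4\beta_1^{4m-2})$ as the Chebyshev--Cantelli exponent; the moment-based exponent is then checked to dominate it, exactly as you do. Your route buys a more elementary treatment of the small-$s$ regime (no second concentration inequality needed), at the cost of the bound being vacuous rather than genuinely decaying on $[0,s_0]$; the paper's route gives a true Gaussian tail on the whole range and explains more transparently where the specific form of $\Cr{c10}$ comes from. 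Since the corollary only asserts the inequality \eqref{eq:ourconcbound} with those constants, both arguments establish it.
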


This corollary in particular means that the rate function of the form $I_+(\gamma,t)=\text{const}\cdot t^2\gamma^{-2m+1}$ gives an optimal bound (up to a constant) for $t=\text{const}\cdot \sqrt{\V F_m}\backsimeq \gamma^{m-1/2}$ as follows from \eqref{eq_UstatVar}. Thus, the CLT regime correspond to the point $(m-1/2,0)$ on Figure \ref{fig:graphics}. In comparison with the Central Limit Theorem our concentration bound \eqref{eq:ourconcbound} holds for large range of $s$ allowing to take $s$ to be of order up to $\gamma^{\frac{1}{2}}$. 

\begin{proof}
    Setting $t=s \sqrt{\V F_m}$, Theorem \ref{tm:momentConcRes_NEW}.b provides
    \begin{align}\label{eq:09.04.2024_1}
        \bP(|F_m-\bE F_m| \geq s \sqrt{\V F_m}) 
        & \leq \exp \Big( -  \frac{s^2 \V F_m}{2^{2m+8}m \beta_2^2 (\gamma\beta_1)^{2m-1}} \Big),
    \end{align}
    for $2^{m+3}\sqrt{2\Cr{c162}m}\beta_2 (\gamma\beta_1)^{m-{1\over 2}} (\V F_m)^{-{1\over 2}}  \leq  s \leq 2^{m+2} \beta_2 (\gamma\beta_1)^{m} (\V F_m)^{-{1\over 2}} $.
    Thus, we only need to bound $\V F_m$ appropriately to get the desired result. Recall that by  \eqref{eq_UstatVarBounds} with $\Cr{c47}=8m\Cr{c162}$ we have
    \begin{align*}
        0<\gamma^{2m-1}\|f_1\|^2_{L^2(\Lambda)}
        \leq \V F_m
        &\leq 2^q\beta_0 m^22^{m-1}\beta_2^2(\gamma\beta_1)^{2m-1}.
    \end{align*}
   It then follows that 
    \begin{align}
        2^{-2m-8}m^{-1}\beta_2^{-2}(\gamma\beta_1)^{-2m+1}\V F_m
         &\ge {\|f_1\|^2_{L^2(\Lambda)}\over 2^{2m+8}m\beta_2^2\beta_1^{2m-1}},\label{eq:09.04.2024_2}\\
         2^{m+2} \beta_2 (\gamma\beta_1)^{m} (\V F_m)^{-{1\over 2}}
         & \geq 2^{{m+5\over 2}}m^{-1}\sqrt{\gamma\beta_1}
         = \Cr{c11} \sqrt{\gamma\beta_1},\notag
         \\
        2^{m+3}\sqrt{2\Cr{c162}m}\beta_2 (\gamma\beta_1)^{m-{1\over 2}} (\V F_m)^{-{1\over 2}}
         & \leq  {2^{m+3}\sqrt{2\beta_0m}\beta_2\beta_1^{m-{1\over 2}}\over \norm{f_1}_{L^2(\Lambda)}}.\notag
    \end{align}
   Further by applying Proposition \ref{prop:ChebyshevCantelli} with 
   $$
   \Cr{c43}= {2^{m+3}\sqrt{2\beta_0m}\beta_2\beta_1^{m-{1\over 2}}\over \norm{f_1}_{L^2(\Lambda)}}\ge {2^{m+3}\over \sqrt{m}}>1,
   $$ 
   as follows from \eqref{eq:fkA2Bound1} and $m\ge 1$, with $t=s\sqrt{\bV F_m}$ and $\Cr{c47}=8m\Cr{c162}$ and since $\bV[F_m]=\bV[-F_m]$ we get
   \[
   \bP(|F_m-\bE F_m| \geq s \sqrt{\V F_m})\leq 2\exp \Big( -  \frac{s^2 \V F_m}{\Cr{c44}\beta_2^2 (\gamma\beta_1)^{2m-1}} \Big),
   \]
   for $0\leq s\leq \Cr{c43}$, where
   \[
   \Cr{c44}^{-1}\beta_2^{-2} (\gamma\beta_1)^{-2m+1}\V F_m\ge {\|f_1\|^4_{L^2(\Lambda)}\over 2^{3m+7+q}(\beta_0m)^2m\beta_2^{4}\beta_1^{4m-2}}.
   \]
   Combining this with \eqref{eq:09.04.2024_1} and \eqref{eq:09.04.2024_2} finishes the proof.
\end{proof}

\subsection{Comparison with known results for Poisson \texorpdfstring{$U$}{U}-statistics}
Finally we compare our results with known bounds for Poisson $U$-statistics. 

The first set of bounds we consider was obtained by Bachmann and Reitzner in \cite[Theorem 3.1]{BR18} and it applies to Poisson $U$-statistics over $\bR^d$ satisfying \eqref{assumptionsBR}. The results have been motivated by the applications to subgraph counts of random geometric graphs.
Together with bounds for the lower tail, which are of the similar form as the one presented in Proposition \ref{prop:LowerTail}, and bounds around the median, they show that
\begin{align*}
    \bP(F_m-\bE F_m\geq t)
    & \leq \exp \Big( -  \Cl{constBR} \cdot\big((\bE F_m+t)^\frac{1}{2m}-(\bE F_m)^\frac{1}{2m}\big)^2 \Big),
    && \text{for any $t>0$ ,}
\end{align*}
where the constant $\Cr{constBR} = \Cr{constBR}(d, m, \Theta, M_1 , M_2) $ has an explicit representation in terms of the parameters involved in the assumption \eqref{assumptionsBR}.
Up to the specific value of the constant, 
this result is equivalent to the existence of constants $\Cr{constBR}'$, $\Cr{constBR}''$ and $\Cl{constBR2}$, depending on $m$, $d$, $\rho$, $\Theta$, $M_1$, $M_2$ and $\bE F_m$
for which we have
\begin{align}
    \label{BRsubMean}
    \bP(F_m-\bE F_m\geq t)
    & \leq 
    \exp \Big( -  \Cr{constBR}'  \frac{t^2}{\gamma^{2m-1}} \Big),
    && \text{if $0<t\leq \Cr{constBR2}\gamma^m$ ,} 
    \\
    \label{BRsupMean}
    \bP(F_m-\bE F_m\geq t)
    & \leq 
    \exp \Big( -  \Cr{constBR}'' t^\frac{1 }{m} \Big),
    && \text{if $t\geq 2\Cr{constBR2}\gamma^m$.}
\end{align} 
On the other hand, recall that according to Lemma \ref{lem:relAssumptions} assumption \eqref{assumptionsBR} implies \eqref{eq:fBound} with $q=0$ and $\|f_1\|_{L^2(\Lambda)}>0$, hence, from Theorem \ref{tm:Summary} we get 
\begin{align*}
    \bP(F_m-\bE F_m \geq t)
    &\leq \exp\left( - \min(\Cr{c_26}, \Cr{c_23})\frac{t^2}{\gamma^{2m-1}} \right), 
    && \text{if $0\leq  t\leq \Cr{c_17}\gamma^{m}$, $\gamma\ge \Cr{c_11}$,}
    \\
    \bP(F_m-\bE F_m \geq t)
    &\leq \exp \left( -\Cr{c_15} t^{\frac{1}{m}}\log \left( \Cr{c_16}{t\over \gamma^m}\right) \right), 
    && \text{if $t\ge \Cr{c_17}\gamma^m$, $\gamma\ge \Cr{c_9}$,}
\end{align*}
where the constants $\Cr{c_9}, \Cr{c_11}$ depend only on $\Lambda$ and $\Theta\rho$, while constants $\Cr{c_26}$, $\Cr{c_23}$ and $\Cr{c_15},\ldots,\Cr{c_17}$ depend additionally on $m$, $M_2$ and only constant $\Cr{c_26}$ depends on $
\|f_1\|_{L^2(\Lambda)}$. The exact values of these constants can be derived by combining Theorem \ref{tm:Summary} and Lemma \ref{lem:relAssumptions}. The main advantage of our approach is that in contrast to \eqref{BRsupMean} we gain a logarithmic factor in the case $t\ge \Cr{c_17}\gamma^m$ and provide a bound of optimal order in $t$ when  $t\to\infty$. Moreover, we can drop the conditions that $f\ge M_1>0$ whenever $f>0$ and $f(x_1,\ldots,x_m)>0$ if $\operatorname{diam}(x_1,\ldots,x_m) \leq \rho$, if we additionally require $\bE F_m<\infty$ (and $\|f_1\|_{L^2(\Lambda)}>0$). Thus, in some situations our bounds hold under slightly relaxed conditions.

In the recent paper \cite{ST23}, Schulte and Th\"ale have used the cumulants method for Poisson $U$-statistics in order to establish moderate deviation principle and derive Bernstein and Cram\'er type concentration inequalities. We will not provide a detailed analysis of their results here since it would require introducing additional notation, but we note, that the concentration bounds from \cite{ST23} hold under the assumption that there is a specific bound for the cumulants of Poisson $U$-statistic. This condition is similar to \eqref{eq:fBound}, but is slightly more restrictive and in particular implies \eqref{eq:fBound}. Moreover application of the cumulant method allows only to obtain a bound of the form $\exp(-c t^{1\over m})$ in case when $t\to\infty$ and other parameters are kept fixed, while in our results the missing logarithmic factor is recovered, namely we obtain a bound of the form $\exp(-c 
 t^{1\over m}\log(c't))$.

Another set of concentration bounds obtained in \cite[Proposition 5.6]{BR18} is for almost surely finite Poisson $U$-statistics over $\bR^d$ with non-negative kernel satisfying
$$
\sum\limits_{x\in \eta} (D_x F_m(\eta-\delta_x))^2 
    \leq c F_m^{\delta},
$$
almost surely for some $\delta\in[0,2)$ and $c>0$. The bounds are of the form
\begin{align}
\bP(F_m-\bM F_m\ge t)&\leq 2\exp\Big(-{\rm const}\cdot{t^2\over (t+\bM F_m)^{\delta}}\Big),\label{eq:ConcentrationUStatMedian1}\\
\bP(F_m-\bM F_m\leq  -t)&\leq 2\exp\Big(-{\rm const}\cdot{t^2\over (\bM F_m)^{\delta}}\Big),\notag
\end{align}
where $\bM F_m$ is the median of $F_m$. In \cite{NRP21} this concentration bounds have been extended to more general settings, namely when $\bX$ is a complete separable metric space, $\Lambda$ is some $\sigma$-finite measure on $\bX$ and $F_m$ is non-negative convex functional. In particular, if $F_m$ satisfies \eqref{assumptionsBR}, it was shown that one can take $\delta=2-1/m$, see \cite[Theorem 3.2]{BR18}. One more set of concentration inequalities for Poisson $U$-statistic around the median was obtained in \cite{RST17} using the Talagrand's inequality for convex distances \cite{R13}. It takes the following form (see also \cite{PeccatiReitzner}). Let $\bX$ be a Polish space equipped with the norm $\|\cdot\|$, $F_m$ be a local $U$-statistic of radius $r$, $B_r(x)$ be the ball of radius $r>0$ around $x\in\bX$, $E\coloneqq \sup_{x\in\bX}\Lambda(B_r(x))$, then for any $t^2/(t+\bM F_m)\ge \gamma^m E^me^{2m}\|f\|_{\infty}$ we have
\begin{equation}\label{eq:ConcentrationUStatMedian2}
\bP(|F_m-\bM F_m|\ge t)\leq 4\gamma\Lambda(\bX)\exp\Big(-{\rm const}\cdot \|f\|_{\infty}^{-\frac{1}{m}}\Big(\frac{t^2}{t+\bM F_m}\Big)^\frac{1}{m}\Big).
\end{equation}
For this estimate to be non trivial, it is required that $\Lambda$ is a finite measure and $f$ is bounded, which is our assumption~\eqref{assumptions}. 

In order to compare our bounds around the mean and the bounds around the median, we note that due to Chebyshev's inequality we have $|\bM F_m- \bE F_m|\leq \sqrt{2\bV F_m}$, see \cite[proof of Theorem 4.3(ii)]{BR18}, and, thus, the concentration inequalities around $\bM F_m$ and $\bE F_m$ can be compared only if $t$ is at least of the order of $\sqrt{\bV F_m}=c\cdot \gamma^{m-{1\over 2}}$.

For fixed $\gamma$ and $t\to\infty$ the right hand side of \eqref{eq:ConcentrationUStatMedian1} and \eqref{eq:ConcentrationUStatMedian2} is greater than $\exp( - c t^{\frac{1}{m}} )$ for some constant $c$, which is worse (by a logarithmic factor) than our upper bound \eqref{newbound6}. Similarly, for $t=\gamma^{a}$, $a\ge m$, the estimate \eqref{eq:ConcentrationUStatMedian2} and \eqref{eq:ConcentrationUStatMedian1} has the form  $\exp( - c \gamma^{\frac{a}{m}})$, while our result include additional logarithmic factor.
Finally, due to the restrictions $t^2/(t+\bM F_m)\ge \gamma^m E^me^{2m}\|f\|_{\infty}$ the case $m-1/2\leq a< m$ is not covered by \eqref{eq:ConcentrationUStatMedian2}, while \eqref{eq:ConcentrationUStatMedian1} gives $\exp( - c t^2/\gamma^{2m-1})$, which is of the same quality as \eqref{newbound5}.


\section{Application to random geometric graphs} \label{sec:geometricGraphs}

\subsection{Poisson \texorpdfstring{$U$}{U}-statistics for the random geometric graph in a metric space}
Let $\bX$ be a metric space with distance denoted by $\dist(\cdot,\cdot)$, and equip $\bX$ with the induced topology and sigma algebra. Let $B_r(x)$ denote the closed ball in $\bX$ of radius $r>0$ centred in $x$.
Let $\Lambda$ be a non-atomic and $\sigma$-finite measure, $\gamma>0$ and set $\eta$ to be a Poisson point process on $\bX$ with intensity measure $\gamma\Lambda$.
Given a parameter $\rho>0$ and a countable subset $V\subset \bX$, define the undirected graph $G=G(V,\rho)=(V,E)$, where the set of vertices is $V$ and set of edges $E$ consists of all pairs $\{x,y\}$, $x,y\in V$ such that $0<\dist(x,y)\leq \rho$. 
When $V=\eta$ the random graph $G(\eta,\rho)$ is called \textbf{random geometric graph} or Gilbert graph (see \cite{Penrose} for more details on the model). The random geometric graphs have been studied in \cite{bachmann2016concentration, BR18, L-RP13b,L-RP13,  RST17}.

Given a graph $H$ we denote by $d(H)$ its combinatorial diameter, with the convention that $d(H)=\infty$ if $H$ is disconnected. 
Further we write $H'\cong H$ if graphs $H$ and $H'$ are isomorphic, with respect to the combinatorial distance.

Let $m,n\in\bN$ with $1\leq n \leq m-1$, $\rho,\, Q>0$ and $h\colon\bX^m\to\bR_+$ a measurable function satisfying $h(x_1,\ldots,x_m) \in [0,Q]$ for any $x_1,\ldots,x_m\in \bX$ with $d(G(\{x_1,\ldots,x_m\},\rho))\leq n$.
Then, we set
\begin{align} \label{eq:UstatisticGraph1}
    F_m
    &:= F_m(\eta, \rho,n,h)
    := {1\over m!}\sum_{(x_1,\ldots,x_m)\in \eta^m_{\neq}}h(x_1,\ldots,x_m){\bf 1}\{d(G(\{x_1,\ldots,x_m\},\rho)) \leq n \} . 
\end{align}

We will see below that under reasonable assumptions this model satisfies \eqref{assumptionsGeneral} and, therefore, our concentration bounds can be applied, see Lemmas \ref{lm:1condEnough} and \ref{lm:A3forGraphs} below.
In the Euclidean setting $\bX=\bR^d$, this covers several special cases which have attracted a lot of attention in the literature.
We will describe them in the two next paragraphs.
We will also use this to derive concentration bound for the total weighted edge length in hyperbolic and spherical setting, see Theorem \ref{thm:RGGnonEuclidean}.
To the best of our knowledge, it is the first time that concentration bounds are established in this context.

\paragraph{Edge count:} The simplest non trivial example for $F_m$ is arguably obtained by setting $m=2$, $n=1$, $h\equiv 1$ and $Q=1$.
In this case $F_m$ is simply the number of edge of the graph $G(\eta,\rho)$.
Concentration bounds for this specific functional were established by Bachmann and Peccati in \cite[Section 6]{bachmann2016concentration} for the Euclidean setting and with $\Lambda$ satisfying \eqref{eq:IntensityMeasureCondition1}.
It has also been studied in \cite{BR18,RST17} via two distinct generalizations, which we present in the next paragraphs. Both situations are special cases of our general setting.

\paragraph{Induced and Included Subgraph Counts:}
Let $H$ be a connected graph on the vertex set $V=[m]$.
For $n = d(H)$, $Q=1$, and $h(x_1,\ldots,x_m) = {\bf 1}(G(\{x_1,\ldots,x_m\},\rho) \cong H)$, we denote the $U$-statistic $F_m$ by
$$
    F_{=}^{H}
    := F_{=}^{H} (\eta,\rho)
    = \frac{1}{m!} \sum_{(x_1,\ldots,x_m) \in \eta^m_{\neq}}  {\bf 1}(G(\{x_1,\ldots,x_m\},\rho) \cong H),
$$
which is known as the \textit{induced} subgraph count of $H$ in $G(\eta,\rho)$.
In \cite{BR18}, where the assumptions \eqref{assumptionsBR} have been introduced, Bachmann and Reitzner noted that, unless $H$ is a complete graph, such kernel do not satisfy their assumption (see \cite[Remark 4.1]{BR18}).
Instead, they establish concentration bounds on a slightly different subgraph count, which one could call \textit{included} subgraph count.
For this one sets $n = d(H)$, $Q=m!$ and $h(x_1,\ldots,x_m) = | \{ \text{subgraphs $H'$ of $G(\{x_1,\ldots,x_m\},\rho)$} : H' \cong H \} |$, and in this case we denote $F_m$ by
\begin{equation} \label{eq:includedgraphs}
    F_{\subset}^{H}
    := F_{\subset}^{H} (\eta,\rho)
    = \frac{1}{m!} \sum_{(x_1,\ldots,x_m) \in \eta^m_{\neq}}  | \{ \text{subgraphs $H'$ of $G(\{x_1,\ldots,x_m\},\rho)$} : H' \cong H \} |.
\end{equation}

\paragraph{Power-weighted edge length:} 
Set a parameter $\tau\geq 0$ and let $m=2$, $n=1$, $Q=\rho^\tau$, and $h(x,y) = \dist(x,y)^\tau$.
Then we consider a Poisson $U$-statistic
\begin{align} \label{eq:powerlength}
    F_2^{(\tau)}
    := F_2^{(\tau)}(\eta,\rho)
    = \frac{1}{2} \sum_{(x,y) \in \eta^2_{\neq}}  \dist(x,y)^\tau {\bf 1}\{\dist(x,y) \leq \rho \},
\end{align}
which is known in the literature as the power-weighted edge length 
or length-power functionals.
For $\tau$ varying from $0$ to $1$, it interpolates the total number of edges ($\tau=0$) and the total edge length ($\tau=1$) of the graph $G(\eta,\rho)$.
Concentration bounds for this functional were established by Reitzner, Schulte and Thäle in \cite[Proposition 6.1]{RST17} for the Euclidean setting $\bX=\bR^d$ and with $\Lambda$ being the Lebesgue measure restricted to a compact and convex set $W$.
They obtain this result as an application of \cite[Theorem 1.3]{eichelsbacher2015Moderate} combined with a variance estimate of $F_2^{(\tau)}$.
This result applies to the \textit{thermodynamic regime} where the expected degree of a typical point is fixed, meaning that $\rho$ is of the form $\delta \gamma^{-\frac{1}{d}}$ for some fixed $\delta>0$.
The bound is written as
\begin{equation}\label{eq:ConcentrationRSTGilbertGraph}
    \bP\Bigl(\bigl| F_2^{(\tau)} \bigl( \eta_\gamma, \frac{\delta}{\gamma^{1/d}} \bigr) -\bE F_2^{(\tau)} \bigl( \eta_\gamma , \frac{\delta}{ \gamma^{1/d}} \bigr) \bigr| \ge t \Bigr)
    \leq \exp\Big(-c\min\big\{\gamma^{{2\tau\over d}-1}t^2, \gamma^{\tau\over 3d}t^{1\over 3}, \gamma^{3\tau-d\over 4d}t^{3\over 4}\big\}\Big) ,
    \quad \gamma\geq 1,\ t\geq 0,
\end{equation}
where $c$ is a (non explicit) constant depending on $\tau \geq 0$, $\delta > 0$ and $W$.

\bigskip
The next lemma demonstrates that, under mild assumptions on the measure $\Lambda$, the Poisson $U$-statistic \eqref{eq:UstatisticGraph1} satisfies \eqref{assumptionsGeneral} and, thus, \eqref{eq:fBound} (see Lemma \ref{lem:relAssumptions}) with which one can derive concentration bounds (see Theorem \ref{tm:Summary}).
This covers the settings of the various special cases described above.
We stress that contrary to the results aforementioned this applies not only to the Euclidean setting.
The mild assumptions, given by \eqref{eq:IntensityMeasureCondition1} and \eqref{eq:IntensityMeasureCondition2} in the lemma, are verified whenever $\Lambda(\bX)<\infty$ but can also hold when the space has infinite measure.

\begin{lemma}[Sufficient criterion for \eqref{assumptionsGeneral} for Poisson $U$-statistics of RGG]  \label{lm:A3forGraphs}
    Let $m,n\in\bN$ with $1\leq n \leq m-1$, $\rho,Q>0$ and $h\colon\bX^m\to\bR_+$ a measurable function satisfying $h(x_1,\ldots,x_m) \in [0,Q]$ for any $x_1,\ldots,x_m\in \bX$ with $d(G(\{x_1,\ldots,x_m\},\rho))\leq n$.
    Let $F_m$ be a Poisson $U$-statistic of the form \eqref{eq:UstatisticGraph1}.
    If 
    \begin{equation} \label{eq:IntensityMeasureCondition1}
    \int_{\bX}\Lambda(B_{n\rho}(x))^{m-1}
    \Lambda(\dint x)<\infty,
    \end{equation}
    and 
    \begin{equation} \label{eq:IntensityMeasureCondition2}
    \sup_{x\in\bX}\Lambda(B_{n\rho}(x))<\infty.
    \end{equation}
    are satisfied, then \eqref{assumptionsGeneral} holds with $g(r)={\bf 1} \{r\leq n \rho\}$ and $M=Q/m!$.
    Moreover, if additionally
    \begin{enumerate} 
         \item (induced subgraph count) $h(x_1,\ldots,x_m) = {\bf 1}(G(\{x_1,\ldots,x_m\},\rho) \cong H)$ for some connected graph $H$ with $m$ vertices and combinatorial diameter $d(H)=n$, then \eqref{assumptionsGeneral} holds with $g(r) = {\bf 1} \{r\leq d(H) \rho\} $ and $M= 1/m!$
         \item (included subgraph count) $h(x_1,\ldots,x_m) = | \{ \text{subgraphs $H'$ of $G(\{x_1,\ldots,x_m\},\rho)$} : H' \cong H \} |$ for some connected graph $H$ with $m$ vertices and combinatorial diameter $d(H)=n$, then \eqref{assumptionsGeneral} holds with $g(r) = {\bf 1} \{r\leq d(H) \rho\}$ and $M=1$.
         \item (power-weighted edge length) $m=2$ and $h(x,y) = \dist(x,y)^\tau$ for some $\tau\geq 0$, then \eqref{assumptionsGeneral} holds with $g(r)={\bf 1}\{r\leq \rho\}$ and $M=\rho^{\tau}/2$. 
    \end{enumerate}
\end{lemma}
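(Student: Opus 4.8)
The plan is to prove the general statement first and then obtain the three itemized cases as immediate corollaries. Writing $f$ for the kernel of $F_m$ (the symmetrisation of $\frac{1}{m!}h\cdot{\bf 1}\{d(G(\cdot,\rho))\le n\}$ if $h$ is not already symmetric; the indicator is symmetric since it depends only on the point set), the bound $h\le Q$ on the set where that indicator does not vanish gives $0\le f(x_1,\ldots,x_m)\le \frac{Q}{m!}\,{\bf 1}\{d(G(\{x_1,\ldots,x_m\},\rho))\le n\}$ for all tuples. The key geometric fact I would isolate is that $d(G(\{x_1,\ldots,x_m\},\rho))\le n$ forces $\diam(x_1,\ldots,x_m)\le n\rho$: a combinatorial diameter at most $n$ means the graph is connected and any two of the points are joined by a path of at most $n$ edges, each joining points at distance at most $\rho$, so the triangle inequality yields $\dist(x_i,x_j)\le n\rho$ for all $i,j$. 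With $g(r)={\bf 1}\{r\le n\rho\}$ (which takes values in $[0,1]$ and satisfies $g=g^{m-1}$) and $M=Q/m!$ this is precisely $f(x_1,\ldots,x_m)\le M\,g^{m-1}(\diam(x_1,\ldots,x_m))$.

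It then remains to verify the two quantitative conditions in \eqref{assumptionsGeneral}. For the first, $C(g,\Lambda)=\sup_{x\in\bX}\int_\bX{\bf 1}\{\dist(x,y)\le n\rho\}\,\Lambda(\dint y)=\sup_{x\in\bX}\Lambda(B_{n\rho}(x))$, which is finite by \eqref{eq:IntensityMeasureCondition2}. For integrability I would use the same observation in the cruder form ${\bf 1}\{d(G(\{x_1,\ldots,x_m\},\rho))\le n\}\le\prod_{j=2}^{m}{\bf 1}\{x_j\in B_{n\rho}(x_1)\}$, integrate out $x_2,\ldots,x_m$ and then $x_1$, obtaining $\|f\|_{L^1(\Lambda^m)}\le\frac{Q}{m!}\int_\bX\Lambda(B_{n\rho}(x))^{m-1}\,\Lambda(\dint x)<\infty$ by \eqref{eq:IntensityMeasureCondition1}. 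Together with $f\ge 0$ and the ambient metric on $\bX$, this establishes \eqref{assumptionsGeneral} with the claimed $g$ and $M$.

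For the three cases I would simply recognise each functional as an instance of \eqref{eq:UstatisticGraph1} and invoke the general part with the right parameters. In the induced case $h={\bf 1}(G(\{x_1,\ldots,x_m\},\rho)\cong H)\in\{0,1\}$, and $G\cong H$ implies $d(G)=d(H)$, so $F_{=}^{H}=F_m(\eta,\rho,d(H),h)$ with $Q=1$, hence $M=1/m!$ and $g={\bf 1}\{\,\cdot\,\le d(H)\rho\}$. In the included case, if $G(\{x_1,\ldots,x_m\},\rho)$ contains a subgraph $H'\cong H$ then $H'$ spans all $m$ points and $d(G)\le d(H')=d(H)$, so the indicator ${\bf 1}\{d(G)\le d(H)\}$ is automatically present and $F_{\subset}^{H}=F_m(\eta,\rho,d(H),h)$; since at most $m!$ bijections of the vertex set of $H$ can embed it, $h\le m!=:Q$, giving $M=Q/m!=1$. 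For the power-weighted edge length $m=2$ forces $n=1$, and $\{x,y\}$ is an edge iff $0<\dist(x,y)\le\rho$ iff $d(G(\{x,y\},\rho))\le 1$, so $F_2^{(\tau)}=F_2(\eta,\rho,1,h)$ with $h(x,y)=\dist(x,y)^\tau$; since $\tau\ge 0$ we have $h(x,y)\le\rho^\tau=:Q$ on $\{\dist(x,y)\le\rho\}$, whence $M=\rho^\tau/2$ and $g={\bf 1}\{\,\cdot\,\le\rho\}$.

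The argument is elementary, so there is no deep obstacle; the one point I would take care to spell out — and which I expect to be the only thing a careful reader would want justified — is the passage between the combinatorial diameter of the geometric graph and the metric diameter of its vertex set, which is used both for the pointwise kernel bound and for the $L^1(\Lambda^m)$ estimate, together with the small observation in the included-subgraph case that the mere presence of a copy of $H$ already implies the combinatorial diameter bound, so that \eqref{eq:includedgraphs} is genuinely covered by \eqref{eq:UstatisticGraph1}.
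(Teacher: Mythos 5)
Your proof is correct and follows essentially the same route as the paper's: the pointwise bound $f\le \frac{Q}{m!}\,{\bf 1}\{\diam(\cdot)\le n\rho\}\le \frac{Q}{m!}\prod_{j\ge 2}{\bf 1}\{\dist(x_1,x_j)\le n\rho\}$, followed by integrating out $x_2,\ldots,x_m$ for the $L^1$ bound and identifying $C(g,\Lambda)=\sup_x\Lambda(B_{n\rho}(x))$. The only difference is that you spell out the reductions for the three special cases (in particular why the diameter indicator is redundant for the included subgraph count), which the paper leaves implicit.
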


\begin{proof}
Set $f(x_1,\ldots,x_m) := {1\over m!} h(x_1,\ldots,x_m){\bf 1} \{d(G(\{x_1,\ldots,x_m\},\rho)) \leq n \}$ so that $F_m =\sum_{x\in\eta^m_{\neq}} f(x)$.
Note, that when $d(G(\{x_1,\ldots,x_m\},\rho)) \leq n$ one has $\diam(x_1,\ldots,x_m)  \leq n \rho$.
Thus,
\begin{align*}
    f(x_1,\ldots,x_m) 
    &\leq {Q\over m!} {\bf 1} \{\diam(x_1,\ldots,x_m) \leq n \rho \} 
    \leq {Q\over m!} \prod_{j=2}^{m} {\bf 1}\{\dist(x_1,x_j)\leq n \rho\} .
\end{align*}
From the first inequality above, we see that the second condition from assumption \eqref{assumptionsGeneral} holds with $g(r)={\bf 1}\{r\leq n\rho\}$ and $M=Q/m!$.
From the second inequality we get
\begin{align*}
    \int_{\bX^m}f(x_1,\ldots,x_m)\Lambda^{\otimes m}(\dint (x_1,\ldots,x_m))
    &\leq  {Q\over m!}\int_{
    \bX}\prod_{j=2}^{m}\int_{\bX}{\bf 1}\{\dist(x_1,x_j)\leq n \rho\}\Lambda(\dint x_j)\Lambda(\dint x_1)\\
    &={Q\over m!}\int_{\bX}\big(\Lambda(B_{n \rho}(x))\big)^{m-1}\Lambda(\dint x)<\infty,
\end{align*}
by \eqref{eq:IntensityMeasureCondition1} and, thus, $f\in L^1(\Lambda^m)$.
The remaining condition of \eqref{assumptionsGeneral} holds, since
$$
C(g,\Lambda)=\sup_{x\in\bX}\int_{\bX}{\bf 1}\{\dist(x,y)\leq n\rho\}\Lambda(\dint y)=\sup_{x\in\bX}\Lambda(B_{n\rho}(x))<\infty,
$$
which is a direct consequence of \eqref{eq:IntensityMeasureCondition2}.
This proves the main part of the lemma. The three items follow immediately as special cases.
\end{proof}

Under additional conditions on metric space $\bX$ the second assumption \eqref{eq:IntensityMeasureCondition2} will follow from \eqref{eq:IntensityMeasureCondition1}.
In particular this holds when $\bX$ is a complete simply connected Riemannian manifold of constant sectional curvature $\kappa\in\bR$.

\begin{lemma}
    \label{lm:1condEnough}
    Let $\bX$, $n$ and $\rho$ be as above.
    If there exists a constant $C_{n\rho}$ such that, for any $x\in\bX$, the ball $B_{n\rho}(x)$ can be covered by $C_{n\rho}$ sets of diameter less than $\rho$, then \eqref{eq:IntensityMeasureCondition1} implies \eqref{eq:IntensityMeasureCondition2}.
\end{lemma}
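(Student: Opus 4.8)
The plan is to establish directly the quantitative inequality
\[
\sup_{x\in\bX}\Lambda(B_{n\rho}(x))
\;\le\;
C_{n\rho}\Big(\int_{\bX}\Lambda(B_{n\rho}(y))^{m-1}\,\Lambda(\dint y)\Big)^{1/m},
\]
which makes the implication \eqref{eq:IntensityMeasureCondition1} $\Rightarrow$ \eqref{eq:IntensityMeasureCondition2} immediate; note that here $m\ge 2$ since $1\le n\le m-1$, so the exponent $m-1$ is at least $1$. First I would fix an arbitrary $x_0\in\bX$ with $\Lambda(B_{n\rho}(x_0))>0$ (the case of measure zero being trivial) and invoke the hypothesis to cover $B_{n\rho}(x_0)$ by sets $S_1,\dots,S_{C_{n\rho}}$ of diameter less than $\rho$. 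Since $\diam(\overline{S})=\diam(S)$ for any set $S$ in a metric space, I may replace each $S_i$ by $\overline{S_i}\cap B_{n\rho}(x_0)$: this still covers $B_{n\rho}(x_0)$, still has all diameters below $\rho$, and now the covering sets are closed, hence measurable. By subadditivity, $\Lambda(B_{n\rho}(x_0))\le\sum_i\Lambda(S_i)$, so there is an index $i^\ast$ with $\Lambda(S_{i^\ast})\ge \Lambda(B_{n\rho}(x_0))/C_{n\rho}>0$; in particular $S_{i^\ast}\neq\emptyset$.

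The key geometric step is the observation that, because $\diam(S_{i^\ast})<\rho\le n\rho$ and $n\ge 1$, every point $y\in S_{i^\ast}$ satisfies $S_{i^\ast}\subseteq B_{n\rho}(y)$ (indeed $\dist(w,y)\le\diam(S_{i^\ast})<\rho\le n\rho$ for all $w\in S_{i^\ast}$), and therefore
\[
\Lambda(B_{n\rho}(y))\;\ge\;\Lambda(S_{i^\ast})\;\ge\;\frac{\Lambda(B_{n\rho}(x_0))}{C_{n\rho}}\qquad\text{for every }y\in S_{i^\ast}.
\]
Integrating this lower bound over $y\in S_{i^\ast}$ then gives
\[
\int_{\bX}\Lambda(B_{n\rho}(y))^{m-1}\,\Lambda(\dint y)
\;\ge\;
\Big(\frac{\Lambda(B_{n\rho}(x_0))}{C_{n\rho}}\Big)^{m-1}\Lambda(S_{i^\ast})
\;\ge\;
\Big(\frac{\Lambda(B_{n\rho}(x_0))}{C_{n\rho}}\Big)^{m},
\]
and taking the supremum over $x_0\in\bX$ yields the displayed inequality. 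The same chain of estimates also handles the degenerate possibility $\Lambda(B_{n\rho}(x_0))=\infty$: then some $\Lambda(S_{i^\ast}\cap B_{n\rho}(x_0))=\infty$, hence $\Lambda(B_{n\rho}(y))=\infty$ on a set of infinite $\Lambda$-measure, forcing the integral in \eqref{eq:IntensityMeasureCondition1} to be infinite, which is excluded.

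I do not anticipate any real difficulty here. The only points needing a little care are measurability — which is why I pass to closures of the covering sets (intersected with the ball) so that $\Lambda(S_{i^\ast})$ and the integral of $\Lambda(B_{n\rho}(\cdot))^{m-1}$ over $S_{i^\ast}$ are legitimate — and keeping track that one needs $n\ge 1$ for the inclusion $S_{i^\ast}\subseteq B_{n\rho}(y)$ and $m-1\ge 1$ for the final estimate, both of which are guaranteed by the standing hypothesis $1\le n\le m-1$.
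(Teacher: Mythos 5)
Your proof is correct and follows essentially the same route as the paper's: cover the ball by $C_{n\rho}$ small-diameter sets, use pigeonhole to find one carrying at least a $C_{n\rho}^{-1}$ fraction of the measure, note that this set is contained in $B_{n\rho}(y)$ for each of its points $y$, and integrate the resulting pointwise lower bound over that set to get $\int_{\bX}\Lambda(B_{n\rho}(y))^{m-1}\Lambda(\dint y)\ge C_{n\rho}^{-m}\sup_{x}\Lambda(B_{n\rho}(x))^{m}$. Your additional care about measurability (passing to closures) and the infinite-measure case is a welcome refinement of details the paper leaves implicit.
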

\begin{proof}
    We slightly adapt the steps of the proof of a similar statement at the beginning of Section 6 in \cite{bachmann2016concentration}.
    Note that $C_{n\rho}\geq 1$.
    By the pigeonhole principle, for any $y\in\bX$ we can choose a set $S_y \subset B_{n\rho}(y)$ satisfying: (i) $S_y \subset B_{n\rho}(x)$ for any $x\in S_y$, and (ii) $\Lambda(S_y) \geq C_{n\rho}^{-1} \Lambda(B_{n\rho}(y))$.
    Now,
    \begin{align*}
        \int_{\bX} \Lambda(B_{n\rho}(x))^{m-1} \Lambda(\dint x)
        &\geq \sup_{y\in\bX} \int_{S_y} \Lambda(B_{n\rho}(x))^{m-1} \Lambda(\dint x)
        \geq C_{n\rho}^{-m+1} \sup_{y\in\bX} \int_{S_y} \Lambda(B_{n\rho}(y))^{m-1} \Lambda(\dint x)
        \\&= C_{n\rho}^{-m+1} \sup_{y\in\bX} \Lambda(S_y) \Lambda(B_{n\rho}(y))^{m-1} 
        \geq C_{n\rho}^{-m} \sup_{y\in\bX} \Lambda(B_{n\rho}(y))^{m} . \qedhere
    \end{align*}
\end{proof}

Lemma \ref{lm:A3forGraphs} in particular implies that $F_m$ satisfies \eqref{eq:fBound} due to Lemma \ref{lem:relAssumptions} and, thus, one can apply Theorem \ref{tm:Summary} as well, as Theorem \ref{tm:momentConcRes_NEW}, Proposition \ref{prop:ChebyshevCantelli} and Proposition \ref{prop:LowerTail}, leading to concentration bounds with explicit constants, depending on $Q$, $\Lambda$, $H$, $m$ and $\rho$.
For simplicity of the representation we will state the results only for a few special cases.

\subsection{Random geometric graph in constant curvature spaces}

Let $d\geq 2$.
In this section, we assume that $\bX=\bM_{\kappa}=\bM_{\kappa,d}$ is a $d$-dimensional complete simply connected Riemannian manifold of constant sectional curvature $\kappa\in\bR$ with fixed origin $o$ and corresponding Riemannian metric inducing the distance function $\dist = \dist_{\kappa}$. 
Note that $\bM_{0}=\bR^d$, $\bM_{-1}=\bH^d$ (hyperbolic space) and $\bM_{1}=\bS^d$ (unit sphere). 
Further let $\cH_{\kappa}^d$ be the measure induced by the Riemannian metric on $\bM_{\kappa}$. 
In particular $\cH_{\kappa}^d$ is isometry invariant, and when $\kappa > 0$ the space $\bM_{\kappa}$ is compact with $\cH_{\kappa}^d(\bM_{\kappa})=\kappa^{-d/2}\omega_d$ and $\diam(\bM_{\kappa})=\kappa^{-1/2}\pi$, where $\omega_d={2\pi^{d\over 2}/\Gamma({d\over 2})}$.

We set $o\in\bM_\kappa^d$ to be an arbitrary point, which we refer to as zero or the origin.
For $r\geq 0$, we denote by $B_r^d=B_r:=B_r(o)$ the $d$-dimensional ball of radius $r$ around zero, where in case of $\kappa>0$ we additionally require $r<{\pi\over 2\sqrt{\kappa}}$. 
It holds that \cite[Equation (17.47)]{SantaloBook}
$$
\cH_{\kappa}^d(B_r)=\omega_d\int_{0}^r({\rm sn}_{\kappa}(s))^{d-1}\dint s,
\quad \text{ where } \quad 
{\rm sn}_{\kappa}(s)=\begin{cases}
{\sin(\sqrt{\kappa}s)\over \sqrt{\kappa}},\, &\kappa>0;\\
s,\, &\kappa = 0;\\
{\sinh(\sqrt{-\kappa} s)\over \sqrt{-\kappa}},\, &\kappa<0.
\end{cases}
$$
Next, we observe that, for any given curvature, the volume of balls of small radius $r>0$ is of order $r^d$.
To see this, first we note that, for $0\leq x \leq \pi/2$, one has $\frac{s}{2} \leq \frac{2}{\pi} s \leq \sin x \leq \sinh x \leq \frac{\sinh (\pi/2)}{\pi/2} x \leq  2 x$.
Thus $\frac{1}{2} s \leq {\rm sn}_{\kappa}(s) \leq 2 s $ for $0\leq s \leq \frac{\pi}{2 \sqrt{|\kappa|}}$, with the convention $\frac{1}{0}=\infty$. 
Therefore, for any $\kappa\in\bR$, and for $0 \leq r \leq r_0(\kappa) := |\kappa|^{-1/2} $, we have $\cH_{\kappa}^d(B_r)= \Theta(r^d)$. 

Further, we assume that the measure $\Lambda$ is non-zero and absolutely continuous with respect to $\cH_{\kappa}^d$, with Radon-Nikodym derivative $\lambda$.
That means that $\Lambda$ is a $\sigma$-finite measure on $\bM_{\kappa}$ given by
\begin{equation}\label{eq:IntensityMeasureDensity}
\Lambda(\cdot)=\int_{\bM_{\kappa}}{\bf 1}\{x\in \cdot\}\lambda(x)\cH_{\kappa}^d(\dint x),
\end{equation}
where $\lambda\colon\bM_{\kappa}\to [0,\infty)$ is some measurable function.

\subsubsection{Sugbraph count}
For a given connected graph $H$ with $m \geq 2$ vertices, we recall that the included subgraph count of the random geometric graph $G(\eta,\rho)$, given by \eqref{eq:includedgraphs}, is 
$$
F_{\subset}^{H}
= F_{\subset}^{H}(\eta,\rho)
= {1\over m!}\sum_{(x_1,\ldots,x_m)\in\eta^m_{\neq}} |\{ \text{subgraphs $H'$ of $G(\{x_1,\ldots,x_m\},\rho)$} : H' \cong H \}|.
$$

\begin{theorem}[Subgraph count]
\label{tm:SubgraphCounts}
    Let $m\ge 2$, $\rho> 0$ and $\kappa\in\bR$.
    If $\kappa>0$ we additionally assume that $\rho<\pi/2\sqrt{\kappa}$.
    Let $H$ be a connected graph with $m$ vertices and diameter $d(H)=n$.
    Let $\Lambda$ be a measure on $\bM_{\kappa}$, absolutely continuous with respect to $\cH_{\kappa}^d$ with Radon-Nikodym derivative $\lambda$ satisfying
    $$
    0<\ell_m:=\int_{\bM_{\kappa}}\lambda(x)^m\cH_{\kappa}^d(\dint x)<\infty.
    $$
    The following holds:
    \begin{enumerate}
        \item The constants 
        \begin{align*}
            C_1(\rho) 
            & :=\sup_{x\in\bM_{\kappa}}\Lambda(B_{n\rho}(x))^m,
            &
            C_2(\rho) 
            & :=\int_{\bM_{\kappa}}\Lambda(B_{n\rho}(x))^{m-1}\Lambda(\dint x) ,
            &
            C_3(\rho)
            &:= \int_{\bM_{\kappa}}\Lambda(B_{\rho/2}(x))^{2m-2}\Lambda(\dint x),
        \end{align*}
        are in $(0,\infty)$, with $C_2(\rho) \leq 2\big[\cH_{\kappa}^d(B_{n\rho})\big]^{m-1}\ell_m $.
    
        \item Set $s\in[0,1]$ and let $\beta_1 = C_1(\rho)^{\frac{1}{m}} \max\Big(1,{C_2(\rho)\over C_1(\rho)}\Big)^{\frac{s}{m}}$,
        $\beta_2 = \max\Big(1,{C_2(\rho)\over C_1(\rho)}\Big)^{\frac{1-s}{2}}$,
        and 
        \[
        \Cr{c_27}' 
        = 2^{-17m-4}m^{-2m-3} \min\left( 1 , \frac{1}{[(m-1)!]^2} C_3(\rho) \beta_2^{-2}\beta_1^{1-2m} \right).
        \]
        Then, for any $t\geq 0$ and $\gamma \geq 8m/\beta_1$,
        \begin{align*}
            \bP(|F_{\subset}^{H}-\bE F_{\subset}^{H}| \geq t)
            &\leq 2\exp \left( - \Cr{c_27}' \left(\frac{t}{\beta_2}\right)^{\frac{1}{m}} \min \left( \left( \frac{t}{\beta_2 (\beta_1 \gamma)^m} \right)^{2-\frac{1}{m}} , 1 + \log_+ \left( \frac{t}{ \beta_2 (\beta_1 \gamma)^m} \right) \right)  \right).
        \end{align*}
        \end{enumerate}
\end{theorem}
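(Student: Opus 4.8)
The plan is to handle Part~1 (the quantitative properties of $C_1(\rho),C_2(\rho),C_3(\rho)$) by direct volume estimates, and then to obtain Part~2 by feeding the included-subgraph-count kernel through the chain Lemma~\ref{lm:A3forGraphs}.2 $\Rightarrow$ Lemma~\ref{lem:relAssumptions}.3 $\Rightarrow$ \eqref{onebound2}. Throughout, the two facts that make the curvature sign irrelevant are that $\cH_\kappa^d$ is isometry invariant on the homogeneous space $\bM_\kappa$ (so $\cH_\kappa^d(B_r(x))=\cH_\kappa^d(B_r)$ for every $x$) and that $\cH_\kappa^d(B_r)<\infty$ for every $r>0$ (by the explicit volume formula when $\kappa\le 0$, and because $\cH_\kappa^d(\bM_\kappa)=\kappa^{-d/2}\omega_d<\infty$ when $\kappa>0$).

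\textbf{Part 1.} By Hölder's inequality with respect to $\cH_\kappa^d$ with exponents $m$ and $m/(m-1)$,
\[
\Lambda(B_r(x)) = \int_{B_r(x)}\lambda\,\dint\cH_\kappa^d \le \ell_m^{1/m}\,\cH_\kappa^d(B_r)^{(m-1)/m}
\]
uniformly in $x$; taking $r=n\rho$ and raising to the $m$-th power gives $C_1(\rho)\le \ell_m[\cH_\kappa^d(B_{n\rho})]^{m-1}<\infty$. For $C_2(\rho)$, I would write it as $\int_{\bM_\kappa^m}\mathbf 1\{\dist(x_1,x_j)\le n\rho,\ 2\le j\le m\}\prod_{i=1}^m\lambda(x_i)\,\dint\cH_\kappa^{d\,\otimes m}$, bound $\prod_{i=1}^m\lambda(x_i)\le\frac1m\sum_{i=1}^m\lambda(x_i)^m$ by AM--GM, and integrate out the ``star'': in each of the $m$ resulting terms every one of the $m-1$ edge constraints contributes a factor $\cH_\kappa^d(B_{n\rho})$ after integrating the corresponding variable, leaving $\int\lambda^m\,\dint\cH_\kappa^d=\ell_m$, so $C_2(\rho)\le [\cH_\kappa^d(B_{n\rho})]^{m-1}\ell_m$, which gives the claimed bound a fortiori. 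Since $\rho/2\le n\rho$ we have $\Lambda(B_{\rho/2}(x))\le\Lambda(B_{n\rho}(x))\le C_1(\rho)^{1/m}$, hence $\Lambda(B_{\rho/2}(x))^{2m-2}\le C_1(\rho)^{(m-1)/m}\Lambda(B_{\rho/2}(x))^{m-1}$ and therefore $C_3(\rho)\le C_1(\rho)^{(m-1)/m}\int\Lambda(B_{\rho/2}(x))^{m-1}\Lambda(\dint x)\le C_1(\rho)^{(m-1)/m}C_2(\rho)<\infty$. Positivity is elementary: since $\Lambda\neq0$ one may pick $x_0$ and $\epsilon\in(0,\rho/4)$ with $\Lambda(B_\epsilon(x_0))>0$, and then $\Lambda(B_{\rho/2}(x))\ge\Lambda(B_\epsilon(x_0))>0$ for all $x\in B_\epsilon(x_0)$, whence $C_3(\rho)\ge\Lambda(B_\epsilon(x_0))^{2m-1}>0$ and $C_1(\rho),C_2(\rho)>0$ by the same argument.

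\textbf{Part 2.} The kernel of $F_{\subset}^{H}$ is $f=\frac1{m!}|\{\text{subgraphs }H'\text{ of }G(\{x_1,\ldots,x_m\},\rho):H'\cong H\}|$, of the form \eqref{eq:UstatisticGraph1} with $n=d(H)$, $Q=m!$. As $\Lambda$ is absolutely continuous with respect to the non-atomic, $\sigma$-finite measure $\cH_\kappa^d$, it is itself non-atomic and $\sigma$-finite, and conditions \eqref{eq:IntensityMeasureCondition1}, \eqref{eq:IntensityMeasureCondition2} read $C_2(\rho)<\infty$ and $\sup_x\Lambda(B_{n\rho}(x))=C_1(\rho)^{1/m}<\infty$, both supplied by Part~1. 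Thus Lemma~\ref{lm:A3forGraphs}.2 applies: $F_{\subset}^{H}$ satisfies \eqref{assumptionsGeneral} with $g(r)=\mathbf 1\{r\le n\rho\}$, $M=1$, and $C(g,\Lambda)=\sup_x\Lambda(B_{n\rho}(x))=C_1(\rho)^{1/m}$. Moreover, whenever $f>0$ one has $d(G(\{x_i\},\rho))\le n$, so $\diam(x_1,\ldots,x_m)\le n\rho$, and the number of labelled copies of $H$ is at most $m!$; hence $f\le\prod_{j=2}^m\mathbf 1\{\dist(x_1,x_j)\le n\rho\}$ and $\|f\|_{L^1(\Lambda^m)}\le C_2(\rho)$, so that $\|f\|_{L^1(\Lambda^m)}/(MC(g,\Lambda)^m)=\|f\|_{L^1(\Lambda^m)}/C_1(\rho)\le C_2(\rho)/C_1(\rho)$. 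Lemma~\ref{lem:relAssumptions}.3 then yields \eqref{eq:fBound} with $q=0$, $\beta_0=1$ and parameters $C_1(\rho)^{1/m}\max(1,\|f\|_{L^1(\Lambda^m)}/C_1(\rho))^{s/m}$, $\max(1,\|f\|_{L^1(\Lambda^m)}/C_1(\rho))^{(1-s)/2}$; since the right-hand side of \eqref{eq:fBound} is non-decreasing in $\beta_1,\beta_2$ and $\|f\|_{L^1(\Lambda^m)}\le C_2(\rho)$, \eqref{eq:fBound} also holds with the (possibly larger) $\beta_1,\beta_2$ of the statement. The hypothesis $\gamma\ge 8m/\beta_1$ is exactly $\beta_1\gamma\ge 8m\beta_0$, so \eqref{onebound2} applies, giving the asserted inequality with the constant $\Cr{c_27}$ of the extended main theorem, in which $q=0$ turns $[1+\log_+(\cdot)]^{1-q}$ into $1+\log_+(\cdot)$.

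\textbf{Finishing and the main obstacle.} It remains to replace $\Cr{c_27}$ by $\Cr{c_27}'$, i.e. to bound $\|f_1\|_{L^2(\Lambda)}^2$ below by $\tfrac1{[(m-1)!]^2}C_3(\rho)$. The key observation is that if $x_2,\ldots,x_m\in B_{\rho/2}(x_1)$ then all pairwise distances are $\le\rho$, so $G(\{x_1,\ldots,x_m\},\rho)=K_m$ contains at least one labelled copy of $H$; thus $f\ge\tfrac1{m!}\mathbf 1\{x_2,\ldots,x_m\in B_{\rho/2}(x_1)\}$, whence by \eqref{eq_fk}, $f_1(y)\ge\tfrac1{(m-1)!}\Lambda(B_{\rho/2}(y))^{m-1}$ and $\|f_1\|_{L^2(\Lambda)}^2\ge\tfrac1{[(m-1)!]^2}C_3(\rho)$. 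Since $\Cr{c_27}\ge\Cr{c_27}'>0$, replacing the former by the latter only weakens the bound; this also re-confirms $C_3(\rho)\le[(m-1)!]^2\|f_1\|_{L^2(\Lambda)}^2<\infty$ via \eqref{eq:fkA2Bound1}. I expect the genuinely delicate points to be (i) arguing the uniform-in-$x$ volume estimates of Part~1 simultaneously for all signs of $\kappa$ (via isometry invariance, plus compactness when $\kappa>0$), and (ii) keeping track that the larger $\beta_1,\beta_2$ of the statement remain admissible in \eqref{eq:fBound} and that the $\min(1,\cdot)$ defining $\Cr{c_27}$ degrades exactly to $\Cr{c_27}'$; everything else is bookkeeping.
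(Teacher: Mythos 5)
Your proof is correct and follows essentially the same route as the paper: the chain Lemma \ref{lm:A3forGraphs} $\Rightarrow$ Lemma \ref{lem:relAssumptions}.3 $\Rightarrow$ \eqref{onebound2}, combined with the same lower bound $\|f_1\|_{L^2(\Lambda)}^2 \geq C_3(\rho)/[(m-1)!]^2$ obtained by noting that points lying in a common $\rho/2$-ball form a complete graph and hence contribute at least one copy of $H$. The only (harmless) deviations are in Part 1, where you bound $C_1(\rho)$ directly via Hölder instead of invoking Lemma \ref{lm:1condEnough}, and your AM--GM argument for $C_2(\rho)$ in fact yields the sharper constant $[\cH_{\kappa}^d(B_{n\rho})]^{m-1}\ell_m$ in place of the paper's $2[\cH_{\kappa}^d(B_{n\rho})]^{m-1}\ell_m$, which implies the stated bound a fortiori.
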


\begin{remark}
In case of induced subgraphs count, it can happen that $F_{=}^{H}$ is trivially zero, for example, for $H=S_n$ with $n$ large enough, where $S_n$ denotes the star graph which is a tree with one root and $n$ leaves.  
However the concentration bound of Theorem \ref{tm:SubgraphCounts} holds also for $F_{=}^{H}$ if one changes $C_3(\rho)$ by 
\begin{align*}
C_4(\rho)
&:= {[(m-1)!]^2} \|f_1^{\mathrm{ind}}\|_{L_2(\Lambda)}^2
\\&=\int_{\bM_{\kappa}}\Big(\int_{(\bM_{\kappa})^{m-1}} {\bf 1}\{G (\{x_1,\ldots,x_{m}\},\rho)\cong H \} \Lambda(\dint x_1)\cdots\Lambda(\dint x_{m-1})\Big)^2\Lambda(\dint x_m) ,
\end{align*}
assuming that this quantity is strictly positive, which is the case as soon as $F_{=}^{H}$ is not trivially zero. 
The proof applies without any changes, apart from the very last estimate.
\end{remark}

\begin{remark}
    Let us point out that the concentration inequalities for the functional $F_{\subset}^{H}$ have been previously obtained in \cite[Theorem 1.1]{BR18} for the Euclidean setting $\kappa=0$.
    In comparison with our results the estimates in \cite{BR18} are of the form $\exp(-{\rm const}\cdot t^{1\over m})$ for big $t$.
    We improve this by adding a logarithmic factor and extending the bounds to the hyperbolic and spherical settings as well.
    Apart from this, our bounds are of the same order in $t$ and $\gamma$, but it appeared to be difficult to compare the corresponding constants and, respectively, dependence of $\rho$.
\end{remark}

\begin{proof}[Proof of Theorem \ref{tm:SubgraphCounts}]       
    We start by showing that $C_2(\rho) \leq 2\big[\cH_{\kappa}^d(B_{n\rho})\big]^{m-1}\ell_m <\infty $.
    We note that
    \begin{align*}
        C_2(\rho)
        &=\int_{\bM_{\kappa}}\Bigl[\int_{\bM_{\kappa}}{\bf 1}\{y\in B_{n\rho}(x)\}\lambda(y)\cH_{\kappa}^d(\dint y) \Bigr]^{m-1}\lambda(x)\cH_{\kappa}^d(\dint x)\\
        &\leq \int_{\bM_{\kappa}}\Big[\Big(\int_{\bM_{\kappa}}{\bf 1}\{y\in B_{n\rho}(x)\}\cH_{\kappa}^d(\dint y)\Big)^{m-2\over m-1}\\
        &\qquad\qquad\times\Big(\int_{\bM_{\kappa}}{\bf 1}\{y\in B_{n\rho}(x)\}\lambda(y)^{m-1}\cH_{\kappa}^d(\dint y)\Big)^{1\over m-1}\Big]^{m-1}\lambda(x)\cH_{\kappa}^d(\dint x)\\
        &=\big[\cH_{\kappa}^d(B_{n\rho})\big]^{m-2}\int_{\bM_{\kappa}}\int_{\bM_{\kappa}}{\bf 1}\{y\in B_{n\rho}(x)\}\lambda(x)\lambda(y)^{m-1}\cH_{\kappa}^d(\dint x)\cH_{\kappa}^d(\dint y),
    \end{align*}
    where in the second step we applied Hölder's inequality to the inner integral with $p=m-1$ and $q=(m-1)/(m-2)$ and in the third equation we used isometry invariance of the measure $\cH_{\kappa}^d$. Now applying the inequality $ab^{m-1}\leq a^m+b^m$, $a,b\ge 0$, and noting that ${\bf 1}\{y\in B_{n\rho}(x)\}$ is equivalent to ${\bf 1}\{x\in B_{n\rho}(y)\}$ we arrive at
    \begin{align*}
        C_2(\rho)
        &\leq \big[\cH_{\kappa}^d(B_{n\rho})\big]^{m-2}\Big(\int_{\bM_{\kappa}}\int_{\bM_{\kappa}}{\bf 1}\{y\in B_{n\rho}(x)\}\lambda(x)^{m}\cH_{\kappa}^d(\dint x)\cH_{\kappa}^d(\dint y)\\
        &\qquad\qquad+\int_{\bM_{\kappa}}\int_{\bM_{\kappa}}{\bf 1}\{x\in B_{n\rho}(y)\}\lambda(y)^{m}\cH_{\kappa}^d(\dint x)\cH_{\kappa}^d(\dint y)\Big)\\
        &=2\big[\cH_{\kappa}^d(B_{n\rho})\big]^{m-1}\int_{\bM_{\kappa}}\lambda(x)^m\cH_{\kappa}^d(\dint x)<\infty.
    \end{align*}
    We have shown $C_2(\rho)<\infty$ which is precisely \eqref{eq:IntensityMeasureCondition1}.
    By Lemma \ref{lm:1condEnough}, it implies  \eqref{eq:IntensityMeasureCondition2} which says that $C_1(\rho)<\infty$. Since \eqref{eq:IntensityMeasureCondition1} and \eqref{eq:IntensityMeasureCondition2} hold, Lemma \ref{lm:A3forGraphs} gives that \eqref{assumptionsGeneral} holds with $g(r) = {\bf 1} \{r\leq d(H) \rho\}$, $M=1$ and $C(g,\Lambda) = C_1(\rho)^{1\over m}$.
    Therefore, according to Lemma \ref{lem:relAssumptions}, $F_{\subset}^{H}$ satisfies \eqref{eq:fBound} with  
    $$
    q=0, \quad 
    \beta_0 = 1, \quad
    \beta_1 = C_1(\rho)^{1\over m} \max\Big(1,{C_2(\rho)\over C_1(\rho)}\Big)^{\frac{s}{m}}, \quad
    \beta_2 = \max\Big(1,{C_2(\rho)\over C_1(\rho)}\Big)^{\frac{1-s}{2}} ,
    $$ 
    where we additionally used that
    $$
    \|f\|_{L_1(\Lambda)}
    \leq \int_{\bM_{\kappa}}\Lambda(B_{n\rho}(x))^{m-1}\Lambda(\dint x)
    = C_2(\rho).
    $$
    
    Thus, we can apply Theorem \ref{tm:Summary} as stated at the beginning of Section \ref{sec:discussion}, with the parameters $q$, $\beta_0$, $\beta_1$ and $\beta_2$ as above and with 
    $$ f_1(x_m)
    = {1\over (m-1)!} \int_{(\bM_{\kappa})^{m-1}} |\{ \text{subgraphs $H'$ of $G(\{x_1,\ldots,x_m\},\rho)$} : H' \cong H \}| \, \Lambda(\dint x_1)\cdots\Lambda(\dint x_{m-1}) .$$
    To show the second part of the theorem, it remains only to observe that $\Cr{c_27}' < \Cr{c_27} $, which follows from
    \begin{align*}
        \|f_1\|_{L_2(\Lambda)}^2
        &\ge {1\over [(m-1)!]^2}\int_{\bM_{\kappa}}\Big(\int_{(\bM_{\kappa})^{m-1}}{\bf 1}\{x_1,\ldots,x_{m-1}\in B_{\rho/2}(x_m)\}\Lambda(\dint x_1)\cdots\Lambda(\dint x_{m-1})\Big)^2\Lambda(\dint x_m)\\
        &= \frac{1}{[(m-1)!]^2}C_3(\rho) .
    \end{align*}
    By \eqref{eq:fkA2Bound1} this also shows that 
    $
    C_3(\rho)\leq (m!)^2\beta_1^{2m-1}\beta_2^2<\infty,
    $
    concluding the proof of the first part of the theorem as well.
\end{proof}

The above theorem applies for rather general family of measures $\Lambda$. For this reason the  expressions for the constant $C_1(\rho)$, $C_2(\rho)$ and $C_3(\rho)$ are quite complex in general.
In particular their dependence on the parameter $\rho$ is obscure.
On the other hand for some specific choices of measure $\Lambda$, these constants can be made more explicit.
In what follows we consider one example of such measure. 

For a compact subset $W\subset \bM_{\kappa}$, we denote its inradius by
$$
r(W):=\max\{r>0\colon B_{r}(x)\subset W\text{ for some }x\in \bM_{\kappa}\} ,
$$
and the restriction of $\cH_\kappa^d$ to $W$ by
\begin{equation}\label{eq:LambdaW}
\Lambda(\cdot ) = \Lambda_W(\cdot )=\cH^d_{\kappa}(W\cap \cdot) .
\end{equation}

\begin{corollary}
\label{cor:RGGsubgraphConstantCurve}
    Let $m\ge 2$, $\rho> 0$, $\kappa\in\bR$ and $W\subset \bM_{\kappa}$ be a compact set with inradius $r(W)>0$.
    If $\kappa > 0$, we additionally assume that $W$ does not contain antipodal points.
    Let $H$ be a connected graph with $m$ vertices and diameter $d(H)=n$.
    Let $\Lambda = \Lambda_W$ be defined by \eqref{eq:LambdaW}.
    Then for any $\rho\leq n^{-1}r(W)$, $s\in[0,1]$,  $t\geq 0$, and $\gamma \geq 8m/\beta_1''$, we have
    \begin{align*}
        \bP(|F_{\subset}^{H}-\bE F_{\subset}^{H}| \geq t)
        &\leq 2\exp \left( - \Cr{c_27}'' \left(\frac{t}{\beta_2''}\right)^{\frac{1}{m}} \min \left( \left( \frac{t}{\beta_2''(\beta_1'' \gamma)^m} \right)^{2-\frac{1}{m}} , 1 + \log_+ \left( \frac{t}{\beta_2''(\beta_1'' \gamma)^m} \right) \right)  \right).
    \end{align*}
    where 
    $\beta_1'' 
    = \cH^{d}_{\kappa} (B_{n\rho})^{1-\frac{s}{m}} \cH_{\kappa}^d(W)^{\frac{s}{m}}$,
    $\beta_2''
    = \left(\frac{\cH_{\kappa}^d(W)}{\cH^{d}_{\kappa}(B_{n\rho})} \right)^{\frac{1-s}{2}} $
    and
    \[
    \Cr{c_27}''
    = 2^{-17m-4}m^{-2m-1}(m!)^{-2}\Big({\cH_{\kappa}^d(B_{\rho/2})\over \cH_{\kappa}^d(B_{n\rho})}\Big)^{2m-2}\Big({\cH_{\kappa}^d(B_{n\rho})\over \cH_{\kappa}^d(W)}\Big)^{s(1-{1\over m})}\Big({\cH_{\kappa}^d(B_{r(W)/2})\over \cH_{\kappa}^d(W)}\Big).
    \]
\end{corollary}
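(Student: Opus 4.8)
The plan is to deduce the statement from the extended form of Theorem~\ref{tm:Summary}, specifically from the bound \eqref{onebound2}, after checking that $F_{\subset}^{H}$ satisfies \eqref{eq:fBound} with the explicit parameters $\beta_0=1$, $q=0$, $\beta_1=\beta_1''$, $\beta_2=\beta_2''$, and after lower bounding $\|f_1\|_{L^2(\Lambda)}^2$. First I would observe that $\Lambda=\Lambda_W$ is finite, $\Lambda(\bM_{\kappa})=\cH_{\kappa}^d(W)<\infty$ since $W$ is compact, so conditions \eqref{eq:IntensityMeasureCondition1} and \eqref{eq:IntensityMeasureCondition2} hold trivially and Lemma~\ref{lm:A3forGraphs}.2 gives that $F_{\subset}^{H}$ fulfils \eqref{assumptionsGeneral} with $g(r)=\mathbf{1}\{r\le n\rho\}$ (recall $n=d(H)$) and $M=1$. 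By isometry invariance of $\cH_{\kappa}^d$ the measure of a metric ball depends only on its radius, hence $C(g,\Lambda)=\sup_{x}\Lambda(B_{n\rho}(x))\le\cH_{\kappa}^d(B_{n\rho})$; since $\rho\le n^{-1}r(W)$ one can centre a ball of radius $n\rho$ inside $W$, so in fact $C(g,\Lambda)=\cH_{\kappa}^d(B_{n\rho})$ and $\cH_{\kappa}^d(W)\ge\cH_{\kappa}^d(B_{n\rho})$. In the same way $\|f\|_{L^1(\Lambda^m)}\le C_2(\rho)=\int_{\bM_{\kappa}}\Lambda(B_{n\rho}(x))^{m-1}\Lambda(\dint x)\le\cH_{\kappa}^d(B_{n\rho})^{m-1}\cH_{\kappa}^d(W)$. (When $\kappa>0$, the hypothesis that $W$ has no antipodal points forces $r(W)<\pi/(2\sqrt{\kappa})$, so all the balls appearing here and below lie in the range where the volume formula for $\cH_{\kappa}^d(B_r)$ and the monotonicity of $r\mapsto\cH_{\kappa}^d(B_r)$ are available.)

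Feeding these estimates into Lemma~\ref{lem:relAssumptions}.3 shows that, for each fixed $s\in[0,1]$, $F_{\subset}^{H}$ satisfies \eqref{eq:fBound} with $q=0$, $\beta_0=1$, and some parameters $\beta_1\le\beta_1''$, $\beta_2\le\beta_2''$, using $\max(1,\|f\|_{L^1(\Lambda^m)}/\cH_{\kappa}^d(B_{n\rho})^m)\le\cH_{\kappa}^d(W)/\cH_{\kappa}^d(B_{n\rho})$. Since for $q=0$, $\beta_0=1$ the right-hand side of \eqref{eq:fBound} is non-decreasing in $\beta_1$ and in $\beta_2$, assumption \eqref{eq:fBound} remains valid after replacing $\beta_1,\beta_2$ by the larger values $\beta_1'',\beta_2''$. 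I would then apply \eqref{onebound2} with $\beta_0=1$, $q=0$, $\beta_1=\beta_1''$, $\beta_2=\beta_2''$: its hypothesis $\beta_1\gamma\ge 8m\beta_0$ becomes exactly $\gamma\ge 8m/\beta_1''$, and one gets
\[
\bP(|F_{\subset}^{H}-\bE F_{\subset}^{H}|\ge t)\le 2\exp\Bigl(-\Cr{c_27}\bigl(\tfrac{t}{\beta_2''}\bigr)^{1/m}\min\bigl(\bigl(\tfrac{t}{\beta_2''(\beta_1''\gamma)^m}\bigr)^{2-1/m},\,1+\log_+\bigl(\tfrac{t}{\beta_2''(\beta_1''\gamma)^m}\bigr)\bigr)\Bigr),
\]
with $\Cr{c_27}=2^{-17m-4}m^{-2m-3}\min\bigl(1,\|f_1\|_{L^2(\Lambda)}^2(\beta_2'')^{-2}(\beta_1'')^{1-2m}\bigr)$. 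As replacing $\Cr{c_27}$ by a smaller positive constant only weakens the bound, it now suffices to show $\Cr{c_27}\ge\Cr{c_27}''$.

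For the lower bound on $\|f_1\|_{L^2(\Lambda)}^2$ I would repeat the argument in the proof of Theorem~\ref{tm:SubgraphCounts}: if $x_1,\dots,x_{m-1}\in B_{\rho/2}(x_m)$ then all pairwise distances are $\le\rho$, so $G(\{x_1,\dots,x_m\},\rho)$ is the complete graph $K_m$, which contains at least one copy of $H$; hence $f_1(x_m)\ge\frac{1}{(m-1)!}\Lambda(B_{\rho/2}(x_m))^{m-1}$ and $\|f_1\|_{L^2(\Lambda)}^2\ge\frac{1}{[(m-1)!]^2}C_3(\rho)$. Next, choosing $x_0$ with $B_{r(W)}(x_0)\subset W$, for every $x\in B_{r(W)/2}(x_0)$ one has $B_{\rho/2}(x)\subset W$ because $\rho\le r(W)$, so $\Lambda(B_{\rho/2}(x))=\cH_{\kappa}^d(B_{\rho/2})$ on this set, giving $C_3(\rho)\ge\cH_{\kappa}^d(B_{\rho/2})^{2m-2}\cH_{\kappa}^d(B_{r(W)/2})$. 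Substituting this together with the explicit $\beta_1''$, $\beta_2''$ into $\Cr{c_27}$, and using $m^{-2m-3}[(m-1)!]^{-2}=m^{-2m-1}(m!)^{-2}$ together with the elementary identity $(\beta_2'')^{-2}(\beta_1'')^{1-2m}=\cH_{\kappa}^d(B_{n\rho})^{-(2m-2)}\bigl(\cH_{\kappa}^d(B_{n\rho})/\cH_{\kappa}^d(W)\bigr)^{s(1-1/m)}/\cH_{\kappa}^d(W)$, one recovers precisely the constant $\Cr{c_27}''$ of the statement; the ``$\min$ with $1$'' is harmless since each of $\cH_{\kappa}^d(B_{\rho/2})/\cH_{\kappa}^d(B_{n\rho})$, $\cH_{\kappa}^d(B_{n\rho})/\cH_{\kappa}^d(W)$, $\cH_{\kappa}^d(B_{r(W)/2})/\cH_{\kappa}^d(W)$ and $[(m-1)!]^{-2}$ is at most $1$.

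The routine parts are the applications of isometry invariance of $\cH_{\kappa}^d$ and of the inclusion $B_{r(W)}(x_0)\subset W$. The main obstacle I expect is the final bookkeeping: tracking the exponents of $\cH_{\kappa}^d(B_{n\rho})$ and $\cH_{\kappa}^d(W)$ through $\beta_1''$ and $\beta_2''$ to confirm that the constant produced by \eqref{onebound2} carries exactly the stated exponents $2m-2$, $s(1-1/m)$ and $1$; a secondary point to check explicitly is that, for $\kappa>0$, the hypotheses keep $n\rho$, $\rho/2$ and $r(W)/2$ strictly below $\pi/(2\sqrt{\kappa})$, so that the ball-volume formula and the inequalities $\cH_{\kappa}^d(B_{\rho/2})\le\cH_{\kappa}^d(B_{n\rho})\le\cH_{\kappa}^d(W)$ may be used.
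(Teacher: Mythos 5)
Your proposal is correct and follows essentially the same route as the paper: the paper proves the corollary by specializing Theorem \ref{tm:SubgraphCounts} (bounding $C_1(\rho)=\cH_{\kappa}^d(B_{n\rho})^m$, $C_2(\rho)\leq\cH_{\kappa}^d(B_{n\rho})^{m-1}\cH_{\kappa}^d(W)$ from above and $C_3(\rho)\geq\cH_{\kappa}^d(B_{\rho/2})^{2m-2}\cH_{\kappa}^d(B_{r(W)/2})$ from below), and your argument simply inlines the proof of that theorem via Lemmas \ref{lm:A3forGraphs} and \ref{lem:relAssumptions}.3 before invoking \eqref{onebound2}. Your explicit remark that \eqref{eq:fBound} with $q=0$, $\beta_0=1$ remains valid after enlarging $\beta_1,\beta_2$ is a slightly cleaner justification of the step the paper phrases as "it remains only to find an upper bound for $\beta_1$ and $\beta_2$", and the exponent bookkeeping matches the stated $\Cr{c_27}''$.
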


\begin{proof}
    Without loss of generality we assume that $B_{r(W)}\subset W$. 
    First we note that the measure $\Lambda_W$ can be represented in the form \eqref{eq:IntensityMeasureDensity} with $\lambda(x)={\bf 1}\{x\in W\}$,
    and thus $\ell_m=\cH_{\kappa}^d(W)<\infty$, where $\ell_m$ is as defined in Theorem \ref{tm:SubgraphCounts}. 
    We also note that for $\kappa>0$, we have $\rho \leq n^{-1} r(W) \leq r(W) < r(\bM_\kappa) / 2 = \pi/2 \sqrt{\kappa}$ because $W$ does not contain antipodal points.
    Hence Theorem \ref{tm:SubgraphCounts} applies and it remains only find an upper bound for the terms $\beta_1$ and $\beta_2$, which means to bound $C_1(\rho)$, $C_2(\rho)$ from above and $C_3(\rho)$ from below.

    Note that, since $n\rho\leq r(W)$ and $B_{r(W)}\subset W$, we get
    $$ C_1(\rho)
    = \sup_{x\in\bM_{\kappa}} \cH_{\kappa}^d(B_{n\rho}(x)\cap W)^m
    = \cH^{d}_{\kappa} (B_{n\rho})^m .$$
    We also note that $C_2(\rho)
    \leq \big[\cH_{\kappa}^d(B_{n\rho})\big]^{m-1}\cH_{\kappa}^d(W)$ and, thus, 
    $$
    \max\Big(1,{C_2(\rho)\over C_1(\rho)}\Big)
    \leq \max\Big(1,{\cH_{\kappa}^d(W)\over \cH^{d}_{\kappa}(B_{n\rho})}\Big)
    = \frac{\cH_{\kappa}^d(W)}{\cH^{d}_{\kappa}(B_{n\rho})}.
    $$
    Therefore,
    \begin{align*}
        \beta_1 
        & = C_1(\rho)^{\frac{1}{m}} \max\Big(1,{C_2(\rho)\over C_1(\rho)}\Big)^{\frac{s}{m}} 
        \leq
        \cH^{d}_{\kappa} (B_{n\rho})^{1-\frac{s}{m}} \cH_{\kappa}^d(W)^{\frac{s}{m}} ,
        &\beta_2 
        &= \max\Big(1,{C_2(\rho)\over C_1(\rho)}\Big)^{\frac{1-s}{2}}
        \leq \left(\frac{\cH_{\kappa}^d(W)}{\cH^{d}_{\kappa}(B_{n\rho})} \right)^{\frac{1-s}{2}} .
    \end{align*}
    Finally for $C_3(\rho)$ we conclude
    \begin{align*}
        C_3(\rho)
        & = \int_{\bM_{\kappa}} \cH_{\kappa}^d(B_{\rho/2}(x)\cap W)^{2m-2}{\bf 1}\{x\in W\} \cH_{\kappa}^d(\dint x)\\
        &\ge \int_{\bM_{\kappa}} \cH_{\kappa}^d(B_{\rho/2}(x))^{2m-2}{\bf 1}\{x\in B_{r(W)-\rho/2}\} \cH_{\kappa}^d(\dint x)\\
        &\ge \cH_{\kappa}^d(B_{\rho/2})^{2m-2} \cH_{\kappa}^d (B_{r(W)/2}),
    \end{align*}
    since $r(W)-\rho/2\ge r(W)/2$.
    
\end{proof}

We will compare our result with Corollary 4.2 of \cite{ST23}, which applies to the Euclidean setting.
For simplicity, we will make a number of assumptions which will simplify the constants in both results.
First, we take $\kappa=0$ and work in a spherical window $B_r$.
Second, we will consider only the setting when $\gamma \rho^d$ is large enough. 

\begin{corollary}
\label{cor:RGGsubgraphZeroCurve}
    Let $m\ge 2$, $\rho> 0$, $r>0$ and $W = B_r \subset \bR^d$.
    Let $H$ be a connected graph with $m$ vertices and diameter $d(H)=n$.
    Let $\Lambda$ be the restriction of the Lebesgue measure to $B_r$.
    Set
    \begin{align*}
        \Cl{c_33} &:= \frac{8m}{\kappa_d n^d} ,
        & \Cl{c_32} &:= m^{-4m-2}2^{-15m-2md+d-6}\kappa_d^{1-2m}n^{-d(4m-4)},
        & \Cl{c_34} &:=  \kappa_d^mn^{dm-{d\over 2}},\\ 
         \Cl{c_35} &:= e\kappa_d^{-m}n^{-dm+{d\over 2}}, & \Cl{c_36} &:= m^{-4m-2}2^{-15m-2md+d-6}n^{-d(2m-2-{1\over 2m})},
    \end{align*}
    where $\kappa_d=\pi^{d\over 2}/\Gamma(d/2+1)$ denotes the Lebesgue measure of a $d$-dimensional Euclidean ball.
    \begin{enumerate}
        \item Then for any $\rho\leq n^{-1}r$, $t\geq 0$ and $\gamma \geq \Cr{c_33} \rho^{-d}$, 
        \begin{align*}
            \bP(|F_{\subset}^{H}-\bE F_{\subset}^{H}| \geq t)
            &\leq  \begin{cases}
            2\exp \left( - \Cr{c_32} \left({\rho\over r}\right)^{d}\left(  \rho^d \gamma \right)^{-2m+1}  t^2 \right) ,
            & t \leq \Cr{c_34}\left(\frac{r}{\rho} \right)^{\frac{d}{2}}(\rho^{d} \gamma)^m ,
            \\
            2\exp \left( - \Cr{c_36}\left(\frac{\rho}{r}\right)^{{d\over 2m}} t^{1\over m} \log \left( \Cr{c_35} \left(\frac{\rho}{r} \right)^{\frac{d}{2}} \frac{t}{ (\rho^{d}  \gamma)^m} \right) \right) ,
            & t \geq \Cr{c_34} \left(\frac{r}{\rho} \right)^{\frac{d}{2}} (\rho^{d} \gamma)^m .
        \end{cases}
        \end{align*}
        \item $\bV F_{\subset}^{H} \geq \left({\kappa_d\over m2^{d-1}}\right)^{2m-1} (\rho^d\gamma)^{2m-1} \left(\frac{r}{\rho}\right)^d$.
        \item
        For any $\rho\leq n^{-1}r$, $z\geq 0$ and $\gamma \geq \Cr{c_33} \rho^{-d}$ we have 
        \begin{align*}
            \bP(|F_{\subset}^{H}-\bE F_{\subset}^{H}| \geq z \sqrt{\bV F_{\subset}^{H}})
            &\leq \begin{cases}
                2 \exp \left( - \Cr{c_37} z^2 \right) ,
                & z  \leq \Cr{c_41} (\rho^d \gamma)^{1\over 2} ,
                \\
                2 \exp \left( - \Cr{c_38} (\rho^{d} \gamma)^{1-{1\over 2m}} z^{1\over m} \log \left( \Cr{c_40} (\rho^d \gamma)^{-{1\over 2}} z \right) \right) ,
                & z  \geq \Cr{c_41} (\rho^d \gamma)^{1\over 2} .
            \end{cases}
        \end{align*}
        where
        \begin{align*}
            \Cl{c_37} &:=  m^{-6m-1}2^{-13m-4md+2d-7}n^{-d(4m-4)},
            & \Cl{c_40} &= e\kappa_d^{-{1\over 2}}({2^{d-1}m n^d})^{-m+{1\over 2}} ,\\
             \Cl{c_38} &=  m^{-4m-3+{1\over 2m}}2^{-15m-2md-5}\kappa_d^{1-{1\over 2m}}n^{-d(2m-2-{1\over 2m})} ,
            & \Cl{c_41} &=  \kappa_d^{1\over 2}({2^{d-1}m n^d})^{m-{1\over 2}}.
        \end{align*}
    \end{enumerate}
\end{corollary}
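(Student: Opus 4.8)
The plan is to obtain this corollary as the flat specialization $\kappa=0$, $W=B_r$, $s=0$ of Corollary~\ref{cor:RGGsubgraphConstantCurve}, in which all geometric quantities become completely explicit. Since $\mathrm{sn}_0(a)=a$ one has $\cH^d_0(B_a)=\kappa_d a^d$ for every radius $a\ge0$, and because the inradius of $B_r$ equals $r\ge n\rho$ the constants of Corollary~\ref{cor:RGGsubgraphConstantCurve} reduce to $\beta_1''=\cH^d_0(B_{n\rho})=\kappa_d n^d\rho^d$ and $\beta_2''=\bigl(\cH^d_0(B_r)/\cH^d_0(B_{n\rho})\bigr)^{1/2}=(r/(n\rho))^{d/2}$, with $q=0$ and $\beta_0=1$. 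In particular the admissibility hypothesis $\gamma\ge 8m/\beta_1''$ of Corollary~\ref{cor:RGGsubgraphConstantCurve} becomes exactly $\gamma\ge\Cr{c_33}\rho^{-d}$, and one computes $\beta_2''(\beta_1''\gamma)^m=\Cr{c_34}(r/\rho)^{d/2}(\rho^d\gamma)^m$, which will be the value separating the two regimes in part~1.

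For part~1 I would feed these values into the single bound of Corollary~\ref{cor:RGGsubgraphConstantCurve} and split its minimum using the elementary inequalities $x^{2-1/m}\le 1+\log_+x$ for $0\le x\le1$ and $x^{2-1/m}\ge 1+\log_+x$ for $x\ge1$, applied with $x=t/(\beta_2''(\beta_1''\gamma)^m)$; this produces precisely the two regimes separated at $t=\Cr{c_34}(r/\rho)^{d/2}(\rho^d\gamma)^m$. In the first regime the exponent equals $\Cr{c_27}''\,t^2\,(\beta_2'')^{-2}(\beta_1''\gamma)^{-(2m-1)}$, and substituting $(\beta_2'')^{-2}=n^d(\rho/r)^d$, $(\beta_1''\gamma)^{-(2m-1)}=\kappa_d^{1-2m}n^{-d(2m-1)}(\rho^d\gamma)^{-(2m-1)}$ together with the value of $\Cr{c_27}''$ (whose middle factor $(\cH^d_0(B_{n\rho})/\cH^d_0(W))^{s(1-1/m)}$ equals $1$ because $s=0$) gives a coefficient of $t^2$ bounded below by $\Cr{c_32}(\rho/r)^d(\rho^d\gamma)^{-(2m-1)}$ once the products of powers of $2$, $m$ and $(m!)^2$ are absorbed into the stated (crude) constant. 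The log regime is handled identically, $1+\log(\cdot)$ turning into the displayed logarithm.

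For part~2 I would combine the lower inequality of \eqref{eq_UstatVarBounds}, namely $\bV F_{\subset}^{H}\ge\gamma^{2m-1}\|f_1\|_{L^2(\Lambda)}^2$, with the bound $\|f_1\|_{L^2(\Lambda)}^2\ge C_3(\rho)/[(m-1)!]^2$ obtained inside the proof of Theorem~\ref{tm:SubgraphCounts} and with $C_3(\rho)\ge\cH^d_0(B_{\rho/2})^{2m-2}\cH^d_0(B_{r/2})=(\kappa_d\rho^d2^{-d})^{2m-2}\kappa_d r^d2^{-d}$, exactly as in the proof of Corollary~\ref{cor:RGGsubgraphConstantCurve} (valid since $\rho\le n^{-1}r\le r$). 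Collecting powers and rewriting $\gamma^{2m-1}\rho^{d(2m-2)}r^d=(\rho^d\gamma)^{2m-1}(r/\rho)^d$ yields the claimed inequality up to the factor $m^{2m-1}/([(m-1)!]^22^{2m-1})$, which one checks is $\ge1$ for all $m\ge2$ (it even grows, since $[(m-1)!]^2\sim(m/e)^{2m}$ while $m^{2m-1}\sim m^{2m}$ and $e>2$).

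For part~3 I would put $t=z\sqrt{\bV F_{\subset}^{H}}$ into part~1 and use the two-sided variance estimate: part~2 bounds $\sqrt{\bV F_{\subset}^{H}}$ from below by $c_-(r/\rho)^{d/2}(\rho^d\gamma)^{m-1/2}$, and the upper inequality of \eqref{eq_UstatVarBounds} with $\Cr{c47}=8m$ bounds it from above by $c_+(r/\rho)^{d/2}(\rho^d\gamma)^{m-1/2}$, with $c_-,c_+$ explicit. The boundary value is chosen as $\Cr{c_41}(\rho^d\gamma)^{1/2}=(\Cr{c_34}/c_-)(\rho^d\gamma)^{1/2}$, so that at $z=\Cr{c_41}(\rho^d\gamma)^{1/2}$ one already has $t=z\sqrt{\bV F_{\subset}^{H}}\ge\Cr{c_34}(r/\rho)^{d/2}(\rho^d\gamma)^m$; hence for $z\ge\Cr{c_41}(\rho^d\gamma)^{1/2}$ the log regime of part~1 applies, and substituting $t=z\sqrt{\bV F_{\subset}^{H}}$ makes all $r/\rho$ factors cancel, leaving the prefactor $\Cr{c_38}(\rho^d\gamma)^{1-1/(2m)}z^{1/m}$ and logarithm $\log(\Cr{c_40}(\rho^d\gamma)^{-1/2}z)$. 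For $z\le\Cr{c_41}(\rho^d\gamma)^{1/2}$ the Gaussian regime of part~1 gives exponent $\Cr{c_32}(\rho/r)^d(\rho^d\gamma)^{-(2m-1)}z^2\,\bV F_{\subset}^{H}\ge\Cr{c_37}z^2$ after inserting the part~2 lower bound; the only point requiring a little care is the narrow band of $z$ for which $t$ already slightly exceeds the part~1 threshold, where one uses instead that near that threshold the log-regime exponent is itself $\gtrsim z^2$ (the logarithm being $\Theta(1)$ there). The genuine obstacle is not any single step but the accumulated bookkeeping -- carrying $\Cr{c_27}''$, $\beta_1''$, $\beta_2''$ through the substitution $t=z\sqrt{\bV F_{\subset}^{H}}$ and reducing the resulting products of factorials and powers of $2$, $m$, $n$, $\kappa_d$ to the closed forms $\Cr{c_32},\ldots,\Cr{c_41}$; no idea beyond Corollary~\ref{cor:RGGsubgraphConstantCurve} and the two-sided variance estimate \eqref{eq_UstatVarBounds} is needed.
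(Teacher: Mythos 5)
Your proof follows the paper's route exactly: part 1 is the $\kappa=0$, $W=B_r$, $s=0$ specialization of Corollary \ref{cor:RGGsubgraphConstantCurve} with the minimum split at $t=\beta_2''(\beta_1''\gamma)^m=\Cr{c_34}(r/\rho)^{d/2}(\rho^d\gamma)^m$; part 2 combines $\bV F_{\subset}^{H}\geq\gamma^{2m-1}\|f_1\|^2_{L^2(\Lambda)}$ with the lower bound on $C_3(\rho)$ taken from that corollary's proof (note $m^2/(m!)^2=1/[(m-1)!]^2$, so your bound is identical to the paper's); and part 3 substitutes $t=z\sqrt{\bV F_{\subset}^{H}}$ into part 1, which is all the paper says ("follows from the two first points"). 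Your flagging of the narrow band where $z\le\Cr{c_41}(\rho^d\gamma)^{1/2}$ yet $t$ already exceeds the part-1 threshold is the one genuinely delicate step, and your fix does close with the stated constants, since there the exponent is at least $\Cr{c_36}\Cr{c_34}^{1/m}\rho^d\gamma$ and one checks $\Cr{c_36}\Cr{c_34}^{1/m}=\Cr{c_37}\Cr{c_41}^{2}$.
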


\begin{proof}

We start by noting that by Stirling's formula we have $m!\leq \sqrt{2\pi}m^{m+{1\over 2}}e^{-m}\leq m^{m+{1\over 2}}2^{-m+1}$.

    For the first point, we apply Corollary \ref{cor:RGGsubgraphConstantCurve} with $s=0$ and observe that
    \begin{align*}
        \beta_1'' 
        &= \cH^{d}_{0} (B_{n\rho})
        = \kappa_d (n\rho)^{d},
        & \beta_2''
        &= \left(\frac{\cH_{0}^d(B_r)}{\cH^{d}_{0}(B_{n\rho})} \right)^{\frac{1}{2}}
        = \left(\frac{r}{n\rho} \right)^{\frac{d}{2}} , &\beta_2''(\beta_1'' \gamma)^m
        &= \kappa_d^mn^{dm-{d\over 2}} \left(\frac{r}{\rho} \right)^{\frac{d}{2}} (\rho^{d}  \gamma)^m,
    \end{align*}
    and
    \begin{align*}
        \Cr{c_27}''
        &=  (m!)^{-2}m^{-2m-1}2^{-17m-2md+d-4}n^{-d(2m-2)}\ge m^{-4m-2}2^{-15m-2md+d-6}n^{-d(2m-2)}.
    \end{align*}
    Therefore, for $t \geq 0$ and 
    $\gamma\geq 8 m / \beta_1'' 
    = \Cr{c_33} \rho^{-d}$ we have
    \begin{align*}
        \bP(F_{\subset}^{H}-\bE F_{\subset}^{H} \geq t)
        &\leq \exp \left( - \Cr{c_27}'' \left(\frac{t}{\beta_2''}\right)^{\frac{1}{m}} \min \left( \left( \frac{t}{\beta_2''(\beta_1'' \gamma)^m} \right)^{2-\frac{1}{m}} , 1 + \log_+ \left( \frac{t}{\beta_2''(\beta_1'' \gamma)^m} \right) \right)  \right)       
        \\&= \begin{cases}
            \exp \left( - \Cr{c_32} \left({r\over \rho}\right)^{-d}\left(  \rho^d \gamma \right)^{-2m+1}  t^2 \right) ,
            & t \leq \Cr{c_34}\left(\frac{r}{\rho} \right)^{\frac{d}{2}}(\rho^{d} \gamma)^m ,
            \\
            \exp \left( - \Cr{c_36}\left(\frac{r}{\rho}\right)^{-{d\over 2m}} t^{1\over m} \log \left( \Cr{c_35} \left(\frac{r}{\rho} \right)^{-\frac{d}{2}} \frac{t}{ (\rho^{d}  \gamma)^m} \right) \right) ,
            & t \geq \Cr{c_34} \left(\frac{r}{\rho} \right)^{\frac{d}{2}} (\rho^{d} \gamma)^m .
        \end{cases}
    \end{align*}

    \medskip
    For the variance estimate of the second point, we recall that $\bV(F) \geq \gamma^{2m-1} \|f_1\|^2_{L^2(\Lambda)}$.
    But we have seen, at the end of the proof of Theorem \ref{tm:SubgraphCounts} that $\|f_1\|^2_{L^2(\Lambda)} \geq \frac{m^2}{(m!)^2}C_3(\rho)\ge m^{1-2m}2^{2m-1}C_3(\rho)$, where 
    $$C_3(\rho) 
    \geq \cH_{0}^d(B_{\rho/2})^{2m-2} \cH_{0}^d (B_{r/2}) 
    = \left({\kappa_d\over 2^d}\right)^{2m-1} \rho^{d(2m-1)}\left({r\over \rho}\right)^{d} , $$
    as seen in the proof of Corollary \ref{cor:RGGsubgraphConstantCurve}.
    The estimate follows.

    The bound of the third point follows from the two first points.
\end{proof}

\begin{remark}
    In Corollary 4.2 of \cite{ST23}, Schulte and Thäle obtain a similar concentration bound for functionals which are linear combinations of (induced or included) subgraph counts in the Euclidean random geometric graph.
    In order to simplify the comparison with our result, we assume that $\gamma \kappa_d \rho^d \geq 1$.
    Under this assumption, the condition $(4.2)$ from \cite{ST23} reads as $\bV F_{\subset}^H \geq v \gamma^{2m-1} (\kappa_d \rho^d)^{2m-2}$.
    With the variance estimate $\bV F_{\subset}^{H} \geq \left(\frac{\kappa_d }{m2^{d-1} }\right)^{2m-1} \gamma^{2m-1} \rho^{d(2m-2)} r^d$ of Corollary \ref{cor:RGGsubgraphZeroCurve}, this is satisfied for 
    $ v 
    = \frac{ \kappa_d r^d}{ (2^{d-1} m)^{2m}}
    $.    
    With the help of Table \ref{tab:dic}, applying the result from \cite{ST23} gives that $F_{\subset}^H$ satisfies a concentration inequality ${\bf CI}(m-1 , \tau )$ with 
    \begin{align*}
        \tau 
        &= \frac{\sqrt{v \gamma \min(1,\gamma \kappa_d \rho^d)^{m-1}}}{ m^{3m} \max(1, \cH_0^d(B_r)/v) } 
        =  \frac{\sqrt{\frac{ \kappa_d r^d}{ (2^{d-1} m)^{2m-1}} \gamma}}{ m^{3m} (2^{d-1} m)^{2m-1} } 
        =  \frac{\sqrt{\kappa_d r^d \gamma}}{ 2^{3m(d-1)-{2\over 2}(d-1)} m^{6m-{3\over 2}} } ,
    \end{align*} 
    meaning that for any $\gamma\geq (\kappa_d \rho^d)^{-1}$ and any $z\geq 0$, one has
    \begin{align*}
        \bP \left( |F_{\subset}^{H}-\bE F_{\subset}^{H}| \geq z \sqrt{\bV F_{\subset}^{H}} \right)
        &\leq 
        \begin{cases}
             2 \exp \left( - 2^{-m-2} z^2 \right) & z^{2m-1} \leq  2^{m^2} \tau 
             \\ 
             2 \exp \left( - 2^{-2} (z \tau)^{1/m} \right) & z^{2m-1} \geq  2^{m^2} \tau 
        \end{cases} 
        \\&\leq
        \begin{cases}
            2 \exp \left( - 2^{-m-2} z^2 \right) 
            & z \leq \Cr{c_42} (r^{d} \gamma)^{1/(4m-2)} ,
            \\
            2 \exp \left( - 2^{-3d-2} m^{-6} (\kappa_dr^{d} \gamma)^{\frac{1}{2m}} z^{\frac{1}{m}} \right) 
            & z \geq \Cr{c_42} (r^{d} \gamma)^{1/(4m-2)} .
        \end{cases}
    \end{align*}    
    where $\Cl{c_42} = (2^{d-1}m)^{-{3\over 2}}( 2^{m^2} \kappa_d^{1\over 2} m^{-3m} )^{1/(2m-1)}$.
    
    Let us now point out that in any regime where $r$ (observation window) stays fixed, $\gamma\to\infty$, and $\rho^d\leq c \gamma^{-1}$, in the above bound we observe the Gaussian tail for the large range of $z$, which eventually extends to $[0,\infty)$ as $\gamma\to\infty$. This means that in this case the above bound is better then the estimate from Corollary \ref{cor:RGGsubgraphZeroCurve}, which provides Gaussian tail only for $z$ from some fixed interval.
    
    On the other hand if both $r$ and $\rho$ stays fixed and $\gamma\to\infty$ the bounds from Corollary \ref{cor:RGGsubgraphZeroCurve} are of the better quality, since they give Gaussian tail for $0<z<{\rm const}\cdot \gamma^{1\over 2}$ in contrast to the above bounds, providing Gaussian tail only for $0<z<{\rm const}\cdot \gamma^{1\over 4m-2}$.
    
    There are certain regimes where both bounds agree, apart from the fact that bound from Corollary \ref{cor:RGGsubgraphZeroCurve} has an additional logarithmic factor for $z\to\infty$, which makes it better in this case. For example, this is the case when $(\rho^d\gamma)^{2m-1}=r^d\gamma$.

    \begin{table}
        \centering
        \begin{tabular}{ccccccccccccc}
            Notation from \cite{ST23}: & $W$ & $k$ & $a_1$ & $\diamondsuit_1$ & $G_1$ & $q_1$ & $S_n$ & $p$ & $q$ & $a$ & $t_n$ & $r_n$ \\
            Our notation: & $B_r$ & $1$ & $1$ & $\subset$ & $H$ & $m$ & $F_{\subset}^H$ & $m$ & $m$ & $1$ & $\gamma$ & $\rho$ 
        \end{tabular}
        \caption{Dictionary to apply Corollary 4.2 of \cite{ST23} to our setting.}
        \label{tab:dic}
    \end{table}
\end{remark}

\begin{remark}
    Similarly to the observation made in \cite[Remark 4.4]{ST23}, we point out that random geometric graphs generalize to the random connection model, and that our methods and results are expected to extend naturally. In random connection model, for every pair of Poisson points an edge is drawn with a probability which is a function of these two points, and where all the edges are drawn independently. 
    This additional randomness excludes functionals of the random connections model from the Poisson $U$-statistics framework.
    
    However, in Proposition 5.1 of the recent preprint \cite{liu2023normal}, Liu and Privault establish (non-centred) moment and cumulant formulas for the subgraph count in this model. They have a similar form as the one obtained for the random geometric graph, with an additional factor corresponding to the additional randomness of this model.
    We expect that centred moments formulas extend in a similar way to the random connection model and that, once this is done, one can apply our concentration bounds.
\end{remark}

\subsubsection{Power-weighted edge length}
Finally we consider the power weighted edge length $F_2^{(\tau)}$ defined by \eqref{eq:powerlength}.

\begin{theorem}
\label{thm:RGGnonEuclidean}
    Let $\kappa\in \bR$, $d\ge 2$, and $W\subset \bM_{\kappa}$ be a compact set with inradius $r(W)>0$. If $\kappa > 0$, we additionally assume that $W$ does not contain antipodal points.
    Let $\eta$ be a Poisson point process with intensity measure $\gamma\Lambda_W$, defined by \eqref{eq:LambdaW}. 
    Set
    \begin{align*}
        s & \in[0,1] ,
        &\beta_1 &= \cH_{\kappa}^d(W)^{\frac{s}{2}} \cH_{\kappa}^d(B_{\rho})^{1-\frac{s}{2}} ,
        & \beta_2 &= \frac{\rho^\tau}{2} \left( \frac{\cH_{\kappa}^d(W)}{\cH_{\kappa}^d(B_{\rho})} \right)^{\frac{1-s}{2}} .
    \end{align*}
    Then for any  $\tau\ge 0$, $0 < \rho\leq r(W)$, $\gamma \geq 16 \beta_1^{-1} $ and any $t\geq 0$,  
    \begin{align*}
        \bP(|F_2^{(\tau)}-\bE F_2^{(\tau)}| \geq t)
        &\leq 2\exp \left( - \Cr{c_27}''' \left(\frac{t}{\beta_2} \right)^{\frac{1}{2}} \min \left( \left( \frac{t}{\beta_2 (\beta_1 \gamma)^2} \right)^{\frac{3}{2}} , 1 + \log_+ \left( \frac{t}{\beta_2 (\beta_1 \gamma)^2} \right) \right)  \right), 
    \end{align*}
    where 
    \[
    \Cr{c_27}''' := 2^{-47-2\tau} \Big({\cH_{\kappa}^d(B_{\rho/2})\over \cH_{\kappa}^d(B_{\rho})}\Big)^{2}\Big({\cH_{\kappa}^d(B_{\rho})\over \cH_{\kappa}^d(W)}\Big)^{s\over 2}\Big({\cH_{\kappa}^d(B_{r(W)})\over \cH_{\kappa}^d(W)}\Big).
    \]
\end{theorem}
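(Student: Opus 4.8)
The plan is to place $F_2^{(\tau)}$ inside the random geometric graph framework of Section \ref{sec:geometricGraphs} and then quote the extended version of Theorem \ref{tm:Summary}, following the template of the proof of Corollary \ref{cor:RGGsubgraphConstantCurve}. The only genuinely new ingredient will be a lower bound on $\|f_1\|_{L^2(\Lambda)}^2$ that keeps track of the weight $\dist(\cdot,\cdot)^\tau$.

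First I would identify $F_2^{(\tau)}$ with the Poisson $U$-statistic \eqref{eq:UstatisticGraph1} for $m=2$, $n=1$, $Q=\rho^\tau$, $h(x,y)=\dist(x,y)^\tau$, that is, with kernel $f(x,y)=\frac12\dist(x,y)^\tau{\bf 1}\{\dist(x,y)\le\rho\}$, and with $\Lambda=\Lambda_W$. Assuming without loss of generality that $B_{r(W)}\subset W$, the compactness of $W$ makes conditions \eqref{eq:IntensityMeasureCondition1} and \eqref{eq:IntensityMeasureCondition2} immediate, since $\Lambda_W(B_\rho(x))\le\cH_\kappa^d(B_\rho)<\infty$ (finiteness for $\kappa>0$ uses $\rho\le r(W)<\pi/2\sqrt\kappa$, which holds because $W$ contains no antipodal points, exactly as in Corollary \ref{cor:RGGsubgraphConstantCurve}) and hence $\int_{\bM_\kappa}\Lambda_W(B_\rho(x))\,\Lambda_W(\dint x)\le\cH_\kappa^d(B_\rho)\cH_\kappa^d(W)<\infty$. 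Lemma \ref{lm:A3forGraphs}.3 then gives \eqref{assumptionsGeneral} with $g(r)={\bf 1}\{r\le\rho\}$, $M=\rho^\tau/2$ and $C(g,\Lambda)=\sup_x\Lambda_W(B_\rho(x))=\cH_\kappa^d(B_\rho)$, and Lemma \ref{lem:relAssumptions}.3 yields, for each $s\in[0,1]$, the assumption \eqref{eq:fBound} with $q=0$, $\beta_0=1$ and $\beta_1,\beta_2$ as in the statement. Here one uses the crude estimate $\|f\|_{L^1(\Lambda^2)}\le\frac{\rho^\tau}{2}\int_{\bM_\kappa}\Lambda_W(B_\rho(x))\,\Lambda_W(\dint x)\le\frac{\rho^\tau}{2}\cH_\kappa^d(B_\rho)\cH_\kappa^d(W)$, so that $\|f\|_{L^1(\Lambda^2)}/(M\,C(g,\Lambda)^2)\le\cH_\kappa^d(W)/\cH_\kappa^d(B_\rho)$, which is at least $1$ because $B_\rho\subset W$; this is why the maximum in Lemma \ref{lem:relAssumptions}.3 collapses to that ratio.

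Next I would invoke the extended Theorem \ref{tm:Summary}, equation \eqref{onebound2}: its hypothesis $\beta_1\gamma\ge8m\beta_0=16$ is exactly $\gamma\ge16\beta_1^{-1}$, and it produces a bound of the claimed shape with $m=2$, $q=0$ and $\Cr{c_27}=2^{-45}\min\bigl(1,\|f_1\|_{L^2(\Lambda)}^2\beta_2^{-2}\beta_1^{-3}\bigr)$. Since $\cH_\kappa^d(B_{\rho/2})/\cH_\kappa^d(B_\rho)$, $\cH_\kappa^d(B_\rho)/\cH_\kappa^d(W)$ and $\cH_\kappa^d(B_{r(W)})/\cH_\kappa^d(W)$ are all at most $1$, we have $\Cr{c_27}'''\le2^{-47-2\tau}\le2^{-45}$; thus the theorem will follow, by monotonicity of $c\mapsto\exp(-c\,(\cdot))$, once one checks that $\|f_1\|_{L^2(\Lambda)}^2\beta_2^{-2}\beta_1^{-3}\ge2^{-2-2\tau}$ times those three ratios, i.e.\ that $\Cr{c_27}'''\le\Cr{c_27}$.

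The remaining and main step is therefore the lower bound on $\|f_1\|_{L^2(\Lambda)}^2$, where
\[
f_1(y)=\int_{\bM_\kappa}\dist(y,z)^\tau{\bf 1}\{\dist(y,z)\le\rho\}\,\Lambda_W(\dint z).
\]
My plan is to integrate $f_1(y)^2$ over an interior ball on which the spherical shell $\{z:\rho/2\le\dist(y,z)\le\rho\}$ stays inside $B_{r(W)}(o)\subset W$; on this shell $\dist(y,z)^\tau\ge(\rho/2)^\tau$, so $f_1(y)\ge2^{-\tau}\rho^\tau\bigl(\cH_\kappa^d(B_\rho)-\cH_\kappa^d(B_{\rho/2})\bigr)$. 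The key volume comparison $\cH_\kappa^d(B_{\rho/2})\le\frac12\cH_\kappa^d(B_\rho)$ follows from the monotonicity of ${\rm sn}_\kappa$ on the relevant range recorded just before the statement, and it gives $\cH_\kappa^d(B_\rho)-\cH_\kappa^d(B_{\rho/2})\ge\cH_\kappa^d(B_{\rho/2})=\bigl(\cH_\kappa^d(B_{\rho/2})/\cH_\kappa^d(B_\rho)\bigr)\cH_\kappa^d(B_\rho)$; squaring, integrating over the interior ball, and then dividing by $\beta_2^2\beta_1^3$ reproduces precisely the factors $\bigl(\cH_\kappa^d(B_{\rho/2})/\cH_\kappa^d(B_\rho)\bigr)^2$, $\bigl(\cH_\kappa^d(B_\rho)/\cH_\kappa^d(W)\bigr)^{s/2}$ and $\cH_\kappa^d(B_{r(W)})/\cH_\kappa^d(W)$ of $\Cr{c_27}'''$ (the power $\rho^\tau$ cancelling against $\beta_2^2$, leaving only the numerical $2$-powers). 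I expect the only real obstacle to be the bookkeeping here: arranging the interior region so that the surviving volume factor is exactly $\cH_\kappa^d(B_{r(W)})$, and collapsing all numerical constants to the stated $2^{-47-2\tau}$. Finally, the inequality $\|f_1\|_{L^2(\Lambda)}^2\le(m!)^2\beta_1^{2m-1}\beta_2^2<\infty$ coming from \eqref{eq:fkA2Bound1} confirms that $\Cr{c_27}'''$ is finite and positive.
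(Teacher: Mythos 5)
Your reduction of $F_2^{(\tau)}$ to the framework of Lemma \ref{lm:A3forGraphs} and Lemma \ref{lem:relAssumptions}.3, the identification of $C(g,\Lambda)=\cH_\kappa^d(B_\rho)$ and the bound $\|f\|_{L^1(\Lambda^2)}\le \frac{\rho^\tau}{2}\cH_\kappa^d(W)\cH_\kappa^d(B_\rho)$, and the appeal to \eqref{onebound2} are exactly the paper's route, and that part is correct.

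The gap is in the lower bound on $\|f_1\|_{L^2(\Lambda)}^2$, and it is not mere bookkeeping. You propose to integrate $f_1(y)^2$ only over those $y$ for which the whole annulus $\{z:\rho/2\le\dist(y,z)\le\rho\}$ lies inside $B_{r(W)}$; this forces $y\in B_{r(W)-\rho}$, so the surviving volume factor is $\cH_\kappa^d(B_{r(W)-\rho})$, not $\cH_\kappa^d(B_{r(W)})$. Since the theorem allows any $0<\rho\le r(W)$, this factor can be arbitrarily small relative to $\cH_\kappa^d(B_{r(W)})$, and it vanishes entirely when $\rho=r(W)$, so the stated constant cannot be reached this way. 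The paper instead integrates over all of $B_{r(W)}$ and controls the boundary effect: the map $y\mapsto\cH_\kappa^d\bigl((B_\rho(y)\setminus B_{\rho/2}(y))\cap B_{r(W)}\bigr)$ depends only on $\dist(o,y)$ and is decreasing in it, so its minimum over $B_{r(W)}$ is attained at $\dist(o,y)=r(W)$; there one has $\cH_\kappa^d(B_{\rho/2}(y)\cap B_{r(W)})\le\tfrac12\cH_\kappa^d(B_{\rho/2})$ (at most half of the small ball lies inside) while $\cH_\kappa^d(B_\rho(y)\cap B_{r(W)})\ge\cH_\kappa^d(B_{\rho/2})$ (the intersection of two radius-$\rho$ balls with centres at distance $\rho$ contains a ball of radius $\rho/2$). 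This yields the pointwise bound $f_1(y)\ge\tfrac12(\rho/2)^\tau\cdot\tfrac12\cH_\kappa^d(B_{\rho/2})$ for \emph{every} $y\in B_{r(W)}$, which is what produces the factors $\cH_\kappa^d(B_{\rho/2})^2\cH_\kappa^d(B_{r(W)})$ in the constant. To repair your argument you would need to replace the "shell fully inside" requirement by this partial-overlap estimate near the boundary (or an equivalent device); your volume comparison $\cH_\kappa^d(B_{\rho/2})\le\tfrac12\cH_\kappa^d(B_\rho)$, while true, is then no longer the step that is needed.
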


\begin{proof}
    As argued in the proof of Theorem \ref{tm:SubgraphCounts} (see also the proof of Corollary \ref{cor:RGGsubgraphConstantCurve}) $\Lambda_W$ satisfies \eqref{eq:IntensityMeasureCondition1} and \eqref{eq:IntensityMeasureCondition2}.
    Thus, Lemma \ref{lm:A3forGraphs} applied with $m=2$, $n=1$, $Q=\rho^\tau$, and $h(x,y) = \dist(x,y)^\tau$ gives that $F_2^{(\tau)}({\gamma},\rho)$ satisfies \eqref{assumptionsGeneral} with $g(r)= {\bf 1} (r\leq \rho)$, $M=\rho^\tau/2$ and  $f(x,y) = \frac{1}{2}{\bf 1} (\dist_{\kappa}(x,y)\leq \rho) \dist(x,y)^\tau$.
    Thus, by item 3 of Lemma \ref{lem:relAssumptions}, we get that \eqref{eq:fBound} holds with
    \begin{align*}
        q&=0,
        &\beta_0&=1, 
        &\beta_1'
        &= C(g,\Lambda) \max\left( 1 , \frac{2\|f\|_{L^1(\Lambda^2)}}{\rho^{\tau} C(g,\Lambda)^2} \right)^{\frac{s}{2}} ,
        &\beta_2' &= \frac{\rho^\tau}{2} \max\left( 1 , \frac{2\|f\|_{L^1(\Lambda^2)}}{\rho^{\tau} C(g,\Lambda)^2} \right)^{\frac{1-s}{2}} ,
    \end{align*} 
    where $s\in[0,1]$ can be chosen arbitrarily.
    We note that since $\rho\leq r(W)$ we get
    \[ C(g,\Lambda)
    = \sup_{x\in\bM_{\kappa}} \int_{\bM_\kappa} g(\dist_{\kappa}(x,y)) \Lambda(\dint y)
    = \sup_{x\in\bM_{\kappa}} \cH_{\kappa}^d (B_{\rho}(x) \cap W)
    = \cH_{\kappa}^d(B_{\rho})  , \]
    and observe that
    \begin{align*}
        \|f\|_{L^1(\Lambda^2)}
        &= {1\over 2}\int_{\bM_{\kappa}}\int_{\bM_{\kappa}} \dist_{\kappa}(x,y)^{\tau}{\bf 1}\{\dist_{\kappa}(x,y) \leq \rho\}{\bf 1}\{y\in W\}{\bf 1}\{x\in W\}\cH^d_{\kappa}(\dint y)\cH^d_{\kappa}(\dint x)\\
        &\leq {1\over 2}\int_{\bM_{\kappa}}\int_{\bM_{\kappa}} \rho^{\tau}{\bf 1} \{y\in B_{\rho}(x)\} {\bf 1}\{x\in W\}\cH^d_{\kappa}(\dint y)\cH^d_{\kappa}(\dint x) \\
        &= {\rho^{\tau} \over 2}\cH^d_{\kappa}(W)\cH^d_{\kappa}(B_{\rho}).
    \end{align*}
    Hence, we get $\max\left( 1 , \frac{2\|f\|_{L^1(\Lambda^2)}}{\rho^{\tau} C(g,\Lambda)^2} \right) \leq \cH_{\kappa}^d(W) \cH_{\kappa}^d(B_{\rho})^{-1} $, and therefore $\beta_1' \leq \beta_1$ and $\beta_2' \leq \beta_2$.

    Let $\Cr{c_27} := 2^{-45} \min\left( 1 , \|f_1\|_{L^2(\Lambda)}^2 \beta_2^{-2} \beta_1^{-3} \right) $ where $ f_1(x)
    = \frac{1}{2} \int_{\bM_{\kappa}} \dist(x,y)^{\tau}{\bf 1}\{\dist_{\kappa}(x,y) \leq \rho\} \, \Lambda_W(\dint y) $.
    It remains only to show that $\Cr{c_27} \geq \Cr{c_27}'''$, since then the results follows immediately from Theorem \ref{tm:Summary}.
    For this we note that
    \begin{align*}
        \|f_1\|_{L^2(\Lambda)}^2
        &=\int_{\bM_{\kappa}}{\bf 1}\{y\in W\}\Big( \frac{1}{2} \int_{\bM_{\kappa}}\dist_{\kappa}(x,y)^{\tau}{\bf 1}\{x\in B_{\rho}(y)\}{\bf 1}\{x\in W\}\cH^d_{\kappa}(\dint x)\Big)^2\cH^d_{\kappa}(\dint y)\\
        &\ge \frac{1}{2^2} \int_{\bM_{\kappa}}{\bf 1}\{y\in B_{r(W)}\}\Big(\int_{\bM_{\kappa}}\dist_{\kappa}(x,y)^{\tau}{\bf 1}\{x\in B_{\rho}(y)\}{\bf 1}\{x\in B_{r(W)}\}\cH^d_{\kappa}(\dint x)\Big)^2\cH^d_{\kappa}(\dint y)\\
        &\ge \frac{(\delta\rho)^{2\tau}}{2^2} \int_{\bM_{\kappa}}{\bf 1}\{y\in B_{r(W)}\}\Big(\int_{\bM_{\kappa}}{\bf 1}\{x\in B_{\rho}(y)\setminus B_{\delta\rho}(y)\}{\bf 1}\{x\in B_{r(W)}\}\cH^d_{\kappa}(\dint x)\Big)^2\cH^d_{\kappa}(\dint y)\\
        &= \frac{(\delta\rho)^{2\tau}}{2^2} \int_{\bM_{\kappa}}{\bf 1}\{y\in B_{r(W)}\}\Big(\cH^d_{\kappa}(B_{\rho}(y)\cap B_{r(W)})-\cH^d_{\kappa}(B_{\delta\rho}(y)\cap B_{r(W)})\Big)^2\cH^d_{\kappa}(\dint y),
    \end{align*}
    which holds for any $\delta\in [0,1]$, $\tau\ge 0$, using the convention $0^0=1$. Since function 
    $$
    y\mapsto \cH^d_{\kappa}(B_{\rho}(y)\cap B_{r(W)})-\cH^d_{\kappa}(B_{\delta\rho}(y)\cap B_{r(W)})=\cH^d_{\kappa}((B_{\rho}(y)\setminus B_{\delta\rho}(y))\cap B_{r(W)})
    $$ 
    depends only on $\dist_{\kappa}(o,y)$ and is decreasing in $\dist_{\kappa}(o,y)$. Hence, it achieves its minimum on $B_{r(W)}$ for $y$ with $\dist_{\kappa}(o,y)=r(W)$. Further we note, that for such $y$ we have $\cH^d_{\kappa}(B_{\delta\rho}(y)\cap B_{r(W)})\leq {1\over 2}\cH^d_{\kappa}(B_{\delta\rho})$ and 
    $$
    \cH^d_{\kappa}(B_{\rho}(y)\cap B_{r(W)})\ge \cH^d_{\kappa}(B_{\rho}(z)\cap B_{\rho})\ge \cH^d_{\kappa}(B_{\rho/2}),
    $$
    where $z$ is any point with $\dist_{\kappa}(o,z)=\rho$. Hence, choosing $\delta= 1/2$ we have
    $$
    \|f_1\|_{L^2(\Lambda)}^2
    \ge {1\over 2^4}\big({\rho\over 2}\big)^{2\tau}\cH^d_{\kappa}(B_{\rho/2})^2 \cH^d_{\kappa}(B_{r(W)}) ,
    $$
    and get
    \begin{align*}
        2^{-45} \min\left( 1 , \|f_1\|_{L^2(\Lambda)}^2 \beta_2'^{-2} \beta_1^{-3} \right)
        &\geq 
        2^{-45} \min\left( 1 , 2^{-2-2\tau}\Big({\cH_{\kappa}^d(B_{\rho/2})\over \cH_{\kappa}^d(B_{\rho})}\Big)^{2}\Big({\cH_{\kappa}^d(B_{\rho})\over \cH_{\kappa}^d(W)}\Big)^{s\over 2}\Big({\cH_{\kappa}^d(B_{r(W)})\over \cH_{\kappa}^d(W)}\Big)\right)
        = \Cr{c_27}''' ,
    \end{align*}
    where the last equality follows from
    $\cH^d_{\kappa}(B_{\rho/2}) 
    \leq \cH_{\kappa}^d(B_{\rho})
    \leq \cH^d_{\kappa}(B_{r(W)}) 
    \leq \cH_{\kappa}^d(W)$.
\end{proof}

In the next corollary we consider the specific case of $\kappa=0$ (Euclidean space) and $W=B_r$.
This gives us a result with very explicit dependencies on the various parameters of the model.

\begin{corollary}
\label{cor:RGGEuclidean}
    Let $d\ge 2$ and $W = B_r \subset \bR^d$ be a $d$-dimensional Euclidean ball of radius $r>0$.
    Let $\eta$ be a Poisson point process with intensity measure $\gamma\Lambda_W$, defined by \eqref{eq:LambdaW}. 
    Set
    \begin{align*}
        s & \in[0,1] ,
        &\beta_1 &= \kappa_d r^{\frac{d s}{2}} \rho^{d (1-\frac{s}{2})} ,
        & \beta_2 &= 2^{-1} r^{{d(1-s)\over 2}} \rho^{\tau -{d(1-s)\over 2}},
    \end{align*}
     where $\kappa_d=\pi^{d\over 2}/\Gamma(d/2+1)$ denotes the Lebesgue measure of a $d$-dimensional Euclidean ball. Then for any  $\tau\ge 0$, $0< \rho \leq r$, $\gamma \geq 16\beta_1^{-1} $ and any $t\geq 0$,  
    \begin{align*}
        \bP(|F_2^{(\tau)}-\bE F_2^{(\tau)}| \geq t)
        &\leq 2\exp \left( - \Cr{c_27}''' \left(\frac{t}{\beta_2} \right)^{\frac{1}{2}} \min \left( \left( \frac{t}{\beta_2 (\beta_1 \gamma)^2} \right)^{\frac{3}{2}} , 1 + \log_+ \left( \frac{t}{\beta_2 (\beta_1 \gamma)^2} \right) \right)  \right), 
    \end{align*}
    where $\Cr{c_27}''' := 2^{-47-2\tau-2d} (\rho/r)^{ds\over 2} $.
\end{corollary}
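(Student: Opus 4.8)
The plan is to obtain Corollary~\ref{cor:RGGEuclidean} as a direct specialization of Theorem~\ref{thm:RGGnonEuclidean} to the case $\kappa=0$, $W=B_r$. In the Euclidean setting there is no antipodal-points condition to check, and the inradius of a Euclidean ball is $r(B_r)=r$, so the hypotheses of Theorem~\ref{thm:RGGnonEuclidean} reduce to exactly $\tau\ge 0$, $0<\rho\le r$, $\gamma\ge 16\beta_1^{-1}$ and $t\ge 0$, which is precisely the range asserted in the corollary (note that $F_2^{(\tau)}$ has order $m=2$, so the exponents $\tfrac12$, $\tfrac32=2-\tfrac1m$ in the bound already match). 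Hence it suffices to evaluate the three quantities $\beta_1$, $\beta_2$ and $\Cr{c_27}'''$ of Theorem~\ref{thm:RGGnonEuclidean} for this choice of space and window, and then feed them back into the displayed inequality.

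First I would recall that $\cH_0^d$ is Lebesgue measure on $\bR^d$, so $\cH_0^d(B_\rho)=\kappa_d\rho^d$ for every $\rho>0$; in particular $\cH_0^d(W)=\cH_0^d(B_r)=\kappa_d r^d$, $\cH_0^d(B_{\rho/2})=\kappa_d(\rho/2)^d$ and $\cH_0^d(B_{r(W)})=\cH_0^d(B_r)=\kappa_d r^d$. Substituting these into the $\beta_1$ and $\beta_2$ of Theorem~\ref{thm:RGGnonEuclidean} gives
\[
\beta_1=\cH_0^d(W)^{s/2}\cH_0^d(B_\rho)^{1-s/2}=\kappa_d\,r^{ds/2}\rho^{d(1-s/2)},\qquad
\beta_2=\frac{\rho^\tau}{2}\Bigl(\frac{\cH_0^d(W)}{\cH_0^d(B_\rho)}\Bigr)^{(1-s)/2}=\frac12\,r^{d(1-s)/2}\rho^{\tau-d(1-s)/2},
\]
which are exactly the values stated in the corollary.

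It then remains to compute the prefactor. From the statement of Theorem~\ref{thm:RGGnonEuclidean},
\[
\Cr{c_27}'''=2^{-47-2\tau}\Bigl(\frac{\cH_0^d(B_{\rho/2})}{\cH_0^d(B_\rho)}\Bigr)^{2}\Bigl(\frac{\cH_0^d(B_\rho)}{\cH_0^d(W)}\Bigr)^{s/2}\Bigl(\frac{\cH_0^d(B_{r(W)})}{\cH_0^d(W)}\Bigr),
\]
and with the explicit volumes the first ratio is $(\rho/2)^d/\rho^d=2^{-d}$, so its square is $2^{-2d}$; the second ratio is $\rho^d/r^d=(\rho/r)^d$, so its $s/2$-th power is $(\rho/r)^{ds/2}$; and the third ratio is $r^d/r^d=1$. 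Multiplying yields $\Cr{c_27}'''=2^{-47-2\tau-2d}(\rho/r)^{ds/2}$, matching the corollary. Plugging $\beta_1$, $\beta_2$ and $\Cr{c_27}'''$ back into the bound of Theorem~\ref{thm:RGGnonEuclidean} finishes the proof. I do not expect any real obstacle: the argument is pure bookkeeping, the only points requiring care being the scaling relation $\cH_0^d(B_\rho)=\kappa_d\rho^d$ and the (trivial) identification $r(B_r)=r$, which makes the last volume ratio collapse to $1$.
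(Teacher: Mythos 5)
Your proposal is correct and follows exactly the route the paper takes: the paper's own proof is the one-line observation that the bounds follow from Theorem \ref{thm:RGGnonEuclidean} together with $\cH_0^d(B_\rho)=\kappa_d\rho^d$, and your computation simply spells out that substitution. All of your evaluations of $\beta_1$, $\beta_2$ and $\Cr{c_27}'''$ check out.
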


\begin{proof}
    These bounds follow directly from Theorem \ref{thm:RGGnonEuclidean} and $\cH^d_0(B_{\rho})=\kappa_d\rho^d$.
\end{proof}

Finally note, that in case when $W$ is fixed and $\rho=\rho(\gamma)\to 0$ as $\gamma\to \infty$ we have that $\cH^d_{\kappa}(B_{\rho})=\Theta(\rho^d)$.
In this case the curvature parameter $\kappa$ does not influence the limiting behaviour of Poisson $U$-statistic $F_2^{(\tau)}(\gamma,\rho(\gamma))$ and for any $\kappa$ we will obtain bounds of the same form as in Corollary \ref{cor:RGGEuclidean}. This situation happens because as $\rho\to 0$ the edges of a random graph $G(\eta,\rho)$ are determined locally and this local geometry is almost Euclidean for any constant curvature space. This situation happens, for example if we fix the average degree $\delta = \gamma \cH_\kappa^d(B_{\rho}) $ of a typical point of the random graph, and let $\gamma\to \infty$.
We derive the following corollary which improves the bound \eqref{eq:ConcentrationRSTGilbertGraph} (when $t$ is large enough) and generalizes it to constant curvature spaces.
\begin{corollary}
\label{thm:RGGfixeddeg}
    Let $\kappa\in \bR$, $d\ge 2$, and $W\subset \bM_{\kappa}$ be a compact set with inradius $r(W)>0$. If $\kappa > 0$, we additionally assume that $W$ does not contain antipodal points.
    Let $\eta$ be a Poisson point process with intensity measure $\gamma\Lambda_W$. 
    Let $\delta \geq 16 $.
    For any $\gamma>0$, we set $\rho_\gamma := \rho(\gamma , d , \kappa , \delta)$ such that $ \gamma \cH_\kappa^d(B_{\rho}) = \delta $.

    Then, there exists $c>0$, independent of $\gamma$ (but dependent on $\kappa$, $d$, $r(W)$, $\delta$ and $\tau$), such that for any $ \gamma > \delta /  \cH_\kappa^d(B_{\min(|\kappa|^{-1/2},r(W))}) $ (with convention ${1\over \infty}:=0$) we have for any $s\in [0,1]$ that
    \begin{align*}
        \bP\left(|F_2^{(\tau)}(\eta_\gamma , \rho_\gamma)-\bE F_2^{(\tau)}(\eta_\gamma , \rho_\gamma)| \geq t \right)
        &\leq 
        \begin{cases}
            2\exp \left( -  c  t^2 \gamma^{\frac{2\tau}{d} -1}\right) ,
            & t \leq \gamma^{-\frac{\tau}{d}+\frac{1}{2} } ,
            \\
            2\exp \left( -  c  t^2 \gamma^{\frac{2\tau}{d} -1-\frac{s}{2} }\right) 
            =\exp \left( -  c  (\gamma^{\tau\over d}t)^{\frac{s}{1+s} }\right) ,
            & t = \gamma^{ -\frac{\tau}{d} + \frac{1+s}{2} } ,
            \\
            2\exp \left( -  c  t^{\frac{1}{2}} \gamma^{\frac{\tau}{2d}} \log \left( e t \gamma^{\frac{\tau}{d} - 1} \right)\right) ,
            & t \geq \gamma^{ -\frac{\tau}{d} + 1} .
        \end{cases}  
    \end{align*}
\end{corollary}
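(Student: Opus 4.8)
The plan is to deduce everything from the extended version of Theorem~\ref{tm:Summary} stated at the beginning of Section~\ref{sec:discussion}, using its three component bounds \eqref{newbound4}, \eqref{newbound5} and \eqref{newbound6} \emph{separately} rather than the combined bound \eqref{onebound2}, after rewriting $\beta_1$, $\beta_2$ and $\|f_1\|_{L^2(\Lambda)}^2$ as powers of $\gamma$. First I would pin down $\rho_\gamma$. The hypothesis $\gamma>\delta/\cH_\kappa^d(B_{\min(|\kappa|^{-1/2},r(W))})$ says exactly that $\cH_\kappa^d(B_{\rho_\gamma})=\delta/\gamma<\cH_\kappa^d(B_{\min(|\kappa|^{-1/2},r(W))})$, so by monotonicity of $r\mapsto\cH_\kappa^d(B_r)$ we get $\rho_\gamma<\min(|\kappa|^{-1/2},r(W))$; in particular $0<\rho_\gamma\le r(W)$ (and $\rho_\gamma<\pi/(2\sqrt\kappa)$ when $\kappa>0$), and since $\rho_\gamma\le|\kappa|^{-1/2}$ the volume comparison $\cH_\kappa^d(B_r)=\Theta(r^d)$ recorded in Section~\ref{sec:geometricGraphs} (with constants depending only on $d$), combined with $\gamma\cH_\kappa^d(B_{\rho_\gamma})=\delta$, yields $\rho_\gamma=\Theta(\gamma^{-1/d})$ and $\rho_\gamma^\tau=\Theta(\gamma^{-\tau/d})$.

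Second, exactly as in the proof of Theorem~\ref{thm:RGGnonEuclidean} (i.e.\ via Lemma~\ref{lm:A3forGraphs} followed by Lemma~\ref{lem:relAssumptions}.3), $F_2^{(\tau)}(\eta_\gamma,\rho_\gamma)$ satisfies \eqref{eq:fBound} with $q=0$, $\beta_0=1$, the stated $\beta_1,\beta_2$ (for any $s\in[0,1]$), and moreover $\|f_1\|_{L^2(\Lambda)}^2\ge\frac{1}{2^4}(\rho_\gamma/2)^{2\tau}\cH_\kappa^d(B_{\rho_\gamma/2})^2\cH_\kappa^d(B_{r(W)})$. Using $\cH_\kappa^d(B_{\rho_\gamma})=\delta/\gamma$, $\cH_\kappa^d(W)=\Theta(1)$ and $\rho_\gamma^\tau=\Theta(\gamma^{-\tau/d})$ I would then read off
\[
\beta_1=\Theta(\gamma^{-1+s/2}),\qquad \beta_1\gamma=\Theta(\gamma^{s/2})\ge\delta\ge16,\qquad \beta_2=\Theta(\gamma^{-\tau/d+(1-s)/2}),\qquad \|f_1\|_{L^2(\Lambda)}^2=\Theta(\gamma^{-2\tau/d-2}),
\]
all $\Theta$-constants depending only on $\kappa,d,r(W),\delta,\tau$. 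The bound $\beta_1\gamma\ge\delta\ge16$ (which uses $\cH_\kappa^d(W)\ge\cH_\kappa^d(B_{\rho_\gamma})$) verifies the hypothesis $\gamma\ge16\beta_1^{-1}$ of Theorem~\ref{thm:RGGnonEuclidean}, as well as $\gamma\ge\Cr{c_11}=16\beta_1^{-1}$ and $\gamma\ge\Cr{c_9}=\beta_1^{-1}$ required by \eqref{newbound4}--\eqref{newbound6}.

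Third I would substitute these orders into the constants of the extended theorem and match cases. With $s=0$: $\beta_2=\Theta(\gamma^{-\tau/d+1/2})$, so $\Cr{c_26}=\Theta(\gamma^{2\tau/d+2})$ and $\Cr{c_24}\gamma^{3/2}=\Theta(\gamma^{-\tau/d+1/2})$, whence \eqref{newbound4} gives $2\exp(-\Theta(\gamma^{2\tau/d-1})t^2)$ for $t\lesssim\gamma^{-\tau/d+1/2}$ --- case~1. For the given $s\in[0,1]$: $\beta_2^2\beta_1^3=\Theta(\gamma^{-2\tau/d-2+s/2})$, so $\Cr{c_23}=\Theta(\gamma^{2\tau/d+2-s/2})$, while $\Cr{c_17}\gamma^2=\Theta(\beta_2(\beta_1\gamma)^2)=\Theta(\gamma^{-\tau/d+(1+s)/2})$ and $\Cr{c_24}\gamma^{3/2}=\Theta(\gamma^{-\tau/d+(2+s)/4})$; thus \eqref{newbound5} gives $\exp(-\Theta(\gamma^{2\tau/d-1-s/2})t^2)$ on a $t$-range with right endpoint $\asymp\gamma^{-\tau/d+(1+s)/2}$, which at $t=\gamma^{-\tau/d+(1+s)/2}$ is case~2 (the displayed identity $2\exp(-ct^2\gamma^{2\tau/d-1-s/2})=\exp(-c(\gamma^{\tau/d}t)^{s/(1+s)})$ following by substitution). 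With $s=1$: $\beta_2=\Theta(\gamma^{-\tau/d})$, so $\Cr{c_15}=\Theta((e\beta_2)^{-1/2})=\Theta(\gamma^{\tau/(2d)})$, $\Cr{c_16}=\Theta(\gamma^{\tau/d+1})$, $\Cr{c_17}\gamma^2=\Theta(\gamma^{-\tau/d+1})$; hence \eqref{newbound6} gives $\exp(-\Theta(\gamma^{\tau/(2d)})t^{1/2}\log(\Theta(\gamma^{\tau/d-1})t))$ for $t\gtrsim\gamma^{-\tau/d+1}$, and since $t\gamma^{\tau/d-1}\gtrsim1$ there, $\log(\Cr{c_16}t/\gamma^2)\asymp\log(et\gamma^{\tau/d-1})$, which is case~3. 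Absorbing all $\gamma$-free constants into a single $c=c(\kappa,d,r(W),\delta,\tau)$ --- and handling the constant-factor mismatches at the three stated thresholds by shrinking $c$ --- completes the proof.

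The step I expect to be the main obstacle is the bookkeeping: propagating the orders of $\beta_1,\beta_2,\|f_1\|_{L^2(\Lambda)}^2$ through the six constants $\Cr{c_26},\Cr{c_23},\Cr{c_24},\Cr{c_15},\Cr{c_16},\Cr{c_17}$ of the extended theorem and checking that the $t$-ranges of \eqref{newbound4}--\eqref{newbound6} patch up, with endpoints matching the thresholds $\gamma^{-\tau/d+1/2}$, $\gamma^{-\tau/d+(1+s)/2}$, $\gamma^{-\tau/d+1}$ in the statement. It is essential to use the \emph{unweighted} Gaussian bound \eqref{newbound5} in the middle range: the combined bound \eqref{onebound2} (equivalently, the bound of Theorem~\ref{thm:RGGnonEuclidean}) is too weak there because its constant carries the factor $\|f_1\|_{L^2(\Lambda)}^2\beta_2^{-2}\beta_1^{-3}=\Theta(\gamma^{-s/2})$, whereas $\Cr{c_23}$ does not. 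One should also double-check that the $\Theta$-constants in the volume comparison and in the inequalities $\beta_1'\le\beta_1$, $\beta_2'\le\beta_2$ of Lemma~\ref{lem:relAssumptions}.3 point in the right direction, so that nothing degenerates as $\rho_\gamma\to0$.
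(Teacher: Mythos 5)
Your proposal is correct and reaches the stated bounds, but it takes a genuinely different route from the paper's proof, and the difference is substantive rather than cosmetic. The paper plugs the orders $\rho_\gamma\asymp\gamma^{-1/d}$, $\beta_1\asymp\gamma^{s/2-1}$, $\beta_2\asymp\gamma^{(1-s)/2-\tau/d}$ into the packaged bound of Theorem~\ref{thm:RGGnonEuclidean} and then specializes $s$; you instead return to the three separate bounds \eqref{newbound4}--\eqref{newbound6} of the extended Theorem~\ref{tm:Summary}. Your reason for the detour --- that the constant of Theorem~\ref{thm:RGGnonEuclidean} carries the factor $\|f_1\|_{L^2(\Lambda)}^2\beta_2^{-2}\beta_1^{-3}\asymp\gamma^{-s/2}$, which the unweighted Gaussian constant $\Cr{c_23}$ of \eqref{newbound5} does not --- is exactly right, and it pinpoints the one delicate step of the paper's own argument: there, the passage from $\Cr{c_27}'''\,(t/\beta_2)^{1/2}$ to $c\,(t/\gamma^{(1-s)/2-\tau/d})^{1/2}$ ``by picking $c$ sufficiently small'' absorbs the factor $\Cr{c_27}'''\asymp\gamma^{-s/2}$, which is not bounded below for $s>0$; carried through literally, the packaged bound yields only exponent $\asymp t^2\gamma^{2\tau/d-1-s}$ in the middle regime (trivial at $t=\gamma^{(1+s)/2-\tau/d}$) and loses a factor $\gamma^{-1/2}$ in the third regime. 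Your computations are what actually deliver the claim: $\Cr{c_23}t^2\gamma^{-3}\asymp t^2\gamma^{2\tau/d-1-s/2}$ with the range of \eqref{newbound5} topping out at $\Cr{c_17}\gamma^2\asymp\gamma^{(1+s)/2-\tau/d}$ gives case 2, and \eqref{newbound6} with $s=1$ gives the $\gamma^{\tau/(2d)}$ prefactor of case 3. What remains is only the threshold bookkeeping you flag: the endpoints of \eqref{newbound4}--\eqref{newbound6} match the stated thresholds only up to multiplicative constants, which one repairs by monotonicity of $t\mapsto\bP(|F-\bE F|\ge t)$, and by observing that whenever a range condition such as $\gamma^{s/4}\ge\mathrm{const}$ fails, the target bound exceeds $1$ after shrinking $c$ and so holds trivially. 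With those two standard remarks made explicit, your argument is complete, and in the middle and upper regimes it is in fact tighter than the route taken in the paper.
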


\begin{remark}
    We compare our result with the bound of Reitzner, Schulte and Thäle presented in Equation~\eqref{eq:ConcentrationRSTGilbertGraph}.
    We observe that the term $\gamma^{3\tau-d\over 4d}t^{3\over 4}$ in \eqref{eq:ConcentrationRSTGilbertGraph} is never smaller than the two other terms in the minimum therein, and thus the bound can also be written as 
    \begin{equation*}
        \bP\Bigl(\bigl| F_2^{(\tau)} \bigl( \eta_\gamma, \frac{\widetilde \rho}{\gamma^{1/d}} \bigr) -\bE F_2^{(\tau)} \bigl( \eta_\gamma , \frac{\widetilde \rho}{ \gamma^{1/d}} \bigr) \bigr| \ge t \Bigr)
        \leq 
        \begin{cases}
            \exp\Big(-c \gamma^{{2\tau\over d}-1}t^2 \Big) ,
            & \gamma\geq 1,\ 0 \leq t \leq \gamma^{ -\frac{\tau}{d} +\frac{3}{5}}, \\
            \exp\Big(-c (\gamma^{\tau\over d}t)^{1\over 3} \Big) ,
            & \gamma\geq 1,\ t \geq \gamma^{ -\frac{\tau}{d} +\frac{3}{5}} .
        \end{cases}
    \end{equation*}
    We note that 
    \begin{itemize}
        \item for $t\leq \gamma^{-\frac{\tau}{d}+\frac{1}{2}}$ this bound is of the same order as ours,
        \item for $ \gamma^{-\frac{\tau}{d}+\frac{1}{2}} \leq t\leq \gamma^{-\frac{\tau}{d}+ \frac{3}{4}}$ this bound is of smaller order (i.e.\ better) than ours,
        \item for $ t\geq \gamma^{-\frac{\tau}{d}+ \frac{3}{4}}$ this bound is higher order (i.e.\ worse) than ours.
    \end{itemize}
\end{remark}

\begin{proof}
    In the following lines, we write $a\asymp b$ if there exist constants $0<c<C<\infty$  such that $c \leq a/b \leq C$, with $c$ and $C$ independent of $\gamma$, but dependent on $\kappa$, $d$, $r(W)$, $\delta$ and $\tau$.
    We have $\rho \asymp \gamma^{-1/d} $ and $\cH_{\kappa}^d(B_{\rho}) \asymp \gamma^{-1}$, since $\gamma\cH^d(B_{\rho})=\delta$ and $\gamma> \delta/\cH_\kappa^d(B_{|\kappa|^{-{1/2}}})$, meaning that $\rho<|\kappa|^{-1/2}$ and, hence, $\cH^d(B_{\rho})\asymp \rho^d$.
    Set
    \begin{align*}
        s & \in[0,1] ,
        &\beta_1 &= \cH_{\kappa}^d(W)^{\frac{s}{2}} \cH_{\kappa}^d(B_{\rho})^{1-\frac{s}{2}} 
        \asymp \gamma^{\frac{s}{2}-1},
        & \beta_2 &= \frac{\rho^\tau}{2} \left( \frac{\cH_{\kappa}^d(W)}{\cH_{\kappa}^d(B_{\rho})} \right)^{\frac{1-s}{2}} 
        \asymp \gamma^{\frac{1-s}{2} - \frac{\tau}{d} } .
    \end{align*}
    Note that $\gamma \beta_1 \geq \gamma \cH_{\kappa}^d(B_{\rho}) = \delta \geq 16$ and that $\rho = \rho(\gamma) \leq r(W)$ because $ \gamma \cH_\kappa^d(B_{\rho}) = \delta $ and $\gamma> \delta/\cH_\kappa^d(B_{r(W)})$.
    Thus, Theorem \ref{thm:RGGnonEuclidean} implies that for any 
    $t\geq 0$
    we have,  
    \begin{align*}
        \bP(|F_2^{(\tau)}-\bE F_2^{(\tau)}| \geq t)
        &\leq 2\exp \left( - \Cr{c_27}''' \left(\frac{t}{\beta_2} \right)^{\frac{1}{2}} \min \left( \left( \frac{t}{\beta_2 (\beta_1 \gamma)^2} \right)^{\frac{3}{2}} , 1 + \log_+ \left( \frac{t}{\beta_2 (\beta_1 \gamma)^2} \right) \right)  \right), 
    \end{align*}
    where 
    $\Cr{c_27}''' 
    := 2^{-45-2\tau} \cH^d_{\kappa}(B_{r(W)}) \cH_{\kappa}^d(W)^{-\frac{s}{2}-1} \cH^d_{\kappa}(B_{\rho/2})^2  \cH_{\kappa}^d(B_{\rho})^{-2+\frac{s}{2}} 
    \asymp \gamma^{ - {s\over 2} } $.
    Therefore, by picking $ c >0$ sufficiently small, we have
    \begin{align*}
        \bP(|F_2^{(\tau)}-\bE F_2^{(\tau)}| \geq t)
        &\leq 2\exp \left( -  c  \left(\frac{t}{\gamma^{\frac{1-s}{2} -\frac{\tau}{d} }} \right)^{\frac{1}{2}} \min \left( \left( \frac{t}{ \gamma^{\frac{1+s}{2} - \frac{\tau}{d} } } \right)^{\frac{3}{2}} , 1 + \log_+ \left( \frac{t}{ \gamma^{\frac{1+s}{2} - \frac{\tau}{d} } } \right) \right)  \right)
        \\& =\begin{cases}
            2\exp \left( -  c  t^2 \gamma^{\frac{2\tau}{d} -1-\frac{s}{2} }\right) ,
            & t \leq \gamma^{\frac{1+s}{2} -\frac{\tau}{d} } ,
            \\
            2\exp \left( -  c  t^{\frac{1}{2}} \gamma^{\frac{\tau}{2d} - \frac{1-s}{4} } \log \left( e t \gamma^{-\frac{1+s}{2} + \frac{\tau}{d} } \right)\right) ,
            & t \geq \gamma^{\frac{1+s}{2} -\frac{\tau}{d} } .
        \end{cases}
    \end{align*}
    The results follows by applying this bound with $s=0$ when $ t\leq \gamma^{\frac{1}{2}-\frac{\tau}{d}}$ and with $s=1$ when $t \geq \gamma^{ -\frac{\tau}{d} + 1} $.
\end{proof}

\section{Applications to Poisson hyperplane process}\label{sec:Applications}

\subsection{Euclidean case}

We start with introducing the model. Let $\bA(d,d-1)$ be the Grassmannian of all $(d-1)$-dimensional affine subspaces in $\bR^d$ and let $\mu_{d-1}$ be the rigid motions invariant Haar measure normalized as
$$
\mu_{d-1}\big(\{H\in \bA(d,d-1)\colon H\cap \bB^d\neq \emptyset\}\big)=2,
$$
where $\bB^d$ denotes the $d$-dimensional unit ball in $\bR^d$. Moreover, we denote by $\omega_d=2\pi^{d/2}/\Gamma(d/2)$ the surface area of the unit $(d-1)$-dimensional sphere $\bS^{d-1}$. A Poisson process $\eta$ on $\bA(d,d-1)$ with intensity measure $\gamma\mu_{d-1}$ is called a (stationary and isotropic) Poisson hyperplane process. 

The functional we are interested in is the Poisson $U$-statistic $F_m(f, \eta)$, $1\leq m\leq d$ with 
\[
f(H_1,\ldots,H_m)=\frac{1}{m!}V_i\left(H_1\cap\ldots\cap H_m \cap W\right){\bf 1}\{\dim (H_1\cap \ldots\cap H_m)=d-m\},
\]
where $W\subset \bR^d$ is some compact convex set and $V_i$ stands for the $i$-th intrinsic volume, $0\leq i\leq d$. Intrinsic volumes are continuous and positive functionals on the space of convex bodies (convex and compact sets), which are monotone under set inclusion. They describe the inner structure of a convex body $K$ and play an important role in stochastic and convex geometry. For exact definitions and further properties of intrinsic volumes we refer reader to \cite[Chapter 13]{SW}.
Further, for $\gamma>0$, we denote by
\begin{align*}
F_{m,i}(W,\gamma)\coloneqq &\frac{1}{m!}\sum_{(H_1,\ldots,H_m)\in \eta^m_{\neq}}V_i(H_1\cap\ldots\cap H_m\cap W){\bf 1}\{\dim (H_1\cap \ldots\cap H_m)=d-m\},
\end{align*}
where $1\leq m\leq d$ and $0\leq i\leq d-m$. We note that for convex body $K$ of dimension $d-m$ we have that $V_{d-m}(K)$ coincides with $(d-m)$-dimensional Hausdorff measure of $K$ and $V_0(K)$ is the Euler characteristic.
In particular $V_{0}(E\cap W)={\bf 1}\{E\cap W \neq \emptyset\}$ for $E\in \bA(d,d-m)$ and $F_{d,0}(W,\gamma)$ counts the number of intersection points of Poisson hyperplane process $\eta$ in a window $W$. Since the kernel $f$ and the measure $\mu_{d-1}$ are independent of $\gamma$, $F_{m,i}(W,\gamma)$ is an example of geometric $U$-statistic and the quantitative CLT for $F_{m,i}(W,\gamma)$ as $\gamma\to \infty$ has been established in \cite[Theorem 5.3]{RS13}. 
It should be noted that, instead of considering the case $\gamma\to\infty$, one could alternatively fix $\gamma$ and consider the sequence of Poisson $U$-statistics $F_{m,i}(rW,\gamma)$ in a growing window $rW$, $r>0$ as $r\to \infty$.
On the other hand due to the mapping properties of Poisson point processes it is easy to see that
$$
F_{m,i}(rW,\gamma)\overset{d}{=}r^iF_{m,i}(W,r\gamma),
$$
(see for example \cite[Corollary 6.4]{LPST14}) and both cases are equivalent.

In what follows we assume that $W$ is a fixed $d$-dimensional convex and compact subset of $\bR^d$ with $V_{d-m}(W)\in (0,\infty)$, and we use the short notation $\nu_i\coloneqq V_i(W)$, $1\leq i\leq d$.
We see that $F_{m,i}(W,\gamma)$ satisfies assumptions \eqref{assumptions} with $\bX = \{ H \in \bA(d,d-1) : H\cap W \neq \emptyset \}$ and 
\begin{align}  \label{paramsHyperplanes}
    \alpha_1 = \mu_{d-1} (\bX) = \frac{\omega_{d+1}}{\pi\omega_d} \nu_{1}, \quad 
    \alpha_2 = {1\over m!} \sup_{F \in \bA(d,d-m)} V_i(F\cap W) \leq {1\over m!} \nu_{i},
\end{align}
when $\gamma \geq \alpha_1^{-1} $.
The equality $\mu_{d-1} (\bX) = \frac{\omega_{d+1}}{\pi\omega_d} \nu_{1}$ follows from \cite[Theorem 5.1.1]{SW}, for example. The following theorem provides concentration bounds for $F_{m,i}(W,\gamma)$.

\begin{theorem}
\label{thm:EuclideanHyperplanes}
    The following inequality holds for any $1\leq m\leq d$, $0\leq i\leq d-m$, $t>0$ and $\gamma \geq 8m \alpha_1^{-1}$
    \[
     \bP(|F_{m,i}(W,\gamma)-\bE F_{m,i}(W,\gamma)| \geq t)
        \leq 2\exp \left( - \Cr{c_27} \left(\frac{t}{\alpha_2} \right)^{\frac{1}{m}} \min \left( \left( \frac{t}{\alpha_2 (\alpha_1 \gamma)^m} \right)^{2-\frac{1}{m}} , 1 + \log_+ \left( \frac{t}{\alpha_2 (\alpha_1 \gamma)^m} \right) \right)  \right), 
    \]
    where $\Cr{c_27}=2^{-17m-4}m^{-2m-3} \min\left( 1 , \left(\frac{ m\,\omega_{i+1}\nu_{i+m}}{ \alpha_2\omega_{i+m+1}(\pi\nu_1)^m} \right)^2\right) $ and $\alpha_1$, $\alpha_2$ are defined in \eqref{paramsHyperplanes}.
\end{theorem}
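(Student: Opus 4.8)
The plan is to apply the extended version of Theorem~\ref{tm:Summary} (specifically the combined bound \eqref{onebound2}) to the Poisson $U$-statistic $F_{m,i}(W,\gamma)$, using the fact, recorded just before the statement, that this functional satisfies assumption~\eqref{assumptions} with the parameters $\alpha_1,\alpha_2$ in \eqref{paramsHyperplanes}. By Lemma~\ref{lem:relAssumptions}.1, \eqref{assumptions} implies \eqref{eq:fBound} with $\beta_0=1$, $\beta_1=\alpha_1$, $\beta_2=\alpha_2$ and $q=0$. The hypothesis $\gamma\ge 8m\alpha_1^{-1}$ is exactly $\beta_1\gamma\ge 8m\beta_0$, which is the condition required for \eqref{onebound2}. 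So the desired inequality is precisely \eqref{onebound2} with these substitutions; the only remaining task is to check that the constant $\Cr{c_27}$ claimed here is (a lower bound for) the constant $\Cr{c_27}=2^{-17m-4}(\beta_0 m)^{-2m-3}\min(1,\|f_1\|_{L^2(\Lambda)}^2\beta_2^{-2}\beta_1^{1-2m})$ of the extended theorem.

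Thus the substantive step is a lower bound on $\|f_1\|_{L^2(\mu_{d-1})}^2$. First I would write out $f_1$ explicitly: by \eqref{eq_fk},
\[
f_1(H)=\frac{m}{m!}\int_{\bX^{m-1}}V_i\bigl(H\cap H_2\cap\cdots\cap H_m\cap W\bigr)\,{\bf 1}\{\dim(H\cap H_2\cap\cdots\cap H_m)=d-m\}\,\mu_{d-1}^{m-1}(\mathrm{d}(H_2,\ldots,H_m)).
\]
To evaluate (or lower bound) $\int_{\bX} f_1(H)^2\,\mu_{d-1}(\mathrm{d}H)$ one can apply the Crofton-type / Blaschke--Petkantschin integral-geometric formulas for flat processes; these are exactly the identities used in \cite{RS13,LPST14} to compute the variance of $F_{m,i}$, and they give a closed expression of the form $\|f_1\|_{L^2(\mu_{d-1})}^2 = c\,(\omega_{i+1}/\omega_{i+m+1})^2\pi^{-2m}\nu_{i+m}^2/m^2\cdot(\text{something})$ after one normalizes the Haar measure as in the statement. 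The cleanest route is to recognize that the leading ($k=1$) term of the variance formula \eqref{eq_UstatVar}, namely $\gamma^{2m-1}\|f_1\|_{L^2}^2$, is computed in the CLT analysis of \cite[Theorem 5.3]{RS13}; I would cite that computation and read off
\[
\|f_1\|_{L^2(\mu_{d-1})}^2 \;\ge\; \Bigl(\frac{\omega_{i+1}\,\nu_{i+m}}{\omega_{i+m+1}\,(\pi\nu_1)^m}\Bigr)^2 ,
\]
which is what makes the displayed $\Cr{c_27}$ valid (the factor $m^2$ inside $(m\,\omega_{i+1}\nu_{i+m}/\ldots)^2$ divided by $m^2$ being absorbed, matching the $\beta_2^{-2}\beta_1^{1-2m}$ scaling with $\beta_1=\alpha_1=\tfrac{\omega_{d+1}}{\pi\omega_d}\nu_1$). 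Then $\min(1,\|f_1\|^2\beta_2^{-2}\beta_1^{1-2m})\ge \min(1,(m\omega_{i+1}\nu_{i+m}/(\alpha_2\omega_{i+m+1}(\pi\nu_1)^m))^2)$ after plugging in $\beta_1,\beta_2$, and the theorem follows.

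The main obstacle is the integral-geometric computation of $\|f_1\|_{L^2(\mu_{d-1})}^2$: one must correctly track the many surface-area constants $\omega_j$ arising from iterated applications of the affine Blaschke--Petkantschin formula and from the chosen normalization $\mu_{d-1}(\{H:H\cap\bB^d\ne\emptyset\})=2$, and must produce a clean lower bound rather than just an asymptotic. A safe way to handle this is to not attempt an exact evaluation but instead to bound $V_i(H\cap H_2\cap\cdots\cap H_m\cap W)$ from below by restricting the $H_j$'s to a set of hyperplanes all meeting a fixed smaller ball inside $W$ (in the spirit of \eqref{assumptionflowbound} and Remark~\ref{rem:mildAssumption}), then using monotonicity and translation-invariance of $\mu_{d-1}$ and of intrinsic volumes; this yields a bound with the stated constant structure, at the cost of slightly more bookkeeping but avoiding any delicate Crofton identity. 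Everything else — verifying the range of $\gamma$, the form of the minimum, the factor $2$ in front — is immediate from the extended Theorem~\ref{tm:Summary} and requires no further work.
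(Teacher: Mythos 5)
Your overall route is exactly the paper's: verify \eqref{assumptions} with the parameters \eqref{paramsHyperplanes}, pass to \eqref{eq:fBound} via Lemma~\ref{lem:relAssumptions}.1 (so $\beta_0=1$, $\beta_1=\alpha_1$, $\beta_2=\alpha_2$, $q=0$), note that $\gamma\ge 8m\alpha_1^{-1}$ is the hypothesis of \eqref{onebound2}, and reduce everything to a lower bound on $\|f_1\|_{L^2(\mu_{d-1})}^2$. The paper carries out that last step directly rather than by citation: it applies Crofton's formula $m-1$ times to collapse the inner integral defining $f_1$, reducing $\|f_1\|^2_{L^2}$ to $m^2\bigl(\tfrac{\omega_{d+1}}{\omega_d}\bigr)^{2(m-1)}\tfrac{\omega_{i+1}^2}{\omega_{i+m}^2}\int V_{i+m-1}(W\cap H)^2\,\mu_{d-1}(\dint H)$, and then uses Jensen's inequality together with one more application of Crofton to bound this single integral from below by $\alpha_1^{-1}\bigl(\tfrac{\omega_{d+1}\omega_{i+m}}{\omega_d\omega_{i+m+1}}\nu_{i+m}\bigr)^2$. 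This is short and self-contained; your alternatives (citing the variance computation in \cite{RS13}, or a localization argument in the spirit of \eqref{assumptionflowbound}) would also work in principle but would either import a computation you have not checked against the normalization $\mu_{d-1}(\{H:H\cap\bB^d\neq\emptyset\})=2$, or produce a constant of a different (and likely worse) form.

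One concrete issue: the intermediate inequality you display, $\|f_1\|_{L^2(\mu_{d-1})}^2\ge\bigl(\omega_{i+1}\nu_{i+m}/(\omega_{i+m+1}(\pi\nu_1)^m)\bigr)^2$, is not the bound you need. To obtain the stated $\Cr{c_27}$ from $\min\bigl(1,\|f_1\|^2\alpha_2^{-2}\alpha_1^{1-2m}\bigr)$ you must show
\[
\|f_1\|_{L^2(\mu_{d-1})}^2\;\ge\;\alpha_1^{2m-1}\left(\frac{m\,\omega_{i+1}\nu_{i+m}}{\omega_{i+m+1}(\pi\nu_1)^m}\right)^2,
\]
i.e.\ your displayed bound is missing the factor $m^2\alpha_1^{2m-1}$, and since $\alpha_1$ can be on either side of $1$ this is not merely a weaker version of the correct inequality. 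The parenthetical remark about the $m^2$ ``being absorbed'' does not resolve this; the $\alpha_1^{2m-1}$ arises precisely from the $m-1$ Crofton reductions (each contributing $\tfrac{\omega_{d+1}}{\pi\omega_d}\nu_1=\alpha_1$ up to the $\omega$-bookkeeping) plus the Jensen step, and needs to be tracked explicitly for the constant to come out as claimed.
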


\begin{proof}
    We start by noting that by Crofton's formula \cite[Theorem 5.1.1]{SW} applied $m-1$ times we have
    \begin{align*}
        \|f_1\|^2_{L^2(\Lambda)}
        &=m^2\int_{\bA(d,d-1)}\Big(\int_{\bA(d,d-1)^{m-1}}V_i(H_1\cap\ldots\cap H_{m-1}\cap H\cap W)\mu^{\otimes (m-1)}_{d-1}(\dint (H_1,\ldots, H_{m-1})\Big)^2\mu_{d-1}(\dint H)\\
        &=m^2\Big({\omega_{d+1}\over \omega_d}\Big)^{2(m-1)}{\omega_{i+1}^2\over \omega_{i+m}^2}\int_{\bA(d,d-1)}V_{i+m-1}(W\cap H)^2\mu_{d-1}(\dint H)
    \end{align*}
    Applying Jensen's inequality and the Crofton formula one more time we get
    \begin{align*}
        \int_{\bA(d,d-1)}V_{i+m-1}(W\cap H)^2 \mu_{d-1}(\dint H)
        & \geq \alpha_1 \left( \int_{\bA(d,d-1)}V_{i+m-1}(W\cap H) \frac{\mu_{d-1}(\dint H)}{\alpha_1} \right)^2 
        \\& = \frac{1}{\alpha_1} \left(\frac{\omega_{d+1}\omega_{i+m}}{\omega_d\omega_{i+m+1}} V_{i+m}(W) \right)^2.
    \end{align*}
    Thus
    \begin{align*}
        \|f_1\|^2_{L^2(\Lambda)}&
        \geq \alpha_1^{-1}\left(\frac{ m\,\omega_{i+1}\nu_{i+m}}{ \omega_{i+m+1}}  \right)^2\Big({\omega_{d+1}\over \omega_d}\Big)^{2m}=\alpha_1^{-1+2m}\left(\frac{ m\,\omega_{i+1}\nu_{i+m}}{ \omega_{i+m+1}(\pi\nu_1)^m} \right)^2.
    \end{align*}
    Then, the result is given directly by Theorem \ref{tm:Summary} (in its extended version at the beginning of Section \ref{sec:discussion}) and Lemma \ref{lem:relAssumptions} since $F_{m,i}(W,\gamma)$ satisfies \eqref{assumptions} with $\alpha_1$ and $\alpha_2$ given by \eqref{paramsHyperplanes}.
\end{proof}

\subsection{Hyperbolic case} \label{sec:hyperbolic}

Let $\bH^d$ be a $d$-dimensional hyperbolic space, which is a simply connected Riemannian manifold of constant sectional curvature $-1$. The $(d-1)$-dimensional totally geodesic subspaces of $\bH^d$ are called hyperplanes and let $\bA_h(d,d-1)$ denotes the space of all hyperplanes in $\bH^d$. There is a unique (up to normalization) measure $\mu_{d-1}^h$ on $\bA_h(d,d-1)$ which is invariant under isometries of $\bH^d$. In this section we assume that $\mu_{d-1}^h$ is normalized in the standard way, namely as in \cite[Section 3.1]{HHT2019}. For $\gamma>0$ let $\eta^h$ be a Poisson process on $\bA_h(d,d-1)$ with intensity measure $\gamma\mu_{d-1}^h$, which is called a (hyperbolic) Poisson hyperplane process. For more information on hyperbolic geometry and, in particular, on hyperbolic Poisson hyperplane process we refer reader to \cite[Section 3]{HHT2019}. 

Let $\cH^s$, $s\ge 0$ denote the $s$-dimensional Hausdorff measure with respect to intrinsic metric on Riemannian manifold $\bH^d$ and let $B_r$ be a closed ball in $\bH^d$ or radius $r>0$. As in Euclidean case we consider a Poisson $U$-statistic of the following form
\begin{align*}
F_{m}^h(r,\gamma)\coloneqq&\frac{1}{m!}\sum_{(H_1,\ldots,H_m)\in (\eta^h)^m_{\neq}}\cH^{d-m}(H_1\cap\cdots\cap H_m\cap B_r){\bf 1}\{\dim (H_1\cap \cdots\cap H_m)=d-m\}.
\end{align*}
In particular $F_{d-1}^h(r,\gamma)$ is the total number of intersection points of Poisson hyperplane process $\eta^h$ and $F_{1}^h(r,\gamma)$ is the total surface content of the union of all hyperplanes of $\eta^h$ within a window $B_r$.

Note that, contrary to what happens in the Euclidean setting, scaling of the window size $r$ and of the intensity $\gamma$ have different effects on the random variable $F_m^h (r,\gamma)$. If one fixes $r$ and considers $\gamma\to\infty$, the assumptions \eqref{assumptions} hold with parameters $\alpha_1$ and $\alpha_2$ constant (they depend only on $d$ and $r$). 
Thus, we get good concentration bounds from Theorem \ref{tm:Summary}.
In this case we would get a similar result as in the Euclidean setting, see Theorem \ref{thm:EuclideanHyperplanes}. In this section, we are interested in deriving concentration bounds for $F_{m}^h(r,\gamma)$ with a particular focus on the situation when $r\to \infty$ and $\gamma = \gamma(r)$ (typically constant).
With this perspective, some computations show that it is not enough to restrict to the assumption \eqref{assumptions}, since the bounds derived under this assumption become meaningless because of the rapid growth of the parameters $\alpha_1$ and $\alpha_2$ as $r\to\infty$. For simplicity and in order to avoid the discussion to become too lengthy, we will mostly focus on the case $m=1$.
Results in Section \ref{sec:Wu} need, in any case, the assumption $m=1$, but we believe that, with additional efforts, the results of Section \ref{sec:appliOurResults} could be extended to $m\geq 2$.

The rest of the (sub)section is structured as follows.
In Section \ref{sec:CLT}, we describe CLT and non CLT results from \cite{HHT2019,KRT22} and derive concentration bounds from these results.
In Section \ref{sec:Wu}, we apply the result from Wu \cite{wu2000new} to derive other concentration bounds in the case $m=1$.
In Section \ref{sec:appliOurResults}, we show that $F_{1}^h(r,\gamma)$ satisfies $\eqref{eq:fBound}$ (with explicit parameters $\beta_0$, $\beta_1$ and $\beta_2$), derive bounds from our main result and compare them with the other bounds.
Finally, in Section \ref{sec:proofsHyperbolic}, we provide the proofs of the various theorems of Section \ref{sec:hyperbolic}.

\subsubsection{CLT and non CLT results and their consequences} \label{sec:CLT}

Herold, Hug and Th\"ale showed in \cite[Theorem 5]{HHT2019} that, for fixed $\gamma$ and $r\to\infty$, $F_m^h(r,\gamma)$ satisfies CLT if $d=2$ and $m=1,2$, or if $d=3$ and $m=1,2,3$.
In cases $d\ge 7$, $m\in \{1,\ldots, d\}$ and $d\ge 4$, $m=1$ they showed that CLT does not hold.
For the remaining cases, the situation is still open, but it is conjectured in \cite{betken2023} that there is no CLT as soon as $d\ge 4$, for any value of $m$.
From now on, we will focus on the case $m=1$.

As already mentioned above, for $d=2$ and $d=3$, $F_1^h(r, \gamma)$ satisfies CLT when $\gamma$ is fixed and $r\to\infty$.
Moreover the speed of convergence is of order $e^{-r/2}$ for $d=2$ and of order $r^{-1}$ for $d=3$, with respect to the Kolmogorov distance \cite[Theorem 5]{HHT2019}.
From this, similarly as in Section \ref{sec:AnalysingCLT} where we derived \eqref{eq:241a} from \eqref{eq:CLT}, we find that there exist $r_0>0$ and $C>0$ such that
\begin{align*}
\bP\left(|F_1^h(r, \gamma)-\bE F_1^h(r, \gamma)| \geq s\sqrt{\bV F_1^h(r, \gamma)}\right)\leq  2\exp( -s^2/2),
\quad
r \geq r_0, \quad
0\leq s \leq
\begin{cases}
    C \sqrt{r}, & d=2,\\
    C \sqrt{\log(r)}, & d=3 .
\end{cases}
\end{align*}
In Theorem \ref{tm:HyperbolicHyperplaneConcentrationWuCLT} below, we will recover Gaussian tails, with bounds of the form $\exp(-c s^2)$ for some constant $c>0$.
On one hand, they are not as tight because $c$ is smaller than $1/2$, but on the other hand, they apply to a larger range of value $s$, which is allowed to be of order at most $e^{r/2}$ for $d=2$ and at most $\sqrt{r}$ for $d=3$.

Now, we will consider the setting $d\geq 4$, $m=1$, $\gamma$ fixed and $r\to\infty$, for which we already mentioned that there is no CLT.
In the more recent work of Kabluchko, Rosen and Th\"ale \cite[Theorem 2.1]{KRT22} it was shows that
\begin{equation} \label{eq:241b}
    {2^{d-2}(d-2)\over \omega_{d-1}}{(F_{1}^h(r,\gamma)-\bE F_{1}^h(r,\gamma)\over e^{(d-2)r}} \overset{d}{\longrightarrow} Z_d,\qquad\qquad d\ge 4, \quad \gamma=1, \quad r\to\infty,
\end{equation}
where $Z_d$ is some given infinitely divisible non-Gaussian random variable. 
In particular the L\'evy measure of $Z_d$ is explicitly given and has support $(0,1)$, see \cite[Remark 2.3]{KRT22}. 
It should be also noted that the variance of $F_{1}^h(r,\gamma)$ has order of growth $e^{2(d-2)r}$ as $r\to\infty$ \cite[Lemma 19]{HHT2019}, and hence, the scaling coincides with the one used in CLT.
By \cite[Theorem 26.1]{Sato}, we have $\bP(Z_d \geq s) =  e^{-s\log(s)(1+o(1))}$ as $s\to\infty$, which suggests that, for large $r$, the random variable $e^{-(d-2)r}(F_{1}^h(r,\gamma)-\bE F_{1}^h(r,\gamma))$ has Poisson tail.
This will be confirmed by Corollary \ref{tm:HyperbolicHyperplaneConcentrationWuNoCLT} below.

\subsubsection{Concentration bounds following from the work of Wu} \label{sec:Wu}

Since we focus on the case $m=1$, we can use the result of Wu \cite{wu2000new}. This leads to the following theorem and corollaries, whose proofs are in Section \ref{sec:proofs}.

\begin{theorem}
\label{tm:HyperbolicHyperplaneConcentration Wu}
    For any $d\ge 2$, $r>0$ and $\gamma>0$ we have
    $$
    \bP(F_1^h(r, \gamma)-\bE F_1^h(r, \gamma) \geq t)
    \leq  \exp(-I_d(\gamma, r, t)),
    $$
    where 
    $$
    I_d(\gamma, r,t)=\begin{cases}
        \frac{t}{4r}\log\Big(1+{tr\over 32\gamma e^{r}}\Big),\qquad &d=2,\\
         \frac{t}{\omega_2e^r}\log\Big(1+{t\over 4\gamma\omega_2 re^{r}}\Big),\qquad &d=3,\\
         {2^{d-3}(d-2)t\over \omega_{d-1}e^{(d-2)r}}\log\Big(1+{(d-2)t\over 2^{d-1}\gamma\omega_{d-1} e^{(d-2)r}}\Big),\qquad &d\ge 4.
    \end{cases}
    $$
\end{theorem}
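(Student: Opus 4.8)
The plan is to apply Wu's concentration inequality \cite[Proposition 3.1]{wu2000new} exactly as in the proof of Proposition \ref{prop:m1}. Since $m=1$, we have $F_1^h(r,\gamma)=\sum_{H\in\eta^h} f_r(H)$ with the nonnegative kernel $f_r(H)=\cH^{d-1}(H\cap B_r)$, so $D_HF_1^h(r,\gamma)=f_r(H)$; moreover $f_r$ is bounded and supported on $\{H\in\bA_h(d,d-1):\dist(o,H)\le r\}$ (here $o$ is the centre of $B_r$), which has finite $\mu_{d-1}^h$-measure, so $F_1^h(r,\gamma)<\infty$ a.s.\ and $\int (D_HF_1^h)^2\,\gamma\mu_{d-1}^h(\dint H)<\infty$. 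Wu's bound then gives $\bP(F_1^h(r,\gamma)-\bE F_1^h(r,\gamma)\ge t)\le\exp\bigl(-\tfrac{t}{2b_r}\log(1+\tfrac{tb_r}{v_r})\bigr)$ for any constants $b_r\ge\sup_H f_r(H)$ and $v_r\ge\gamma\int f_r^2\,\dint\mu_{d-1}^h$, the latter quantity being exactly $\bV F_1^h(r,\gamma)$ by \eqref{eq_UstatVar}. Since $b\mapsto\tfrac{t}{2b}\log(1+\tfrac{tb}{v})$ and $v\mapsto\tfrac{t}{2b}\log(1+\tfrac{tb}{v})$ are both decreasing, it suffices to exhibit clean upper bounds $b_r,v_r$ reproducing the three stated forms of $I_d$.

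First I would pin down $b_r$ using the standard integral geometry of $\bH^d$: a hyperplane $H$ with $u:=\dist(o,H)\le r$ meets $B_r$ in a totally geodesic $(d-1)$-ball of radius $\rho(u)$ satisfying $\cosh\rho(u)=\cosh r/\cosh u$, so $f_r(H)=\omega_{d-1}\int_0^{\rho(u)}\sinh^{d-2}(s)\,\dint s$, which is maximised at $u=0$. Hence $\sup_H f_r(H)=\omega_{d-1}\int_0^r\sinh^{d-2}(s)\,\dint s$, and elementary estimates (using $\omega_1=2$ when $d=2$; $\cosh r-1\le\tfrac12 e^r$ when $d=3$; $\sinh^{d-2}(s)\le 2^{-(d-2)}e^{(d-2)s}$ when $d\ge4$) yield $b_r$ equal to $2r$, $\tfrac{\omega_2}{2}e^r$, and $\tfrac{\omega_{d-1}}{2^{d-2}(d-2)}e^{(d-2)r}$ respectively, which turn $\tfrac{t}{2b_r}$ into the three prefactors $\tfrac{t}{4r}$, $\tfrac{t}{\omega_2 e^r}$, $\tfrac{2^{d-3}(d-2)t}{\omega_{d-1}e^{(d-2)r}}$ appearing in $I_d$.

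The substantive step is bounding $v_r=\gamma\int_0^r\bigl(\omega_{d-1}\int_0^{\rho(u)}\sinh^{d-2}(s)\,\dint s\bigr)^2 p_d(u)\,\dint u$, where $p_d(u)$ is the radial density of the foot point of a hyperplane under the normalisation of $\mu_{d-1}^h$ used in \cite[Section 3.1]{HHT2019}, which is proportional to $\cosh^{d-1}(u)$. For $d\ge3$ I would bound the inner integral by a constant multiple of $(\cosh r/\cosh u)^{d-2}$ and $\cosh^{d-1}(u)$ by a constant multiple of $e^{(d-1)u}$, so the $u$-integrand is a constant times $e^{2(d-2)r}e^{-(d-3)u}$; integrating over $u\in[0,r]$ gives $v_r=O(\gamma e^{2(d-2)r})$ when $d\ge4$ (the $u$-integral converges) and $v_r=O(\gamma r e^{2r})$ when $d=3$ (the $u$-integral equals $r$). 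For $d=2$ one works directly with $f_r(H)=2\rho(u)$, $\rho(u)\le r-u+\log 4$, and $p_2(u)\propto\cosh u$, and $\int_0^r(r-u+\log4)^2 e^{u}\,\dint u=O(e^r)$ gives $v_r=O(\gamma e^r)$. Tracking the (deliberately generous) constants in each regime yields $v_r$ of exactly the shape needed so that $\log(1+tb_r/v_r)$ becomes the logarithmic factor in $I_d$; since only the upper tail is asserted, no parallel argument for $-F_1^h(r,\gamma)$ is needed. The hard part will be the bookkeeping in this three-case variance estimate — in particular making explicit why the $u$-integral stays bounded for $d\ge4$, grows linearly for $d=3$, and is dominated by the range $u\approx r$ for $d=2$ — since everything else is a direct transcription of the proof of Proposition \ref{prop:m1}.
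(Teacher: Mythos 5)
Your proposal is correct and follows essentially the same route as the paper: apply Wu's \cite[Proposition 3.1]{wu2000new} with $D_HF_1^h=\cH^{d-1}(H\cap B_r)$, bound the supremum by $\omega_{d-1}\int_0^r\sinh^{d-2}$ split into the three cases, and bound $\gamma\int(\cH^{d-1}(H\cap B_r))^2\,\mu_{d-1}^h(\dint H)$ exactly as the paper does via its appendix Lemma \ref{lm:UpperBoundIntegral} (your inline sketch of that second-moment estimate matches the paper's computation, which itself follows \cite[Lemma 8]{HHT2019}). The resulting constants reproduce the stated $I_d$ in all three regimes.
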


Next, we apply the above theorem to bound the deviation probability of the centred and normalized version of $F_1^h(r, \gamma)$.
We see that we get Gaussian bounds in any dimensions, but it is important to pay attention to the range of allowed values for $s$.
In any dimensions these ranges grow with $\gamma$, which is to be expected since we always have CLT when $\gamma\to\infty$ and $r$ is lower bounded, see \cite[Theorem 6, Remark 5]{HHT2019}.
If $\gamma$ is fixed and $r\to\infty$, then the ranges grow with $r$ only if $d=2$ or $d=3$, which is again to be expected since these are precisely the cases for which CLT holds.

\begin{corollary}[Gaussian tails] \label{tm:HyperbolicHyperplaneConcentrationWuCLT}
    For $r\ge 3$ and $\gamma\ge 1$ we have
    \begin{align*}
    \bP\left({F_1^h(r, \gamma)-\bE F_1^h(r, \gamma)\over\sqrt{\bV F_1^h(r, \gamma)}} \geq s\right)\leq 
    \begin{cases}
        \exp\left( -2^{-8} s^2\right),
        & d=2,\qquad 0<s\leq 2^5r^{-1}(\gamma e^{r})^{1/2},\\
        \exp\left( - 2^{-9} s^2\right), 
        & d= 3,\qquad 0<  s\leq 2^5(r\gamma)^{1/2},\\
         \exp\left( - 2^{-4d+3} s^2\right), 
        & d\ge 4,\qquad 0<  s\leq 2^{3d-3}\sqrt{d-2}\gamma^{1/2}.
    \end{cases}
    \end{align*}
\end{corollary}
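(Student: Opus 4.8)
The plan is to start from the exponential tail bound of Theorem \ref{tm:HyperbolicHyperplaneConcentration Wu}, substitute $t = s\sqrt{\bV F_1^h(r,\gamma)}$, and then control the rate function $I_d(\gamma,r,t)$ from below by a multiple of $s^2$ on the stated range of $s$. The key input besides Theorem \ref{tm:HyperbolicHyperplaneConcentration Wu} is the asymptotic order of the variance. From \cite[Lemma 19]{HHT2019} (and the $d=2,3$ analogues), one has two-sided bounds of the form $\bV F_1^h(r,\gamma) \asymp \gamma\, r\, e^{2r}$ when $d=2$, $\bV F_1^h(r,\gamma)\asymp \gamma\, r\, e^{2r}$ when $d=3$ (with a logarithmic correction in the variance that I will need to check against the precise statement there), and $\bV F_1^h(r,\gamma) \asymp \gamma\, e^{2(d-2)r}$ when $d\ge 4$, with explicit constants. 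First I would record, for each dimension, explicit constants $v_-, v_+>0$ (depending only on $d$) with $v_- \gamma\, \kappa(r) \le \bV F_1^h(r,\gamma) \le v_+ \gamma\, \kappa(r)$, where $\kappa(r)$ is the relevant growth factor ($r e^{2r}$, $r e^{2r}$, or $e^{2(d-2)r}$).

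Next, for a fixed dimension I would plug $t = s\sqrt{\bV}$ into $I_d$. Consider $d\ge 4$ for concreteness: there
\[
I_d(\gamma,r,t) = \frac{2^{d-3}(d-2)\, s\sqrt{\bV}}{\omega_{d-1} e^{(d-2)r}} \log\!\Big(1 + \frac{(d-2)\, s\sqrt{\bV}}{2^{d-1}\gamma\omega_{d-1} e^{(d-2)r}}\Big).
\]
Using $\sqrt{\bV}\ge \sqrt{v_-\gamma}\, e^{(d-2)r}$, the prefactor is at least a constant times $s\sqrt{\gamma}$, and the argument of the logarithm is at least $1 + c\, s/\sqrt{\gamma}$ for an explicit $c>0$. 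The point of the upper restriction $s\le 2^{3d-3}\sqrt{d-2}\,\gamma^{1/2}$ is precisely to keep $s/\sqrt{\gamma}$ bounded above by an absolute constant, so that on this range $\log(1+c s/\sqrt\gamma) \ge c' s/\sqrt\gamma$ for a concrete $c'>0$ (using the elementary inequality $\log(1+x)\ge x/(1+x_0)$ for $0\le x\le x_0$, exactly as in the proof of Proposition \ref{prop:ChebyshevCantelli}). Combining the two lower bounds gives $I_d(\gamma,r,t) \ge (\text{const})\cdot s\sqrt\gamma \cdot s/\sqrt\gamma = (\text{const})\, s^2$, and tracking the constants through yields the claimed $2^{-4d+3}s^2$. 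The cases $d=2$ and $d=3$ are handled the same way: substitute $t=s\sqrt\bV$, use the lower variance bound so that the logarithm's argument is $1 + c\, s / (r^{-?}\sqrt\gamma\cdots)$, use the upper restriction on $s$ (which is why the ranges read $2^5 r^{-1}(\gamma e^r)^{1/2}$ for $d=2$ and $2^5(r\gamma)^{1/2}$ for $d=3$ — these are exactly the thresholds making the log-argument bounded) to linearize the logarithm, and collect constants.

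The main obstacle, and the part requiring care rather than ideas, is bookkeeping: matching the explicit numerical constants ($2^{-8}$, $2^{-9}$, $2^{-4d+3}$) and the explicit range endpoints to the explicit variance constants from \cite{HHT2019} and the explicit form of $I_d$. I would also need to double-check the hypothesis $r\ge 3$ is used in the right place — it ensures $\bV$ is comparable to the clean growth factor without lower-order boundary effects, and possibly that $r$-dependent prefactors like $1/r$ in the $d=2$ rate function are not pathological for small $r$. One subtlety for $d=2$: the variance growth $r e^{2r}$ has the factor $r$ which cancels against the $1/r$ in $I_2$'s prefactor in a way that must be tracked exactly to land on $2^{-8}$. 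No genuinely new argument is needed beyond Theorem \ref{tm:HyperbolicHyperplaneConcentration Wu}, the variance asymptotics, and the linearization-of-$\log$ trick; everything else is careful substitution.
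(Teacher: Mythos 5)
Your proposal follows essentially the same route as the paper's proof: substitute $t=s\sqrt{\bV F_1^h(r,\gamma)}$ into Theorem \ref{tm:HyperbolicHyperplaneConcentration Wu}, lower-bound the variance (the paper's Lemma \ref{lm:Variance}), and linearize the logarithm via $\log(1+x)\ge x/2$ for $x\le 1$ on exactly the ranges of $s$ that keep the logarithm's argument below $1$. One bookkeeping correction: for $d=2$ the variance is of order $\gamma e^{r}$ (Lemma \ref{lm:Variance} gives $\gamma e^{r}\le \bV F_1^h(r,\gamma)\le 2^{6}\gamma e^{r}$), not $\gamma r e^{2r}$, and the cancellation of the $1/(4r)$ prefactor in $I_2$ comes from the factor $r$ inside the logarithm's argument rather than from the variance; with that fixed, your computation lands on $2^{-8}s^2$ and the stated range as claimed.
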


For $d\geq 4$ and fixed $\gamma$, the last corollary does not provide bounds for large $s$.
The following corollary, provide bounds of a different quality for such case.
Note, that for convenience we normalize slightly differently, but this normalization is also of order $\sqrt{\bV F_1^h(r,\gamma)}$.
The following Poisson tail bounds are the best we could hope for since they apply to the limiting random variable \eqref{eq:241b}.
\begin{corollary}[Poisson tails] \label{tm:HyperbolicHyperplaneConcentrationWuNoCLT}
    For any $d\ge 4$, $r>0$ and $\gamma\ge 1$ we have
    \begin{align*}
        \bP\Big( {2^{d-2}(d-2)\over \omega_{d-1}}{(F_{1}^h(r,\gamma)-\bE F_{1}^h(r,\gamma)\over e^{(d-2)r}} \geq s\Big)&\leq \exp\left( - \Big(s+2^{2d-3}\gamma \Big)\log\Big(1+{s\over 2^{2d-3}\gamma}\right)+s\Big),\qquad s>0,\\
        &\leq \exp\left( - s\log(s)+s\log(2^{2d-3}e\gamma)\right), \qquad s> 2^{2d-3}e\gamma,\\
    &= \exp\left( - s\log(s)(1+o(1))\right),\qquad s\to\infty.
    \end{align*}
\end{corollary}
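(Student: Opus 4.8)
The plan is to go back to Wu's concentration inequality for Poisson functionals \cite{wu2000new} in its full Bennett form and apply it to $F_1^h(r,\gamma)$, which is a Poisson $U$-statistic of order $1$; this is the very mechanism already used for Proposition~\ref{prop:m1} and for Theorem~\ref{tm:HyperbolicHyperplaneConcentration Wu}, but here one must \emph{not} route through the simplified rate $I_d$ of the latter. Writing $h(u):=(1+u)\log(1+u)-u$, the exponent in the target bound, $-(s+2^{2d-3}\gamma)\log\!\big(1+s/(2^{2d-3}\gamma)\big)+s$, is exactly $-2^{2d-3}\gamma\,h\!\big(s/(2^{2d-3}\gamma)\big)$, and since $h(u)\ge\tfrac{u}{2}\log(1+u)$ (strictly for $u>0$) this is strictly smaller than what $I_d$ would yield. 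Hence the corollary is a strictly stronger statement than Theorem~\ref{tm:HyperbolicHyperplaneConcentration Wu} in this regime and must be obtained from the intermediate estimate produced \emph{inside} its proof rather than from its final form.

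Concretely, $F_1^h(r,\gamma)$ has kernel $f(H)=\cH^{d-1}(H\cap B_r)$, so the add-one cost operator acts deterministically, $D_H F_1^h(r,\gamma)=f(H)$, and \cite[Proposition~3.1]{wu2000new} applies with $b:=\sup_H f(H)$ and $\sigma^2:=\gamma\|f\|_{L^2(\mu_{d-1}^h)}^2$, giving $\bP\big(F_1^h(r,\gamma)-\bE F_1^h(r,\gamma)\ge t\big)\le \exp\!\big(-\tfrac{\sigma^2}{b^2}\,h(\tfrac{bt}{\sigma^2})\big)$ for all $t\ge0$. The maximal hyperplane section of $B_r$ is a totally geodesic $(d-1)$-dimensional ball of radius $r$, so $b=\omega_{d-1}\int_0^r\sinh^{d-2}(\rho)\,\mathrm{d}\rho\le\bar b:=\omega_{d-1}e^{(d-2)r}/(2^{d-2}(d-2))$ using $\sinh\rho\le e^\rho/2$; and the second-moment estimate carried out in the proof of Theorem~\ref{tm:HyperbolicHyperplaneConcentration Wu} yields $\sigma^2\le\bar\sigma^2:=2^{2d-3}\gamma\,\bar b^{\,2}$. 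Since enlarging $b$ or $\sigma^2$ only weakens Bennett's bound, one may run Wu's inequality with $(\bar b,\bar\sigma^2)$ in place of $(b,\sigma^2)$. Substituting $t=\bar b\,s=s\,\omega_{d-1}e^{(d-2)r}/(2^{d-2}(d-2))$ makes $\bar\sigma^2/\bar b^{\,2}=2^{2d-3}\gamma$ and $\bar b t/\bar\sigma^2=s/(2^{2d-3}\gamma)$, and expanding $h$ produces precisely the first displayed inequality.

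The remaining two bounds are elementary consequences. With $\mu:=2^{2d-3}\gamma$, the inequalities $\log(1+s/\mu)\ge\log(s/\mu)$ and $s+\mu\ge s$ give $(s+\mu)\log(1+s/\mu)-s\ge s\log(s/\mu)-s=s\log\!\big(s/(e\mu)\big)$, so for $s>2^{2d-3}e\gamma=e\mu$ the exponent is at most $-s\log s+s\log(2^{2d-3}e\gamma)$, which is the second line; and since $(s+\mu)\log(1+s/\mu)-s=s\log s\,(1+o(1))$ as $s\to\infty$ for fixed $\mu$, the asymptotic line follows.

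I expect the only substantive ingredient to be the bound $\gamma\|f\|_{L^2(\mu_{d-1}^h)}^2\le2^{2d-3}\gamma\,\bar b^{\,2}$, i.e.\ controlling $\int\cH^{d-1}(H\cap B_r)^2\,\mu_{d-1}^h(\mathrm{d}H)$ by a dimensional constant times the square of the maximal section volume; this is an integral-geometric (Crofton-type) computation in $\bH^d$ that is in any case needed for Theorem~\ref{tm:HyperbolicHyperplaneConcentration Wu}, so nothing genuinely new arises, and apart from it the proof is the substitution above plus the two one-line logarithm estimates. The one pitfall is to retain the full Bennett function $h$ and not the weaker $\tfrac{u}{2}\log(1+u)$ estimate used to state $I_d$, since the Poisson-type asymptotics $\exp(-s\log s(1+o(1)))$ — which match the tail of the limit $Z_d$ in~\eqref{eq:241b} — are exactly what that simplification destroys.
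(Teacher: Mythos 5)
Your proposal is correct and follows essentially the same route as the paper: both apply Wu's Proposition 3.1 in its full Bennett form with $b=\omega_{d-1}e^{(d-2)r}/(2^{d-2}(d-2))$ and $\sigma^2\le 2\gamma\omega_{d-1}^2(d-2)^{-2}e^{2(d-2)r}$ (the $k=2$ case of Lemma \ref{lm:UpperBoundIntegral}), substitute $t=bs$ so that $\sigma^2/b^2=2^{2d-3}\gamma$, and then derive the second and third lines by the same elementary logarithm estimates. Your remark that one must keep the full Bennett function rather than the weakened rate $I_d$ of Theorem \ref{tm:HyperbolicHyperplaneConcentration Wu} is exactly the point of the paper's proof as well.
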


\subsubsection{Applications of Theorem \ref{tm:Summary}} \label{sec:appliOurResults}
Below, we show that $F_1^h(r, \gamma)$ satisfies \eqref{eq:fBound} which allows us to derive concentration bounds.
We will then comment on the fact that these bounds are comparable to the ones derived in Section \ref{sec:Wu} above.
We stress that, with additional work, our method can be extended to $m\geq 2$ whereas the results derived from the work of Wu are specific to $m=1$.

\begin{lemma} \label{lem:A2holds}
    For any $d\geq 2$, $F_1^h(r, \gamma)$ satisfies \eqref{eq:fBound} with $\beta_0=1$ and
    \begin{align*}
        q&=1, &\beta_1&=2e^r, &\beta_2&=4 , &\text{ if } &d=2;\\
        q&=0, &\beta_1&=2r, &\beta_2&=\omega_2e^{r}, &\text{ if } &d=3;\\
        q&=0, &\beta_1&=2, &\beta_2&={\omega_{d-1}\over d-2}e^{r(d-2)}, &\text{ if } &d\ge 4.
    \end{align*}
\end{lemma}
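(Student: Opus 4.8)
The statement to be proved is Lemma \ref{lem:A2holds}, i.e.\ that $F_1^h(r,\gamma)$ satisfies \eqref{eq:fBound} with the stated parameters in each of the three dimensional regimes. Since $m=1$, the combinatorics of \eqref{eq:fBound} collapse dramatically: for $m=1$ a subpartition $\sigma\in\Pi_{\geq 2}^{**}(1;\ell,k)$ must have $|B\cap R_i|\le 1$ with each $R_i$ a singleton, so the blocks of $\sigma$ are just subsets of $[\ell]$ of size $\ge 2$, and $k=m\ell+|\sigma|-\|\sigma\|=\ell-(\|\sigma\|-|\sigma|)$. The function $(f^{\otimes\ell})_\sigma$ on $\bX^k$ is then a product over the blocks $J\in h(\sigma)$ of factors $f$ evaluated at the shared variable (one variable per block, $f$ being a function of a single hyperplane), so
\[
\int (f^{\otimes\ell})_\sigma\,\dint\Lambda^{k}=\prod_{J\in h(\sigma)}\int_{\bA_h(d,d-1)} f(H)^{|J|}\,\mu_{d-1}^h(\dint H),
\]
where $f(H)=\frac1{1!}\cH^{d-1}(H\cap B_r)=\cH^{d-1}(H\cap B_r)$. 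So the whole lemma reduces to estimating the single-integral moments $M_p:=\int f(H)^p\,\mu_{d-1}^h(\dint H)$ for $p\ge 1$, and then checking that $\prod_{J}M_{|J|}\le \beta_0\beta_1^k\beta_2^\ell(\prod_J |J|!)^q$ with the claimed constants.

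**Key steps.** First I would write $f(H)=\cH^{d-1}(H\cap B_r)$ and use the explicit formula for the invariant measure $\mu_{d-1}^h$ in terms of the signed distance $t$ from the origin to the hyperplane $H$ (as in \cite[Section 3.1]{HHT2019}): integrating over hyperplanes at distance $t\in(-r,r)$ from $o$, one has $\mu_{d-1}^h(\dint H)=\omega_d^{-1}\cdots\cosh^{d-1}(t)\,\dint t\,\dint(\text{direction})$ up to the chosen normalization, while $H\cap B_r$ is a $(d-1)$-dimensional hyperbolic ball of radius $\operatorname{arccosh}(\cosh r/\cosh t)$, so $\cH^{d-1}(H\cap B_r)=\omega_{d-1}\int_0^{\operatorname{arccosh}(\cosh r/\cosh t)}\sinh^{d-2}(s)\,\dint s$. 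Carefully bounding this, one gets $f(H)\le c_d\, e^{(d-2)r}$ pointwise for $d\ge 3$ and $f(H)\le c_2\, r$ for $d=2$ (the $d=2$ case is special because $\sinh^{d-2}=\sinh^0=1$ so the chord length is just $2\operatorname{arccosh}(\cosh r/\cosh t)\le 2r$), and one also needs the total mass / first moment $M_1=\int f(H)\,\mu_{d-1}^h(\dint H)=\bE F_1^h(r,\gamma)/\gamma$, which grows like $e^{(d-2)r}$ for $d\ge 4$, like $r e^r$ for $d=3$, and like $e^r$ for $d=2$ (these are exactly the orders appearing in the variance asymptotics of \cite[Lemma 19]{HHT2019}). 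Second, for $d\ge 3$ I would bound $M_p\le \|f\|_\infty^{p-1}M_1$, giving $\prod_{J\in h(\sigma)}M_{|J|}\le \|f\|_\infty^{\sum(|J|-1)}M_1^{|h(\sigma)|}=\|f\|_\infty^{k-|h(\sigma)|}M_1^{|h(\sigma)|}$; using $|h(\sigma)|\le k$ and $|h(\sigma)|\le\ell$ together with $M_1\ge\|f\|_\infty$ being false in general — so instead one tracks $\|f\|_\infty^{k}(M_1/\|f\|_\infty)^{|h(\sigma)|}$ and bounds $(M_1/\|f\|_\infty)^{|h(\sigma)|}$ above by a constant to the power $\ell$ — one arrives at a bound of the form $\beta_1^k\beta_2^\ell$ with $q=0$, and matching constants forces $\beta_1=\Theta(\|f\|_\infty^{1/?})$; I expect the clean choice to be $\beta_1=2$ (resp.\ $2r$) absorbing the geometric factors into $\|f\|_\infty$, and $\beta_2=\|f\|_\infty$-type constants $\frac{\omega_{d-1}}{d-2}e^{(d-2)r}$ (resp.\ $\omega_2 e^r$). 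Third, the $d=2$ case genuinely needs $q=1$: here the moments $M_p$ of the chord length do not admit a uniform $\|f\|_\infty^{p-1}M_1$ bound that is good enough, because one picks up a factorial — one must estimate $M_p=\int (2\operatorname{arccosh}(\cosh r/\cosh t))^p\cdots$, and after the substitution this behaves like $\int_0^r s^p e^{?}\dint s$, producing a $\Gamma(p+1)=p!$ factor; hence $\prod_J M_{|J|}\le \text{const}^k\,\text{const}^\ell\prod_J|J|!$, i.e.\ $q=1$, with $\beta_1=2e^r$, $\beta_2=4$. I would also separately verify the finiteness condition $\int(|f|^{\otimes\ell})_\sigma\dint\Lambda^k<\infty$, which is automatic here since $f\ge 0$ and $B_r$ is compact, so $f$ is bounded and $\mu_{d-1}^h$-integrable.

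**Main obstacle.** The routine part is the reduction to single hyperplane moments and the bookkeeping of exponents $k,\ell,|h(\sigma)|$; the delicate part is getting the \emph{sharp} growth orders of $\|f\|_\infty$ and of $M_1$ (and in $d=2$, the exact factorial growth of $M_p$) with clean explicit constants matching $2e^r,4$; $2r,\omega_2e^r$; $2,\frac{\omega_{d-1}}{d-2}e^{(d-2)r}$ — this requires honest, if elementary, estimates on $\operatorname{arccosh}(\cosh r/\cosh t)$, on $\int_0^{\rho}\sinh^{d-2}s\,\dint s$, and on the distance-integral against $\cosh^{d-1}t$. The $d=2$ moment estimate showing that $q=1$ is necessary and sufficient (the factorial truly appears and is not an artefact of a lossy bound) is the single step I expect to take the most care, and it is the reason the three regimes cannot be treated uniformly.
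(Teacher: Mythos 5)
Your reduction is the right one, and it is the same as the paper's: for $m=1$ every row is a singleton that must be hit by a block, so $\|\sigma\|=\ell$, $h(\sigma)=\sigma$, $k=|\sigma|$, and the integral factorizes as $\prod_{J\in\sigma}M_{|J|}$ with $M_p=\int_{\bA_h(d,d-1)}\cH^{d-1}(H\cap B_r)^p\,\mu_{d-1}^h(\dint H)$. Your $d=2$ analysis (chord length at most $2r$, a genuine $p!$ in $M_p$, hence $q=1$) is also exactly what the paper does; it simply quotes the moment bounds of Lemma 8 of Herold--Hug--Th\"ale, re-derived with explicit constants in the appendix.

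The gap is in the $d\ge3$ step. Your claimed order for $M_1$ is wrong: by the hyperbolic Crofton formula, $M_1$ is proportional to $\cH^d(B_r)\asymp e^{(d-1)r}$, not $e^{(d-2)r}$ (and for $d=3$ it is $\asymp e^{2r}$, not $re^{r}$); the orders you quote are those of the \emph{second} moment $M_2$, equivalently of $\bV F_1^h(r,\gamma)/\gamma$. Consequently the peel-off $M_p\le\|f\|_\infty^{p-1}M_1$ leaves one factor $M_1\asymp e^{(d-1)r}$ per block, and $\prod_J M_{|J|}\le\|f\|_\infty^{\,\ell-k}M_1^{k}$ cannot be dominated by $\beta_1^k\beta_2^\ell$ with $\beta_1=2$ and $\beta_2=\frac{\omega_{d-1}}{d-2}e^{r(d-2)}$: that would require $M_1\lesssim\beta_1\beta_2\asymp e^{(d-2)r}$, which fails by a factor of $e^{r}$. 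Your fallback of bounding $(M_1/\|f\|_\infty)^{|\sigma|}$ by a constant to the power $\ell$ fails for the same reason, since $M_1/\|f\|_\infty\asymp e^{r}$ is not a constant (and note $\sum_J(|J|-1)=\ell-k$, not $k-|h(\sigma)|=0$). The repair is to use that every block satisfies $|J|\ge2$, so only moments $M_p$ with $p\ge2$ ever occur: peel down to $M_2$ rather than $M_1$ (or, as the paper does, bound $M_p$ for all $p\ge2$ directly), which gives $M_p\le 2\beta_2^p$ for $d\ge4$, and for $d=3$ an extra factor $r$ precisely when $p=2$ --- this is exactly what produces $\beta_1=2$, respectively $\beta_1=2r$, one such factor per block, i.e.\ $\beta_1^{k}$.
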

The proof of the lemma above is in Section \ref{sec:proofs}.
The next theorem follows immediately.
\begin{theorem}
\label{tm:ConcentrationHypHyperplane1}
For any $d\ge 2$ we have
$$
\bP(|F_1^h(r, \gamma)-\bE F_1^h(r, \gamma)|\geq t)\leq  2\exp(-I_d(\gamma, r, t)),
$$
where 
\[
I_d(\gamma, r,t)=2^{-4d-22}\omega_{d-1}^{-1}\frac{t}{e^{r(d-2)}}  \min \left( \frac{t}{\beta' \gamma}, 1 + \log_+ \left( \frac{t}{\beta' \gamma} \right)\right), \qquad d\ge 3,\, \gamma\ge 8\beta_1^{-1},
\]
with
\[
\beta'={\omega_{d-1}\over d-2}\begin{cases}
re^r,\qquad &d=3,\\
e^{r(d-2)},\qquad &d\ge 4,
\end{cases},\qquad\qquad \beta_1=\begin{cases}
2r,\qquad &d=3,\\
2,\qquad &d\ge 4,
\end{cases}
\]
and 
\[
I_2(\gamma, r,t)=
    \frac{t}{2^{31}}\min \left( \frac{t}{8e^{r}\gamma}, 1 \right), \qquad \gamma\ge 4e^{-r}.
\]

\end{theorem}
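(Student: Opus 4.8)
The plan is to derive Theorem \ref{tm:ConcentrationHypHyperplane1} as a direct corollary of Theorem \ref{tm:Summary} (in its extended form at the beginning of Section \ref{sec:discussion}) combined with Lemma \ref{lem:A2holds}, which supplies explicit values of $\beta_0=1$, $\beta_1$, $\beta_2$ and $q$ for $F_1^h(r,\gamma)$ in each dimension. Since $m=1$ here, the bound \eqref{onebound2} simplifies: the two terms in the minimum become $t/(\beta_2(\beta_1\gamma))$ raised to the power $2-\frac{1}{m}=1$, and $(1+\log_+(t/(\beta_2\beta_1\gamma)))^{1-q}$. First I would check the hypothesis $\beta_1\gamma\geq 8m\beta_0 = 8$ needed for \eqref{onebound2}: for $d=3$ this reads $2r\gamma\geq 8$, for $d\geq 4$ it reads $2\gamma\geq 8$, and for $d=2$ it reads $2e^r\gamma\geq 8$; these are precisely the stated conditions $\gamma\geq 8\beta_1^{-1}$ (resp.\ $\gamma\geq 4e^{-r}$ for $d=2$).

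The main computation is then to plug the parameters from Lemma \ref{lem:A2holds} into the constant $\Cr{c_27}=2^{-17m-4}(\beta_0 m)^{-2m-3}\min(1,\|f_1\|_{L^2(\Lambda)}^2\beta_2^{-2}\beta_1^{1-2m})$ and into the factor $(t/\beta_2)^{1/m}$. With $m=1$, $\beta_0=1$, this is $2^{-21}\min(1,\|f_1\|_{L^2(\Lambda)}^2\beta_2^{-2}\beta_1^{-1})$. So I would need a lower bound on $\|f_1\|_{L^2(\Lambda)}^2$ for $F_1^h(r,\gamma)$ — for a $U$-statistic of order $1$, $f_1=f$, so $\|f_1\|_{L^2(\Lambda)}^2 = \int_{\bA_h(d,d-1)} \cH^{d-1}(H\cap B_r)^2\,\mu_{d-1}^h(\dint H)$. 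By Jensen's inequality against the (finite) mass $\mu_{d-1}^h(\{H: H\cap B_r\neq\emptyset\})$, this is bounded below by $(\bE F_1^h(r,\gamma))^2/(\gamma^2\cdot\text{mass})$, i.e.\ in terms of $\cH^{d-1}$ of a hyperbolic hyperplane section of $B_r$ and the measure of hyperplanes meeting $B_r$. The asymptotics $\|f_1\|_{L^2}^2\asymp e^{2(d-2)r}$ (consistent with $\bV F_1^h \asymp e^{2(d-2)r}$ from \cite[Lemma 19]{HHT2019}) should be available or derivable from the kinematic formulas in \cite{HHT2019}. Tracking constants through $\beta_2^{-2}\beta_1^{-1}$ — which is $\asymp e^{-2r}\cdot e^{-r}$ for $d=2$ (recall $q=1$ there kills the log and changes the exponent bookkeeping), $\asymp e^{-2r}r^{-1}$ for $d=3$, $\asymp e^{-2(d-2)r}$ for $d\geq 4$ — then yields the claimed $I_d(\gamma,r,t)$ after absorbing numerical constants into the powers of $2$.

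Concretely, for $d\geq 4$: $(t/\beta_2)^{1/m}=t/\beta_2 = \frac{d-2}{\omega_{d-1}}e^{-r(d-2)}t$, the threshold $\beta_2\beta_1\gamma = \frac{2\omega_{d-1}}{d-2}e^{r(d-2)}\gamma=:\beta'\gamma$ matches the stated $\beta'$, and $\Cr{c_27}$ combines with the $\frac{d-2}{\omega_{d-1}}e^{-r(d-2)}$ prefactor to give the overall $2^{-4d-22}\omega_{d-1}^{-1}e^{-r(d-2)}t$ coefficient after bounding $\|f_1\|_{L^2}^2\beta_2^{-2}\beta_1^{-1}$ from below by an explicit constant (this is where the only genuine work lies — all the $\omega$-ratios and powers of $2$ must be chased). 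For $d=3$ the same steps give $\beta'=\frac{\omega_2}{1}re^r$ and $\beta_1=2r$, and the extra factor of $r$ hidden in $\beta_1$ and $\beta_2$ reshuffles into $\beta'$ while the final coefficient is still of the form $2^{-4d-22}\omega_{d-1}^{-1}e^{-r}t$ (with $d=3$). For $d=2$, since $q=1$, the bound \eqref{onebound2} degenerates to $\min\big((t/(\beta_2\beta_1\gamma))^{1}, 1\big)$ times $(t/\beta_2)^{1/m}\Cr{c_27}$, with $\beta_1=2e^r$, $\beta_2=4$; here $\beta_2\beta_1\gamma = 8e^r\gamma$, giving the stated $I_2(\gamma,r,t)=2^{-31}t\min(t/(8e^r\gamma),1)$ once the constant $\Cr{c_27}$ (using $\|f_1\|_{L^2}^2\geq$ explicit multiple of $\beta_2^2\beta_1$) is evaluated.

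The main obstacle I anticipate is not conceptual but bookkeeping: obtaining a clean, explicit lower bound on $\|f_1\|_{L^2(\Lambda)}^2$ in each dimension with constants sharp enough to land exactly on $2^{-4d-22}\omega_{d-1}^{-1}$ (and $2^{-31}$ for $d=2$), and then verifying that all the auxiliary numerical inequalities (e.g.\ $\lceil\cdot\rceil$-roundings in $\Cr{c151}$, the threshold comparisons between the three regimes of \eqref{boundA2}) are harmlessly absorbed. A secondary point to handle carefully is that \eqref{onebound2} is a two-sided bound, so the factor $2$ in front of $\exp(-I_d)$ is exactly what Theorem \ref{tm:Summary} provides; no separate lower-tail argument is needed. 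I would organize the proof as: (i) invoke Lemma \ref{lem:A2holds}; (ii) record $\|f_1\|_{L^2(\Lambda)}^2$ lower bounds via Jensen and hyperbolic Crofton formulas; (iii) substitute into \eqref{onebound2} and simplify the constants dimension by dimension.
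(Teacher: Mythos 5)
Your overall strategy is exactly the paper's: invoke Lemma \ref{lem:A2holds} to get the parameters of \eqref{eq:fBound} with $m=1$, $\beta_0=1$, lower-bound $\|f_1\|_{L^2(\Lambda)}^2$, and substitute into \eqref{onebound2}, with the $d=2$ case handled separately because $q=1$ kills the logarithm. The thresholds $\gamma\ge 8\beta_1^{-1}$, the identification $\beta'=\beta_1\beta_2$, and the two-sided factor $2$ are all handled correctly.

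The one step where your concrete plan would fail is the lower bound on $\|f_1\|_{L^2(\Lambda)}^2=\int_{\bA_h(d,d-1)}\cH^{d-1}(H\cap B_r)^2\,\mu_{d-1}^h(\dint H)$. Jensen's inequality against the total mass $\mu_{d-1}^h(\{H:H\cap B_r\neq\emptyset\})\asymp e^{(d-1)r}$ gives only $(\bE f)^2/\text{mass}\asymp e^{2(d-1)r}/e^{(d-1)r}=e^{(d-1)r}$, whereas the required order is $e^{2(d-2)r}$; for $d\ge 4$ this falls short by a factor $e^{(d-3)r}$, and for $d=3$ it misses the factor $r$. The reason is that for $d\ge 4$ the second moment is dominated by the small-measure set of hyperplanes passing near the origin, where $\cH^{d-1}(H\cap B_r)\asymp e^{(d-2)r}$, so $f$ is very far from constant and Jensen is badly lossy. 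The paper instead proves the needed bound in Lemma \ref{lm:LowerBoundIntegral} by writing the integral in the polar form $\int_0^r\cosh^{d-1}(s)\,\cH^{d-1}(L_{d-1}(s)\cap B_r)^2\,\dint s$ and restricting to $s\in[0,1]$ (for $d\ge 4$), resp.\ $s\le r/2$ (for $d=3$), together with elementary $\sinh/\cosh$ estimates. With that lemma in place of the Jensen step, the rest of your argument goes through as you describe (the remaining discrepancies in the powers of $2$ are absorbed by the deliberately generous constants).
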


The concentration bounds from Theorem \ref{tm:ConcentrationHypHyperplane1} are of the same order as the bounds from Theorem \ref{tm:HyperbolicHyperplaneConcentration Wu}, up to constant factors in $I_d$ depending only on $d$.
To see this, note that the terms of the form $\log(1+x)$ in Theorem \ref{tm:HyperbolicHyperplaneConcentration Wu}, can be approximated by $x$ if $x\leq 1$ and by $1+\log_+(x)=1+\log(x)$ if $x\ge 1$.

\subsubsection{Proofs} \label{sec:proofs}
\label{sec:proofsHyperbolic}
    
\begin{proof}[Proof of Theorem \ref{tm:HyperbolicHyperplaneConcentration Wu}]
   Note that for any $H\in \bA_h(d,d-1)$ we have 
   $$
   D_H F_1^h(r, \gamma)= \cH^{d-1}(H\cap B_r)\leq \omega_{d-1}\int_0^r\sinh^{d-2}(x)\dint x\leq \begin{cases} 
   2r,\quad &d=2,\\
   {\omega_{d-1}\over 2^{d-2}(d-2)}e^{(d-2)r},\quad &d\ge 3.
   \end{cases}.
   $$ 
   On the other hand by Lemma \ref{lm:UpperBoundIntegral} we also have 
   \begin{align*}
   \gamma\int_{\bA_h(d,d-1)}|D_H F_1^h(r, \gamma)|^2\mu_{d-1}^h(\dint H)&= \gamma\int_{\bA_h(d,d-1)}\cH^{d-1}(H\cap B_r)^2\mu_{d-1}^h(\dint H)\\
   &\leq \gamma \begin{cases} 
   64e^r,\quad &d=2,\\
   2\omega_2^2re^{2r},\quad &d=3,\\
   2\omega_{d-1}^2(d-2)^{-2}e^{2(d-2)r},\quad &d\ge 4.
   \end{cases}
   \end{align*}
   Combining the above estimates and \cite[Proposition 3.1]{wu2000new} we obtain the result.
\end{proof}

\begin{proof}[Proof of Theorem \ref{tm:HyperbolicHyperplaneConcentrationWuCLT}]
    Let us start by considering the case $d=2$. Then we get
    $$
    \bP\left(F_1^h(r, \gamma)-\bE F_1^h(r, \gamma)\geq s\sqrt{\bV F_1^h(r, \gamma)}\right)\leq \exp\left(-{s\sqrt{\bV F_1^h(r, \gamma)}\over 4r}\log\left(1+{sr\sqrt{\bV F_1^h(r, \gamma)}\over 2^5\gamma e^{r}}\right)\right),\qquad s>0.
    $$
    Since by Lemma \ref{lm:Variance} we have $\bV F_1^h(r, \gamma)\ge \gamma e^{r}$ from the above inequality we obtain 
    $$
    \bP\left(F_1^h(r, \gamma)-\bE F_1^h(r, \gamma)\geq s\sqrt{\bV F_1^h(r, \gamma)}\right)\leq \exp\left(-{s(\gamma e^{r})^{1/2}\over 4r}\log\left(1+{sr\over 2^5(\gamma e^{r})^{1/2}}\right)\right),\qquad s>0.
    $$
    Further since $\log(1+x)>\frac{x}{1+x}>x/2$ for $x<1$ we get that, for $s<2^5r^{-1}(\gamma e^{r})^{1/2}$,
    $$
    \bP\left(F_1^h(r, \gamma)-\bE F_1^h(r, \gamma)\geq s\sqrt{\bV F_1^h(r, \gamma)}\right)\leq \exp\left(-{2^{-8}s^2}\right).
    $$
    Similar for $d=3$ using $\bV F_1^h(r, \gamma)\ge 2^{-6}\omega_2^2\gamma re^{2r}$ (see Lemma \ref{lm:Variance}) we get 
    $$
    \bP\left(F_1^h(r, \gamma)-\bE F_1^h(r, \gamma)\geq s\sqrt{\bV F_1^h(r, \gamma)}\right)\leq \exp\left(-{s(r\gamma)^{1/2}\over 2^{3}}\log\left(1+{s\over 2^{5}(r\gamma)^{1/2}}\right)\right),\qquad s>0,
    $$
    and for $s<2^5(r\gamma)^\frac{1}{2}$ we obtain
    $$
    \bP\left(F_1^h(r, \gamma)-\bE F_1^h(r, \gamma)\geq s\sqrt{\bV F_1^h(r, \gamma)}\right)\leq \exp\left(-{2^{-9}s^2}\right).
    $$
    Finally, for $d\ge 4$ using $\bV F_1^h(r, \gamma)\ge 2^{-4d+6}\omega_{d-1}^2(d-2)^{-3}\gamma e^{2r(d-2)}$ we have
    $$
    \bP\left(F_1^h(r, \gamma)-\bE F_1^h(r, \gamma)\geq s\sqrt{\bV F_1^h(r, \gamma)}\right)\leq \exp\left(-{s\gamma^{1/2}\over 2^d\sqrt{d-2}}\log\left(1+{s\over 2^{3d-4}\sqrt{d-2}\gamma^{1/2} }\right)\right),\qquad s>0,
    $$
    and for $s<2^{3d-4}\sqrt{d-2}\gamma^{1/2}$ we conclude
    $$
    \bP\left(F_1^h(r, \gamma)-\bE F_1^h(r, \gamma)\geq s\sqrt{\bV F_1^h(r, \gamma)}\right)\leq \exp\left(-{s^2\over 2^{4d-3}(d-2)}\right).
    $$
\end{proof}

\begin{proof}[Proof of Theorem \ref{tm:HyperbolicHyperplaneConcentrationWuNoCLT}]
    Recall that for any $H\in \bA_h(d,d-1)$ and $d\ge 4$ we have 
    $$
    D_H F_1^h(r, \gamma)\leq 
    {\omega_{d-1}\over 2^{d-2}(d-2)}e^{(d-2)r},
    $$ 
    and
    \begin{align*}
    \gamma\int_{A_h(d,d-1)}|D_H F_1^h(r, \gamma)|^2\mu_{d-1}^h(\dint H)\leq 2\gamma
    \omega_{d-1}^2(d-2)^{-2}e^{2(d-2)r}.
    \end{align*}
    Then by \cite[Proposition 3.1]{wu2000new} we get
    \begin{align*}
        \bP\left(F_1^h(r, \gamma)-\bE F_1^h(r, \gamma) \geq {\omega_{d-1}e^{(d-2)r}\over 2^{d-2}(d-2)}s\right)&\leq \exp\left( - \Big(s+2^{2d-3}\gamma
       \Big)\log\left(1+{s\over 2^{2d-3}\gamma}\right)+s\right).
    \end{align*}
    Moreover, for $s> 2^{2d-3}e\gamma$ using $\log(x+c)>\log(x)$ for any $c>0$ and $x>0$ we have that
    \begin{align*}
    \bP\left(F_1^h(r, \gamma)-\bE F_1^h(r, \gamma) \geq {\omega_{d-1}e^{(d-2)r}\over 2^{d-2}(d-2)}s\right)&\leq \exp\left( - s\log(s+2^{2d-3}\gamma)+s\log(2^{2d-3}e\gamma)\right)\\
    &\leq \exp\left( - s\log(s)+s\log(2^{2d-3}e\gamma)\right)\\
    &= \exp\left( - s\log(s)(1+o(1))\right).
    \end{align*}
\end{proof}

\begin{proof}[Proof of Lemma \ref{lem:A2holds}] 
    We start by showing that for any $\ell\in\bN$, $r>0$ and any $\sigma \in \Pi^{**}_{\ge 2}(1;\ell)$ we have $\|\sigma\|=\ell$ and, hence,
    \begin{equation} \label{eq:fd1}
        \int (f^{\otimes \ell})_{\sigma} \dint (\mu_{d-1}^h)^{\ell +|\sigma|-\|\sigma\|}=\int (f^{\otimes \ell})_{\sigma} \dint (\mu_{d-1}^h)^{|\sigma|}\leq
        \begin{cases}(2e^r)^{|\sigma|}4^{\ell}\prod_{k=1}^{|\sigma|}(|J_k|)!,& d=2,\\
        (2r)^{|\sigma|}(\omega_2e^{r})^{\ell},& d=3,\\
        2^{|\sigma|}({\omega_{d-1}\over d-2}e^{r(d-2)})^{\ell},& d\ge 4,
        \end{cases}
    \end{equation}
    where $f(H)=\cH^{d-1}(H\cap B_r)$.

    Recall the notation introduced in Section \ref{sec:subpartitions}. Consider a partition $\sigma\in \Pi_{\geq 2}^{**}(1;\ell)$ with blocks $J_1,\ldots,J_{|\sigma|}$. Note that according to definition we have $|J_1|+\ldots+|J_{|\sigma|}|=\|\sigma\|=\ell$ since each row $R_j$, $j\in[\ell]$, which in this case consists of a single element, is hit by at least one block $J_i$, $1\leq i\leq |\sigma|$ and as a consequence there are no uncovered elements. Further let $\tau_{\cdot}(j)\colon\Pi_\ell\to [\ell]$, $j\in[\ell]$ be the functions, such that $\tau_{\sigma}(j)=k$ if $j\in J_k$. Thus, we obtain
    \begin{align}
    T_{d,\ell}(\sigma)&\coloneqq \int (f^{\otimes \ell})_{\sigma} \dint (\mu_{d-1}^h)^{\otimes(|\sigma|)}\notag\\
    &= \int \prod_{j=1}^\ell \cH^{d-1}(H_{\tau_{\sigma}(j)}\cap B_r)\dint (\mu_{d-1}^h)^{\otimes(|\sigma|)}\notag\\
    &= \prod_{k=1}^{|\sigma|} \int_{\bA_h(d,d-1)} \cH^{d-1}(H\cap B_r)^{|J_k|}\mu_{d-1}^h(\dint H).\label{eq:T1}
    \end{align}
    The integrals of the above form have been investigated in \cite[Lemma 8]{HHT2019} and the following bounds have been obtained
    \begin{equation}\label{eq:MomentBoundsHyperbolic2}
    \int_{\bA_h(d,d-1)}\cH^{d-1}(H\cap B_r)^{k}\mu_{d-1}^h(\dint H)\leq \begin{cases}
        2\cdot 4^kk!e^{r}, \qquad &d=2,k\ge 2,\\
        2\cdot\omega_2^k re^{2r},\qquad &d=3, k=2,\\
        2\cdot\omega_{d-1}^k(d-2)^{-k}e^{rk(d-2)},\qquad &d\ge 4, k\ge 2\text{ or }d=3, k>2.
    \end{cases}
    \end{equation}
    It should be noted that in \cite[Lemma 8]{HHT2019} the above estimate has been stated exactly in terms of $r$, but the exact dependence on $k$ and $d$ has not been specified and only the existence of the constants, depending only on $k$ and $d$ has been claimed. Nevertheless these constants are easy to determine following the proof of Lemma 8 in \cite{HHT2019}. For reader's convenience of we put these computations in appendix.
    
    Then combining \eqref{eq:T1} with \eqref{eq:MomentBoundsHyperbolic2} we get
    $$ 
    T_{d,\ell}(\sigma)\leq  \begin{cases}
            (2e^r)^{|\sigma|}4^{\ell}\prod_{k=1}^{|\sigma|}(|J_k|)!, \qquad &d=2,\\
            2^{|\sigma|}\omega_2^{\ell}r^{t(\sigma)}e^{2rt(\sigma)}e^{r(\ell - 2t(\sigma))}\leq (2r)^{|\sigma|}(\omega_2e^{r})^{\ell},\qquad &d=3,\\
            2^{|\sigma|}(\omega_{d-1}(d-2)^{-1}e^{r(d-2)})^{\ell},\qquad& d\ge 4,
        \end{cases}
    $$
    where $t(\sigma)$ is the number of blocks of partition $\sigma$ having size $2$ and, hence, $t(\sigma)\leq |\sigma|$.
    This implies the estimate \eqref{eq:fd1}. Further note that according to \eqref{eq:fd1}, $F_1^h(r, \gamma)$ satisfies \eqref{eq:fBound} with $\beta_0=1$ and
    \begin{align*}
        q&=1, &\beta_1&=2e^r, &\beta_2&=4 , &\text{ if } &d=2;\\
        q&=0, &\beta_1&=2r, &\beta_2&=\omega_2e^{r}, &\text{ if } &d=3;\\
        q&=0, &\beta_1&=2, &\beta_2&={\omega_{d-1}\over d-2}e^{r(d-2)}, &\text{ if } &d\ge 4.
    \end{align*}
\end{proof}

\begin{proof}[Proof of Theorem \ref{tm:ConcentrationHypHyperplane1}]

	We start by noting that by Lemma \ref{lm:LowerBoundIntegral} we have
	$$
	\|f_1\|^2_{L^2(\Lambda)}=\int_{\bA_h(d,d-1)}\cH^{d-1}(H\cap B_r)^2\mu_{d-1}^h(\dint H)\ge \begin{cases}
        e^{r}, \quad &d=2,\\
        2^{-6}\omega_2^2 re^{2r},\quad &d=3,\\
        2^{-4d+6}\omega_{d-1}^2(d-2)^{-3}e^{2r(d-2)},\quad &d\ge 4.
    \end{cases}
	$$
	Since, according to Lemma \ref{lem:A2holds}, $F_1^h(r, \gamma)$ satisfies \eqref{eq:fBound} with $q=0$ if $d\ge 3$ and with $q=1$ if $d=2$, the bound above implies that
	\[
	\Cr{c_27} = 2^{-26}\min\left( 1 , {\|f_1\|_{L^2(\Lambda)}^2\over\beta_2^{2} \beta_1} \right) \ge 2^{-26}\begin{cases}
        2^{-5}, \quad &d=2,3,\\
        2^{-4d+6}(d-2)^{-1},\quad &d\ge 4.
    \end{cases}
	\]
	Now the bound follows directly from \eqref{onebound2}.
\end{proof}

\section*{Acknowledgements}

AG and GB were supported by the DFG priority program SPP 2265 \textit{Random Geometric Systems}.
AG was supported by the DFG under Germany's Excellence Strategy  EXC 2044 -- 390685587, \textit{Mathematics M\"unster: Dynamics - Geometry - Structure}.
GB would like to acknowledge the financial support of the CogniGron research center and the Ubbo Emmius Funds (Univ. of Groningen).

The authors would like to thank Matthias Schulte, Christoph Th\"ale and Zakhar Kabluchko for valuable comments and remarks regarding the applications.

\appendix

\section{Estimates for hyperbolic hyperplane process}

\begin{lemma}\label{lm:UpperBoundIntegral}
For any $k\ge 1$ and $r>0$ we have
\begin{equation}\label{eq:UpperBoundIntegral2}
    \int_{\bA_h(d,d-1)}\cH^{d-1}(H\cap B_r)^{k}\mu_{d-1}^h(\dint H)\leq \begin{cases}
        2\cdot 4^kk!e^{r}, \quad &d=2,k\ge 2,\\
        2\cdot \omega_2^k re^{2r},\quad &d=3, k=2,\\
        2\cdot \omega_{d-1}^k(d-2)^{-k}e^{rk(d-2)},\quad &d\ge 4, k\ge 2\text{ or }d=3, k>2.
    \end{cases}
\end{equation}
\end{lemma}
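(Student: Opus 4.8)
The plan is to reduce the integral over $\bA_h(d,d-1)$ to a one-dimensional integral over the distance from the fixed ball center $o$ to the hyperplane $H$, using the standard parametrization of the invariant measure $\mu_{d-1}^h$ as described in \cite[Section 3.1]{HHT2019}. Writing $p\in[0,\infty)$ for the (signed) distance of $H$ to $o$ and integrating out the directional component, one has the coarea-type identity
\[
\int_{\bA_h(d,d-1)} g(H)\,\mu_{d-1}^h(\dint H) = \omega_d \int_0^\infty g(p) \cosh^{d-1}(p)\, \dint p,
\]
valid for functions $g$ depending on $H$ only through $\dist(o,H)=p$, with the appropriate normalization of $\mu_{d-1}^h$. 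Since $\cH^{d-1}(H\cap B_r)=0$ for $p\geq r$, the outer integral is over $p\in[0,r)$ only. Thus the first step is to record this identity and reduce to estimating $\omega_d\int_0^r \cH^{d-1}(H_p\cap B_r)^k \cosh^{d-1}(p)\,\dint p$, where $H_p$ is a hyperplane at distance $p$.

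The second step is to bound $\cH^{d-1}(H_p\cap B_r)$. The intersection $H_p\cap B_r$ is a geodesic ball of radius $\operatorname{arccosh}(\cosh r/\cosh p)$ inside the totally geodesic copy of $\bH^{d-1}$, so by the ball volume formula in $\bH^{d-1}$,
\[
\cH^{d-1}(H_p\cap B_r) = \omega_{d-1}\int_0^{\operatorname{arccosh}(\cosh r/\cosh p)} \sinh^{d-2}(s)\,\dint s.
\]
I would then use $\sinh s\leq \tfrac12 e^s$, integrate, and use $\operatorname{arccosh}(x)\leq \log(2x)$ to get a clean bound of the form $\cH^{d-1}(H_p\cap B_r)\leq c_{d}\, e^{(d-2)r}/\cosh^{d-2}(p)$ for $d\geq 3$ (with a constant like $\omega_{d-1}/((d-2)2^{d-2})$ after being slightly generous), and a bound like $\leq c\, r$ or $\leq c\log(e^r/\cosh p)$ for $d=2$. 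Plugging this into the reduced integral, for $d\geq 3$ one gets a bound proportional to $\omega_d e^{k(d-2)r}\int_0^r \cosh^{d-1-k(d-2)}(p)\,\dint p$; when $k\geq 2$ and $d\geq 4$ (or $d=3$, $k\geq 3$) the exponent $d-1-k(d-2)$ is negative so the integral is bounded by a constant, yielding exactly the claimed $2\omega_{d-1}^k(d-2)^{-k}e^{rk(d-2)}$ after tracking constants. The borderline case $d=3$, $k=2$ gives $\int_0^r \cosh^{-?}$... actually $d-1-k(d-2)=2-2=0$, so the integral equals $r$, producing the $2\omega_2^2 r e^{2r}$ term. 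For $d=2$, $k\geq 2$ the integrand is $\omega_2 \cosh(p)\cdot(\text{poly or log in }r)^k$, and integrating $\cosh p$ up to $r$ gives $\sinh r\leq \tfrac12 e^r$; a crude bound $\cH^1(H_p\cap B_r)\leq 4r$ combined with $k!\geq$ a suitable lower bound, or more carefully a bound like $\cH^1(H_p\cap B_r)\leq 4\log(e^r/\cosh p)$ and the elementary estimate $\int_0^\infty (\log(e^r/\cosh p))^k \cosh p\, \dint p \lesssim 4^k k! e^r$, yields $2\cdot 4^k k! e^r$.

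The main obstacle I anticipate is getting the constants to come out exactly as stated — in particular the appearance of $k!$ in the $d=2$ case, which requires a genuinely sharp treatment of $\int_0^r (\log(e^r/\cosh p))^k\cosh(p)\,\dint p$ rather than a lazy $\cH^1\leq 4r$ bound (the latter would give $(4r)^k$, not $4^k k!$, and $r^k$ is much bigger than $k!$ for fixed $k$ and large $r$, so that direction is fine, but one wants uniformity in $k$). I would handle this by the substitution $u=\operatorname{arccosh}(e^r/\cosh p)$ or by directly observing that $\int_0^\infty(\log(e^r/\cosh p))^k\cosh p\,\dint p = e^r\int_0^\infty (\log(2/(1+e^{-2p})))^k\cdot$... and comparing with $\Gamma(k+1)$; a cleaner route is to note $\cH^1(H_p\cap B_r) = 2(\text{arccosh}(\cosh r/\cosh p))\leq 2(r-p+\log 2)\leq 4\max(r-p,1)$ and then $\int_0^r (4\max(r-p,1))^k \cdot \tfrac12 e^p\,\dint p \leq \tfrac12 e^r\int_0^\infty (4\max(v,1))^k e^{-v}\,\dint v \leq 4^k k! e^r$ after a change of variable $v=r-p$ and a standard gamma-integral estimate. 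Once the $d=2$ constants are pinned down this way, the remaining cases are routine, and the lemma follows; one should also double-check the $k=1$ case is not actually needed (the statement says $k\geq 1$ but only $k\geq 2$ branches appear, and the $k=1$ bound follows a fortiori from the same computation with the convention that the empty product of factorials is $1$).
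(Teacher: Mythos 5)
Your proposal follows essentially the same route as the paper's proof: reduce to a one-dimensional integral over the distance parameter (the paper cites \cite[Lemma 8]{HHT2019} for this), express the slice volume via the ball-volume formula in the totally geodesic $\bH^{d-1}$, bound $\operatorname{arccosh}(\cosh r/\cosh p)$ by $r-p+O(1)$, and evaluate the resulting exponential integrals case by case according to the sign of $k(d-2)-(d-1)$, with the $k!$ in $d=2$ coming from a Gamma-type integral exactly as you anticipate. The only point to watch is the normalization of $\mu_{d-1}^h$: under the paper's convention the disintegration carries a prefactor $2$ rather than $\omega_d$, which is where the factor $2$ in the stated bound originates.
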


\begin{proof}
By \cite[Lemma 8]{HHT2019} we have
$$
I_d(k)\coloneqq \frac{1}{2}\int_{\bA_h(d,d-1)}\cH^{d-1}(H\cap B_r)^{k}\mu_{d-1}^h(\dint H)=\int_{0}^r\cosh^{d-1}(s)\cH^{d-1}(L_{d-1}(s)\cap B_r)^k\dint s,
$$
where $L_{d-1}(s)$ denotes the $(d-1)$-dimensional totally geodesic subspaces  such that the hyperbolic distance between $L_{d-1}(s)$ and the origin $o$ equals $s$. By \cite[Equation 19]{HHT2019} we have
$$
\cH^{d-1}(L_{d-1}(s)\cap B_r)=\omega_{d-1}\int_{0}^{{\rm arccosh}\big(\frac{\cosh(r)}{\cosh(s)}\big)}\sinh^{d-2}(x)\dint x.
$$
Moreover, by \cite[Lemma 6]{HHT2019} we get for any $r>0$ and $0\leq s\leq r$ that
\begin{equation}\label{eq:02.01.24_1}
{\rm arccosh}\Big(\frac{\cosh(r)}{\cosh(s)}\Big)\leq r-s+\log(2).
\end{equation}
Combining the above estimates with the trivial bounds $\cosh(x)\leq e^x$, $\sinh(x)\leq e^x/2$ and applying change of variables $u=r-s$ we obtain
\begin{align}
     I_d(k)&=\omega_{d-1}^k\int_{0}^r\cosh^{d-1}(r-u)\Big(\int_{0}^{{\rm arccosh}\big(\frac{\cosh(r)}{\cosh(r-u)}\big)}\sinh^{d-2}(x)\dint x\Big)^k\dint u\label{eq:17.12.23_eq1}\\
     &\leq \omega_{d-1}^k2^{-(d-2)k}\int_{0}^r\cosh^{d-1}(r-u)\Big(\int_{0}^{{\rm arccosh}\big(\frac{\cosh(r)}{\cosh(r-u)}\big)}e^{(d-2)x}\dint x\Big)^k\dint u\notag\\
     &\leq \omega_{d-1}^k2^{-(d-2)k}\int_{0}^re^{(d-1)(r-u)}\Big(\int_{0}^{u+\log(2)}e^{(d-2)x}\dint x\Big)^k\dint u.\notag
\end{align}
Now for $d=2$ we get for any $k\ge 1$ that
 \begin{align*}
     I_d(k)&\leq 2^ke^r\int_{0}^re^{-u}(u+\log(2))^k\dint u\\
     &\leq 2^{2k-1}e^r\int_{\log(2)}^{r}e^{-u}u^k\dint u+2^{2k-1}(\log(2))^ke^r\int_{0}^{\log(2)}e^{-u}\dint u\\
     &\leq  2^{2k-1}e^r\Gamma(k+1)+2^{2k-1}e^r\leq 4^kk!e^r,
\end{align*}
where in the second step we used that $a+b\leq 2^{k-1\over k}(a^k+b^k)^{1\over k}$ for any $a,b\ge 0$ as follows from H\"older inequality with $p=k$ and $q={k\over k-1}$. In case $d\ge 3$ we continue as follows
 \begin{align*}
     I_d(k)&\leq \omega_{d-1}^k2^{-(d-2)k}(d-2)^{-k}\int_{0}^re^{(d-1)(r-u)}(2^{(d-2)}e^{(d-2)u}-1)^k\dint u\\
     &\leq \omega_{d-1}^k(d-2)^{-k}\int_{0}^re^{(d-1)r}e^{(k(d-2)-(d-1))u}\dint u.
\end{align*}
Finally, for $d=3$ and $k=2$ we immediately obtain $I_2(3)\leq  \omega_{2}^2e^{2r}r$ and, otherwise,
\begin{align*}
     I_d(k)&\leq \omega_{d-1}^k(d-2)^{-k}(k(d-2)-(d-1))^{-1}e^{k(d-2)r}\leq \omega_{d-1}^k(d-2)^{-k}e^{k(d-2)r},
\end{align*}
which finishes the proof of \eqref{eq:UpperBoundIntegral2}.

\end{proof}

\begin{lemma}\label{lm:LowerBoundIntegral}
For any $r\ge 3$ have
$$
    \int_{\bA_h(d,d-1)}\cH^{d-1}(H\cap B_r)^{2}\mu_{d-1}^h(\dint H)\ge \begin{cases}
        e^{r}, \quad &d=2,\\
        2^{-6}\omega_2^2 re^{2r},\quad &d=3,\\
        2^{-4d+6}\omega_{d-1}^2(d-2)^{-3}e^{2r(d-2)},\quad &d\ge 4.
    \end{cases}
$$
\end{lemma}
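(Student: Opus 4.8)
The plan is to mirror the proof of Lemma~\ref{lm:UpperBoundIntegral}, replacing every upper estimate by its lower counterpart. Recall from \cite[Lemma 8]{HHT2019} and \cite[Equation 19]{HHT2019} that, writing $T(s):={\rm arccosh}\big(\cosh(r)/\cosh(s)\big)$,
\[
\frac{1}{2}\int_{\bA_h(d,d-1)}\cH^{d-1}(H\cap B_r)^{2}\mu_{d-1}^h(\dint H)=\int_0^r\cosh^{d-1}(s)\,\cH^{d-1}(L_{d-1}(s)\cap B_r)^2\,\dint s,
\]
\[
\cH^{d-1}(L_{d-1}(s)\cap B_r)=\omega_{d-1}\int_0^{T(s)}\sinh^{d-2}(x)\,\dint x .
\]
The elementary input replacing \eqref{eq:02.01.24_1} is the lower bound $T(s)\ge r-s$ for $0\le s\le r$; this follows from the monotonicity of ${\rm arccosh}$ together with $\cosh(r)/\cosh(s)\ge\cosh(r-s)$, which is equivalent to $\sinh(r)\cosh(s)\ge\cosh(r)\sinh(s)$, i.e.\ to $\tanh r\ge\tanh s$. (The slightly weaker $T(s)\ge r-s-\log2$, from $\cosh(r)/\cosh(s)\ge\frac12 e^{r-s}$ and ${\rm arccosh}(y)\ge\log y$, would also suffice.)

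For $d=2$ the inner integral is exactly $T(s)$, so $\cH^1(L_1(s)\cap B_r)\ge\omega_1(r-s)=2(r-s)$, and using $\cosh(s)\ge\frac12 e^{s}$ together with the substitution $u=r-s$,
\[
\frac{1}{2}\int\cH^1(H\cap B_r)^2\,\dint\mu_{d-1}^h\ \ge\ 4\int_0^r\cosh(s)(r-s)^2\,\dint s\ \ge\ 2e^{r}\int_0^r u^2e^{-u}\,\dint u\ \ge\ e^{r}\qquad(r\ge3),
\]
since $\int_0^3 u^2e^{-u}\,\dint u=2-17e^{-3}>\frac12$. For $d=3$ the inner integral is $\cosh(T(s))-1=\cosh(r)/\cosh(s)-1$, hence $\cH^2(L_2(s)\cap B_r)=\omega_2\big(\cosh r-\cosh s\big)/\cosh s$ and $\frac12\int\cH^2(H\cap B_r)^2\,\dint\mu_{d-1}^h=\omega_2^2\int_0^r(\cosh r-\cosh s)^2\,\dint s$. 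Restricting to $s\in[0,r/2]$ and using $\cosh r-\cosh s\ge\cosh r-e^{r/2}\ge\frac12 e^{r}-e^{r/2}\ge\frac14 e^{r}$ for $r\ge3$ (here $e^{-3/2}<\frac14$ is exactly where $r\ge3$ is used) gives $\int_0^r(\cosh r-\cosh s)^2\,\dint s\ge\frac r2\cdot\frac{e^{2r}}{16}$, and the claimed bound $2^{-6}\omega_2^2 re^{2r}$ follows.

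For $d\ge4$ I would bound the inner integral from below by keeping only the top part of the range, where $\sinh$ is comparable to $\frac12 e^{x}$. From $\sinh(x)\ge\frac{e^x}{2}(1-e^{-2x})\ge\frac{e^x}{3}$ for $x\ge1$, one gets for any $T\ge r/2\ge\frac32$ (so $r\ge3$ is used again, via $T\ge 1+\tfrac{\log2}{d-2}$) the estimate $\int_0^{T}\sinh^{d-2}(x)\,\dint x\ge c_d(d-2)^{-1}e^{(d-2)T}$ with $c_d=\tfrac12 3^{-(d-2)}$. Combining this with $T(s)\ge r-s$, $\cosh^{d-1}(s)\ge1$, and integrating only over $s\in[0,r/2]$ yields
\[
\frac{1}{2}\int\cH^{d-1}(H\cap B_r)^2\,\dint\mu_{d-1}^h\ \ge\ \frac{\omega_{d-1}^2c_d^2}{(d-2)^2}\,e^{2(d-2)r}\int_0^{r/2}e^{-2(d-2)s}\,\dint s\ \ge\ \frac{\omega_{d-1}^2\,9^{-(d-2)}}{16(d-2)^3}\,e^{2(d-2)r},
\]
where the $s$-integral is precisely what upgrades the $(d-2)^{-2}$ of the upper estimate to the required $(d-2)^{-3}$, and where $1-e^{-(d-2)r}>\frac12$ was used. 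It then remains to check the numerical inequality $9^{-(d-2)}/8\ge 2^{-4d+6}$, i.e.\ $(2d-4)\log3\le(4d-9)\log2$, which holds for all $d\ge4$; this gives the stated $2^{-4d+6}\omega_{d-1}^2(d-2)^{-3}e^{2(d-2)r}$. The only place that genuinely requires attention is this last case: one must choose the truncation point in the lower bound for $\int_0^T\sinh^{d-2}$ and the cut-off $s=r/2$ economically enough that the accumulated constant actually dominates $2^{-4d+6}$; everything else is routine, the $d=2,3$ cases being exact up to the harmless inequalities $\frac12 e^{y}\le\cosh y\le e^{y}$.
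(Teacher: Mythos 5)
Your proof is correct and follows essentially the same route as the paper: both start from the integral-geometric formula of \cite[Lemma 8]{HHT2019}, use the lower bound ${\rm arccosh}(\cosh(r)/\cosh(s))\ge r-s$ (which the paper cites from \cite[Lemma 6]{HHT2019} and you reprove directly), and then truncate the resulting one-dimensional integrals to a region where $\sinh$ and $\cosh$ are comparable to half an exponential. The only differences are cosmetic — the choice of variable, the exact evaluation of the inner integral for $d=3$, and slightly different truncation windows for $d\ge 4$ — and your constants all dominate the stated ones.
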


\begin{proof}
    We start exactly like in the proof of the previous lemma and by \eqref{eq:17.12.23_eq1} together with $\cosh(x)\ge e^x/2$ we get
    \begin{align*}
        I_d&\coloneqq \frac{1}{2}\int_{\bA_h(d,d-1)}\cH^{d-1}(H\cap B_r)^{2}\mu_{d-1}^h(\dint H)\\
        &=\omega_{d-1}^2\int_{0}^r\cosh^{d-1}(r-u)\Big(\int_{0}^{{\rm arccosh}\big(\frac{\cosh(r)}{\cosh(r-u)}\big)}\sinh^{d-2}(x)\dint x\Big)^2\dint u\\
        &\ge 2^{-d+1}\omega_{d-1}^2\int_{0}^re^{(d-1)(r-u)}\Big(\int_{0}^{{\rm arccosh}\big(\frac{\cosh(r)}{\cosh(r-u)}\big)}\sinh^{d-2}(x)\dint x\Big)^2\dint u.
    \end{align*}
    Further we use the bound \cite[Lemma 6]{HHT2019} 
    $$
    {\rm arccosh}\Big(\frac{\cosh(r)}{\cosh(r-u)}\Big)\ge u,
    $$
    which holds for any $r>0$ and for $d=2$ with $r\ge 3$ we obtain
    \begin{align*}
        I_2&\ge 2e^{r}\int_{0}^re^{-u}u^2\dint u\ge 2e^{r}\int_{1}^2e^{-u}u^2\dint u\ge 2^{-1}e^r.
    \end{align*}
    For $d=3$ and $r\ge 3$ using the estimate $e^x/2\leq \cosh(x)\leq e^x$ we get
    \begin{align*}
        I_3&\ge 2^{-2}\omega_{2}^2\int_{0}^re^{2(r-u)}\Big(\int_{0}^{{\rm arccosh}\big(\frac{\cosh(r)}{\cosh(r-u)}\big)}\sinh(x)\dint x\Big)^2\dint u\\
        &= 2^{-2}\omega_{2}^2\int_{0}^re^{2(r-u)}\Big(\frac{\cosh(r)}{\cosh(r-u)}-1\Big)^2\dint u\\
        &\ge 2^{-2}\omega_{2}^2e^{2r}\int_{r/2}^r\big(1/2-e^{-u}\big)^2\dint u\\
        &\ge 2^{-7}\omega_{2}^2e^{2r}r.
    \end{align*}
    For $d\ge 4$ and $r\ge 3$ using additionally the estimate $\sinh(x)={1
    \over 2}(e^x-e^{-x})\ge {1\over 2\sqrt{2}}e^{x}$, which is valid for $x\ge \log(2+\sqrt{2})/2$ and $e^x\ge 1$ for $x\ge 0$ we finally obtain
    \begin{align*}
        I_d&\ge 2^{-d+1}\omega_{d-1}^2\int_{0}^re^{(d-1)(r-u)}\Big(\int_{0}^{u}\sinh^{d-2}(x)\dint x\Big)^2\dint u\\
         &\ge 2^{-4d+7}\omega_{d-1}^2\int_{r-1}^r\Big(\int_{u-1}^{u}e^{(d-2)x}\dint x\Big)^2\dint u\\
        &= 2^{-4d+7}\omega_{d-1}^2(d-2)^{-2}(1-e^{-(d-2)})^2\int_{r-1}^re^{2(d-2)u}\dint u\\
        &= 2^{-4d+6}\omega_{d-1}^2(d-2)^{-3}(1-e^{-(d-2)})^2(1-e^{-2(d-2)})e^{2(d-2)r}\\
        &\ge 2^{-4d+5}\omega_{d-1}^2(d-2)^{-3}e^{2(d-2)r},
    \end{align*}
    and the proof is finished.
\end{proof}

\begin{lemma}\label{lm:Variance}
For any $r\ge 3$ and $\gamma\ge 1$ we have
\begin{align*}
     \gamma e^r\leq &\bV F_1^h(r, \gamma)\leq 2^{6}\gamma e^r, \qquad &d=2,\\
    2^{-6}\omega_2^2 \gamma r e^{2r}\leq &\bV F_1^h(r, \gamma)\leq 2\omega_2^2 \gamma r e^{2r},\qquad &d=3,\\
    2^{-4d+6}\omega_{d-1}^2(d-2)^{-3}\gamma e^{2r(d-2)}\leq &\bV F_1^h(r, \gamma)\leq 2\omega_{d-1}^2(d-2)^{-2} \gamma e^{2r(d-2)},\qquad &d\ge 4,
    \end{align*} 
\end{lemma}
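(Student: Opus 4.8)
The plan is to compute the variance of $F_1^h(r,\gamma)$ directly via the standard variance formula for Poisson $U$-statistics of order $1$, which is simply the variance of a Poisson integral. Indeed, since $F_1^h(r,\gamma) = \sum_{H\in\eta^h} \cH^{d-1}(H\cap B_r)$, by the Mecke formula (or \eqref{eq_UstatVar} with $m=1$) we have
\[
\bV F_1^h(r,\gamma) = \gamma \int_{\bA_h(d,d-1)} \cH^{d-1}(H\cap B_r)^2 \, \mu_{d-1}^h(\dint H).
\]
So the entire lemma reduces to sandwiching this single integral $\int_{\bA_h(d,d-1)} \cH^{d-1}(H\cap B_r)^2 \mu_{d-1}^h(\dint H)$ between explicit multiples of $e^r$ (for $d=2$), $re^{2r}$ (for $d=3$), and $e^{2r(d-2)}$ (for $d\ge4$).

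For the upper bound, I would invoke Lemma \ref{lm:UpperBoundIntegral} with $k=2$: it gives exactly $\int \cH^{d-1}(H\cap B_r)^2 \mu_{d-1}^h(\dint H) \le 2\cdot 16\, e^r = 2^5 e^r$ for $d=2$ (so multiplying by $\gamma$ gives $\le 2^6\gamma e^r$ — note the stated constant $2^6$ is slightly looser than $2^5$, which is fine), $\le 2\omega_2^2 re^{2r}$ for $d=3$, and $\le 2\omega_{d-1}^2(d-2)^{-2}e^{2r(d-2)}$ for $d\ge4$. Multiplying through by $\gamma$ yields the claimed upper bounds verbatim. For the lower bound, I would invoke Lemma \ref{lm:LowerBoundIntegral}, which (for $r\ge 3$) gives $\int \cH^{d-1}(H\cap B_r)^2\mu_{d-1}^h(\dint H) \ge e^r$ for $d=2$, $\ge 2^{-6}\omega_2^2 re^{2r}$ for $d=3$, and $\ge 2^{-4d+6}\omega_{d-1}^2(d-2)^{-3}e^{2r(d-2)}$ for $d\ge4$ (here I'd use the weaker form of the $d\ge4$ bound with $2^{-4d+6}$ rather than the $2^{-4d+5}$ actually proved, which only makes the inequality easier). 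Multiplying by $\gamma$ and using $\gamma\ge1$ where convenient (it isn't strictly needed since everything is linear in $\gamma$) gives the lower bounds.

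Since both auxiliary lemmas are already available in the appendix, there is essentially no genuine obstacle here — the proof is a two-line citation of Lemmas \ref{lm:UpperBoundIntegral} and \ref{lm:LowerBoundIntegral} together with the order-$1$ variance identity. The only mild point of care is bookkeeping the numerical constants: checking that $2\cdot 4^2 = 32 \le 64 = 2^6$ for the $d=2$ upper bound, and that the $d\ge4$ lower-bound constant stated ($2^{-4d+6}$) is indeed implied by the sharper $2^{-4d+5}$ from Lemma \ref{lm:LowerBoundIntegral}. I would write it as follows.

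\begin{proof}[Proof of Lemma \ref{lm:Variance}]
Since $F_1^h(r,\gamma)$ is a Poisson $U$-statistic of order $m=1$ with kernel $f(H) = \cH^{d-1}(H\cap B_r)$, formula \eqref{eq_UstatVar} gives
\[
\bV F_1^h(r,\gamma) = \gamma \int_{\bA_h(d,d-1)} \cH^{d-1}(H\cap B_r)^2 \, \mu_{d-1}^h(\dint H).
\]
The upper bounds follow from Lemma \ref{lm:UpperBoundIntegral} applied with $k=2$, using that $2\cdot 4^2 = 32 \le 64 = 2^6$ in the case $d=2$. The lower bounds follow from Lemma \ref{lm:LowerBoundIntegral}, using that $2^{-4d+5} \ge 2^{-4d+6}$ in the case $d\ge 4$ so that the bound stated there is at least the one claimed here. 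Multiplying the resulting integral estimates by $\gamma$ yields all six inequalities.
\end{proof}
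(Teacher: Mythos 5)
Your proof is correct and is exactly the paper's argument: the order-$1$ variance identity \eqref{eq_UstatVar} followed by a direct citation of Lemmas \ref{lm:UpperBoundIntegral} and \ref{lm:LowerBoundIntegral}. Two bookkeeping remarks: for $d=2$ you dropped the $k!$ factor from Lemma \ref{lm:UpperBoundIntegral} (with $k=2$ the bound is $2\cdot 4^2\cdot 2!\,e^r=2^6 e^r$ exactly, not $2^5 e^r$), and the inequality ``$2^{-4d+5}\ge 2^{-4d+6}$'' you invoke for $d\ge 4$ is false — but it is also unnecessary, since the statement of Lemma \ref{lm:LowerBoundIntegral} already gives $2^{-4d+6}$ (the $2^{-4d+5}$ appearing at the end of its proof bounds $I_d$, which is \emph{half} the integral). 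Neither slip affects the validity of the argument.
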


\begin{proof}
 First note that by \eqref{eq_UstatVar} we have
 $$
 \bV F_1^h(r, \gamma)=\gamma\int_{\bA_h(d,d-1)}\cH^{d-1}(H\cap B_r)^2\mu^h_{d-1}(\dint H).
 $$
 Thus, combining estimates from Lemma \ref{lm:UpperBoundIntegral} and Lemma \ref{lm:LowerBoundIntegral} we obtain the result.
\end{proof}

\bibliographystyle{abbrv}
\bibliography{biblio}

\end{document}